\documentclass{amsart}
\usepackage[T1]{fontenc}
\usepackage{barz}
\author{Michael Barz}
\DeclareMathOperator{\can}{can}
\DeclareMathOperator{\AS}{AS}
\DeclareMathOperator{\DCZ}{dCZ}
\DeclareMathOperator{\Conn}{Conn}
\newcommand{\sW}{{}^{s}W}

\usepackage{epigraph}

\newcommand{\SPEC}{\mathbf{Spec}} 

\DeclareMathOperator{\cone}{cone}
\DeclareMathOperator{\res}{res}
\DeclareMathOperator{\Dol}{Dol}
\renewcommand{\hat}{\widehat}
\title{Logarithmic de Rham stacks and non-abelian Hodge theory}
\setcounter{tocdepth}{1}
\begin{document}

\begin{abstract}
		In this article, we introduce the \emph{logarithmic} de Rham stack of a pair \((X, D),\) for \(X\) a smooth variety over a field \(k\) of positive characteristic \(p,\) and \(D\) a strict normal crossings divisor on \(X.\) Using this stack, we prove a new version of logarithmic Cartier descent, and a new logarithmic non-abelian Hodge theorem for curves, both stated using a certain logarithmic Frobenius twist. Our logarithmic non-abelian Hodge theorem implies an earlier logarithmic non-abelian Hodge theorem of de Cataldo-Zhang \cite{dCZ}.
\end{abstract}

\maketitle
\tableofcontents

\epigraph{`It's about \emph{a certain machine},' I said, getting right to the point.}{\emph{Pinball, 1973}, Haruki Murakami}

\section{Introduction}

Fix a prime \(p.\) For \(k\) a field of characteristic \(p,\) and \(X\) a smooth variety over \(k\) with a strict normal crossings divisor \(D,\) we will introduce a stack \(X^{\dR}(\log D),\) which we dub the \emph{logarithmic de Rham stack} of \((X, D).\) The definition is straightforward; setting \(\Theta = \A^1/\G_m,\) if \(D\) has \(n\) components, then we can encode the divisor \(D\) as a certain smooth morphism \(c_D : X \to \Theta^n\) (this will be explained in \autoref{logdrsec}).
\begin{defn}
		The \emph{logarithmic de Rham stack} \(X^{\dR}(\log D)\) of \((X, D)\) is the relative de Rham stack \((X/\Theta^n)^{\dR}\) of \(X\) over \(\Theta^n.\) The notion of a de Rham stack relative to a general base will be made precise in \autoref{derhamsection}.
\end{defn}

de Rham stacks in characteristic 0 were first defined by Simpson \cite{simpson}, as a way to `geometrize' algebraic de Rham cohomology. That is, for \(Y\) a complex algebraic variety, Simpson defined a certain stack \(f : Y^{\operatorname{Simpson-dR}} \to \Spec(\C)\) having the following two properties:
\begin{enumerate}
		\item there is an equivalence of categories \(\Qcoh(Y^{\operatorname{Simpson-dR}}) \heq \Conn(Y),\) for \(\Conn(Y)\) the category of quasicoherent sheaves on \(Y\) endowed with a flat connection, 
		\item for each \(q,\) there are isomorphisms \(R^qf_*\OO_{Y^{\operatorname{Simpson-dR}}} \heq H^q_{\dR}(Y, \C)\) between the coherent cohomology of \(Y^{\operatorname{Simpson-dR}}\) and the algebraic de Rham cohomology of \(Y.\)
\end{enumerate}
The utility of Simpson's construction is that now algebraic de Rham cohomology behaves as a coherent cohomology theory. Working over \(Y^{\operatorname{Simpson-dR}}\) also allows one to better understand phenomena in non-abelian Hodge theory.

Since then, Drinfeld \cite{prismatization} and Bhatt--Lurie \cite{bhattlurie} have made mixed and positive characteristic analogues of Simpson's de Rham stack. Recently, Bhatt--Kanaev--Mathew--Vologodsky--Zhang \cite{bkmvz} have introduced a `sheared' variant of this construction of Drinfeld and Bhatt--Lurie, which has slightly better properties. Our logarithmic de Rham stack is essentially a logarithmic variant of these constructions, which allows one to geometrize \emph{logarithmic} de Rham cohomology, in the following sense.
\begin{remark}
		Analogously to Simpson's de Rham stack, we will prove in \autoref{logdrsec} that
		\[\Qcoh(X^{\dR}(\log D)) \heq \Conn^{\psi_p\text{ nilp}}(X, D),\]
where by \(\Conn(X, D)\) we mean the category of pairs \((\mathcal{E}, \nabla : \mathcal{E} \to \mathcal{E} \otimes \Omega^1_X(\log D)),\) where \(\mathcal{E} \in \Qcoh(X)\) and \(\nabla\) is a flat connection whose \(p\)-curvature is nilpotent. It is inconvenient for some applications to need to restrict to connections with \(p\)-curvature zero; to eliminate this problem, we will also define a \emph{sheared} variant of \(X^{\dR}(\log D),\) which we define by \(X^{\hat{\dR}}(\log D).\) The theory of sheared de Rham stacks will be reviewed in \autoref{derhamsection}, but the key difference is that
		\[\Qcoh(X^{\hat{\dR}}(\log D)) \heq \Conn(X, D),\]
		where now we have the category of \emph{all} flat connections \(\nabla : \mathcal{E} \to \mathcal{E} \otimes \Omega^1_X(\log D),\) with no restriction on \(p\)-curvature.
\end{remark}
\begin{remark}
		The idea of making logarithmic constructions by working over \(\A^1/\G_m\) is due to Olsson; see Olsson's \cite{olsson3} and \cite{olsson} for some examples especially relevant to our applications.
\end{remark}

Many connections which arise in practice have logarithmic poles; thus, for any theorem about the category \(\Conn(X)\) (meaning flat connections on \(X\) with \emph{no} poles), it is often essential to have a corresponding theorem about \(\Conn(X, D).\) Our logarithmic de Rham stack will allow one to easily transmute the proof of a theorem about \(\Conn(X)\) to a proof of a theorem about \(\Conn(X, D).\) The point is that, since the logarithmic de Rham stack is defined in almost exactly the same way as the usual de Rham stack \((X/k)^{\dR},\) most arguments involving the de Rham stack will not care about if the base is \(\Spec(k)\) or if it is \((\A^1/\G_m)^n.\) 

To persuade the reader of this point, we show how two interesting theorems -- Cartier descent, and Groechenig's \cite{groechenig} non-abelian Hodge theorem for curves -- can be proven in the logarithmic setting in nearly exactly the same way as they are proven in the non-logarithmic setting. Besides just suggesting proofs, in both cases the logarithmic de Rham stack also \emph{suggests what the statement} should be. 

We now overview these two applications. 

\subsection{Logarithmic Cartier descent}

\begin{defn}
		We write \(X'\) for the Frobenius twist of \(X\) relative to \(k.\)
\end{defn}

In characteristic 0, one can reconstruct a linear ODE from its local system of solutions. Cartier found an analogue to this in positive characteristic; the starting point is that, instead of a local system, solutions to a linear ODE on in characteristic \(p\) naturally live over the Frobenius twist (this is more or less the observation that \(x^p\) has derivative 0 in characteristic \(p\)), allowing us to define a functor
\[\Sol : \Conn(X) \to \Qcoh(X'),\]
sending a vector bundle with flat connection \((\mathcal{V}, \nabla)\) on \(X\) to its quasicoherent sheaf of solutions on the Frobenius twist \(X'.\) 

\begin{remark} \label{rem75}
		Geometrically, this fact that solutions live on \(X'\) is related to the fact that \((X/k)^{\dR}\) admits a natural morphsim to \(X'.\) In \autoref{pcurvsec}, we will similarly construct a certain natural morphism
		\[X^{\dR}(\log D) \to (X/\Theta^n)^{(1)},\]
		where by \((X/\Theta^n)^{(1)}\) we mean the Frobenius twist of \(X\) relative to \(\Theta^n.\) 
\end{remark}
\begin{remark}
		This relative Frobenius twist \((X/\Theta^n)^{(1)}\) appearing in \autoref{rem75} is a certain root stack. More precisely, \((X/\Theta^n)^{(1)}\) is the \((p, p, ..., p)\)-multiroot stack of \(X'\) along each component of \(D'.\) In \autoref{logdrsec}, we will review multiroot stacks.
\end{remark}

The analogue of the Riemann-Hilbert correspondence, reconstructing an ODE from its local system of solutions, then becomes the following.
\begin{theorem}[Cartier descent, Theorem 5.1 of \cite{katz}] \label{thm71} 
		The functor
		\[\Sol : \Conn(X) \to \Qcoh(X')\]
		restricts to an equivalence of categories
		\[\Conn(X)^{\psi_p = 0} \inclusion \Conn(X) \xto{\Sol} \Qcoh(X')\]
		gives an equivalence of categories between \(\Qcoh(X')\) and the category of flat connections on \(X\) with \(p\)-curvature 0.
\end{theorem}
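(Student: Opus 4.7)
The plan is to exploit the equivalence \(\Qcoh((X/k)^{\dR}) \simeq \Conn(X)^{\psi_p\text{ nilp}}\) from the introduction, together with the natural morphism \(\pi : (X/k)^{\dR} \to X'\) advertised in \autoref{rem75}. The guiding principle is that \(\pi\) should exhibit \((X/k)^{\dR}\) as a relative gerbe (or quotient by an infinitesimal group scheme) over \(X'\), and that descent along \(\pi\) should correspond exactly to the vanishing of the \(p\)-curvature.

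First I would unpack the structure of \(\pi\) and show that the pullback
\[\pi^* : \Qcoh(X') \longrightarrow \Qcoh((X/k)^{\dR}) \simeq \Conn(X)^{\psi_p\text{ nilp}}\]
is fully faithful, and that its essential image is precisely the subcategory \(\Conn(X)^{\psi_p = 0}\). Unwinding the equivalences, \(\pi^*\mathcal{F}\) should correspond to the pair \((F^*\mathcal{F}, \nabla_{\can})\), where \(F : X \to X'\) is the relative Frobenius and \(\nabla_{\can}\) is the connection whose horizontal sections are \(F^{-1}\mathcal{F}\); the fact that this connection lives in the full subcategory \(\Conn(X)^{\psi_p = 0}\) is the classical observation that \(d(x^p) = 0\) in characteristic \(p\).

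Next I would identify the functor \(\Sol\) with the right adjoint to \(F^* = \pi^*\) (so that \(\Sol(V, \nabla)\) computes the horizontal sections along the fibers of \(F\)). Granting fully faithfulness above, the theorem then reduces to two standard adjunction statements: the unit map \(\mathcal{F} \to \Sol(F^*\mathcal{F}, \nabla_{\can})\) is an isomorphism, and the counit \(F^*\Sol(V, \nabla) \to (V, \nabla)\) is an isomorphism whenever \(\psi_p(\nabla) = 0\). Both are local on \(X\) and \(X'\), and both follow from the classical local form of Cartier's theorem once they are reformulated.

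The main obstacle will be verifying essential surjectivity, i.e. that every connection with vanishing \(p\)-curvature genuinely descends along \(\pi\). Conceptually the cleanest route is to show that \(\pi\) is a torsor under an infinitesimal commutative group scheme (morally, \(B\) of the Frobenius kernel of the relative tangent bundle), so that
\[\Qcoh(X') \;\simeq\; \Qcoh((X/k)^{\dR})^{\text{trivial action}},\]
and then to identify this action with multiplication by the \(p\)-curvature. Once this is set up, the rest of the argument is formal descent. The payoff of this slightly heavy framing is that exactly the same diagrammatic proof, with \(k\) replaced by \(\Theta^n\) and \(X'\) replaced by the multiroot stack \((X/\Theta^n)^{(1)}\), will later yield the logarithmic version of Cartier descent with no new ideas required.
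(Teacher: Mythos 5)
You should first note that \autoref{thm71} is not something the paper proves; it is cited verbatim from Katz (Theorem 5.1 of \cite{katz}) and then used as a black box. The paper's own contribution is the relative version \autoref{cartierdescent}, whose proof proceeds exactly along the lines you sketch (fully faithfulness of $\nu^*$ via the $(\nu^*, \Sol)$ adjunction, equivalence of conditions 1 and 2 by flat descent along $\pi$, condition 1 $\Rightarrow$ 3 by a Cartesian-diagram computation) --- but the hardest implication, condition 3 $\Rightarrow$ 2 (your ``essential surjectivity'' step, i.e.\ the counit $\nu^*\Sol(\mathcal{E},\nabla) \to (\mathcal{E},\nabla)$ being an isomorphism when $\psi_p = 0$), is resolved in the paper by reducing via flat descent to $X = \A^d_k$ and then \emph{quoting classical Cartier descent}. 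Your proposal inherits this same dependency: you write that the counit-iso step ``follows from the classical local form of Cartier's theorem once reformulated.'' Since the statement you are trying to prove \emph{is} classical Cartier descent, this is circular as written. The de Rham stack framework does not, in the paper or in your sketch, eliminate the need for the local input that Katz supplies.

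Your final paragraph gestures at a potentially non-circular route: use \autoref{gerbe} to identify $\nu : (X/k)^{\dR} \to X'$ as a $B\mathbb{V}(\mathcal{T}_{X'})^{\#}$-gerbe, and then argue via Cartier duality that the weight-zero piece of $\Qcoh$ of the gerbe (the objects with trivial action of the band, equivalently $\psi_p = 0$ per the paper's \autoref{pcurv}) is precisely $\Qcoh(X')$. This is a plausible independent proof, and it would be genuinely different from both Katz's direct local computation and the paper's treatment, but you haven't carried it out: you would still need to verify that $R\Gamma(B\mathbb{V}(\mathcal{T}_{X'})^{\#}, \OO) = \OO$ (so that pullback from $X'$ is fully faithful) and that the Cartier-dual decomposition of quasicoherent sheaves on the gerbe recovers the $p$-curvature decomposition on the nose, which is not quite formal. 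As your proposal stands, you should either supply the explicit local verification on $\A^d$ with the canonical connection (essentially Katz's Theorem 5.1), or flesh out the gerbe-descent argument into an actual proof; citing ``the classical local form of Cartier's theorem'' is not an available move here.
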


As suggested by \autoref{rem75}, using the logarithmic de Rham stack suggests that, in the logarithmic setting, solutions to an ODE naturally live on this stacky Frobenius twist \((X/\Theta^n)^{(1)}.\) We now give an example illustrating this in a concrete situation.
\begin{exmp} \label{exmp91} 
		Let \(X = \A^1 = \Spec k[x].\) For each \(a \in \{0, 1, ..., p-1\},\) we can define a connection 
		\[\nabla_a : \OO_X \to \Omega_X^1, \nabla_a = d - \frac{a}{x} dx.\] 

		In this case, \(X' = \Spec k[x^p],\) and the solutions to \(\nabla_a\) are the \(k[x^p]\)-module \(\Sol_a := x^a \cdot k[x^p].\) In particular, as \(k[x^p]\)-modules, the \(\Sol_a\) are all isomorphic. 

		This prevents a naive version of Cartier descent (\autoref{thm71}) from holding in the logarithmic setting, as the connections \(\nabla_a\) are all distinct -- and all have \(p\)-curvature zero -- yet have isomorphic sheaves of solutions. 

		However, if we view the solutions as sheaves on the Frobenius twist \((X/\Theta)^{(1)},\) then the situation improves. A quasicoherent sheaf on \((X/\Theta)^{(1)}\) consists of a \(k[x^p]\)-module, together with a \(p\)-step filtration obeying certain properties (see ). In this situation, the \(p\)-step filtration on \(\Sol_a\) is given by
		\[F^p = x^{a+p} \cdot k[x^p]\subseteq \cdots F^{a+1} = x^{a+p} \cdot k[x^p] \subseteq F^a = x^a \cdot k[x^p] \subseteq \cdots \subseteq F^0 = x^a \cdot k[x^p].\]

		In other words, \(F^i\) consists of all solutions to \(\nabla_a\) which are divisible by \(x^i.\) Note that these filtrations allow us to distinguish the modules \(\Sol_a\); and so, while a naive version of Cartier descent fails in the logarithmic situation, we might hope that if we replace \(X'\) by \((X/\Theta^n)^{(1)}\) in \autoref{thm71}, we can salvage Cartier descent.
\end{exmp}

The phenomenon of \autoref{exmp91} generalizes, and we will prove in \autoref{pcurvsec} that one always has the following form of logarithmic Cartier descent.
\begin{theorem}[Logarithmic Cartier descent] \label{thm105} 
		There is a morphism \(\nu : X^{\dR}(\log D) \to (X/\Theta^n)^{(1)}\) (which we will explicitly construct in \autoref{pcurvsec}) such that
		\[\nu^* : \Qcoh((X/\Theta^n)^{(1)}) \to \Qcoh(X^{\dR}(\log D))\]
		is fully faithful, and has essential image those logarithmic connections of \(p\)-curvature zero. 
\end{theorem}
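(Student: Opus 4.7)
The plan is to mimic the classical (non-logarithmic) proof of Cartier descent via the de Rham stack, transposed to the relative setting over $\Theta^n$.

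First, I would construct $\nu$. The relative Frobenius $F_{X/\Theta^n} : X \to (X/\Theta^n)^{(1)}$ is a morphism of $\Theta^n$-stacks whose relative differential vanishes identically. By the universal property of the relative de Rham stack developed in \autoref{derhamsection} (analogous to the classical fact that $X \to X^{\dR}$ is initial among maps out of $X$ killing the infinitesimal groupoid), this map should factor uniquely as
\[X \to X^{\dR}(\log D) \xrightarrow{\nu} (X/\Theta^n)^{(1)}.\]

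Second, I would compute the effect of $\nu^*$ under the equivalence $\Qcoh(X^{\dR}(\log D)) \simeq \Conn^{\psi_p\text{-nilp}}(X, D)$ from \autoref{logdrsec}. For $\mathcal{F} \in \Qcoh((X/\Theta^n)^{(1)})$, the sheaf $\nu^*\mathcal{F}$ should correspond to the logarithmic connection $(F_{X/\Theta^n}^*\mathcal{F}, \nabla_{\can})$, that is, the canonical connection on a relative Frobenius pullback. As in the classical case, the $p$-curvature of such a connection vanishes, so $\nu^*$ lands in the full subcategory of $p$-curvature zero connections.

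Third, I would verify that $\nu^*$ is fully faithful with essential image exactly this $p$-curvature zero subcategory. Given $(\mathcal{E}, \nabla)$ logarithmic with $\psi_p = 0$, the candidate inverse functor should send $(\mathcal{E}, \nabla)$ to its sheaf of flat sections $\mathcal{E}^\nabla$, which one argues descends naturally to a quasicoherent sheaf on $(X/\Theta^n)^{(1)}$, together with a canonical horizontal isomorphism $F_{X/\Theta^n}^* \mathcal{E}^\nabla \simeq \mathcal{E}$. Full faithfulness and essential surjectivity then reduce to checking that the unit and counit of the resulting adjunction are isomorphisms on $p$-curvature zero connections, which is a local matter.

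The main obstacle is this last descent step, since $(X/\Theta^n)^{(1)}$ is a multiroot stack of $X'$ along $D'$ rather than a scheme, so classical Cartier descent cannot be cited as a black box. A workable approach is to reduce to the \'etale-local situation around a component of $D$, where the picture is essentially that of \autoref{exmp91}: the relative Frobenius along a coordinate $x$ is the inclusion $k[x^p] \hookrightarrow k[x]$ enhanced with its natural $p$-step filtration coming from the divisor, and one can check descent by a direct calculation on this filtered module. The general case should then follow by combining these one-variable computations multiplicatively across the coordinate charts of a smooth neighborhood of a stratum of $D$.
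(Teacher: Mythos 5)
Your outline lines up with the paper's in its first three steps, but the key final step is handled in a genuinely different and more laborious way than the paper's, and you should be aware that the paper deliberately avoids the route you propose.

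On constructing $\nu$: the paper does not appeal to a universal property of $X \to X^{\dR}(\log D)$ among maps killing the infinitesimal groupoid (no such property is established there). Instead it constructs $\nu_{X/S}$ by transmutation of the ring-stack map $\G_a^{\dR} \to \pi_0(\G_a^{\dR}) = F_*\G_a$, then checks $\nu_{X/S}\circ\pi_{X/S} = F_{X/S}$ directly; fully faithfulness of $\nu^*$ is then proved by exhibiting an explicit right adjoint $\Sol_{X/S}$ (your ``sheaf of flat sections'') and showing the unit of the adjunction is an equivalence. This part of your proposal matches in spirit.

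The genuine divergence is your treatment of the essential-image step. You propose reducing to an \'etale-local coordinate chart and carrying out a direct computation on the filtered ($p$-step parabolic) module attached to the root stack, as in \autoref{exmp91}. This should in principle work, but the paper explicitly advertises that its proof ``never needs to explicitly track parabolic structures.'' The trick you are missing: by \autoref{prop385} (change of base for relative de Rham stacks), one may perform flat descent along the flat cover $\A^n \to \Theta^n$ of the \emph{base}. After this base change, $S$ becomes an affine scheme and, passing to a further Zariski cover of $X$, one may take $X = \A^d_k$; then $X^{(1)}$ is a genuine scheme, the multiroot-stack structure has disappeared entirely, and classical Cartier descent (Katz, Theorem 5.1 of \cite{katz}) can be cited as a black box to show that $\psi_p = 0$ implies the counit $F^*\Sol(\mathcal{E},\nabla)\to\mathcal{E}$ is an isomorphism. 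This yields the essential image statement with no filtered-module computation at all. Your proposed local calculation at strata of $D$ would require verifying by hand how the $p$-step filtrations interact with horizontal sections, which is considerably more work and is precisely the bookkeeping the stacky formalism was designed to eliminate.

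One further remark: the paper's proof also establishes $(1)\Rightarrow(3)$ (that $\nu^*\mathcal{F}$ has $p$-curvature zero) not by a direct computation with $\nabla_{\mathrm{can}}$ but by a diagram chase using the Cartesian square defining $p$-curvature and the identity $F'^{*}(\nu^*\mathcal{F})=\nu'^{*}(F^*\mathcal{F})$; your appeal to ``as in the classical case'' is fine as a summary but you would still need to set this up carefully, since the target of $\psi_p$ is a sheaf on a \emph{stack} and the pullback from $X^{(1)}$ is not simply a Frobenius pullback over a scheme.
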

\begin{remark}
		Implicit in the statement of \autoref{thm105} is that there is a notion of \(p\)-curvature of logarithmic connections; we will define a notion of \(p\)-curvature for any relative de Rham stack in \autoref{pcurvsec}.
\end{remark}
\begin{remark}
		In practice, one can formally manipulate quasicoherent sheaves on \((X/\Theta^n)^{(1)}\) without needing to think too hard about the implicit filtrations (called \emph{parabolic structures}). We hope this is apparent in our proof of logarithmic Cartier descent, which is completely formal, and never needs to explicitly track parabolic structures. Our proof of logarithmic non-abelian Hodge theory also never needs to explicitly use parabolic structures, except for when we deduce de Cataldo-Zhang's \cite{dCZ} non-abelian Hodge theorem from ours. 
\end{remark}
\begin{remark}
		The stack \((X/\Theta^n)^{(1)}\) appears also in the work of Esnault-Groechenig \cite{eg} (see their Lemma 3.16). Esnault-Groechenig's \cite{eg} was a large inspiration for this project, as the author was trying to understand the constructions of that paper in terms of de Rham stacks.
\end{remark}
\begin{remark}
		While the stack \((X/\Theta^n)^{(1)}\) does not appear explicitly in Hablicsek's \cite{azumayaparabolic}, Hablicsek does essentially work with quasicoherent sheaves on this stack (in the guise of parabolic bundles -- see Borne \cite{borne2007} for the relationship). Encountering Hablicsek's work gave the author confidence that de Rham stacks relative to \(\Theta^n\) should behave analogously to de Rham stacks over \(\Spec(k).\)
\end{remark}
\begin{remark}
		Lorenzon \cite{lorenzon} has an alternative version of logarithmic Cartier descent, phrased in the language of \emph{indexed} Azumaya algebras. At the present moment, the author is unaware of a direct relationship between Lorenzon's indexed Azumaya algebras and the parabolic structures which show up in our logarithmic Cartier descent. 
\end{remark}

\subsection{Logarithmic non-abelian Hodge theory}

In characteristic 0, Simpson and others established the \emph{non-abelian Hodge theorem}, which relates a moduli space of \emph{vector bundles with connection} on a Riemann surface to the moduli space of \emph{Higgs fields} on that same surface. Inspired by this characteristic 0 story, Groechenig \cite{groechenig} found a wonderful characteristic \(p\) variant. 

In the characteristic 0 story, the curves are defined over \(\C,\) yet the comparison of moduli spaces is only \emph{real analytic}. Similarly, Groechenig's \cite{groechenig} non-abelian Hodge theorem involves a Frobenius twist. Using the Frobenius twist \((X/\Theta^n)^{(1)}\) suggested by the logarithmic de Rham stack also allows us to prove a logarithmic non-abelian Hodge theorem for curves using almost verbatim the same argument that Groechenig \cite{groechenig} used in the non-logarithmic setting. More precisely, when \(\dim X = 1,\) we have the following.

Let \(\mathcal{M}_{\Dol} := \mathcal{M}_{\Dol}((X/\Theta^n)^{(1)}, r)\) denote the moduli stack of rank \(r\) Higgs fields on \((X/\Theta^n)^{(1)}.\) This moduli stack admits a natural Hitchin morphism
\[h : \mathcal{M}_{\Dol}((X/\Theta^n)^{(1)}, r) \to \mathcal{A}^{(1)} := \bigoplus_{i=1}^r H^0((X/\Theta^n)^{(1)}, \omega_{(X/\Theta^n)^{(1)}}^{\otimes i}).\] 

Similarly, let \(\mathcal{M}_{\dR} := \mathcal{M}_{\dR}(X, D, r)\) denote the moduli stack of pairs \((\mathcal{E}, \nabla),\) where \(\mathcal{E}\) is a rank \(r\) vector bundle on \(X\) and \(\nabla : \mathcal{E} \to \mathcal{E} \otimes \Omega_X^1(\log D)\) is a connection. In characteristic \(p,\) there is a natural \emph{linear} invariant of a connection, called the \emph{\(p\)-curvature}. The \(p\)-curvature will be a certain \emph{linear} map \(\psi_p : \mathcal{E} \to \mathcal{E} \otimes F^*_X\Omega_X^1(\log D).\) Though naturally the characteristic polynomial of such a map has coefficients in \(\mathcal{A} := \bigoplus_{i=1}^r H^0(X, F^*_X(\Omega_X^1(\log D)^{\otimes i})),\) Laszlo-Pauly \cite{laszlopauly} (though see also Zhang \cite{siqingfix}) proved the wonderful result that the \(p\)-curvature of a connection always has a charateristic polynomial whose coefficients lie in a subspace of \(\mathcal{A}\) isomorphic to \(\mathcal{A}^{(1)}.\) In this way, Laszlo-Pauly \cite{laszlopauly} allow us to define a de Rham Hitchin morphism 
\[h_{\dR} : \mathcal{M}_{\dR}(X, D, r) \to \mathcal{A}^{(1)}.\]

\begin{remark}
		We give a definition of \(p\)-curvature in terms of de Rham stacks, which allows us to give a new construction of \(h_{\dR},\) by viewing \(\psi_p\) as a morphism of quasicoherent sheaves on \(X^{\hat{\dR}}(\log D).\) See \autoref{charpolypsip} for more information.
\end{remark}

\begin{theorem}[Our \autoref{thm1447}] \label{thm1}
		There is a group scheme \(\mathcal{P}\) over \(\mathcal{A}^{(1)}\) acting naturally on \(\mathcal{M}_{\Dol},\) and a \(\mathcal{P}\)-torsor \(\mathcal{S}\) such that
		\[\mathcal{S} \times^{\mathcal{P}} \mathcal{M}_{\Dol} \heq \mathcal{M}_{\dR}\]
		as \(\mathcal{A}^{(1)}\)-stacks. 
\end{theorem}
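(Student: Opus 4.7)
The plan is to adapt Groechenig's \cite{groechenig} proof of non-abelian Hodge theory for curves essentially verbatim, with $X^{\hat{\dR}}(\log D)$ and $(X/\Theta^n)^{(1)}$ replacing $X^{\dR}$ and $X^{(1)}.$ The guiding philosophy, stressed throughout the introduction, is that the de Rham stack construction is insensitive to whether the base is $\Spec(k)$ or $\Theta^n,$ so the shape of Groechenig's argument should translate directly. The sheared $p$-curvature morphism $\nu : X^{\hat{\dR}}(\log D) \to (X/\Theta^n)^{(1)}$ (the sheared analogue of \autoref{thm105}) provides the geometric input that makes everything run, and the Laszlo-Pauly result quoted in the introduction ensures that the characteristic polynomial of $p$-curvature genuinely takes values in $\mathcal{A}^{(1)},$ so that $h_{\dR}$ is well-defined.

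First I would rephrase both moduli problems sheaf-theoretically. Using $\Qcoh(X^{\hat{\dR}}(\log D)) \heq \Conn(X, D),$ objects of $\mathcal{M}_{\dR}$ become rank $r$ vector bundles on $X^{\hat{\dR}}(\log D).$ Pushing such a bundle $\mathcal{E}$ forward along $\nu,$ the $p$-curvature makes $\nu_* \mathcal{E}$ into a module over a symmetric algebra on $(X/\Theta^n)^{(1)},$ hence (after passage to relative spectra over $\mathcal{A}^{(1)}$) a coherent sheaf supported on a spectral curve $Y_a \subseteq \mathrm{Tot}(\omega_{(X/\Theta^n)^{(1)}})$ cut out by the Hitchin datum $a.$ The same construction on the Dolbeault side identifies $h^{-1}(a)$ with a torsor under the relative Picard stack of the spectral curves, and this relative Picard stack over $\mathcal{A}^{(1)}$ is the definition of $\mathcal{P}.$ The action of $\mathcal{P}$ on $\mathcal{M}_{\Dol}$ is then defined fiberwise by tensoring a Hitchin fiber with line bundles on its spectral curve.

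The key step is to recognize that $\nu$ endows $\nu_* \OO_{X^{\hat{\dR}}(\log D)}$ with the structure of an Azumaya algebra over the total space $\mathrm{Tot}(\omega_{(X/\Theta^n)^{(1)}}),$ and more generally that the pushforward along $\nu$ makes the fiber of $h_{\dR}$ over $a \in \mathcal{A}^{(1)}$ into the moduli of modules over an Azumaya algebra $\mathcal{B}_a$ on $Y_a.$ This is the logarithmic, stacky generalization of the Ogus-Vologodsky Azumaya algebra invoked by Groechenig. Granted this, the fiber $h_{\dR}^{-1}(a)$ is a twisted form of $h^{-1}(a),$ twisted by the gerbe of splittings of $\mathcal{B}_a.$ Defining $\mathcal{S}$ over $\mathcal{A}^{(1)}$ to have fiber over $a$ the stack of splittings of $\mathcal{B}_a,$ one checks that $\mathcal{S}$ is a $\mathcal{P}$-torsor (since any two splittings differ by a line bundle on $Y_a$), and the equivalence $\mathcal{S} \times^{\mathcal{P}} \mathcal{M}_{\Dol} \heq \mathcal{M}_{\dR}$ is then formal: tensoring a Dolbeault object by a splitting of $\mathcal{B}_a$ turns it into a $\mathcal{B}_a$-module, i.e.\ into a de Rham object.

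The main obstacle will be establishing the Azumaya algebra property of $\nu_* \OO_{X^{\hat{\dR}}(\log D)}$ over the total cotangent space of $(X/\Theta^n)^{(1)},$ in a form strong enough to give a genuine $\mathcal{P}$-torsor of splittings. In the non-logarithmic case this is Ogus-Vologodsky, and Groechenig uses it as a black box; in the logarithmic setting one must verify the analogue at the level of the relevant root stacks, which is where the parabolic structure on $(X/\Theta^n)^{(1)}$ intervenes. A careful local computation --- ideally reducing to the toy situation of \autoref{exmp91} --- should suffice, since everything in sight is étale local on $X.$ Once the Azumaya property is in hand, the remaining constructions of $\mathcal{P},$ its action, and the torsor $\mathcal{S}$ are formal, and proceed in parallel with Groechenig's argument.
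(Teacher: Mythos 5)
Your overall outline — define $\mathcal{P}$ as the relative Picard stack of the stacky spectral curves, define $\mathcal{S}$ as the stack of splittings of the Azumaya algebra of logarithmic differential operators on those spectral curves, and recover $\mathcal{M}_{\dR}$ from $\mathcal{M}_{\Dol}$ by twisting along $\mathcal{S}$ — is exactly the strategy the paper pursues, adapted from Groechenig. The Azumaya property you flag as the ``main obstacle'' is indeed needed (the paper cites Hablicsek for it, and your minor misattribution to Ogus--Vologodsky rather than Bezrukavnikov--Hablicsek is harmless); and the passage from the torsor to the equivalence via the formal twisting/quotient argument is correct in spirit, matching the paper's \autoref{lem1683} and the ensuing quotient step.

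However, there is a genuine gap in the assertion that $\mathcal{S}$ is a $\mathcal{P}$-torsor ``since any two splittings differ by a line bundle on $Y_a$.'' That observation only gives the \emph{pseudo}-torsor property (simple transitivity of the action). A torsor must also be locally nonempty, and you never address this. Local nonemptiness is nontrivial here: the spectral curves are singular \emph{root stacks}, so the classical Tsen theorem does not apply as a black box, and one must prove that the Brauer group of these stacky spectral curves vanishes. The paper does exactly this in its ``Tsen's theorem for stacky spectral curves'' (\autoref{tsen}), which relies on Achenjang's work to control $H^2(-, \G_m)$ of root stacks, and then bootstraps from pointwise nonemptiness to étale-local nonemptiness via a smoothness argument (\autoref{smooth1243}) resting on \autoref{lem2056}. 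Without these two ingredients, $\mathcal{S}$ could a priori be empty over part of $\mathcal{A}^{(1)}$, in which case $\mathcal{S} \times^{\mathcal{P}} \mathcal{M}_{\Dol}$ would fail to match $\mathcal{M}_{\dR}$ there, and the final quotient step (which needs $\mathcal{S}/\mathcal{P} \heq \mathcal{A}^{(1)}$) would collapse. So while the skeleton of your argument is right, the proof as written is missing the part of the paper that requires genuinely new input in the logarithmic/stacky setting.
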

\begin{remark}
		In particular, the fibers of \(h : \mathcal{M}_{\Dol} \to \mathcal{A}^{(1)}\) and \(h_{\dR} : \mathcal{M}_{\dR} \to \mathcal{A}^{(1)}\) over any point of \(\mathcal{A}^{(1)}\) are (non-canonically!) isomorphic. 
\end{remark}

de Cataldo-Zhang \cite{dCZ} have a different logarithmic non-abelian Hodge theorem; we are able to deduce theirs from ours. The key differences between our theorems is that
\begin{enumerate}
		\item de Cataldo-Zhang do not use this stacky Frobenius twist \((X/\Theta^n)^{(1)}\) in defining their Dolbeault side, and instead only use \(X',\)
		\item de Cataldo-Zhang have a slightly different Hitchin base, remembering not just the spectral data of \(p\)-curvature, but also the spectral data of the \emph{residue} of a connection.
\end{enumerate}

One may understand our difference in approaches as follows. Naively, one might hope to compare \(\mathcal{M}_{\dR}(X, D, r)\) and \(\mathcal{M}_{\Dol}(X', D', r).\) Unfortunately, the fiber of \(\mathcal{M}_{\dR}(X, D, r)\) over a point \(a \in \mathcal{A}^{(1)}\) is in general much larger than the fiber of \(\mathcal{M}_{\Dol}(X', D', r)\) over \(a.\) To fix this, our \autoref{thm1} makes the Dolbeault side bigger by replacing \(X'\) with the stacky Frobenius twist \((X/\Theta^n)^{(1)}.\) However, one could instead fix this by making the fibers on the de Rham side smaller; de Cataldo-Zhang \cite{dCZ} accomplish this by enlarging the Hitchin base.

To deduce de Cataldo-Zhang's non-abelian Hodge theorem from ours, we track carefully how the residue of a connection is related to parabolic structures (as a parabolic structure is the difference between a sheaf on \(X'\) and a sheaf on \((X/\Theta^n)^{(1)}\)). 

\subsubsection{Relation to work of Li--Sun} 

Li--Sun \cite{lisun} explore a version of logarithmic non-abelian Hodge theory for curves in positive characteristic using parahoric structures, more closely analogous to the characteristic 0 story. The author is presently unaware of any direct relationship between our \autoref{thm1} and their results. While quasicoherent sheaves on root stacks (such as \((X/\Theta^n)^{(1)}\)) are often called sheaves with parabolic structure, it seems the parahoric structures used are of a very different nature than the parabolic structures considered here. Indeed, their parahoric structures depend heavily on the group used, whereas our parabolic structures are always just \(p\)-step filtrations.

\subsubsection{Relation to work of Ogus--Vologodsky and Schepler}

Ogus--Vologodsky \cite{ov} prove a different sort of logarthmic non-abelian Hodge theorem; Schepler \cite{schepler} found a logarithmic variant of their work. The key differences between our work and Schepler is that 
\begin{enumerate}
		\item we give a logarithmic version of the non-abelian Hodge theorem of Groechenig \cite{groechenig}, which gives a correspondence between \emph{all} Higgs fields and \emph{all} connections, but only in dimension 1;
		\item Schepler \cite{schepler} gives a logarithmic version of Ogus--Vologodsky's \cite{ov} non-abelian Hodge theorem, which gives a correspondence in \emph{all} dimensions, but only for Higgs fields and connections obeying a certain nilpotence condition.
\end{enumerate}

Gleb Terentiuk is currently writing an article on Ogus--Vologodsky's work, but re-done in the language of de Rham stacks; it seems very likely to the author of this work that Terentiuk's results will also give a strengthening Schepler's \cite{schepler} result. 

\subsection{Weight filtrations}

The logarithmic de Rham stack \(X^{\hat{\dR}}(\log D)\) naturally lives over \((\A^1/\G_m)^n,\) since it is defined as the relative de Rham stack of a morphism over \((\A^1/\G_m)^n.\) Thus, the cohomology of any quasicoherent sheaf on \(X^{\hat{\dR}}(\log D)\) is naturally equipped with a \(\Z^n\)-indexed filtration. In a forthcoming work, the author will focus more on these aspects of the logarithmic de Rham stack (which are interesting even in characteristic 0). 

For now, we pose the following questions. Recently, Annala-Pstragowski \cite{weightfiltrations} constructed filtrations on logarithmic de Rham cohomology in positive characteristic; they then proved that these filtrations actually depend only on the open part \(U := X\setminus D,\) and not on the specific choice of compactification. 
\begin{question}
		Can one geometrize the weight filtration of Annala-Pstragowski?
\end{question}
\begin{remark}
		The logarithmic de Rham stack \(X^{\dR}(\log D)\) does \emph{not} depend on just \(X\setminus D,\) so in searching for a geometrization of this weight filtration of Annala-Pstragowski, it would be nice to find some modification of the logarithmic de Rham stack which \emph{does} only depend on \(X\setminus D.\)
\end{remark}

In this vein, it is also interesting to ask the following question.
\begin{question} \label{quest149} 
		Let \(U\) be a smooth variety over \(k.\) Is there some stack \(U^{\dR, \reg}\), depending only on \(U,\) such that \(\Qcoh(U^{\dR, \reg}) \heq \Conn(U)^{\reg},\) the category of connections on \(U\) with regular singularities at infinity?
\end{question}
\begin{remark}
		As a smooth curve has a unique compactification, logarithmic de Rham stacks provide a positive answer to \autoref{quest149} for \(\dim U = 1.\)
\end{remark}

\subsection{Organization of this paper}

In \autoref{derhamsection}, we will define \(X^{\dR}(\log D)\) and \(X^{\hat{\dR}}(\log D)\) more formally, by reviewing the theory of (sheared) de Rham stacks and then defining relative de Rham stacks. Afterwards, we will study \(p\)-curvature and Cartier descent of relative de Rham stacks more closely in \autoref{pcurvsec}. Then, we turn towards non-abelian Hodge theory. \autoref{hitchinsec} sets up the theory of Hitchin morphisms in some genearlity, to prepare for our discussion of non-abelian Hodge theory. In \autoref{logdrsec}, we explain what the results of \autoref{pcurvsec} and \autoref{derhamsection} specialize to in the case of logarithmic de Rham stacks. After this prepatory work, we finally state and prove our logarithmic non-abelian Hodge theorem in \autoref{lognonsec}. 

In the appendix, we review some work of Niels Borne on quasicoherent sheavse on root stacks, which will be used in studying \(\Qcoh((X/\Theta^n)^{(1)}).\) 
\subsection{Acknowledgements}

The author thanks Bhargav Bhatt for many very useful discussions on this work, and for suggesting the author use these techniques to re-prove de Cataldo--Zhang's \cite{dCZ} logarithmic non-abelian Hodge theorem. 

The author also thanks Siqing Zhang for giving very detailed feedback on a preliminary draft of this paper, and for catching some typos in the first draft.

In addition, the author is grateful to Toni Annala for explaining aspects of \cite{weightfiltrations}; to Niven Achenjang for explaining aspects of his work \cite{achenjang}; to Zachary Berens, Kenta Suzuki and Gleb Terentiuk for discussions about relative de Rham stacks over \(B\G_m\); to Niels Borne for communications about \cite{borne2007}; to Sanath Devalapurkar for communications about \(p\)-curvature; to Longke Tang and Qixiang Wang for simplifying the construction of \autoref{pcurvmorphism}; to Siqing Zhang for giving detailed feedback on a draft of this article; and to Mingjia Zhang for discussions on sheared de Rham stacks.

\section{The relative de Rham stack and its sheared variant} \label{derhamsection} 

In this \emph{purely expository} section, we recall a few results on de Rham stacks and sheared de Rham stacks. This is done mostly because the results we need about sheared de Rham stacks are not yet in the literature, but nothing in this section should be viewed as original; all arguments we make in this section essentially appear in Bhatt's notes \cite{bhatt} (which themselves are based on the related works \cite{drinfeld} of Drinfeld and \cite{bhattlurie} of Bhatt--Lurie). We hope this section can be useful to a reader less familiar with the theory of de Rham stacks.

\begin{warn}
		To reiterate, no results in this section are original; all come from Bhatt--Mathew--Vologodsky--Zhang's forthcoming work \cite{bmvz}, Drinfeld's \cite{shearedwitt} and \cite{drinfeld}, and Bhatt--Lurie's \cite{bhattlurie}. We recall these results in part because \cite{bmvz} has not yet appeared publicly and in part to make the article a little more self-contained.
\end{warn}	

\subsection{The ring stacks \(\G_a^{\dR}\) and \(\G_a^{\hat{\dR}}\)}

\begin{remark}[Motivational remark] \label{rem93} 
		In characteristic 0, Simpson \cite{simpson} defines the de Rham stack of a scheme \(X \in \Sch_{\C}\) by 
		\begin{equation} \label{eq95} X^{\operatorname{Simpson-dR}}(A) := X(A_{\red}).\end{equation}
		Simpson makes this definition for the following reason: \(X^{\operatorname{Simpson-dR}}\) is a stack whose coherent cohomology is the algebraic de Rham cohomology of \(X,\) and whose quasicoherent sheaves are precisely quasicoherent \(\mathcal{D}_{X/\C}\)-modules. This allows one to reason about de Rham cohomology and \(\mathcal{D}\)-modules using the machinery of algebraic geometry. 
		In characteristic \(p,\) we will use a very similar formula, but instead of replacing a ring \(A\) by its quotienet \(A_{\red} = A/\nil(A)\), we will quotient by elements admitting a system of divided powers; this is related to the difference between the infinitesimal and crystalline sites. One can think of Simpson's de Rham stack as geometrizing the infinitesimal site, the characteristic \(p\) de Rham stack as geometrizing the crystalline site, and the sheared de Rham stack as geometrizing the \emph{nilpotent} crystalline site. 

		Unfortunately, while being nilpotent is a \emph{property} of an element, a divided power structure is \emph{data}. This means that, even to handle de Rham stacks of schemes, one needs to use (1-truncated) animated rings (also called \emph{ring groupoids}).
\end{remark}

Analogously to Simpson's formula \autoref{eq95}, the de Rham and sheared de Rham stacks are defined by \emph{transmutation} of certain ring stacks. That is, for \(X\) an \(\F_p\)-stack, we will see that
\[X^{\dR}(A) = X(\G_a^{\dR}(A)), X^{\hat{\dR}}(A) = X(\G_a^{\hat{\dR}}(A)),\]
for certain ring stacks \(\G_a^{\dR}\) and \(\G_a^{\hat{\dR}}.\) The purpose of this section is to define these ring stacks. 

\subsubsection{Reminder on quasi-ideals} \label{quasiideals} Drinfeld developed a very convenient way to model 1-truncated ring stacks, called \emph{quasi-ideals}; see section 3.1 of \cite{prismatization}, or the short article \cite{drinfeld}, for the full story, but for the reader's convenience we will recall a few of Drinfeld's results.

\begin{defn}
		A \emph{quasi-ideal} in a ring \(R\) is a pair \((I, d)\) where \(I\) is an \(R\)-module and \(d : I \to R\) is an \(R\)-linear map obeying \(d(x) \cdot y = d(y) \cdot x\) for all \(x, y\in I.\) 

		A \emph{sheaf of quasi-ideals} on a sheaf of rings \(\mathcal{R}\) (for us, \(\mathcal{R}\) will be a sheaf of rings on \(\Sch_{\F_p}\) with the fpqc topology, but one can make this definition on any Grothendieck site) is a sheaf of \(\mathcal{R}\)-modules \(\II\) together with an \(\mathcal{R}\)-linear map \(d : \II \to \mathcal{R}\) such that, for every test object \(T,\) the map \(d(T) : \II(T) \to \mathcal{R}(T)\) is a quasi-ideal on the ring \(R(T).\) 
\end{defn}

For any quasi-ideal \(d : I \to R,\) Drinfeld constructs a 1-truncated animated \(R\)-algebra which he calls
\[\Cone(I \to R).\]
See section 3 of \cite{drinfeld} for the construction; one should think of it as a suitably derived version of \(R/I\). 

\begin{remark} \label{rem54}
		The underlying 1-truncated animated \(R\)-module of \(\Cone(I \to R)\) has a very succint definition: 
		\[\Cone(I \to R) := \tau_{\leq 1}\cofib(I \to R),\]
		where we take the cofiber in the category of \(R\)-modules; see \cite{drinfeld}'s remark just before section 3.4.8.

		In an \(\infty\)-categorical context, the cofiber \(\cofib(I \to R)\) is a little more natural, and the truncation can make the cone seem a little confusing to work with. We will explain that, for our examples of interest, the truncation is not truly needed.
\end{remark}

Drinfeld's construction is functorial in the quasi-ideal, and hence if \(d : \II \to \mathcal{R}\) is a sheaf of quasi-ideals on \(\Sch_{\F_p}\) (with the flat topoplogy), then one can define a functor 
\[T \mapsto \Cone(\II(T) \to \mathcal{R}(T))\] 
from \(\Sch_{\F_p}\) to the category of animated rings. The fpqc \emph{sheafification} of this functor is denoted by \(\Cone(\II \to \mathcal{R}),\) and is an animated ring stack on \(\Sch_{\F_p}.\) As \(\Cone(\II(T) \to \mathcal{R}(T))\) is an \(\mathcal{R}(T)\)-algebra, we can also treat \(\Cone(\II \to \mathcal{R})\) as a sheaf of \(\mathcal{R}\)-algebras.

\begin{remark}[Sheafified cones are still 1-truncated]
		Sheafification is a left exact functor, and so by proposition 5.5.6.16 of Lurie's \cite{HTT}, it takes 1-truncated objects to 1-truncated objects; thus \(\cone(\II \to \mathcal{R})\) will still be a sheaf of 1-truncated animated rings.
\end{remark}

One might be worried about set-theoretic subtleties implicit in fpqc sheafification; fortunately, we can give an explicit construction of the sheafification as a quotient stack. 

We will start by explaining how to define \(\Cone(\II \to \mathcal{R})\) as an fpqc stack of \(\mathcal{R}\)-modules.
\begin{lemma} \label{lem73} 
		Let \([\mathcal{R}/\II]\) denote the quotient stack; explicitly, we can define \([\mathcal{R}/\II]\) as the stack of groupoids defined by setting \([\mathcal{R}/\II](T)\) to be the groupoid of pairs \((\FF, \phi : \FF \to \mathcal{R}|_T)\), where \(\FF\) is an fpqc \(\II|_T\)-torsor on \(T\) and \(\phi\) is \(\II|_T\)-equivariant. In particular, \([\mathcal{R}/\II]\) is 1-truncated.

		Then \([\mathcal{R}/\II]\) is an fpqc sheafification of the presheaf of anima defined by
		\[T \mapsto \Cone(\II(T) \to \mathcal{R}(T)).\] 
\end{lemma}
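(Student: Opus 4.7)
The plan is to first unwind the definition of $\Cone(I \to R)$ for an ordinary (not animated) quasi-ideal of rings into a concrete $1$-groupoid, then identify its sheafification with $[\mathcal{R}/\II]$ by the standard torsor-versus-quotient story.

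First, I would observe that for a quasi-ideal $d : I \to R$ of honest rings, the cofiber $\cofib(I \to R)$ computed in $R$-modules is already a $1$-truncated animated $R$-module: both $I$ and $R$ sit in degree $0$, so the cofiber has $\pi_0 = R/d(I)$ and $\pi_1 = \ker(d)$ and no higher homotopy, making the $\tau_{\leq 1}$ of \autoref{rem54} redundant. Under the Dold--Kan-type equivalence between $1$-truncated animated abelian groups and groupoids in abelian groups, this cofiber corresponds to the action groupoid $[R/I]_{\text{naive}}$ whose objects form the abelian group $R$, whose morphisms $r \to r'$ are elements $i \in I$ with $d(i) = r' - r$, and with the obvious composition. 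Drinfeld's construction equips this groupoid with a ring structure using the quasi-ideal identity $d(x)y = d(y)x$; but for our purposes it suffices to identify the underlying groupoid, since the ring structure on $[\mathcal{R}/\II]$ will be determined by it.

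Applied sectionwise to the sheaf of quasi-ideals $d : \II \to \mathcal{R}$, this gives a presheaf of groupoids $T \mapsto [\mathcal{R}(T)/\II(T)]_{\text{naive}}$ canonically equivalent to $T \mapsto \Cone(\II(T) \to \mathcal{R}(T))$. Next I would construct a natural morphism of presheaves
\[
    [\mathcal{R}/\II]_{\text{naive}} \longrightarrow [\mathcal{R}/\II]
\]
by sending an element $r \in \mathcal{R}(T)$ to the trivial $\II|_T$-torsor $\II|_T$ equipped with the $\II|_T$-equivariant map $i \mapsto r + d(i) : \II|_T \to \mathcal{R}|_T$, and sending a morphism $i : r \to r' = r + d(i)$ to translation by $i$. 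It is direct to check that this is fully faithful on sections (two $\II|_T$-equivariant maps from the trivial torsor differ by an element of $\ker(d : \II(T) \to \mathcal{R}(T))$, which matches the automorphism groups on the naive side).

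Finally I would verify that this morphism exhibits $[\mathcal{R}/\II]$ as the fpqc sheafification. This splits into two standard checks: (a) $[\mathcal{R}/\II]$ is itself an fpqc stack, which is fpqc descent for $\II$-torsors together with their equivariant maps to $\mathcal{R}$; and (b) the morphism is fpqc-locally essentially surjective. For (b), any $\II|_T$-torsor $\mathcal{F}$ admits a section after passing to some fpqc cover $T' \to T$ (being a torsor for a flat sheaf of groups, with trivializations forming an fpqc cover by definition); such a section trivializes $\mathcal{F}|_{T'} \simeq \II|_{T'}$, and the equivariant map $\phi$ becomes translation by some $r \in \mathcal{R}(T')$, producing the required lift from the naive quotient. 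The main obstacle in the writeup is being precise about the $1$-truncated animated setting: one must check that $[\mathcal{R}/\II]$ has the correct higher structure (namely trivial $\pi_{\geq 2}$, which follows because torsor automorphisms of trivial torsors are $\II$ and automorphisms of those are trivial), and that the groupoid description of $\Cone$ really agrees with Drinfeld's construction as an anima; once these identifications are in hand, the sheafification claim is the standard comparison between naive and stacky quotients.
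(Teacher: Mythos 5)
Your proof is correct and follows essentially the same route as the paper: identify the presheaf-level $\Cone$ with the naive action groupoid, then recognize its sheafification as the torsor-theoretic quotient stack. Your observation that $\cofib(I \to R)$ is already $1$-truncated for an ordinary quasi-ideal is a nice small simplification that sidesteps the paper's implicit appeal to sheafification commuting with $\tau_{\leq 1}$, and your explicit checks of full faithfulness and local essential surjectivity spell out what the paper compresses into the phrase ``produces the usual quotient stack.''
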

\begin{proof}
		By \autoref{rem54}, the object \(\Cone(\II(T) \to \mathcal{R}(T)),\) considered as an anima, is nothing but the 1-truncation of \(\cofib(\II(T) \to \mathcal{R}(T)).\) The sheafification of \(T \mapsto \cofib(\II(T) \to \mathcal{R}(T))\) is by definition the sheaf \(\cofib(\II \to \mathcal{R}),\) and so the sheafification of 
		\[T \mapsto \tau_{\leq 1}(\cofib(\II(T) \to \mathcal{R}(T)))\]
		is \(\tau_{\leq 1}\cofib(\II \to \mathcal{R}) \heq [\mathcal{R}/\II],\) as desired. This last identification with the quotient stack is because the cofiber, computed in the category of \emph{presheaves}, has functor of points
		\[\cofib(\II(T) \to \mathcal{R}(T)),\]
		whose 1-truncation is just the groupoid whose objects are elements of the ring \(\mathcal{R}(T),\) and where morphisms \(r \to r'\) are just the data of some \(i \in \II(T)\) so that \(r' = r + i.\) Sheafifying this presheaf of groupoids produces the usual quotient stack.
\end{proof}

However, we defined \(\Cone(\II \to \mathcal{R})\) to be a sheaf of animated \emph{rings}, not a sheaf of anima. Fortunately, the fpqc sheafification of rings we use exists because ring stacks are equivalent to ring objects in the category of stacks, and so we can sheafify the ring structure morphisms (addition, multiplication, etc.) on the presheaf
\[T \mapsto \Cone(\II(T) \to \mathcal{R}(T))\]
to get a ring structure on \([\mathcal{R}/\II].\)

\subsubsection{The quasi-ideals \(\G_a^{\#}\) and \(\G_a^{\#, \wedge}\)}

As explained in \autoref{rem93}, to define the de Rham stack in positive characteristic, we will want to keep track of divided power structures on elements of \(\F_p\)-algebras. We now introduce a sheaf which does this. 
\begin{defn}[Systems of divided powers]
		Let \(\G_a = \Spec \F_p[t], \G_a^{\#} = \Spec \F_p[t, t^2/2!, t^3/3!, ...].\) We will subsequently write \(\F_p\inner{t}\) to denote \(\F_p[t, t^2/2!, ...].\) 

		Let \(A\) be an \(\F_p\)-algebra. A \emph{system of divided powers} for \(a\in A\) is a sequence of \(\gamma_n \in A\) so that \(\gamma_0 = 1, \gamma_1 = a,\) and 
		\[\gamma_n \cdot \gamma_m = \binom{n+m}{n} \cdot \gamma_{n+m}\]
		for all \(n, m \geq 0.\) It's easy to check that divided power structures on \(a\in A\) are in natural bijection with \(\F_p\)-algebra homomorphisms \(\F_p\inner{t} \to A\) sending \(t\) to \(a.\) Thus \(\G_a^{\#}(A)\) is the set of pairs \((a, \gamma_{\bullet}\) where \(a\in A\) and \(\gamma_{\bullet}\) is a system of divided powers for \(a.\) 
\end{defn}

For the sheared de Rham stack, we will need a variant of \(\G_a^{\#}\) keeping track of \emph{nilpotent} divided powers. One should think of the difference between the following definition and the previous one as being the difference between the crystalline site and the nilpotent crystalline site. 
\begin{defn}[Systems of nilpotent divided powers]
		Set \(\G_a^{\#, \wedge} := \Spf \F_p\inner{t}^{\wedge},\) where we let \(\F_p\inner{t}^{\wedge}\) denote the topological ring of all formal power series of the form
		\[\sum_{n=0}^{\infty} a_n \cdot \frac{t^n}{n!}\]
		with \(a_n \in \F_p,\) and the topology is given by the metric
		\[d(f(t), g(t)) = p^{-\nu(f(t) - g(t))},\]
		where \(\nu(f(t))\) is defined to be the smallest \(n\) so that the \(t^n/n!\) coefficient in \(f(t)\) is nonzero. 

		Concretely, \(\G_a^{\#,\wedge}(A)\) can be identified with the set of all pairs \((a, \gamma_{\bullet})\) of \(a \in A\) and \(\gamma_{\bullet}\) a system of divided powers for \(a\in A\) such that \(\gamma_N = 0\) for all \(N \gg 0.\) 
\end{defn}
\begin{remark}
		One can make analogous definitions in mixed characteristic, but we will not need those notions in this paper.
\end{remark}

The morphisms
\[\G_a^{\#} \to \G_a\]
and 
\[\G_a^{\#, \wedge} \to \G_a\]
(corresponding to forgetting the divided power structure) are both quasi-ideals in the sense of Drinfeld. 

\begin{defn}
		We define
		\[\G_a^{\dR} := \Cone(\G_a^{\#} \to \G_a),\]
		\[\G_a^{\hat{\dR}} := \Cone(\G_a^{\#, \wedge} \to \G_a),\]
		where we use \(\Cone\) in the sense of \autoref{quasiideals}. In particular, \(\G_a^{\dR}\) and \(\G_a^{\hat{\dR}}\) are fpqc stacks of 1-truncated animated rings on \(\Sch_{\F_p}.\)
\end{defn}

The definition of \(\Cone\) in \autoref{quasiideals} was a little subtle because it involved sheafification. We now explore how one can make these two ring stacks a bit more concrete. 

\subsubsection{Witt vectors and \(\G_a^{\#}\)}

We start by explaining some simplifications one can do to \(\G_a^{\dR}\), stemming from the relationship between \(\G_a^{\#}\) and Witt vectors. These results all come from \cite{bhattlurie} and \cite{prismatization}, and our presentation follows section 2.6 of Bhatt's \cite{bhatt}. 

\begin{remark}
		We omit most of the proofs in this section because the proofs in the sheared case are entirely analogous, and we feel it is more valuable to prove the sheared versions since these non-sheared results are already very well-documented in the literature.
\end{remark}

\begin{defn}
		We define \(W\) the ring scheme of Witt vectors, sending an \(\F_p\)-algebra \(A\) to the ring \(W(A)\) of \(p\)-typical Witt vectors.
\end{defn}

\begin{lemma} \label{lem114} 
		Let \(R : W \to \G_a\) be the 0th ghost map; in this way, we can view \(\G_a\) (and hence \(\G_a^{\#}\)) as a sheaf of \(W\)-modules. Let \(F : W \to W\) be the Frobenius on Witt vectors. There is a short exact sequence of sheaves of \(W\)-modules
		\[0 \to \G_a^{\#} \to W \xto{F} F_*W \to 0.\]
		In other words, \(F\) is faithfully flat and there is a \(W\)-linear equivalence \(\G_a^{\#} \heq W[F].\) 
\end{lemma}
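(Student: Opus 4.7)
The plan is to construct an explicit $W$-linear map $\phi : \G_a^{\#} \to W$, identify its image with the kernel $W[F]$ of Frobenius, and separately verify that $F : W \to W$ is faithfully flat. For the construction, given an $\F_p$-algebra $A$ and a point $(a, \gamma_\bullet) \in \G_a^{\#}(A)$, so that $a \in A$ has a compatible system of divided powers, I define $\phi_A(a, \gamma_\bullet) \in W(A)$ to be the Witt vector $(a, \gamma_p, \gamma_{p^2}, \ldots, \gamma_{p^i}, \ldots)$ in Witt coordinates. This is manifestly functorial in $A$, so glues to a morphism of fpqc sheaves.

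Next I would check that $\phi$ is a $W$-module homomorphism landing in $W[F]$. For additivity, match the universal Witt addition polynomials with the divided-power convolution $\gamma_n(a+b) = \sum_{i+j=n} \gamma_i(a) \gamma_j(b)$; on the second Witt component the key input is Wilson's theorem $(p-1)! \equiv -1 \pmod{p}$, which identifies the Witt correction $-\tfrac{1}{p}\binom{p}{i} a^i b^{p-i}$ with $\gamma_i(a)\gamma_{p-i}(b)$ in characteristic $p$, and an inductive argument handles higher components. For $W$-linearity, the key observation is that the $W$-action on $W[F]$ factors through the ghost map $w_0 : W \to \G_a$ (since $V(u) \cdot x = V(u \cdot F(x)) = 0$ for $x \in W[F]$), and likewise the $W$-action on $\G_a^{\#}$ factors through $w_0$; compatibility on the $\G_a$ level then reduces to the divided-power identity $\gamma_n(c a) = c^n \gamma_n(a)$. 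That the image lies in $W[F]$ follows from $x^p = p!\, \gamma_p(x) = 0$ in any $\F_p$-algebra once $x$ admits divided powers: a direct computation with ghost components shows the universal Witt polynomials for $F$ satisfy $F_n \equiv x_n^p \pmod{p}$, so on $\F_p$-algebras $F$ acts on Witt coordinates as the componentwise $p$-th power, and each coordinate of $\phi(a, \gamma_\bullet)$ has vanishing $p$-th power.

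To upgrade this to an isomorphism $\phi : \G_a^{\#} \heq W[F]$ of $W$-modules, I would match underlying affine schemes. In characteristic $p$ one has the presentation $\F_p\inner{t} \cong \F_p[x_0, x_1, \ldots]/(x_0^p, x_1^p, \ldots)$ with $x_i := \gamma_{p^i}(t)$, while $W[F] \subset W = \Spec \F_p[a_0, a_1, \ldots]$ is cut out by the ideal $(a_0^p, a_1^p, \ldots)$ by the formula for $F$ above; the map $\phi$ sends $x_i$ to $a_i$, so it is an isomorphism of affine schemes. For faithful flatness of $F$, I would work with truncated Witt vectors $W_n = \Spec \F_p[a_0, \ldots, a_{n-1}]$: in characteristic $p$, $F^*$ acts on the coordinate ring by $a_i \mapsto a_i^p$, exhibiting the target as a free module of rank $p^n$ over its Frobenius twist; passing to the inverse limit $W = \lim W_n$ yields faithful flatness of $F : W \to W$.

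The delicate point is the combinatorial verification that $\phi$ is additive on higher Witt components, since the universal Witt addition polynomials are defined recursively and their coefficients quickly become unwieldy. A cleaner route is to reduce to the universal example $A = \F_p\inner{s, s'}$ (the free PD algebra on two generators) equipped with its tautological divided powers on $s$ and $s'$: by functoriality, checking $\phi(s + s') = \phi(s) + \phi(s')$ in this particular $A$ suffices to establish additivity in general. This sidesteps ever writing down the higher-order Witt addition polynomials in closed form, and is the strategy that adapts most cleanly to the sheared variant of the lemma that the paper will prove in detail in subsequent subsections.
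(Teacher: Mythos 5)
Your proposal takes essentially the same approach as the paper. The paper's proof is a brief sketch — it cites Drinfeld's \cite{prismatization} Lemma 3.2.6 and \cite{bhatt} Lemma 2.6.1, observes that faithful flatness of $F$ follows from $W \heq \prod_{i\geq 0}\A^1_{\F_p}$ with $F$ acting as componentwise $p$th power, and asserts (without details) that $(a_0, a_1, \ldots) \in W[F]$ iff the $a_n$ are ``up to units'' a system of divided powers on $a_0$. Your argument fills in this sketch: the explicit map $\phi(a,\gamma_\bullet) = (a, \gamma_p, \gamma_{p^2}, \ldots)$, the observation that the image lands in $W[F]$ because $\gamma_{p^n}(a)^p = 0$, and the clean reduction of $W$-linearity to the $\G_a$-module level using $V(u)\cdot x = V(u F(x)) = 0$. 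Your faithful flatness argument via truncated Witt vectors is also fine and essentially the same as the paper's. Incidentally, you are more precise than the paper: the naive formula with no unit corrections is indeed the isomorphism (one can verify this at small $p$ and small Witt index, and it follows in general from the combinatorics), so the paper's ``up to units'' is if anything overly cautious.

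That said, there is one place where you overclaim. You describe the verification that $\phi$ is additive on higher Witt components as ``the delicate point'' and then assert that passing to the universal example $A = \F_p\inner{s,s'}$ ``sidesteps ever writing down the higher-order Witt addition polynomials in closed form.'' This is not right: the reduction to the free PD algebra is a valid and standard move, but checking $\phi(s+s') = \phi(s) +_W \phi(s')$ there still requires knowing what the Witt addition polynomials do when evaluated at $(\gamma_{p^i}(s))_i$ and $(\gamma_{p^i}(s'))_i$, which is exactly the combinatorial identity you were trying to avoid. What the universal reduction \emph{does} buy you is the ability to exploit the nilpotence relations $s^p = s'^p = 0$ to kill many terms in the expansion, which makes the verification tractable, but the identity itself (that the $n$th Witt addition polynomial, with $\gamma_{p^i}$ plugged into the slots, equals $\gamma_{p^n}(s+s')$) is a genuine statement that needs an inductive proof rather than a one-line Wilson's theorem observation. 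You should either carry out that induction or, like the paper does, cite Drinfeld Lemma 3.2.6 or Bhatt Lemma 2.6.1, where the argument is done carefully.
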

\begin{proof}
		See \cite{prismatization} lemma 3.2.6, or \cite{bhatt} lemma 2.6.1. The argument is not very involved, though: \(F\) being faithfully flat is clear since \(W \heq \prod_{i\geq 0} \A^1_{\F_p}\) as an \(\F_p\)-scheme. To check that the kernel of Frobenius can be identified with \(\G_a^{\#},\) one does a simple computation to check that \((a_0, a_1, ...) \in W(A)\) lies in the kernel of Frobenius if and only if the \(a_n\) are, up to units, a system of divided powers on \(a_0\) (more precisely, \(a_n\) will be a divided power \(a_0^{p^n}/(p^n)!,\) up to units). 
\end{proof}	

\begin{cor} \label{cor122}
		For \(A\) any \(\F_p\)-algebra, the complex \(R\Gamma_{\operatorname{flat}}(\Spec(A), \G_a^{\#})\) is concentrated in degrees 0 and 1. Moreover, for \(A\) semiperfect, this complex is quasi-isomorphic to \(\G_a^{\#}(A)[0].\) 
\end{cor}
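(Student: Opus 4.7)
The plan is to apply $R\Gamma_{\operatorname{flat}}(\Spec(A), -)$ to the short exact sequence
\[0 \to \G_a^{\#} \to W \xto{F} F_*W \to 0\]
of \autoref{lem114} and read off the result from the resulting long exact sequence. The key preliminary observation is that $W$ has no higher flat cohomology on affines: as an $\F_p$-scheme we have $W \cong \prod_{i \geq 0} \A^1_{\F_p}$, so as an fpqc sheaf of abelian groups $W \cong \prod_{i \geq 0} \G_a$. Since $R\Gamma_{\operatorname{flat}}(\Spec(A), \G_a) \heq A[0]$ (this is fpqc descent for $\OO$), taking a product gives $R\Gamma_{\operatorname{flat}}(\Spec(A), W) \heq W(A)[0]$, concentrated in degree $0$. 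The same applies to $F_*W$, because $F_*W$ and $W$ agree as fpqc sheaves of abelian groups (the pushforward only affects the $W$-module structure, which is irrelevant for cohomology of the underlying additive sheaf).

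With this in hand, the long exact sequence associated to the short exact sequence of sheaves degenerates into
\[0 \to H^0_{\operatorname{flat}}(\Spec(A), \G_a^{\#}) \to W(A) \xto{F} W(A) \to H^1_{\operatorname{flat}}(\Spec(A), \G_a^{\#}) \to 0\]
and vanishing in degrees $\geq 2$. This already yields the first claim, and identifies
\[H^0 \heq \ker(F \colon W(A) \to W(A)) \heq \G_a^{\#}(A), \qquad H^1 \heq \coker(F \colon W(A) \to W(A)),\]
where the first identification uses \autoref{lem114}'s description $\G_a^{\#} \heq W[F]$.

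For the semiperfect case, I would then argue that when the absolute Frobenius $\phi \colon A \to A$ is surjective, the Witt vector Frobenius $F \colon W(A) \to W(A)$ is also surjective: this is a standard consequence, since the formula for $F$ modulo $V$ recovers $\phi$ on the first ghost component, and one can lift a preimage componentwise using the universal polynomials defining $F$ (together with surjectivity of $\phi$ at each stage). With $F$ surjective the cokernel vanishes, forcing $H^1 = 0$ and yielding the quasi-isomorphism $R\Gamma_{\operatorname{flat}}(\Spec(A), \G_a^{\#}) \heq \G_a^{\#}(A)[0]$.

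The main thing to be careful about is the cohomology vanishing for $W$ on affines — phrasing $W$ as a countable product of copies of $\G_a$ sidesteps having to worry about quasi-coherence of infinite products directly. The surjectivity of Witt vector Frobenius in the semiperfect case is standard and should not be the real obstacle; if a cleaner reference is wanted, one can cite the corresponding statement in \cite{bhattlurie} or \cite{bhatt}, from which this corollary is essentially extracted.
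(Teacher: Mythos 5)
Your proof is correct and follows essentially the same route as the paper: apply $R\Gamma_{\operatorname{flat}}(\Spec(A),-)$ to the short exact sequence from \autoref{lem114}, reduce to showing $R\Gamma_{\operatorname{flat}}(\Spec(A),W)$ is concentrated in degree~$0$ using the decomposition of $W$ as a product of copies of $\G_a$, and for the semiperfect case invoke surjectivity of $F$ on $W(A)$. The only cosmetic difference is in the vanishing-of-higher-cohomology step: you invoke commutation of $R\Gamma$ with arbitrary products of sheaves (together with exactness of infinite products in abelian groups), whereas the paper writes $W = \inverselim_n \prod_{i=1}^n \G_a$ and uses a Mittag-Leffler argument to kill the $R^1\lim$ term. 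Both justifications are valid and express the same underlying fact; if you state it your way, it is worth being explicit that you are using that products of injective sheaves are injective (so $R\Gamma$ does commute with products) and that $\Ab$ has exact products, since these are precisely the points the paper's $R\lim$ phrasing is designed to keep visible.
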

\begin{remark}
		As pointed out by Bhatt (see footnote 22 in \cite{bhatt}, at the bottom of page 28), this shows that the subtleties implicit in fpqc cohomology of \(\G_a^{\#}\) do not matter: the notion of \(H^0\) and \(H^1\) are always meaningful (global sections and torsors), and the higher cohomologies, which a priori might depend on a cutoff cardinal, always vanish. 
\end{remark}
\begin{proof}
		By \autoref{lem114}, it suffices to prove that \(R\Gamma_{\operatorname{flat}}(\Spec(A), W)\) is concentrated in degree 0 (even the semiperfectness claim follows from this, since for \(A\) semiperfect the Frobenius \(W(A) \to W(A)\) is surjective). 

		Write \(W = \prod_{i\geq 0} \G_a\) (which is true as an equality of abelian sheaves). Then
		\begin{align*}
				R\Gamma(\Spec(A), W) &\heq R\Gamma\left(\Spec(A), \inverselim_n \prod_{i=1}^n \G_a\right) \\
									 &\heq R\inverselim_n R\Gamma(\Spec(A), (\G_a)^{\oplus n}) \\
									 &\heq R\inverselim_n A^{\oplus n} \\
									 &\heq W(A),
		\end{align*}
		since the transition maps \(W_n(A) \to W_{n-1}(A)\) of truncated Witt vectors are clearly surjective, and so the Mittag-Leffler condition ensures we have no \(R^1\lim.\) We conclude. 
\end{proof}
\begin{cor} \label{cor138} 
		For any \(\F_p\)-algebra \(A,\) there is a (functorial in \(A\)) equivalence
		\[\G_a^{\dR}(A) \heq R\Gamma_{\operatorname{flat}}(\Spec(A), \G_a^{\dR}).\]
		When \(A\) is semiperfect, we even have
		\begin{equation} \label{eq172} \G_a^{\dR}(A) \heq \cofib(\G_a^{\#}(A) \to A).\end{equation} 
\end{cor}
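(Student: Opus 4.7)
The plan is to apply $R\Gamma_{\operatorname{flat}}(\Spec A, -)$ to the defining sheaf cofiber sequence $\G_a^{\#} \to \G_a \to \G_a^{\dR}$ and then invoke \autoref{cor122}. Since this functor into complexes of abelian sheaves preserves (co)fiber sequences, it yields a cofiber sequence
\[
R\Gamma_{\operatorname{flat}}(\Spec A, \G_a^{\#}) \to R\Gamma_{\operatorname{flat}}(\Spec A, \G_a) \to R\Gamma_{\operatorname{flat}}(\Spec A, \G_a^{\dR}).
\]
The middle term is $A$ in cohomological degree zero; by \autoref{cor122}, the left term lies in cohomological degrees $0$ and $1$; so the long exact sequence in cohomology forces the right term to live in cohomological degrees $-1$ and $0$, i.e., to be a 2-term complex (equivalently, a 1-truncated animated abelian group).

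To identify this 2-term complex with the sheaf value $\G_a^{\dR}(A)$, I would use the quotient-stack presentation $\G_a^{\dR} \heq [\G_a/\G_a^{\#}]$ from \autoref{lem73}. The fiber sequence of anima $A \to [\G_a/\G_a^{\#}](A) \to B\G_a^{\#}(A)$ then induces a long exact sequence of homotopy groups yielding $\pi_1(\G_a^{\dR}(A)) = \ker(\G_a^{\#}(A) \to A)$ together with a short exact sequence $0 \to A/\G_a^{\#}(A) \to \pi_0(\G_a^{\dR}(A)) \to H^1_{\operatorname{flat}}(\Spec A, \G_a^{\#}) \to 0$, and these coincide precisely with $H^{-1}$ and $H^0$ of the $R\Gamma$-cofiber computed above. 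This produces the first equivalence functorially in $A$. The second equivalence then comes for free: \autoref{cor122} additionally says $R\Gamma_{\operatorname{flat}}(\Spec A, \G_a^{\#})$ collapses to $\G_a^{\#}(A)$ concentrated in degree zero once $A$ is semiperfect, so the cofiber reduces to the naive $\cofib(\G_a^{\#}(A) \to A)$.

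The main obstacle is upgrading this identification from the level of 1-truncated animated abelian groups to the level of 1-truncated animated \emph{rings}, as required by the formulation of the corollary. The quasi-ideal structure on $d : \G_a^{\#} \to \G_a$ endows both the sheafified cone $[\G_a/\G_a^{\#}]$ and its derived global sections with compatible ring structures, and verifying this multiplicative compatibility throughout is essentially formal; but it is the only non-bookkeeping step beyond the cohomological computation above.
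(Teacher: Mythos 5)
Your computation is essentially the paper's, using \autoref{cor122} in exactly the role the paper does, and your semiperfect reduction is correct. But your opening move contains a circularity that is worth flagging explicitly, because it is precisely the content of the first equivalence.

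You begin by applying $R\Gamma_{\operatorname{flat}}(\Spec A, -)$ to ``the defining sheaf cofiber sequence $\G_a^{\#} \to \G_a \to \G_a^{\dR}$.'' But this sequence is \emph{not} a cofiber sequence of sheaves of complexes a priori: by \autoref{rem54} and \autoref{lem73}, $\G_a^{\dR}$ is the $1$-\emph{truncated} cofiber $[\G_a/\G_a^{\#}] \heq \tau_{\leq 1}\cofib(\G_a^{\#}\to\G_a)$, not the full cofiber. So $\G_a^{\#}\to\G_a\to\G_a^{\dR}$ being a cofiber sequence is exactly what you are trying to establish, not something you get to assume. The paper's proof makes this point visible: it first shows, using \autoref{cor122}, that the full sheaf cofiber $\cofib(\G_a^{\#}\to\G_a)$ already has $1$-truncated values on every $\Spec(A)$ (since $R\Gamma(\Spec(A),\G_a^{\#})$ sits in degrees $[0,1]$ and $R\Gamma(\Spec(A),\G_a)=A$ sits in degree $0$, so the cofiber sits in degrees $[-1,0]$), hence is $1$-truncated as a sheaf and therefore equal to its truncation $\G_a^{\dR}$; only \emph{then} does the exact triangle $\G_a^{\#}\to\G_a\to\G_a^{\dR}$ exist, and applying $R\Gamma$ becomes legitimate. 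Your second step --- computing $\pi_0,\pi_1$ of $[\G_a/\G_a^{\#}](A)$ via the fiber sequence over $B\G_a^{\#}$ and observing the groups ``coincide'' with those of the $R\Gamma$-cofiber --- is a correct parallel computation, but ``the groups coincide'' is not by itself an equivalence; you need a map, and the map you want is exactly the truncation map $\cofib(\G_a^{\#}\to\G_a)\to\tau_{\leq 1}\cofib(\G_a^{\#}\to\G_a)=\G_a^{\dR}$, whose invertibility is what the truncatedness argument above establishes. So the cleaner logical order is: (1) $\cofib$ is $1$-truncated, (2) hence the truncation map is an equivalence, (3) hence $\G_a^{\#}\to\G_a\to\G_a^{\dR}$ is a cofiber sequence, (4) apply $R\Gamma$ and use \autoref{cor122} again for the semiperfect case. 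Once reordered this way your argument and the paper's are the same.

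Your final worry about upgrading the identification to one of animated \emph{rings} is not needed: the corollary (and the paper's proof) only asserts an equivalence of underlying objects (animated $W$-modules is all that is used downstream in \autoref{prop158}); the right-hand side of \autoref{eq172} is written as a cofiber in the module category, not as a ring.
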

\begin{proof}
		By \autoref{lem73}, we have \(\G_a^{\dR}(A) \heq [\G_a/\G_a^{\#}](A),\) where the right hand side is the 1-truncated quotient stack. But because of \autoref{cor122}, this 1-truncated quotient stack is actually equal to \(\cofib(\G_a^{\#} \to \G_a)\) (the cofiber taken in the category of fpqc sheaves; since it is equal to the quotient stack \([\G_a/\G_a^{\#}]\) by \autoref{cor122}, the set theoretic issues implicit in its definition are irrelevant). Thus
		\[\G_a^{\dR}(A) \heq [\G_a/\G_a^{\#}](A) \heq R\Gamma_{\operatorname{flat}}(\Spec(A), \G_a^{\dR}).\] 
		The last equality is because
		\[\cofib(\G_a^{\#} \to \G_a)(A) = R\Gamma_{\operatorname{flat}}(\Spec(A), \G_a^{\dR}),\]
		essentially by definition. 

		Moreover, because \(\G_a^{\dR} \heq \cofib(\G_a^{\#} \to \G_a),\) we have an exact triangle 
		\[\G_a^{\#} \to \G_a \to \G_a^{\dR}\]
		of sheaves of \(\G_a\)-modules. Taking flat cohomology and using \autoref{cor122}, for \(A\) semiperfect we get an exact triangle
		\[\G_a^{\#}(A) \to A \to R\Gamma_{\operatorname{flat}}(\Spec(A), \G_a^{\dR}),\]
		of \(A\)-modules, which implies \autoref{eq172}. 
\end{proof}

As the semiperfect \(\F_p\)-algebras form a basis for the flat topology (see \cite{BMS} proposition 4.31 for an even more general statement), it often suffices to check properties of \(\G_a^{\dR}\) just on semiperfect \(\F_p\)-algebras; thus the formula \autoref{eq172} allows us to prove properties of \(\G_a^{\dR}\) without needing to worry about the sheafification used in its definition.

We end with one more useful fact about \(\G_a^{\dR},\) expressing it in terms of Witt vectors. This allows us to compute its homotopy groups, which will be useful later. 
\begin{prop} \label{prop158} 
		View \(\G_a^{\dR}\) as a \(W\)-module via the composition \(W \xto{R} \G_a \to \G_a^{\dR},\) where recall \(R : W\to \G_a\) is the 0th ghost component. There is a natural equivalence of \(W\)-modules 
		\[\G_a^{\dR} \heq \cone(F_*W \xto{p} F_*W).\]
\end{prop}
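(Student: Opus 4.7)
My plan is to express $\G_a^{\dR}$ using the octahedral axiom applied to the factorization $\G_a^{\#} \hookrightarrow W \twoheadrightarrow \G_a$, and then identify the resulting boundary map with multiplication by $p$ using the Witt vector identity $FV = p$.

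I would first combine two natural short exact sequences of $W$-modules: the sequence $0 \to \G_a^{\#} \to W \xto{F} F_*W \to 0$ from \autoref{lem114}, together with $0 \to F_*W \xto{V} W \xto{R} \G_a \to 0$, where $R$ is the $0$th ghost map (whose kernel is exactly the image of Verschiebung) and where $V$ is $W$-linear when its source is viewed as $F_*W$, by the projection formula $V(F(w) y) = w \cdot V(y)$. Applying the octahedral axiom to $\G_a^{\#} \hookrightarrow W \twoheadrightarrow \G_a$, the intermediate cofiber is $F_*W$, while the cofiber of $W \twoheadrightarrow \G_a$ is $F_*W[1]$ (the cofiber in the derived category of a surjection of abelian sheaves with kernel $K$ being $K[1]$). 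This produces an exact triangle
\[F_*W \to \G_a^{\dR} \to F_*W[1] \to F_*W[1],\]
which, after rotation, reads $F_*W \xto{\delta} F_*W \to \G_a^{\dR}$, so $\G_a^{\dR} \simeq \cone(\delta)$ for some boundary $\delta \colon F_*W \to F_*W$.

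The heart of the proof is identifying $\delta$ with multiplication by $p$. I would do this at the chain level: model $\G_a^{\#}$ by the 2-term complex $[W \xto{F} F_*W]$ in degrees $0, 1$ and $\G_a$ by $[F_*W \xto{V} W]$ in degrees $-1, 0$, then compute the mapping cone of the natural chain map representing $\G_a^{\#} \to \G_a$. Into the resulting mapping cone I would construct a candidate chain map from $[F_*W \xto{p} F_*W]$ in degrees $-1, 0$, using Verschiebung $V$ in degree $-1$ and the natural inclusion into the $F_*W$ summand in degree $0$. The chain map condition reduces exactly to the Witt vector identity $FV = p$, and the candidate is a quasi-isomorphism by direct cohomology computation: both sides have $H^{-1} = F_*W[p]$ and $H^0 = F_*W/pF_*W$. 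Naturality in the $W$-module structure is automatic since $F$, $V \colon F_*W \to W$, and multiplication by $p$ are all $W$-linear. The main obstacle is the careful bookkeeping in the chain-level identification of $\delta$, but it all reduces to $FV = p$ in Witt vectors.
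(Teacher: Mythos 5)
Your argument is correct, and the conclusion is the same, but your route is genuinely different from the one the paper takes. The paper's proof (deferred to \autoref{prop316}, whose unsheared analogue is identical) builds a common ``bridge'' quasi-ideal $\G_a^{\#}\oplus F_*W \xto{(\can,V)} W$ sitting in a commutative diagram whose left and right columns are the quasi-ideals $\G_a^{\#}\to\G_a$ and $F_*W\xto{p}F_*W$; it then shows both induced maps on cones are equivalences by elementary surjectivity/injectivity checks (using that $R$ and $F$ are surjective, $\im V = \ker R$, and $W[F]=\G_a^{\#}$). You instead resolve $\G_a^{\#}$ by $[W\xto{F}F_*W]$ and $\G_a$ by $[F_*W\xto{V}W]$, form the mapping cone of the chain map between them (which in degrees $-1,0$ becomes $[W\oplus F_*W \to F_*W\oplus W]$), and explicitly construct a quasi-isomorphism from $[F_*W\xto{p}F_*W]$ into it. Both proofs lean on the same three facts ($W[F]=\G_a^{\#}$, $\ker R=\im V$, $FV=p$), but the paper organizes the argument around morphisms of quasi-ideals, which keeps the $W$-module structure transparent and ports cleanly to the sheared setting; your chain-level computation is more explicit about \emph{why} the boundary map is multiplication by $p$, at the cost of the sign bookkeeping you flag. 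One small correction to your description: the degree-$(-1)$ component of your candidate chain map needs \emph{both} a Verschiebung term \emph{and} an identity term (e.g.\ $x\mapsto(-Vx,\,x)$ into $W\oplus F_*W$ with suitable sign conventions), not just $V$ alone — the second component is what kills the image of the diagonal in $H^0$ and makes the $H^{-1}$ check come out to $F_*W[p]$. With that adjustment your computation closes. You also don't actually need to invoke the octahedral axiom as a separate step, since your explicit mapping-cone model for $\G_a^{\dR}$ together with the constructed quasi-isomorphism already gives the identification directly.
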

\begin{proof}
		This is proven in exactly the same way as the sheared version of this statement, which is our \autoref{prop316}. Alternatively, see Corollary 2.6.8 of \cite{bhatt}.
\end{proof}
\begin{cor} \label{cor198}
		As fpqc sheaves of \(W\)-modules, we have
		\[\pi_0(\G_a^{\dR}) = F_*\G_a, \pi_1(\G_a^{\dR}) = F_*\G_a^{\#}.\] 
\end{cor}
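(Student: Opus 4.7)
The plan is to read off both claims directly from \autoref{prop158}, which identifies $\G_a^{\dR}$ with $\cone(F_*W \xto{p} F_*W)$ as a sheaf of $W$-modules. This cone sits in an exact triangle
\[F_*W \xto{p} F_*W \to \G_a^{\dR}\]
of 1-truncated complexes of fpqc sheaves, so I would immediately read off $\pi_1(\G_a^{\dR}) = \ker(p)$ and $\pi_0(\G_a^{\dR}) = \coker(p)$, both computed in fpqc sheaves of $W$-modules.

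For $\pi_1$, observe that in characteristic $p$ one has $V(1) = p$ in $W$, and $W$-semilinearity of the Verschiebung then gives $VF = V(1)\cdot\mathrm{id} = p$; combined with the universal identity $FV = p$, this yields $p = VF = FV$ on $W$. Since $V$ is sectionwise injective (being the shift $(a_0, a_1, \ldots) \mapsto (0, a_0, a_1, \ldots)$ in Witt coordinates), $\ker(p) = \ker(VF) = \ker(F)$, and by \autoref{lem114} this last kernel is $\G_a^{\#}$. The action of $W$ on $\G_a^{\#}$ inherited as a submodule of $F_*W$ is precisely the natural action precomposed with Frobenius, so this kernel is $F_*\G_a^{\#}$.

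For $\pi_0$, the image subsheaf of $p$ is $F(VW)$; using $FV = VF$ rewrites this as $V(FW)$. But $F : W \to W$ is faithfully flat by \autoref{lem114}, hence an epimorphism of fpqc sheaves, so $FW = W$ as subsheaves and thus $F(VW) = VW$. Combining with the standard short exact sequence
\[0 \to VW \to W \xto{R} \G_a \to 0\]
given by the $0$th ghost component $R$ yields $\coker(p) = W/VW = \G_a$ as abelian sheaves. Transporting the $W$-module structure through $F$ identifies this cokernel with $F_*\G_a$: the induced action of $w \in W$ on the class of $\tilde{x} \in W$ is $R(F(w)\tilde{x}) = w_1(w)\,R(\tilde{x})$, i.e.\ scaling by the 1st ghost component $w_1$, which is by definition the $F_*$-twisted $W$-action on $\G_a$.

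I do not anticipate a serious obstacle: the content reduces to the identities $\ker(VF) = \ker F$ and $F(VW) = VW$ in characteristic $p$, each of which rests only on injectivity of $V$ and on \autoref{lem114}. The only bookkeeping required is matching up the $W$-module structures, but since we begin with $F_*W$ on both terms of the cone in \autoref{prop158}, the $F_*$-twists on $\G_a$ and $\G_a^{\#}$ are automatic.
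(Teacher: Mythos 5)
Your proof is correct and follows essentially the same route the paper takes (the paper's proof of this corollary defers verbatim to the sheared version, \autoref{cor270}): read off $\pi_0 = \coker(p)$ and $\pi_1 = \ker(p)$ from \autoref{prop158}, use that in characteristic $p$ one has $p = VF = FV$ with $V$ injective and $F$ an fpqc epimorphism, identify $\ker(p) = \ker F = \G_a^{\#}$ and $\coker(p) = W/VW \cong \G_a$ via the $0$th ghost map, and track the $F_*$-twist on the module structures. The only cosmetic difference is that you factor $p = FV$ and then massage the image via $FV = VF$ and $FW = W$, whereas the paper factors $p = VF$ from the start so that $\mathrm{im}(p) = VW$ falls out immediately.
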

\begin{warn}
		In contrast to the other results of this section, in this corollary it is \emph{crucial} that we are working over \(\F_p.\) This is because at some point in the proof we will use that \(FV = VF,\) which of course only holds when we take Witt vectors of rings of characteristic \(p.\) 
\end{warn}
\begin{proof}
		As with \autoref{prop158}, below we will prove the sheared version of this, which has exactly the same proof; we remark that \cite{bhatt} includes this as Corollary 2.6.11. 
\end{proof}

It is incredibly desirable to have sheared variants of \autoref{eq172} and \autoref{cor198}. To this end, we introduce the \emph{sheared} Witt vectors. 

\subsubsection{Sheared Witt vectors and \(\G_a^{\#, \wedge}\)}

Our first goal is to give a variant of \autoref{lem114} involving \(\G_a^{\#, \wedge}.\) As \(\G_a^{\#, \wedge}\) classifies divided powers which are eventually zero, it is natural to introduce the following ideal of \(W.\) 
\begin{defn}
		For any \(\F_p\)-algebra \(A,\) we define \(W'(A) \subseteq W(A)\) the ideal consisting of all sequences \((a_0, a_1, a_2, ...) \in W(A)\) such that \(a_n = 0\) for all \(n \gg 0.\)
\end{defn}

\begin{lemma}[Drinfeld] \label{lem184}
		The equivalence \(W[F] \heq \G_a^{\#}\) induces to an equivalence \(W'[F] \heq \G_a^{\#, \wedge}.\) 
\end{lemma}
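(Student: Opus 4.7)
The plan is to pin down the equivalence $W[F] \cong \mathbb{G}_a^{\#}$ explicitly enough to see precisely which Witt vectors in the kernel of Frobenius correspond to \emph{nilpotent} divided-power structures, and then use the structure of the divided-power algebra in characteristic $p$ to match that condition with membership in $W'$.

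First, I recall from the proof of \autoref{lem114} that, under the equivalence $W[F] \cong \mathbb{G}_a^{\#}$, a Witt vector $(a_0, a_1, \ldots) \in W(A)$ in the kernel of Frobenius corresponds to the divided-power structure $\gamma_\bullet$ on $a_0$ with $\gamma_{p^n}(a_0) = u_n \cdot a_n$ for certain explicit units $u_n$. Consequently, the defining condition of $W'[F] \subseteq W[F]$, namely $a_n = 0$ for all $n \geq N_0$, translates to $\gamma_{p^n}(a_0) = 0$ for all $n \geq N_0$. The goal is to show this condition is equivalent to $\gamma_N(a_0) = 0$ for all $N \gg 0$, which is precisely what carves out $\mathbb{G}_a^{\#,\wedge}$ inside $\mathbb{G}_a^{\#}$.

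The core of the argument is the combinatorial fact that, in characteristic $p$, a divided-power structure is determined by its values at the prime powers $p^i$. Concretely, for any $N$ with base-$p$ expansion $N = \sum_i n_i p^i$ (so $0 \leq n_i < p$), iterating the divided-power axiom $\gamma_m \gamma_n = \binom{m+n}{m}\gamma_{m+n}$ together with the identity $\gamma_{p^i}^{n_i} = \tfrac{(n_i p^i)!}{(p^i!)^{n_i}}\, \gamma_{n_i p^i}$, one obtains
\[
\gamma_N(a_0) \;=\; c_N \cdot \prod_i \gamma_{p^i}(a_0)^{n_i}
\]
for some coefficients $c_N \in \mathbb{F}_p^\times$. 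The unit-ness of $c_N$ follows from the two observations that (i) by Lucas's theorem, $\binom{n p^i + m p^j}{n p^i} \equiv 1 \pmod{p}$ whenever $i \neq j$ (so assembling disjoint digit contributions produces no loss), and (ii) by Legendre's formula, $v_p\bigl(\tfrac{(n_i p^i)!}{(p^i!)^{n_i}}\bigr) = 0$ when $n_i < p$.

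With this formula in hand, both directions are immediate: if $\gamma_{p^i}(a_0) = 0$ for $i \geq N_0$, then for any $N \geq p^{N_0}$ the base-$p$ expansion of $N$ has a nonzero digit $n_i$ at some position $i \geq N_0$, forcing $\gamma_N(a_0) = 0$; conversely, if $\gamma_N(a_0) = 0$ for $N \gg 0$, then in particular $\gamma_{p^n}(a_0) = 0$ for $p^n$ sufficiently large, which is exactly the nilpotence of the Witt vector. This establishes the bijection of sections, and naturality in $A$ is immediate from the construction, giving the equivalence of sheaves $W'[F] \cong \mathbb{G}_a^{\#,\wedge}$. The only real obstacle is the combinatorial identity displayed above; everything else is bookkeeping on the two sides of the identification.
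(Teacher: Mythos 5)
Your proof is correct, and it actually does something the paper's sketch does not: it closes a real gap. The paper's argument for this lemma is one sentence, deferring to Drinfeld's Lemma 3.2.5 and asserting that ``the $a_n$ form a system of divided powers'' which is ``eventually zero'' iff $(a_0,a_1,\ldots) \in W'$. But \autoref{lem114} only identifies $a_n$ with the divided power $\gamma_{p^n}(a_0)$ up to units, so the $W'$ condition translates to $\gamma_{p^n}(a_0)=0$ for $n \gg 0$, while membership in $\G_a^{\#,\wedge}$ requires $\gamma_N(a_0)=0$ for \emph{all} large $N$, not just prime powers. Your combinatorial identity $\gamma_N = c_N \prod_i \gamma_{p^i}^{n_i}$, with $c_N \in \F_p^\times$ and $N = \sum_i n_i p^i$ the base-$p$ expansion, is exactly what bridges the two, and the unit-ness check via Lucas and Legendre is accurate (one can also verify it in one shot: $v_p(N!) = \sum_i n_i \tfrac{p^i-1}{p-1} = v_p\bigl(\prod_i (p^i!)^{n_i}\bigr)$). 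So the approach is the same in spirit as the paper's --- transport the defining conditions through the $W[F] \cong \G_a^{\#}$ identification --- but your version is self-contained and supplies the nontrivial divided-power combinatorics that the paper leaves implicit.
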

\begin{proof}
		See Drinfeld \cite{shearedwitt}, lemma 3.2.5 for full details, but we give a sketch. When we constructed the equivalence \(W[F] \heq \G_a^{\#}\) in \autoref{lem114}, we stated that \((a_0, a_1, ...) \in W\) belongs to \(W[F]\) if and only if the \(a_n\) form a system of divided powers (up to minor technicalities). A system of divided powers belongs to \(\G_a^{\#, \wedge}\) if and only if the powers are eventually zero, we see that this equivalence identifies \(\G_a^{\#, \wedge}\) with \(W'[F],\) as desired.
\end{proof}

Unfortunately, it is not yet immediate to prove an analogue of \autoref{cor122} for \(\G_a^{\#, \wedge}.\) The trouble is that it's not clear that, for \(A\) semiperfect, the map \(W'(A) \to W'(A)\) is surjective. So we introduce another ideal, for which this surjectivity is apparent.
\begin{defn}
		We define \(\hat{W}(A) \subseteq W'(A)\) the ideal consisting of all sequences \((a_0, a_1, ...)\in W(A)\) such that \(a_n = 0\) for all \(n \gg 0,\) and so that each \(a_i\) is nilpotent.
\end{defn}

\begin{lemma}[Drinfeld] \label{lem185} 
		The canonical inclusion \(\hat{W}[F] \inclusion W'[F]\) is an equivalence, and so \autoref{lem184} gives \(\hat{W}[F] \heq \G_a^{\#, \wedge}.\)
\end{lemma}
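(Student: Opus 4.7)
The plan is to verify that the inclusion $\hat{W}[F] \hookrightarrow W'[F]$ of subsheaves of $W$ is an equality by checking on sections over an arbitrary $\F_p$-algebra $A$. By definition, $\hat{W}$ is the subsheaf of $W'$ whose sections are eventually-zero sequences with additionally nilpotent components, so the inclusion $\hat{W}[F] \subseteq W'[F]$ is tautological. Hence the only thing to show is: if $(a_0, a_1, \ldots) \in W'(A)$ is eventually zero and annihilated by the Witt-vector Frobenius, then each $a_i$ is already nilpotent.

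The key input I would invoke is the standard fact that over an $\F_p$-algebra $A$, the Witt-vector Frobenius acts componentwise by the Frobenius of $A$: one has $F(a_0, a_1, a_2, \ldots) = (a_0^p, a_1^p, a_2^p, \ldots)$. (This is verified by the usual ghost-component calculation together with $p = 0$ in $A$.) Granting this, an element $(a_0, a_1, \ldots)$ lies in $W[F](A)$ if and only if $a_i^p = 0$ for every $i$; in particular every element of $W[F](A)$ automatically has nilpotent components, with nilpotency exponent dividing $p$.

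Putting these observations together: if $(a_0, a_1, \ldots) \in W'(A)[F]$, then it is both eventually zero (by membership in $W'$) and has $p$-nilpotent components (by membership in $W[F]$), so it already lies in $\hat{W}(A)[F]$. This gives the reverse inclusion $W'[F] \subseteq \hat{W}[F]$, and combined with \autoref{lem184} yields the identification $\hat{W}[F] \heq \G_a^{\#, \wedge}$.

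There is no substantial obstacle; the statement is essentially a tautology once one unpacks Drinfeld's definition of $\hat{W}$ and recalls the componentwise-Frobenius formula on Witt vectors in characteristic $p$. The point of recording the lemma, as I read it, is simply to allow one to replace the awkward ideal $W'$ by the more convenient $\hat{W}$ (whose Frobenius is manifestly surjective on semiperfect algebras, being componentwise a surjection onto the nilradical) without losing anything after taking Frobenius kernels, setting up the sheared analogue of \autoref{cor122} that the subsequent development will need.
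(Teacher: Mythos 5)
Your proof is correct, and it takes a genuinely different route from the paper's. You invoke the classical fact that over an \(\F_p\)-algebra the Witt-vector Frobenius acts componentwise (i.e.\ \(F(a_0,a_1,\dots) = (a_0^p,a_1^p,\dots)\), proved by reducing the universal Frobenius polynomials mod \(p\)), so that \(W[F]\) is exactly the subsheaf of tuples with \(p\)-nilpotent entries; intersecting with \(W'\) then lands tautologically inside \(\hat{W}\). The paper instead routes through \autoref{lem184}: an element of \(W'[F]\) corresponds under \(W[F] \heq \G_a^{\#}\) to a (nilpotent) divided-power structure, its Witt components are (up to units) divided powers, and any element of a characteristic-\(p\) ring admitting divided powers satisfies \(a^p = p!\,\gamma_p(a) = 0\) and hence is nilpotent. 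Both arguments are short and correct. Yours has the advantage of being self-contained and not relying on \autoref{lem184} at all, at the cost of importing the componentwise-Frobenius formula; the paper's is a clean corollary of the identification it has just established, keeping the divided-power picture front and center, which is the picture that matters for the rest of the development. One small caveat: your parenthetical ``verified by the usual ghost-component calculation together with \(p=0\) in \(A\)'' is a bit loose --- ghost components alone do not determine Witt components over rings with \(p\)-torsion; the honest argument reduces the universal polynomials \(F_n \in \Z[a_0,\dots,a_{n+1}]\) modulo \(p\) and checks \(F_n \equiv a_n^p\). But the fact itself is standard and correct.
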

\begin{proof}
		By \autoref{lem184}, the Witt components of any element of \(W'[F]\) admit divided powers; but any element of a characteristic \(p\) ring admitting divided powers is automatically nilpotent, and so the inclusion \(\hat{W}[F] \inclusion W'[F]\) is an equality, as desired. 
\end{proof}	
\begin{cor} \label{cor1222}
		For \(A\) any \emph{semiperfect} \(\F_p\)-algebra, the complex \(R\Gamma_{\operatorname{flat}}(\Spec(A), \G_a^{\#, \wedge})\) is quasi-isomorphic to \(\G_a^{\#, \wedge}(A)[0].\) 
\end{cor}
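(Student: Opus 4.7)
Following the pattern of the proof of \autoref{cor122}, the plan is to use the identification $\G_a^{\#,\wedge} \simeq \hat{W}[F]$ supplied by \autoref{lem185} and produce a short exact sequence of fpqc sheaves of abelian groups
\[0 \to \G_a^{\#,\wedge} \to \hat{W} \xto{F} F_*\hat{W} \to 0,\]
from which the corollary will follow by taking flat cohomology over $\Spec A$ and tracing through the long exact sequence. That $F$ restricts to an endomorphism of $\hat{W}$ is immediate because, in characteristic $p$, the Witt-vector Frobenius acts as componentwise $p$-th power, which preserves both nilpotence of each entry and the condition of vanishing in all but finitely many positions. Surjectivity of $F$ as a map of fpqc sheaves follows by passing to a semiperfect cover, where componentwise $p$-th roots exist; choosing $0$ as the lift of $0$ ensures the chosen preimage stays inside $\hat{W}$. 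The same argument applied directly to $A$ itself shows that $F : \hat{W}(A) \to \hat{W}(A)$ is literally surjective when $A$ is semiperfect.

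The crux is then to show that $R\Gamma_{\operatorname{flat}}(\Spec A, \hat{W})$ is concentrated in degree $0$ for $A$ semiperfect, at which point the long exact sequence associated to the displayed short exact sequence collapses and yields $R\Gamma_{\operatorname{flat}}(\Spec A, \G_a^{\#,\wedge}) \simeq \G_a^{\#,\wedge}(A)[0]$. The approach I would take parallels the computation $R\Gamma_{\operatorname{flat}}(\Spec A, W) \simeq W(A)[0]$ used in \autoref{cor122}: exhibit $\hat{W}$ as an increasing filtered colimit of sub-ideals cut out by bounds on the support and on the nilpotency order of the Witt coordinates, each of which is represented by a finitely presented affine $\F_p$-scheme. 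Filtering each such piece by the Verschiebung, one reduces to flat cohomology vanishing for (sub-group-sheaves of) $\G_a$ on affine bases, which holds by fpqc descent for quasicoherent sheaves. The colimit step is then handled by a Mittag-Leffler argument as in \autoref{cor122}, with the semiperfectness of $A$ killing any $R^1 \lim$ contributions coming from layers on which the transition maps are not yet surjective.

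The main obstacle is precisely this filtration/colimit step. Unlike the approximating pieces $W_n = \G_a^n$ in the non-sheared case, the natural approximants to $\hat{W}$ are subtler: nilpotence is not a scheme-theoretic condition cut out by finitely many equations of fixed degree, and the subobjects that are \emph{a priori} affine schemes are not automatically closed under Witt addition. The technical heart of the proof will be organizing the filtration on $\hat{W}$ so that these two issues do not interact badly. Once this step is in place, the remainder of the argument is entirely formal and follows the long-exact-sequence manipulation of \autoref{cor122} line for line, producing the stated quasi-isomorphism.
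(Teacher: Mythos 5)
You have the right reduction and the right short exact sequence of fpqc sheaves
\[
0 \to \G_a^{\#,\wedge} \to \hat{W} \xto{F} \hat{W} \to 0,
\]
and your arguments that \(F\) preserves \(\hat{W}\) and is surjective on semiperfect points are fine. The gap is exactly the one you flag yourself: your proposed proof of \(\hat{W}\)-acyclicity on \(\Spec(A)\) rests on a filtration of \(\hat{W}\) by finitely presented affine approximants plus a Verschiebung filtration, and you acknowledge that nilpotence is not a bounded-degree scheme-theoretic condition and that the obvious truncations are not closed under Witt addition. You label this ``the technical heart of the proof'' but do not supply it; as written the argument does not close.

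The paper gets around this with a different and cleaner device, which you should absorb. Rather than approximating \(\hat{W}\) by finitely presented subschemes, one writes \(\hat{W} = \directlim_n \prod_{i=0}^n \hat{\G_a}\) as abelian fpqc sheaves, where \(\hat{\G_a}\) is the sheaf \(R \mapsto \nil(R)\) (an ind-scheme, not a scheme, so there is no need — and no attempt — to make it finitely presented). The acyclicity of \(\hat{\G_a}\) on \(\Spec(A)\) for \(A\) semiperfect is then extracted not from a filtration argument but from the short exact sequence of fppf sheaves
\[
0 \to \hat{\G_a} \to \G_a \to \G_{a,\perf} \to 0,
\]
where \(\G_{a,\perf}(R) := R_{\perf}\) is the colimit perfection. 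Both \(\G_a\) and \(\G_{a,\perf}\) are quasicoherent (the latter as a filtered colimit of copies of \(\G_a\)), hence have no higher flat cohomology on affines, and for \(A\) semiperfect the map \(A \to A_{\perf}\) is surjective; the long exact sequence then yields \(R\Gamma_{\operatorname{fpqc}}(\Spec(A), \hat{\G_a}) = \hat{\G_a}(A)[0]\). This is the ingredient your proposal is missing, and it completely sidesteps the representability and closure-under-addition issues you ran into. Once one has it, the remainder of the paper's proof runs exactly as you predicted, taking the colimit over \(n\) and feeding the resulting \(\hat{W}\)-acyclicity into the long exact sequence for \(F\).
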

\begin{proof}
		Observe that \(\hat{W}\) is stable under \(F\) (this is easiest to see using ghost components). We claim first that if \(A\) is semiperfect, then \(F : \hat{W}(A) \to \hat{W}(A)\) is surjective; this is easy to check by hand, since given \(a = (a_0, a_1, ..., a_n, 0, 0, ...) \in \hat{W}(A),\) by semiperfectness of \(A\) we can find \(b_i \in A\) with \(b_i^p = a_i\), and then \(b = (b_0, b_1, ..., b_n, 0, 0, ...) \in \hat{W}(A)\) has \(F(b) = a.\) 

		By \autoref{lem185}, it thus suffices to prove that \(R\Gamma_{\operatorname{flat}}(\Spec(A), W')\) is concentrated in degree 0 to get our claim, since we have a short exact sequence
		\begin{equation} \label{eq210} 0 \to \G_a^{\#, \wedge} \to \hat{W} \xto{F} \hat{W} \to 0\end{equation}
		of fpqc sheaves, with \(F : \hat{W}(A) \to \hat{W}(A)\) surjective for \(A.\) 

		Thus to get our result, it suffices to prove that \(\Spec(A)\) is \(\hat{W}\)-acyclic in the fpqc topology. As an abelian sheaf, we may write \(\hat{W} = \directlim_n \prod_{i=0}^n \hat{\G_a},\) since every element of \(\hat{W}(A)\) can be represented as a finite tuple of nilpotents. 

		In remark 2.2.18 of \cite{bhatt}, Bhatt uses the short exact sequence
		\[0 \to \hat{\G_a} \to \G_a \to \G_{a,\perf} \to 0\]
		of fppf sheaves (where \(\G_{a,\perf}(R) := R_{\perf}\) is the \emph{colimit} perfection) to prove that 
		\[R\Gamma_{\operatorname{fppf}}(\Spec(A), \hat{\G_a}) = \cofib(A \to A_{\perf})[-1].\]
		For \(A\) semiperfect, the map \(A \to A_{\perf}\) is surjective, and so the above cohomology is just \(\hat{\G_a}(A).\) Note that \(\G_a\) is a quasicoherent sheaf and hence has no higher cohomology; \(\G_{a, \perf},\) being a filtered colimit of quasicoherent sheaves (namely copies of \(\G_a\)), is still quasicoherent and hence the higher fpqc cohomology of \(\G_{a,\perf}\) on \(\Spec(A)\) vanishes; thus we deduce \(R\Gamma_{\operatorname{fpqc}}(\Spec(A), \hat{\G_a}) = \cofib(A \to A_{\perf})[-1]\) as well. 

		Therefore
		\begin{align*}
				R\Gamma_{\operatorname{fpqc}}(\Spec(A), \hat{W}) &\heq R\Gamma_{\operatorname{fpqc}}\left(\Spec(A), \directlim_n \prod_{i=0}^n \hat{\G_a}\right) \\
									  &\heq \directlim_n A^{\oplus (n+1)} \\
									  &\heq \hat{W}(A).
		\end{align*}
		In particular, for a \emph{semiperfect} \(\F_p\)-algebra, we have \(R\Gamma_{\operatorname{fpqc}}(\Spec(A), \hat{W}) = \hat{W}(A).\) 

		We conclude that
		\[R\Gamma_{\operatorname{fpqc}}(\Spec(A), \G_a^{\#, \wedge}) = \G_a^{\#, \wedge}(A)[0],\]
		as desired.
\end{proof}

\autoref{cor1222} furnishes a sheared version of the incredibly powerful \autoref{eq172}. We next introduce the \emph{sheared} Witt vectors, and use them to compute the homotopy groups of \(\G_a^{\hat{\dR}}.\) The idea is to introduce an fpqc sheaf of rings \(\sW\), analogous to the Witt vectors, so that \(\G_a^{\hat{\dR}} \heq F_*\sW/p.\) 

Set \(Q_n := W/\hat{W}[F^n],\) where we take the quotient in the category of fpqc sheaves. Drinfeld then defines
\[\sW := \inverselim_n Q_n,\]
where the transition maps \(Q_{n+1} \to Q_n\) are given by Frobenius. There is a natural map \(\sW \to Q_0 = W.\) Composing with the map \(W \to \G_a\) sending a Witt vector to its 0th ghost component, we get a natural map \(R : \sW \to \G_a.\) 

This definition has a number of nice properties. Firstly, \(\sW\) naturally has a Frobenius \(F : \sW \to \sW\) and Verschiebung \(\tilde{V} : F_*(\sW) \to \sW,\) which have exactly the same properties as the usual Frobenius and Verschiebung on \(W\) do; see \cite{shearedwitt} for extensive discussion of these points, particularly Drinfeld's sections 1.4 and 3.2. In particular, Drinfeld proves 
\begin{enumerate}
		\item \(\tilde{V}(x) \cdot y = \tilde{V}(x \cdot F(y))\) (see 3.2.4 of \cite{shearedwitt}),
		\item \(F\tilde{V} = \tilde{V}F = p\) (see \cite{bmvz}), 
		\item \(F : \sW \to \sW\) is faithfully flat, and \(\sW[F] = \hat{W}[F]\) (Lemma 3.2.5 of \cite{shearedwitt})
		\item \(\tilde{V} : F_*\sW \to \sW\) is injective, and \(\im\tilde{V} = \ker(R : \sW \to \G_a)\) (Lemma 3.2.5 of \cite{shearedwitt}),
		\item moreover, \(R : \sW \to \G_a\) is surjective (Lemma 3.2.5 of \cite{shearedwitt}). 
\end{enumerate}

\begin{prop} \label{prop316}
		There is a natural quasi-isomorphism of animated \(W\)-algebras 
		\[\G_a^{\hat{\dR}} \heq \Cone(F_*\sW \xto{p} F_*\sW) =: F_*\sW/p.\] 
\end{prop}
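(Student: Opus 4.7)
The plan is to exploit Drinfeld's five listed properties of \(\sW\) to factor multiplication by \(p\) on \(F_*\sW\) as a composition of \(\tilde V\) and \(F\), and then apply the octahedral axiom to compare the resulting cofiber with the quasi-ideal cone \(\G_a^{\hat{\dR}} = \Cone(\G_a^{\#,\wedge} \to \G_a)\).

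Concretely, the properties of \(\sW\) listed just before the statement supply two short exact sequences of sheaves of \(\sW\)-modules,
\[0 \to \G_a^{\#,\wedge} \to \sW \xto{F} F_*\sW \to 0, \qquad 0 \to F_*\sW \xto{\tilde V} \sW \xto{R} \G_a \to 0.\]
The first comes from the faithful flatness of \(F\) together with \(\sW[F] = \hat W[F] = \G_a^{\#,\wedge}\) (property (iii) and \autoref{lem185}); the second from the injectivity of \(\tilde V\) with image \(\ker R\) and the surjectivity of \(R\) (properties (iv) and (v)). Using \(p = F\tilde V\) from property (ii), I would then factor multiplication by \(p\) on \(F_*\sW\) as the composite
\[F_*\sW \xto{\tilde V} \sW \xto{F} F_*\sW\]
and apply the octahedral axiom. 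The resulting fiber sequence
\[\cofib(\tilde V) \to \cofib(p) \to \cofib(F)\]
has outer terms \(\G_a\) and \(\G_a^{\#,\wedge}[1]\) by the two exact sequences above, so rotating gives a fiber sequence
\[\G_a^{\#,\wedge} \to \G_a \to \cone(F_*\sW \xto{p} F_*\sW).\]
A brief inspection---using that \(R\) restricted to \(\sW[F] = \G_a^{\#,\wedge}\) implements the canonical forgetful map \((a,\gamma_\bullet)\mapsto a\)---identifies the leftmost map with the canonical inclusion \(\G_a^{\#,\wedge} \hookrightarrow \G_a\). This yields an equivalence \(F_*\sW/p \heq \G_a^{\hat{\dR}}\) of underlying \(\sW\)-module complexes.

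The main obstacle will be promoting this to an equivalence of animated \(W\)-algebras, since the octahedral argument by itself only produces an equivalence of \(\sW\)-modules. To handle this, I would observe that \(\tilde V: F_*\sW \to \sW\) is itself a quasi-ideal: the identity \(\tilde V(x)\cdot y = \tilde V(x\cdot F(y))\) supplies the required \(\sW\)-linearity, while \(F\tilde V = p\) combined with the commutativity of \(\sW\) verifies the quasi-ideal relation \(\tilde V(x)\cdot y = \tilde V(y)\cdot x\). One can then realize the octahedral identification as coming from a zigzag of quasi-ideal morphisms whose cones reproduce both \(F_*\sW/p\) and \(\G_a^{\hat{\dR}}\), so that the induced underlying-complex equivalence is automatically upgraded to an equivalence of animated rings. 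Alternatively, the argument for the non-sheared \autoref{prop158} (Bhatt's Corollary 2.6.8 in \cite{bhatt}) is purely formal in \(W\), and should transport verbatim to the sheared setting once the five listed properties of \(\sW\) are in hand.
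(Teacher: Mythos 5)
Your octahedral argument correctly reproduces the module-level equivalence, and it uses exactly the same raw ingredients the paper does: the two short exact sequences coming from \(F\) (with kernel \(\sW[F] = \G_a^{\#,\wedge}\)) and from \(\tilde V\) (with cokernel \(\G_a\)), together with the factorization \(p = F\tilde V\). The identification of the connecting map \(\G_a^{\#,\wedge}\to\G_a\) with the forgetful map via \(R\) restricted to \(\sW[F]\) is also right. So the underlying mathematics is the same as the paper's.

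Where the two routes genuinely diverge is in how they deliver the ring structure. The octahedral axiom only sees the underlying \(\sW\)-module level, and your proposal then has to patch the ring structure back in. You correctly observe that \(\tilde V : F_*\sW\to\sW\) is a quasi-ideal (your verification via \(\tilde V(x)\cdot y = \tilde V(x\cdot F(y))\) and \(F\tilde V = p\) is right), and you gesture at a ``zigzag of quasi-ideal morphisms,'' but you never actually construct the zigzag. That construction is exactly what the paper supplies: the auxiliary quasi-ideal \(\G_a^{\#,\wedge}\oplus F_*\sW \xrightarrow{(\can,\,\tilde V)} \sW\), together with maps of quasi-ideal pairs to both \([\G_a^{\#,\wedge}\to\G_a]\) (via the first projection and \(R\)) and \([F_*\sW\xrightarrow{p} F_*\sW]\) (via the second projection and \(F\)). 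The paper then checks by hand that both maps induce equivalences on cones, using surjectivity of \(R\), surjectivity of \(F\), \(\mathrm{im}\,\tilde V = \ker R\), and \(\sW[F] = \G_a^{\#,\wedge}\). Once you work at the quasi-ideal level throughout, the octahedral axiom step becomes unnecessary: the ring-level equivalence falls out directly from the two quasi-ideal maps, and the module-level statement is a consequence rather than an intermediate stop. So the paper's packaging is slightly more efficient; your octahedral formulation is a nice conceptual rephrasing of the module-level content, but to close the ring-structure gap you would end up writing down the paper's diagram anyway.

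One small caution: when you call \(\tilde V\) a quasi-ideal, you should be explicit that \(F_*\sW\) is being regarded as an \(\sW\)-module via \(F\) (restriction of scalars), and that the quasi-ideal condition \(\tilde V(x)\cdot y = \tilde V(y)\cdot x\) is computed in that module structure; unwinding it, both sides become \(p\,x y\) in the commutative ring \(\sW\), which is what makes it work. That is the content hidden in your phrase ``\(F\tilde V = p\) combined with commutativity.''
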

\begin{proof}
		We will use the properties of \(\tilde{V}, F\) mentioned above. There is a commutative diagram
		\begin{center}
				\begin{tikzcd}
						\G_a^{\#, \wedge} \ar[d] & \G_a^{\#, \wedge} \oplus F_*\sW \ar[d, "{(\can, V)}"] \ar[r] \ar[l] & F_*\sW \ar[d, "p"] \\
						\G_a & \sW \ar[l, "R"] \ar[r, "F"] & F_*\sW.
				\end{tikzcd}
		\end{center}
		Here, the map \(\can : \G_a^{\#, \wedge} \to \sW\) is the one furnishing the identification
		\[\G_a^{\#, \wedge} \heq \sW[F].\] 
		
		Each column is a quasi-ideal (because \(\tilde{V}(x) \cdot y = \tilde{V}(x \cdot F(y))\)), and the two squarse are morphisms of pairs of a ring with a quasi-ideal. Thus the left square induces a map
		\begin{equation} \label{eq264} \cone(\G_a^{\#, \wedge} \oplus F_*\sW \to \sW) \to \G_a^{\hat{\dR}}.\end{equation}

		As \(R : \sW \to \G_a\) is surjective, the map \autoref{eq264} is surjective. As \(\im\tilde{V} = \ker(R),\) we deduce \autoref{eq264} is also injective (as every element of \(R^{-1}(\G_a^{\#, \wedge})\) belongs to the image of \(\G_a^{\#, \wedge} \oplus F_*\sW \to \sW\)), and hence 
		\[\cone(\G_a^{\#, \wedge} \oplus F_*\sW \to \sW) \heq \G_a^{\hat{\dR}}.\] 

		Similarly, the right square induces a map
		\[\cone(\G_a^{\#, \wedge} \oplus F_*\sW \to W) \to F_*\sW/p,\]
		which is also an equivalence: surjectivity is because \(F\) is already surjective, and injectivity is as \(\G_a^{\#, \wedge}\to \sW\) has image \(\hat{W}[F] = \sW[F].\) 

		Composing these two equivalences, we conclude. 
\end{proof}
\begin{cor} \label{cor270} 
		As sheaves of \(\sW\)-modules, we have
		\[\pi_0(\G_a^{\hat{\dR}}) = F_*\G_a, \pi_1(\G_a^{\hat{\dR}}) = F_*\G_a^{\#, \wedge}.\] 
\end{cor}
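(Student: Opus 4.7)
The plan is to start from the quasi-isomorphism $\G_a^{\hat{\dR}} \heq \cone(F_*\sW \xto{p} F_*\sW)$ provided by \autoref{prop316}, and then compute the kernel and cokernel of multiplication by $p$ on $\sW$ using the identities $p = F\tilde{V} = \tilde{V}F$ and the properties of $F$ and $\tilde{V}$ listed just before \autoref{prop316}. Since $F_*$ is exact, the associated long exact sequence yields
\[
\pi_0(\G_a^{\hat{\dR}}) \heq F_*(\sW/p\sW), \qquad \pi_1(\G_a^{\hat{\dR}}) \heq F_*(\sW[p]),
\]
so it remains to identify $\sW/p\sW \heq \G_a$ and $\sW[p] \heq \G_a^{\#, \wedge}$ as $\sW$-modules.

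For the kernel, I would argue that $\sW[p] = \sW[F]$. Indeed, $p = \tilde{V}F$, and since $\tilde{V}: F_*\sW \to \sW$ is injective, an element $x \in \sW$ satisfies $\tilde{V}F(x) = 0$ if and only if $F(x) = 0$. Then by \autoref{lem184} and \autoref{lem185}, the equivalence $\G_a^{\#, \wedge} \heq \hat{W}[F] = \sW[F]$ identifies $\sW[p]$ with $\G_a^{\#, \wedge}$, giving the claimed description of $\pi_1$.

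For the cokernel, I would use $p = \tilde{V}F$ to write $\im(p) = \tilde{V}(\im F)$ as subsheaves of $\sW$. Since $F: \sW \to \sW$ is faithfully flat (hence surjective as a map of fpqc sheaves), $\im(F) = \sW$, so $\im(p) = \im(\tilde{V})$. Combined with the fact that $\im(\tilde{V}) = \ker(R: \sW \to \G_a)$ and that $R$ is surjective, we obtain a short exact sequence $0 \to \im(\tilde{V}) \to \sW \xto{R} \G_a \to 0$, which identifies $\sW/p\sW$ with $\G_a$, and hence $\pi_0(\G_a^{\hat{\dR}}) \heq F_*\G_a$.

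The main potential pitfall is the use of surjectivity of $F$: it fails pointwise on sections over a general test scheme, and only holds after fpqc sheafification. This is where the hypothesis of working over $\F_p$ intervenes, as flagged in the warning: the identity $F\tilde{V} = \tilde{V}F = p$ (and the faithful flatness of $F$ on $\sW$) relies on Witt-vector arithmetic in characteristic $p$. Once these are granted, the two identifications above are essentially formal, and the argument is a sheared analogue of \autoref{cor198}, just as \autoref{prop316} was of \autoref{prop158}.
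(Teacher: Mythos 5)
Your proposal is correct and follows essentially the same route as the paper's own proof: both start from $\G_a^{\hat{\dR}} \heq \cone(F_*\sW \xto{p} F_*\sW)$ from \autoref{prop316}, use $p = \tilde{V}F$ with $\tilde{V}$ injective to get $\ker(p) = \ker(F) = \sW[F] \heq \G_a^{\#,\wedge}$, and use $F$ surjective to identify $\coker(p) = \sW/\tilde{V}\sW \heq \G_a$. The only cosmetic difference is that you reassemble the short exact sequence $0 \to \im(\tilde{V}) \to \sW \xto{R} \G_a \to 0$ from the listed properties of $\tilde{V}$ and $R$, whereas the paper cites Drinfeld's (3.13) directly — but these are the same fact.
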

\begin{proof}
		By \autoref{prop316}, we have \(\G_a^{\hat{\dR}} \heq F_*\sW/p.\) 

		We may write \(p = \tilde{V}F.\) Thus 
		\[\pi_0(F_*\sW/p) = F_*\sW/F_*(V\sW)) \heq F_*(\sW/V\sW).\]
		But the short exact sequence (3.13) of Drinfeld's \cite{shearedwitt} identifies \(\sW/V\sW \heq \G_a,\) and so
		\[\pi_0(\G_a^{\hat{\dR}}) = F_*\G_a,\]
		as desired.

		For \(\pi_1,\) we have
		\[\pi_1(F_*\sW/p) = F_*\ker(p : \sW \to \sW).\]
		But \(p = \tilde{V}F\) with \(\tilde{V}\) injective and \(F\) surjective, so 
		\[\ker(p) = \ker(F : \sW \to \sW) = \sW[F] = \hat{W}[F] = \G_a^{\#, \wedge},\] 
		and so we conclude.
\end{proof}

We have now finally checked the main technical properties of \(\G_a^{\dR}\) and \(\G_a^{\hat{\dR}}\) which we shall use in this note.

\subsection{Defining de Rham stacks}

We can now define the de Rham stack.
\begin{defn}
		Let \(X\) be an \(\F_p\)-stack. We define \(X^{\dR}\) and \(X^{\hat{\dR}}\) by transmutation from the ring stacks \(\G_a^{\dR}\) and \(\G_a^{\hat{\dR}},\) as 
		\[X^{\dR}(A) := X(\G_a^{\dR}(A)),\]
		\[X^{\hat{\dR}}(A) := X(\G_a^{\hat{\dR}}(A)).\] 
\end{defn}

Note that formation of the (sheared) de Rham stack is functorial, and commutes with all derived limits. Morever, the natural maps \(\G_a \to \G_a^{\hat{\dR}} \to \G_a^{\dR}\) (coming from the natural maps of \(\G_a\)-quasi-ideals \(0 \to \G_a^{\#, \wedge} \to \G_a^{\#}\)) induce maps
\begin{equation} \label{eq345} X \to X^{\dR} \to X^{\hat{\dR}}.\end{equation} 

\begin{defn}
		For \(f : X \to S\) a morphism of \(\F_p\)-stacks, we define \(S\)-stacks
		\[(X/S)^{\dR} := X^{\dR} \times_{S^{\dR}} S,\]
		\[(X/S)^{\hat{\dR}} := X^{\hat{\dR}} \times_{S^{\hat{\dR}}} S.\] 
\end{defn}
\begin{remark}
		In principle, one can also define these stacks by transmutation relative to the base \(S,\) but the theory of transmutation over an arbitrary base seems quite delicate, and so we have chosen not to develop it as this more direct definition suffices for our applications. The author thanks Bhargav Bhatt for observing that this transmutation-free definition would lead to a more efficient presentation. 
\end{remark}

We can make a relative version of the morphisms \autoref{eq345}. 
\begin{construction}
		Fix \(f :X \to S\) a morphism of \(\F_p\)-stacks. We define
		\[\pi_{X/S} : X \to (X/S)^{\dR}\]
		the morphism of \(S\)-stacks induced by the commutative diagram
		\begin{center}
				\begin{tikzcd}
				X \ar[r, "\pi_X"] \ar[d, "f"] & X^{\dR} \ar[d, "f^{\dR}"] \\
				S \ar[r, "\pi_S"] & S^{\dR}.
		\end{tikzcd}
		\end{center}	
		To check commutativity, take \(x : \Spec(A) \to X\) an \(A\)-point of \(X.\) Then \(\pi_X(x) \in X^{\dR}(A) = X(\G_a^{\dR}(A))\) arises as the composition
		\[\Spec(\G_a^{\dR}(A)) \xto{\can} \Spec(A) \xto{x} X.\]
		Applying \(f^{\dR}\) gives us the point \(f^{\dR}(\pi_X(x)) \in S^{\dR}(A) = S(\G_a^{\dR}(A))\) defined by the composition
		\begin{equation} \label{eq371} \Spec(\G_a^{\dR}(A)) \xto{\can} \Spec(A) \xto{x} X \xto{f} S.\end{equation}
		
		In contrast, \(\pi_S(f(x)) \in S(\G_a^{\dR}(A))\) is formed as the composition
		\[\Spec(\G_a^{\dR}(A)) \xto{\can} \Spec(A) \xto{f(x)} S,\]
		which is evidently equivalent to \autoref{eq371}. 

		Similarly, we may construct a morphism
		\[\pi_{X/S, \wedge} : X \to (X/S)^{\hat{\dR}}.\] 
\end{construction}

We now prove a few results about \((X/S)^{\dR}\) and \((X/S)^{\hat{\dR}}\) which allow us to more easily prove statements about de Rham stacks via descent.

\subsubsection{Change of base for relative de Rham stacks}

\begin{prop} \label{prop385} 
		Let \(f : X \to S\) be a morphism of \(\F_p\)-stacks, and \(g : S'\to S\) a base change. Define \(X' := X \times_S S',\) recalling that to us all stacks are derived, and so this fiber product is really a \emph{derived} fiber product. Then the diagrams
		\begin{center}
				\begin{tikzcd}
						(X'/S')^{\dR} \ar[r] \ar[d] & (X/S)^{\dR} \ar[d] \\
						S' \ar[r, "g"] & S
				\end{tikzcd}
		\end{center}
		and
		\begin{center}
				\begin{tikzcd}
						(X'/S')^{\hat{\dR}} \ar[r] \ar[d] & (X/S)^{\hat{\dR}} \ar[d] \\
						S' \ar[r, "g"] & S
				\end{tikzcd}
		\end{center}
		are Cartesian diagrams of derived stacks.
\end{prop}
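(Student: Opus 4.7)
The proof is essentially formal, reducing to the fact that transmutation by any ring stack preserves fiber products. Let me sketch the argument for the unsheared version; the sheared version will be identical with $\G_a^{\hat{\dR}}$ in place of $\G_a^{\dR}.$

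First, I would observe that the transmutation functor $X \mapsto X^{\dR}$ preserves fiber products. Indeed, for any test ring $A,$ the anima of $A$-points is
\[(X \times_S S')^{\dR}(A) = (X \times_S S')(\G_a^{\dR}(A)) = X(\G_a^{\dR}(A)) \times_{S(\G_a^{\dR}(A))} S'(\G_a^{\dR}(A)) = X^{\dR}(A) \times_{S^{\dR}(A)} (S')^{\dR}(A),\]
where the middle equality is just the definition of $X \times_S S'$ as a (derived) fiber product of stacks, evaluated on the animated ring $\G_a^{\dR}(A).$ Thus $(X')^{\dR} \heq X^{\dR} \times_{S^{\dR}} (S')^{\dR}.$

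With this in hand, the proof is a computation. Unfolding the definition of the relative de Rham stack, I would compute
\[(X'/S')^{\dR} = (X')^{\dR} \times_{(S')^{\dR}} S' \heq \left(X^{\dR} \times_{S^{\dR}} (S')^{\dR}\right) \times_{(S')^{\dR}} S' \heq X^{\dR} \times_{S^{\dR}} S',\]
using associativity of fiber products to collapse the middle term. On the other hand,
\[(X/S)^{\dR} \times_S S' \heq \left(X^{\dR} \times_{S^{\dR}} S\right) \times_S S' \heq X^{\dR} \times_{S^{\dR}} S'.\]
These two expressions agree, and tracing through the natural maps confirms that the square in the proposition exhibits this common value as the fiber product. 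The sheared statement follows by replacing $\G_a^{\dR}$ with $\G_a^{\hat{\dR}}$ throughout.

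There is no real obstacle to speak of — the only thing worth being careful about is the insistence that fiber products be derived, since $X \to S$ need not be flat and the transmutation functor is sensitive to the animated ring structure on $\G_a^{\dR}(A).$ Working with derived fiber products throughout (as the statement of the proposition already does) means that the identity $(X \times_S S')(R) = X(R) \times_{S(R)} S'(R)$ holds for any animated ring $R,$ which is exactly what the argument requires.
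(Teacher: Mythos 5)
Your proposal is correct and follows essentially the same route as the paper's own proof: both arguments reduce to the fact that formation of the (sheared) de Rham stack commutes with derived limits, and then chain associativity of fiber products to collapse $(X'/S')^{\dR} \heq X^{\dR}\times_{S^{\dR}} S' \heq (X/S)^{\dR}\times_S S'.$ The only difference is cosmetic: you re-derive the commutation with limits by evaluating the transmutation formula on test animated rings, whereas the paper simply cites this fact (which it records immediately after defining the de Rham stack).
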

\begin{proof}
		The proof for the de Rham and sheared de Rham stacks are identical, so we only state the proof for the sheared version. As formation of sheared de Rham stacks commutes with derived limits, the diagram
		\begin{center}
				\begin{tikzcd}
						(X')^{\hat{\dR}} \ar[r] \ar[d] & X^{\hat{\dR}} \ar[d] \\
						(S')^{\hat{\dR}} \ar[r] & S^{\hat{\dR}}
				\end{tikzcd}
		\end{center}
		is derived Cartesian. Hence
		\begin{align*}
				(X'/S')^{\hat{\dR}} &\heq (X')^{\hat{\dR}} \times_{(S')^{\hat{\dR}}} S' \\
									&\heq (X^{\hat{\dR}} \times_{S^{\hat{\dR}}} (S')^{\hat{\dR}}) \times_{(S')^{\hat{\dR}}} S' \\
									&\heq X^{\hat{\dR}} \times_{S^{\hat{\dR}}} S',
		\end{align*}
		as desired.
\end{proof}

\autoref{prop385} allows us to check many properties of relative de Rham stacks after performing passing to a flat cover of the base. Sometimes it is useful to also pass to a flat cover of the \emph{source}; our next section will explain how to do this.

\subsubsection{Flatness of \(\pi_{X/S}\) and \(\pi_{X/S, \wedge}\)}

Before giving our result, we recall a basic result from derived deformation theory.
\begin{lemma} \label{lem589}
		Let \(X \to \Spec(k)\) be a \(k\)-scheme, and \(A\) an \emph{animated} \(k\)-algebra. If \(B\) is a square-zero extension of \(A\) by \(M,\) then for any point \(\eta \in X(A),\) there is a fiber sequence of anima
		\[\{\eta\} \times_{X(A)} X(B) \to \pt \to \Map_A(\eta^*\mathbb{L}_{X/k}, M[1]).\]
\end{lemma}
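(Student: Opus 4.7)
The lemma is the animated incarnation of first-order deformation theory for schemes; my plan is to reduce to the affine case and then invoke the universal property of the relative cotangent complex.

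\textbf{Step 1: Reduce to $X$ affine.} The map $B \to A$ is a square-zero extension, hence surjective on $\pi_0$ with square-zero kernel; so $\Spec \pi_0 A \hookrightarrow \Spec \pi_0 B$ is a homeomorphism of underlying topological spaces. Any lift $\tilde\eta \in X(B)$ of $\eta$ therefore has the same topological image as $\eta$ and must factor through any affine open $U \subseteq X$ containing that image. For such $U,$ the fiber $\{\eta\} \times_{X(A)} X(B)$ is identified with $\{\eta\} \times_{U(A)} U(B),$ and $\eta^*\mathbb{L}_{X/k} \heq \eta^*\mathbb{L}_{U/k}$ since $U \subseteq X$ is an open immersion. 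So we may assume $X = \Spec R$ is affine.

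\textbf{Step 2: Pullback presentation and universal property.} Present $B$ as a pullback of animated $k$-algebras
\begin{center}
\begin{tikzcd}
B \ar[r] \ar[d] & A \ar[d, "s"] \\
A \ar[r, "d"] & A \oplus M[1]
\end{tikzcd}
\end{center}
where $s$ is the trivial section of $A \oplus M[1] \to A$ and $d$ is the algebra map $A \to A \oplus M[1]$ corresponding to the derivation classifying the extension; both $s$ and $d$ split the projection $A \oplus M[1] \to A.$ Since $X = \Spec R$ is affine, $X(-) = \Map(R, -)$ preserves finite limits, so applying $X$ to the above square yields a pullback square of anima. Taking the fiber of $X(B) \to X(A)$ over $\eta$ gives the anima of paths from $s_*\eta$ to $d_*\eta$ inside $X(A \oplus M[1]).$ The universal property of $\mathbb{L}_{R/k}$ identifies the fiber of $X(A \oplus M[1]) \to X(A)$ over $\eta$ with $\Map_A(\eta^*\mathbb{L}_{X/k}, M[1]),$ sending $s_*\eta$ to $0$ and $d_*\eta$ to the pullback derivation $\eta^*d$ (the composition $\eta^*\mathbb{L}_{X/k} \to \mathbb{L}_{A/k} \xto{d} M[1]$). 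Hence the fiber of $X(B) \to X(A)$ over $\eta$ is the space of paths from $0$ to $\eta^*d$ inside $\Map_A(\eta^*\mathbb{L}_{X/k}, M[1]),$ which is exactly the fiber of $\pt \xto{\eta^*d} \Map_A(\eta^*\mathbb{L}_{X/k}, M[1])$ at the basepoint $0.$ This yields the claimed fiber sequence.

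\textbf{Main obstacle.} The argument is essentially formal once one grants the universal property of $\mathbb{L}_{X/k}$ in the animated setting (Lurie's HA \S7.4). The one subtlety worth flagging is correctly placing the basepoint in $\Map_A(\eta^*\mathbb{L}_{X/k}, M[1])$: the statement takes $0$ as basepoint and packages the obstruction class $\eta^*d$ into the map $\pt \to \Map_A(\eta^*\mathbb{L}_{X/k}, M[1]).$ Being careful with this bookkeeping, and with the fact that $A$ is itself animated (not discrete) so that one must use the derived cotangent complex throughout, are the only real points of care.
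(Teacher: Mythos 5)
Your Step 2 is essentially the same as the paper's argument: present $B$ as a pullback $A \times_{A \oplus M[1]} A$, apply $X(-)$, and read off the fiber over $\eta$ using the universal property of the cotangent complex. The divergence — and the gap — is in Step 1. The paper never reduces to the affine case; it directly invokes the statement (Lurie, DAG Proposition 5.3.7) that the functor of points of a scheme is \emph{infinitesimally cohesive}, which is exactly the assertion that $X(-)$ carries the square $B = A \times_{A \oplus M[1]} A$ to a Cartesian square of anima. Your replacement of this with an affine reduction is the natural thing to try, since once $X = \Spec R$ one has $X(-) = \Map_k(R,-)$ and limit-preservation is free, but your justification for the reduction does not go through as written.

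Concretely, the phrase ``must factor through any affine open $U \subseteq X$ containing that image'' presupposes that such a $U$ exists, i.e.\ that the image of $\eta : \Spec A \to X$ lies in a single affine open. This is false in general: already for $X = \mathbb{P}^1$ and $A = k \times k$, the map $\eta$ can hit both $0$ and $\infty$, and no affine open contains both. The correct version of your reduction is a Zariski gluing argument: pull back an affine open cover $\{U_i\}$ of $X$ along $\eta$ to an open cover $\{V_i\}$ of $\Spec A$, observe that the $V_i$ correspond (via the homeomorphism of spectra) to an open cover $\{W_i\}$ of $\Spec B$ with $W_i \to V_i$ still square-zero, express both sides of the desired fiber sequence as limits over the nerve of this cover, and reduce to the affine case that way. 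This is not difficult, but it is not automatic, and it is precisely the content that ``infinitesimally cohesive'' packages — so in the end your proof, once repaired, is a proof \emph{of} the cohesiveness input the paper cites rather than a way to avoid it. Either fill in the descent step or simply cite Lurie's result as the paper does.
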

\begin{remark}
		The middle term is \(\pt\) because we've fixed \(k\)-algebra structures on \(A, B.\) 
\end{remark}
\begin{proof}
		Lurie \cite{SAG} proves the spectral algebraic geometry version of this in his Remark 17.3.9.2, but the proof goes through for animated rings; for the reader's benefit, we reproduce it. 

		By definition of square-zero extension of animated rings, there is a pullback square of animated rings
		\begin{center}
				\begin{tikzcd}
						B \ar[r, "f"] \ar[d] & A \ar[d, "i"] \\
						A \ar[r, "s"] & A \oplus M[1],
				\end{tikzcd}
		\end{center}
		where \(i\) is the inclusion and \(s\) is a certain derivation. 

		By Lurie's \cite{DAG} Proposition 5.3.7, the functor of points of a scheme is \emph{infinitesimally cohesive}, which implies that the induced diagram
		\begin{center}
				\begin{tikzcd}
						X(B) \ar[r, "X(f)"] \ar[d] & X(A) \ar[d, "X(i)"] \\
						X(A) \ar[r, "X(s)"] & X(A \oplus M[1])
				\end{tikzcd}
		\end{center}
		is a Cartesian diagram in \(\Ani.\) 

		The projection \(A \oplus M[1] \to A\) induces a map
		\[X(A \oplus M[1]) \to X(A).\]
		By definition of the cotangent complex, we have 
		\[\Map_A(\eta^*\mathbb{L}_{X/k}, M[1]) \heq X(A \oplus M[1]) \times_{X(A)} \{\eta\}.\]
		
		Taking the fiber above \(\eta\) in our \(X(B)\) Cartesian diagram, we then get a Cartesian diagram
		\begin{center}
				\begin{tikzcd}
						\{\eta\} \times_{X(A)} X(B) \ar[r] \ar[d] & \pt \ar[d] \\
						\pt \ar[r] & \Map_A(\eta^*\mathbb{L}_{X/k}, M[1]),
				\end{tikzcd}
		\end{center}
		as desired.
\end{proof}
\begin{cor} \label{cor617}
		Suppose \(X \to \Spec(k)\) is a \emph{smooth} \(k\)-scheme, and \(B \to A\) is a morphism of animated \(k\)-algebras such that \(\pi_0(A) \to \pi_0(B)\) is surjective with nilpotent kernel. Then \(X(B) \to X(A)\) is surjective on \(\pi_0.\) If \(X\) is moreover etale, then \(X(B) \to X(A)\) is an equivalence.
\end{cor}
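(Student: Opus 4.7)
The strategy is to reduce to the case of a single square-zero extension, where \autoref{lem589} applies directly. Throughout I read the hypothesis as meaning $\pi_0(B) \to \pi_0(A)$ is surjective with nilpotent kernel, so that $B \to A$ plays the role of a thickening (the hypothesis as written appears to have the arrows reversed).

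First, I would decompose $B \to A$ as a tower of square-zero extensions of animated $k$-algebras by connective modules. One way is in two stages: the Postnikov tower expresses $B \to \tau_{\leq 0}(B)$ and $A \to \tau_{\leq 0}(A)$ as limits of square-zero extensions by shifted $\pi_i$-modules, and then the remaining discrete map $\pi_0(B) \to \pi_0(A)$, being surjective with nilpotent kernel $I$ (so $I^n = 0$ for some $n$), is filtered by the powers $I \supset I^2 \supset \cdots \supset I^n = 0$, each successive quotient of which is a classical square-zero extension. This yields a factorization $B = B_0 \to B_1 \to \cdots \to B_n = A$ in which each $B_i \to B_{i+1}$ is a square-zero extension by a connective $B_{i+1}$-module $M_i$. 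Since $X(-)$ preserves limits of rings, it suffices to prove the claim for each step.

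Second, for a single step $B_i \to B_{i+1}$ by $M_i$, \autoref{lem589} gives for every $\eta \in X(B_{i+1})$ a fiber sequence
\[\{\eta\} \times_{X(B_{i+1})} X(B_i) \to \pt \to \Map_{B_{i+1}}(\eta^*\mathbb{L}_{X/k}, M_i[1]).\]
A lift of $\eta$ to $X(B_i)$ exists precisely when the obstruction class in $\pi_0$ of the right-hand mapping space vanishes. When $X$ is smooth, $\mathbb{L}_{X/k}$ is locally free of finite rank, concentrated in degree $0$, so $\eta^*\mathbb{L}_{X/k}$ is a projective $B_{i+1}$-module, and the mapping space reduces to $(\eta^*\mathbb{L}_{X/k})^\vee \otimes_{B_{i+1}} M_i[1]$. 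Since $M_i$ is connective, this has vanishing $\pi_0$, so the obstruction is automatically trivial and $\pi_0 X(B_i) \to \pi_0 X(B_{i+1})$ is surjective. Composing along the tower gives $\pi_0$-surjectivity of $X(B) \to X(A)$. If $X$ is moreover étale, $\mathbb{L}_{X/k} = 0$, the entire mapping space is contractible, the fibers are contractible, and each $X(B_i) \to X(B_{i+1})$ — and hence also the composite $X(B) \to X(A)$ — is an equivalence.

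The main obstacle is not the obstruction-theoretic step, which is formal from smoothness, but the careful reduction to a tower of square-zero extensions in the animated setting; this is standard but requires some bookkeeping, and for the (possibly infinite) Postnikov part one must appeal either to convergence of the tower or to infinitesimal cohesiveness of $X$ (as was already used in the proof of \autoref{lem589}) to pass from each finite stage to the limit.
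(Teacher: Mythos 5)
Your strategy is the same as the paper's: reduce to square-zero extensions, then apply \autoref{lem589} and observe that smoothness (resp.\ \'etaleness) makes the obstruction space $\Map_A(\eta^*\mathbb{L}_{X/k}, M[1])$ have trivial $\pi_0$ (resp.\ be contractible). The paper handles the reduction step with a one-line citation to Remark 3.4.2 of Lurie's \emph{DAG}, whereas you attempt to spell it out. Your obstruction-theoretic core is correct and matches the paper exactly, including the use of Whitehead's theorem in the \'etale case.

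The one genuine issue is in your explicit decomposition. You propose splicing the Postnikov towers of $B$ and $A$ together with the $I$-adic filtration of $\pi_0(B) \to \pi_0(A)$ to obtain a finite chain $B = B_0 \to B_1 \to \cdots \to B_n = A$. This does not assemble as written: the Postnikov tower of $A$ runs $A \to \cdots \to \tau_{\leq 1}(A) \to \pi_0(A)$, i.e.\ \emph{away} from $A$, so after traversing $B \to \pi_0(B) \to \pi_0(A)$ you arrive at $\pi_0(A)$ with no map back up to $A$ to continue the chain. (And the chain is in general infinite, not finite, another thing your notation $B_n = A$ hides.) A correct intermediary is the animated fiber product $C := A \times_{\pi_0(A)} \pi_0(B)$: the map $C \to A$ is an isomorphism on $\pi_n$ for $n \geq 1$ and is $\pi_0(B) \twoheadrightarrow \pi_0(A)$ on $\pi_0$, so it can be filtered by the $I$-adic filtration, while $B \to C$ is an isomorphism on $\pi_0$ and so is handled by the Postnikov tower plus nilcompleteness of $X$. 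You do flag at the end that the reduction "requires some bookkeeping" and that cohesiveness/nilcompleteness are needed, which is exactly where the extra care lies --- but the chain you wrote down in the body would, taken literally, not exist. Since the paper itself elides this by citing Lurie, the discrepancy is more a matter of attempted rigor backfiring than of a different route.

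(Incidentally, you are right that the statement of the corollary as printed reverses the direction: for a map $B \to A$ the hypothesis should read that $\pi_0(B) \to \pi_0(A)$ is surjective with nilpotent kernel, and the paper's concluding sentence similarly has $X(A) \to X(B)$ where it should say $X(B) \to X(A)$.)
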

\begin{proof}
		As in Remark 3.4.2 of Lurie's \cite{DAG}, it suffices to treat the case of square-zero extensions, so assume \(B\) is a square-zero extension of \(A\) by some \(M.\) 

		For any \(\eta \in X(A),\) we have by \autoref{lem589} a fiber sequence
		\[\{\eta\} \times_{X(A)} X(B) \to \pt \to \Map_A(\eta^*\mathbb{L}_{X/k}, M[1]).\]
		When \(X\) is smooth, \(\mathbb{L}_{X/k}\) lives in degree 0, so that
		\[\pi_0(\Map_A(\eta^*\mathbb{L}_{X/k}, M[1])) = \pt,\]
		and hence \(\pi_0(\{\eta\}\times_{X(A)} X(B)) = \pt,\) implying the claimed surjectivity.

		When \(X\) is moreover etale, we have \(\mathbb{L}_{X/k} = 0,\) so that in fact every fiber of \(X(A) \to X(B)\) has the homotopy type of a point. It then follows from Whitehead's theorem that \(X(A) \to X(B)\) is an equivalence in this situation, as desired.
\end{proof}

\begin{prop} \label{prop424} 
		Assume that \(f : X \to S\) is a \emph{smooth} morphism of \(\F_p\)-stacks, and that \(S\) admits a flat cover by a scheme. Then the morphisms
		\[\pi_{X/S} : X \to (X/S)^{\dR}, \pi_{X/S, \wedge} : X \to (X/S)^{\hat{\dR}}\]
		are surjective in the fpqc topology. Moreover, if \(f : X \to S\) is \emph{etale}, then \(\pi_{X/S}\) and \(\pi_{X/S, \wedge}\) are both equivalences.
\end{prop}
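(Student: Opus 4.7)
The plan is to reduce both assertions to derived deformation theory, specifically to \autoref{cor617} applied to the smooth base-changed scheme $X_A := X \times_S \Spec A \to \Spec A$ and the animated $A$-algebra map $A \to \G_a^{\dR}(A)$ for varying test $A$.

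First I would reduce to the case where $X$ and $S$ are affine schemes. By \autoref{prop385}, the formation of both $(X/S)^{\dR}$ and $(X/S)^{\hat{\dR}}$ commutes with flat base change on $S$, so the hypothesis on $S$ lets me assume $S$ is affine; smoothness of $f$ then allows me to replace $X$ by a smooth surjection from an affine scheme. With this reduction made, an $A$-valued point of $(X/S)^{\dR}$ unpacks to a pair $(\xi, \sigma)$ consisting of $\xi \colon \Spec \G_a^{\dR}(A) \to X$ and $\sigma \colon \Spec A \to S$ with $f \circ \xi \simeq \sigma \circ \can$; by the universal property of the fibre product, this is equivalent to a single map $\tilde\xi \colon \Spec \G_a^{\dR}(A) \to X_A$ over $\Spec A$, and a lift to $X(A)$ is precisely a section of $X_A \to \Spec A$ restricting to $\tilde\xi$ along $\can$.

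I would then apply \autoref{cor617} to the smooth morphism $X_A \to \Spec A$ and the unit $A \to \G_a^{\dR}(A)$. The key input is that this unit is $\pi_0$-surjective with nilpotent kernel. From the cofibre sequence $\G_a^{\#} \to \G_a \to \G_a^{\dR}$ (and \autoref{cor122} to trivialise the fpqc sheafification on a semiperfect cover), $\pi_0(\G_a^{\dR}(A)) = A/I$ where $I \subseteq A$ is the ideal of elements admitting divided powers, and in characteristic $p$ the identity $a^p = p! \cdot \gamma_p(a) = 0$ forces each such element to be $p$-nilpotent; thus $I$ is a nil ideal. Because $X_A$ is locally of finite presentation over $\Spec A$, any concrete lifting obstruction is controlled by finitely generated sub-ideals of $I$, which are automatically nilpotent, and the literal hypotheses of \autoref{cor617} are met. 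This produces the section, giving the desired fpqc-surjectivity. In the etale case, $\mathbb{L}_{X_A/A} = 0$ and \autoref{cor617} sharpens to an equivalence $X_A(A) \simeq X_A(\G_a^{\dR}(A))$, which promotes $\pi_{X/S}$ to an equivalence of fpqc stacks.

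The sheared version is handled by the identical argument with $\G_a^{\dR}$ replaced throughout by $\G_a^{\hat{\dR}}$: the cofibre sequence is now $\G_a^{\#, \wedge} \to \G_a \to \G_a^{\hat{\dR}}$, one invokes \autoref{cor270} in place of \autoref{cor198}, and elements admitting \emph{nilpotent} divided powers still satisfy $a^p = 0$, so the $\pi_0$-kernel of $A \to \G_a^{\hat{\dR}}(A)$ remains nil. The main obstacle throughout is precisely this gap between nil and nilpotent ideals, which forces the reduction via finite presentation of smooth morphisms; once that is in hand, the rest of the argument is formal.
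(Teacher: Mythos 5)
Your proposal follows the same overall strategy as the paper's proof: reduce to affine $S$ via \autoref{prop385}, pass to semiperfect test algebras $A$ (so the sheafified cone can be computed as $\Cone(\G_a^{\#,\wedge}(A)\to A)$), and invoke derived deformation theory in the form of \autoref{cor617} applied to the unit $A\to\G_a^{\hat{\dR}}(A)$, using that this map is $\pi_0$-surjective with nil kernel (elements killed by raising to the $p$-th power). The only substantive difference is your observation that the kernel, while nil with bounded exponent, need \emph{not} be nilpotent as an ideal, so the literal hypotheses of \autoref{cor617} are not met; the paper asserts ``nilpotent kernel'' and justifies it only by noting elements are nilpotent of order $\leq p$, which is the nil condition, not the nilpotent one. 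Your fix -- using that $X$ is locally of finite presentation over the base so that the relevant mapping spaces commute with filtered colimits, and any point of $X(A/I)$ already factors through $X(A/I_\alpha)$ for a finitely generated (hence genuinely nilpotent) sub-ideal $I_\alpha\subseteq I$ -- is correct and is a real sharpening of the argument as written.

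One caveat: your step ``smoothness of $f$ allows me to replace $X$ by a smooth surjection from an affine scheme'' is not justified as stated, and risks circularity: to relate surjectivity of $\pi_{U/S}$ to that of $\pi_{X/S}$ for a smooth atlas $U\to X$ you would need to know that $(U/S)^{\dR}\to(X/S)^{\dR}$ is surjective, which is not available at this stage (the etale version is \autoref{prop443}, whose proof relies on \autoref{prop424} itself). The paper sidesteps this by applying \autoref{cor617} directly to $X$ as a $k$-stack, but \autoref{cor617} is only proved for schemes, so the paper has an analogous implicit reduction. Both arguments are really only fully rigorous under a representability hypothesis on $f$ (which is what is actually used downstream in the logarithmic situation), and it would be cleaner to state that hypothesis explicitly rather than attempt the source reduction.
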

\begin{proof}
		By \autoref{prop385} and the assumption that \(S\) admits a flat cover by a scheme, we can assume without loss of generality that actually \(S\) is an affine \emph{scheme}, \(S = \Spec(k)\) (for \(k\) some \(\F_p\)-algebra).  

		In this case, it is the case that \((X/k)^{\dR}\) and \((X/k)^{\hat{\dR}}\) are defined by transmutation of \(\G_{a,k}^{\dR}\) and \(\G_{a,k}^{\hat{\dR}},\) the \(k\)-algebra stacks defined as the restrictions of \(\G_a^{\dR}, \G_a^{\hat{\dR}}\) to the category of \(k\)-algebras.

		Now, let \(A\) be a semiperfect \(k\)-algebra. By \autoref{prop316}, we have
		\[\G_a^{\hat{\dR}}(A) = \Cone(\G_a^{\#,\wedge}(A) \to A).\]
		It follows that the canonical map \(A \to \G_a^{\hat{\dR}}(A)\) is surjective, and has nilpotent kernel (since any element of \(A\) admitting divided powers is automatically nilpotent of order at most \(p\)). 

		Deformation theory (see \autoref{cor617}) tells us that, since \(X \to \Spec(k)\) is \emph{smooth}, and \(A \to \G_a^{\hat{\dR}}(A)\) is surjective on \(\pi_0\) with nilpotent kernel, the morphism \(X(A) \to X(\G_a^{\hat{\dR}}(A))\) of anima is surjective on \(\pi_0.\) As the semiperfect \(k\)-algebras form a basis for the flat topology, this immediately implies \(\pi_{X/S, \wedge}\) is surjective in the fpqc topology. When \(f\) is furthermore \emph{etale}, deformation theory (again see \autoref{cor617}) furthermore gives us that \(X(A) \to X(\G_a^{\hat{\dR}}(A))\) is an equivalence of anima; thus \(X(A) \to X^{\dR}(A)\) is an equivalence for all semiperfect \(A\), and hence \(\pi_{X/S}\) is an equivalence of stacks.

		The exact same argument works for \((X/S)^{\dR},\) so we conclude. 
\end{proof}	

\begin{prop} \label{prop443}
		Fix an \(\F_p\)-stack \(S.\) If \(j : U \to X\) is an etale morphism of \(S\)-stacks, and \(X\) admits a flat cover by a scheme, then 
		\begin{center}
				\begin{tikzcd}
						U \ar[r] \ar[d, "j"] & (U/S)^{\hat{\dR}} \ar[d, "j^{\hat{\dR}}"] \\
						X \ar[r] & (X/S)^{\hat{\dR}}
				\end{tikzcd}
		\end{center}
		is Cartesian. In particular, by \autoref{prop424}, if \(X\to S\) is smooth, then the map \(j^{\hat{\dR}}\) is etale as well. 

		The same statements holds for the non-sheared de Rham stacks. 
\end{prop}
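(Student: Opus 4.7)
The plan is to reduce the statement to an application of \autoref{prop424} with $X$ playing the role of the base.

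First I would unwind the relevant fiber products. By the definition of the relative sheared de Rham stack, $(X/S)^{\hat{\dR}} \heq X^{\hat{\dR}} \times_{S^{\hat{\dR}}} S$ and similarly for $U.$ A direct manipulation with test points (or a formal computation with fiber products) shows that the claimed fiber product simplifies:
\[X \times_{(X/S)^{\hat{\dR}}} (U/S)^{\hat{\dR}} \heq X \times_{X^{\hat{\dR}} \times_{S^{\hat{\dR}}} S} (U^{\hat{\dR}} \times_{S^{\hat{\dR}}} S) \heq X \times_{X^{\hat{\dR}}} U^{\hat{\dR}}.\]
The intuition is that the auxiliary factor of $S$ gets absorbed once one remembers the structure map $X \to S.$ But the right hand side is by definition $(U/X)^{\hat{\dR}},$ i.e.\ the sheared de Rham stack of $U$ relative to the base $X.$

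Now I would apply \autoref{prop424} directly. Since $X$ admits a flat cover by a scheme by hypothesis, and $j : U \to X$ is etale, that proposition (with $X$ playing the role of $S$) gives
\[\pi_{U/X, \wedge} : U \xto{\simeq} (U/X)^{\hat{\dR}}.\]
Combining this equivalence with the simplification above produces the desired equivalence $U \heq X \times_{(X/S)^{\hat{\dR}}} (U/S)^{\hat{\dR}},$ verifying that the square is Cartesian. The same argument (applied to the unsheared version of \autoref{prop424}) handles the non-sheared case.

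For the ``in particular'' assertion, suppose further that $X \to S$ is smooth. Then $X \to (X/S)^{\hat{\dR}}$ is an fpqc-surjection by \autoref{prop424}. The Cartesian square we have just established exhibits $j : U \to X$ as the base change of $j^{\hat{\dR}}$ along this cover; since $j$ is etale by assumption and etaleness satisfies fpqc descent, we conclude that $j^{\hat{\dR}}$ is etale as well. I anticipate the main obstacle is just being careful in step one with the rearrangement of iterated fiber products, particularly ensuring that the identifications are natural enough to assemble into an honest equivalence of derived stacks; everything after that is a formal invocation of the preceding flatness result.
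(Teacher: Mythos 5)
Your proof is correct and takes essentially the same route as the paper's. The key step — reducing the fiber product $(U/S)^{\hat{\dR}} \times_{(X/S)^{\hat{\dR}}} X$ to $(U/X)^{\hat{\dR}}$ — is what the paper extracts from \autoref{prop385} (the base-change compatibility); you carry out the iterated-fiber-product cancellation by hand, which amounts to the same thing. Both proofs then finish by applying \autoref{prop424} to the \'etale map $j : U \to X$ over the base $X$ (which admits a flat cover by a scheme) to get $U \heq (U/X)^{\hat{\dR}}$. Your closing paragraph justifying the ``in particular'' via descent of \'etaleness along the fpqc cover $X \to (X/S)^{\hat{\dR}}$ is a reasonable spelling-out of what the paper leaves implicit.
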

\begin{proof}
		The same argument will hold for sheared and non-sheared de Rham stacks, so we just prove it for sheared de Rham stacks. By \autoref{prop385}, we have
		\[(U/S)^{\hat{\dR}} \times_{(X/S)^{\hat{\dR}}} X = (U/X)^{\hat{\dR}}.\]
		Since \(X\) admits a flat cover by a scheme, \autoref{prop424} applies and so we deduce that \(\pi_{U/X} : U \to (U/X)^{\hat{\dR}}\) is an equivalence, so we conclude. 
\end{proof}

\subsection{The gerbe property}

In characteristic \(p,\) despite the abstract seeming definitions, the de Rham stack and the sheared de Rham stack are actually quite concrete geometric objects; the purpose of this section is to describe their geometry. This can be viewed as a stacky form of Bezrukavnikov's observation that, in positive characteristic, the ring of differential operators is an Azumaya algebra (see \cite{bezazumaya}).

Fix \(f : X \to S\) a smooth, representable morphism of stacks. 
\begin{warn}
		We remind the reader that to us, representable always means representable by \emph{schemes}. 
\end{warn}

We write \(X^{(1)}\) for the relative Frobenius twist of \(X\) over \(S,\) defined as the fiber product
\begin{center}
		\begin{tikzcd}
				X^{(1)} \ar[r] \ar[d] & X \ar[d] \\
				S \ar[r, "F_S"] & S,
		\end{tikzcd}
\end{center}
where \(F_S : S \to S\) is the absolute Frobenius. 

We now give a relative version of proposition 2.7.1 in \cite{bhatt}. 

\begin{notn}
		For a stack \(X,\) and a vector bundle \(\mathcal{E}\) on \(X,\) we define
		\[\mathbb{V}(\mathcal{E}) := \SPEC_X \Sym_{\OO_X}^{\bullet}(\mathcal{E}^{\vee}).\] 
\end{notn}

\begin{construction} \label{nuconst}
		We construct morphisms of \(S\)-stacks
		\[\nu_{X/S} : (X/S)^{\dR} \to X^{(1)},\]
		\[\nu_{X/S, \wedge} : (X/S)^{\hat{\dR}} \to X^{(1)}.\]
		
		We start by defining 
		\[\nu_X : X^{\dR} \to X\]
		by transmutation of the natural map 
		\[\G_a^{\dR} \to \pi_0(\G_a^{\dR}) = F_*\G_a.\]
		In particular, 
		\[\nu_X \circ \pi_X = F_X\]
		since the composition is just the transmutation of 
		\[\G_a \to \G_a^{\dR} \to \pi_0(\G_a^{\dR}) = F_*\G_a.\]
		As \(\nu_X, \pi_X\) are both defined by transmutation, we automatically find that the diagram
		\begin{center}
				\begin{tikzcd}
						X \ar[r, "\pi_X"] \ar[d, "f"] & X^{\dR} \ar[d, "f^{\dR}"] \ar[r, "\nu_X"] & X \ar[d, "f"] \\
						S \ar[r, "\pi_S"] & S^{\dR} \ar[r, "\nu_S"] & S
				\end{tikzcd}
		\end{center}
		commutes. 

		Recall that \((X/S)^{\dR}\) is defined as the fiber product 
		\begin{center}
				\begin{tikzcd}
						(X/S)^{\dR} \ar[r, "\pi_S'"] \ar[d, "s"] & X^{\dR} \ar[d, "f^{\dR}"] \\
						S \ar[r, "\pi_S"] & S^{\dR},
				\end{tikzcd}
		\end{center}

		It follows that 
		\[F_S \circ s = \nu_S\pi_S \circ s = \nu_S \circ (f^{\dR} \circ \pi_{S'}) = f\nu_X\pi_{S'}.\]
		Thus \(s : (X/S)^{\dR} \to S\) and \(\nu_X\pi_{S'} : (X/S)^{\dR} \to X\) define a morphism 
		\[\nu_{X/S} : (X/S)^{\dR} \to X^{(1)}.\]
		Adding hats everywhere (or precomposing with the natural map \((X/S)^{\hat{\dR}} \to (X/S)^{\dR}\)) gives us a similar morphism
		\[\nu_{X/S, \wedge} : (X/S)^{\hat{\dR}} \to X^{(1)}.\]

		Moreover, the composition \(\nu_{X/S} \circ \pi_{X/S} : X \to X^{(1)}\) is the relative Frobenius \(F_{X/S},\) and ditto with the sheared version; this is immediate from the formula \(\nu_X\pi_X = F_X.\)
\end{construction}

In order to prove the gerbe property by descent, the following proposition will be useful. 
\begin{prop} \label{prop383}
		Let \(f : X \to S\) morphism of \(\F_p\)-stacks, and \(g : S' \to S\) the base change. Set \(X' = X\times_S S'.\) Then, the diagram
		\begin{center}
				\begin{tikzcd}
						(X'/S')^{\hat{\dR}} \ar[r] \ar[d, "\nu_{X'/S', \wedge}"] & (X/S)^{\hat{\dR}} \ar[d, "\nu_{X/S, \wedge}"] \\
						(X')^{(1)} \ar[r] & X^{(1)}
				\end{tikzcd}
		\end{center}
		commutes.
\end{prop}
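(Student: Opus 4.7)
The strategy is to use the universal property of \(X^{(1)}\) as a fiber product, together with the fact that both \(\pi\) and \(\nu\) are defined by \emph{transmutation}, which is manifestly functorial in the source.

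Recall from \autoref{nuconst} that \(\nu_{X/S, \wedge}\) is the unique morphism \((X/S)^{\hat{\dR}} \to X^{(1)}\) whose composition with the two projections \(X^{(1)} \to X\) and \(X^{(1)} \to S\) are respectively \(\nu_X \circ \pi_{S'}\) and the structure map \(s : (X/S)^{\hat{\dR}} \to S\); similarly for \(\nu_{X'/S', \wedge}\). The morphism \((X')^{(1)} \to X^{(1)}\) is, by definition, the map of fiber products induced by \(X' \to X\) and \(S' \to S\) (which intertwine the relevant Frobenii). Hence, by the universal property of \(X^{(1)}\), it suffices to verify that the two compositions \((X'/S')^{\hat{\dR}} \to X^{(1)}\) agree after post-composition with the projections to \(X\) and to \(S\) separately.

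The projection to \(S\) is immediate: in both cases the composite is just the structure morphism \((X'/S')^{\hat{\dR}} \to S' \to S\). For the projection to \(X\), I would paste together two squares. The first,
\begin{center}
\begin{tikzcd}
(X'/S')^{\hat{\dR}} \ar[r, "\pi_{S'}"] \ar[d] & (X')^{\hat{\dR}} \ar[d] \\
(X/S)^{\hat{\dR}} \ar[r, "\pi_{S'}"] & X^{\hat{\dR}},
\end{tikzcd}
\end{center}
commutes because the base-change morphism \((X'/S')^{\hat{\dR}} \to (X/S)^{\hat{\dR}}\) was constructed (via \autoref{prop385}) precisely as the map on fiber products induced by the functoriality of \((\cdot)^{\hat{\dR}}\) applied to \(X' \to X\). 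The second square,
\begin{center}
\begin{tikzcd}
(X')^{\hat{\dR}} \ar[r, "\nu_{X'}"] \ar[d] & X' \ar[d] \\
X^{\hat{\dR}} \ar[r, "\nu_X"] & X,
\end{tikzcd}
\end{center}
commutes because \(\nu\) is defined by transmutation of the fixed natural map of ring stacks \(\mathbb{G}_a^{\hat{\dR}} \to \pi_0 \mathbb{G}_a^{\hat{\dR}} = F_*\mathbb{G}_a\), and transmutation is functorial in the source stack. Pasting the two squares gives the required agreement of the projections to \(X\).

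There is no serious obstacle here; the entire argument is a universal-property chase. The only thing to be careful about is to unwind the definition of the base-change morphism \((X'/S')^{\hat{\dR}} \to (X/S)^{\hat{\dR}}\) of \autoref{prop385} so that one genuinely sees it as being induced from \((X')^{\hat{\dR}} \to X^{\hat{\dR}}\) on the first factor, which is what makes the first pasted square commute.
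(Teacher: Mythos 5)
Your proof is correct and takes essentially the same route as the paper: the paper's one-line argument ("immediate from \autoref{prop385} plus compatibility of Frobenius twists with base change") is precisely the universal-property chase you spell out, with the Cartesian squares of \autoref{prop385} and the base-change identification of $(X')^{(1)}$ as $X^{(1)} \times_S S'$ doing the work of your two pasted squares. You have simply unwound the ellipsis; the extra detail about transmutation being functorial in the source is exactly what one needs to justify the second square, and is worth having on the page.
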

\begin{proof}
		Immediate from combining \autoref{prop385} with the fact that formation of relative Frobenius twists is compatible with base change.
\end{proof}

\begin{remark}
		A forthcoming work of Gleb Terentiuk will give a more general, and much stronger, version of this gerbe property. Because of this, we only prove a somewhat weaker statement, assuming restrictive hypotheses which will suffice for our applications to logarithmic geometry. In showing the pseudotorsor claim, Terentiuk only needs to assume the morphism \(X\to S\) is lci, not smooth. And, while we only show non-emptiness flat locally, Terentiuk will show non-emptiness locally for the quasi-synotomic topology.
\end{remark}

\begin{theorem}[The de Rham stack is a gerbe] \label{gerbe} 
		Recall that we assumed \(X/S\) is smooth and representable (which, to us, means representable by \emph{schemes}). In particular, the cotangent complex \(\mathbb{L}_{X^{(1)}/S}\) is actually a vector bundle on \(X^{(1)}.\) 

		The morphism \(\nu_{X/S} : (X/S)^{\dR} \to X^{(1)}\) of \autoref{nuconst} is a \(B\mathbb{V}(\mathbb{L}_{X^{(1)}/S}^{\vee})^{\#}\)-torsor in the flat topology. Similarly, the morphism \(\nu_{X/S, \wedge} : (X/S)^{\hat{\dR}} \to X^{(1)}\) is a \(B\mathbb{V}(\mathbb{L}_{X^{(1)}/S}^{\vee})^{\#, \wedge}\)-torsor in the flat topolgoy. 

		In fact, both of these torsors are split after pullback along the relative Frobenius \(F_{X/S} : X \to X^{(1)}.\)
\end{theorem}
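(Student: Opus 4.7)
Plan: I will treat the sheared case; the un-sheared case is formally identical after using \autoref{cor198} in place of \autoref{cor270}. By \autoref{prop385} (combined with \autoref{prop383}), I may further assume $S = \Spec(k)$ is affine. The proof has two essentially independent parts: first, the splitting after pullback along $F_{X/S}$, which is built into the construction; and second, the identification of the band via deformation theory applied to the square-zero extension structure on $\G_a^{\hat{\dR}}(A)$.

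The splitting after pullback along $F_{X/S}: X \to X^{(1)}$ is immediate from the identity $\nu_{X/S, \wedge} \circ \pi_{X/S, \wedge} = F_{X/S}$ recorded in \autoref{nuconst}: the map $\pi_{X/S, \wedge}$ factors canonically through a section
\[
s : X \to X \times_{X^{(1)}, F_{X/S}} (X/S)^{\hat{\dR}}
\]
of the Cartesian pullback, which is the claimed trivialization. Since $X/S$ is smooth, the relative Frobenius $F_{X/S}$ is faithfully flat, so once the $B\mathbb{V}(\mathbb{L}_{X^{(1)}/S}^{\vee})^{\#, \wedge}$-torsor structure is established, $s$ gives a flat-local trivialization; in particular it already shows $\nu_{X/S, \wedge}$ is fpqc-surjective.

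For the torsor property itself, I would compute the fibers of $\nu_{X/S, \wedge}$ above an $A$-point $y$ of $X^{(1)}$ via deformation theory. By \autoref{cor270}, the $1$-truncated animated ring $\G_a^{\hat{\dR}}(A)$ sits in a canonical fiber sequence
\[
F_*\G_a^{\#, \wedge}(A)[1] \to \G_a^{\hat{\dR}}(A) \to F_*A,
\]
exhibiting it as a square-zero extension of animated $k$-algebras. Applying \autoref{lem589} to the smooth $k$-scheme $X$ (relative version; since $S$ is affine this is a direct application) identifies the fiber of $(X/S)^{\hat{\dR}}(A) \to X^{(1)}(A)$ above $y$ with the classifying anima
\[
\Map_{F_*A}\bigl(y^*\mathbb{L}_{X/k},\, F_*\G_a^{\#, \wedge}(A)[1]\bigr) \simeq B\bigl(y^*\mathbb{L}_{X/k}^{\vee} \otimes_{F_*A} F_*\G_a^{\#, \wedge}(A)\bigr),
\]
the equivalence using that $\mathbb{L}_{X/k}$ is a vector bundle in degree $0$. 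Using the identification $\mathbb{L}_{X^{(1)}/S} = g^*\mathbb{L}_{X/S}$ for $g: X^{(1)} \to X$ the Frobenius-twist projection, this matches precisely the pullback along $y$ of $B\mathbb{V}(\mathbb{L}_{X^{(1)}/S}^{\vee})^{\#, \wedge}$.

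The main obstacle is verifying that the mapping anima above is correctly recognized as the \emph{divided-power} classifying stack $B\mathbb{V}(\cdot)^{\#, \wedge}$, rather than merely the classifying stack of the underlying vector bundle. The key point is that $F_*\G_a^{\#, \wedge}$ is not just an abstract $\G_a$-module but comes equipped with the divided-power structure coming from the quasi-ideal $\G_a^{\#, \wedge} \to \G_a$ used in its definition. This is most transparent in the semiperfect case, where \autoref{cor1222} gives the explicit formula $\G_a^{\hat{\dR}}(A) = \cofib(\G_a^{\#, \wedge}(A) \to A)$ and the divided-power data is manifest; since semiperfect $\F_p$-algebras form a basis for the flat topology, this local identification descends to give the claimed fpqc-local torsor property.
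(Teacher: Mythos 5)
Your argument is correct and follows essentially the same route as the paper: reduce to affine $S$ via base-change compatibility of relative de Rham stacks, then apply the deformation-theoretic lemma \autoref{lem589} to the square-zero extension $\G_a^{\hat{\dR}}(A)\to F_*A$ (coming from \autoref{cor270}) to identify the band, and exhibit the splitting of the pullback along $F_{X/S}$ from the section $\pi_{X/S,\wedge}$ and the identity $\nu_{X/S,\wedge}\circ\pi_{X/S,\wedge}=F_{X/S}$. One small correction to the framing in your last paragraph: the ``obstacle'' you worry about does not actually arise. The deformation-theoretic fiber lands directly in $\Hom_{F_*A}(y^*\mathbb{L}_{X/k},\,F_*\G_a^{\#,\wedge}(A))$, and since the functor of points of $\mathbb{V}(\mathbb{L}_{X^{(1)}/S}^{\vee})^{\#,\wedge}$ over a point $x\in X^{(1)}(A)$ is by definition $\Hom_A(x^*\mathbb{L}_{X^{(1)}/S},\,\G_a^{\#,\wedge}(A))$, the divided-power structure is already present in the sheaf $\G_a^{\#,\wedge}$ appearing as $\pi_1(\G_a^{\hat{\dR}})$; it is not an extra datum to recognize. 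The reduction to semiperfect $A$ is still needed, but only to justify that $\G_a^{\hat{\dR}}(A)$ is actually computed by the explicit cofiber formula (avoiding the sheafification in the definition), not to recover divided powers. You also implicitly use the representability hypothesis (as the paper does) to deduce that $X$ itself becomes a scheme once $S$ is affine, which is needed for \autoref{lem589} to apply; it would be worth making that step explicit.
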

\begin{proof}
		Of course being a torsor can be checked flat locally on \(X^{(1)},\) and hence flat locally on \(S.\) Using \autoref{prop383}, we may therefore assume \(S = \Spec(k)\) is an \emph{affine} scheme. By representability of \(f\), we may now assume that \(X\) is a scheme as well. In this generality, with \(S\) an affine scheme, Bhatt's \cite{bhatt} proposition 2.7.1 literally applies. However, as this result is very important for us, and as \cite{bhatt} is a little sparse on details, we will give the full proof. 

		We can understand the fibers of \((X/S)^{\dR} \to X^{(1)}\) using deformation theory (more particularly, in the form of \autoref{lem589}). Fix a morphism of \(\F_p\)-schemes \(\Spec(A) \to S,\) and let \(x : \Spec(A) \to X^{(1)}\) be an \(A\)-point of \(X^{(1)}\), and let \(y : \Spec(F_*A) \to X\) be the corresponding point of \(X.\) An \(A\)-point of \((X/S)^{\dR}\) lifting \(x\) is then the same as a map 
		\[\bar{x} : \Spec \G_a^{\dR}(A) \to X\]
		lifting \(y.\) In other words, the \(A\)-points of the fiber of \((X/S)^{\dR} \to X^{(1)}\) is equivalent to the fiber of 
		\[X(\G_a^{\dR}(A)) \to X(F_*A)\]
		above \(y.\) 

		Deformation theory (see \autoref{lem589}) tells us that this fiber will either be empty, or a torsor under
		\begin{align*}
				\Hom_{F_*A}(y^*\mathbb{L}_{X/S}, F_*B\G_a^{\#}(A)) &= y^*\mathbb{L}_{X/S}^{\vee} \otimes_{F_*A} F_*B\G_a^{\#}(A) \\
																   &= x^*\mathbb{L}_{X^{(1)}/S}^{\vee} \otimes_A B\G_a^{\#}(A) \\ 
		\end{align*}
		We conclude that \((X/S)^{\dR} \to X^{(1)}\) is a pseudogerbe. The same argument works for the sheared de Rham stack \((X/S)^{\hat{\dR}}\) once we write \(\G_a^{\hat{\dR}}\) as a square-zero extension. As any 1-truncated animated ring is a square-zero extension of its \(\pi_0\) by its \(\pi_1[1],\) it suffices to compute the \(\pi_0, \pi_1\) of \(\G_a^{\hat{\dR}},\) but this was done in \autoref{cor270}. 

		Next, we must prove non-emptiness locally in the flat topology; the proof will be exactly the same for both \((X/S)^{\dR}\) and \((X/S)^{\hat{\dR}},\) so we just give the proof for \((X/S)^{\hat{\dR}}.\) As \(X \to X^{(1)}\) is a cover in the flat topology, it suffices to show that the morphism \((X/S)^{\hat{\dR}} \times_{X^{(1)}} X \to X\) admits a section. 

		Constructing such a section is equivalent to giving a map \(X \to (X/S)^{\hat{\dR}}\) together with a homotopy between the composition \(X \to (X/S)^{\hat{\dR}} \to X^{(1)}\) and the relative Frobenius \(F_{X/S} : X \to X^{(1)}.\) But we already have such a morphism: earlier, we constructed a map \(\pi_{X/S, \wedge} : X\to (X/S)^{\hat{\dR}}\) by transmutation of the natural map
		\[\G_{a, S} \to \G_{a, S}^{\hat{\dR}}.\]
		We constructed \(\nu_{X/S, \wedge} : (X/S)^{\hat{\dR}} \to X^{(1)}\) as the transmutation of \(\G_{a, S}^{\hat{\dR}} \to F_*\G_{a, S}.\) Thus \(\nu_{X/S, \wedge} \circ \pi_{X/S, \wedge} : X \to X^{(1)}\) is obtained by transmutation from the composition \(\G_{a, S} \to \G_{a, S}^{\hat{\dR}} \to F_*\G_{a, S}.\) But this composition is just the natural map \(\G_{a, S} \to F_*\G_{a, S},\) whose transmutation is nothing but the relative Frobenius. 
\end{proof}
\begin{notn} \label{cansplit}
		In the proof of \autoref{gerbe}, we constructed an explicit splitting of \((X/S)^{\hat{\dR}} \times_{X^{(1)}} X \to X,\) being induced from \(\pi_{X/S,\wedge} : X \to (X/S)^{\hat{\dR}}.\) We will now call this the \emph{canonical splitting} of \((X/S)^{\hat{\dR}} \times_{X^{(1)}} X \to X.\) 

		The canonical splitting induces a particular isomorphism
		\[\alpha : (X/S)^{\hat{\dR}} \times_{X^{(1)}} X \heq B\mathbb{V}(F_{X/S}^*\mathcal{T}_{X^{(1)}/S})^{\#, \wedge}.\] 
		In our discussion of \(p\)-curvature, we will use this \emph{particular} isomorphism \(\alpha.\) 
\end{notn}

\subsection{Relation to the classical notion of \(\mathcal{D}\)-modules} 

Fix \(f : X \to S\) a smooth, representable morphism of algebraic \(\F_p\)-stacks. We now show that quasicoherent sheaves on \((X/S)^{\hat{\dR}}\) recover the classical notion of \(\mathcal{D}_{X/S}\)-modules.

\subsubsection{The algebra \(\mathcal{D}_{X/S}\)} 

We start by defining a (non-commutative!) quasicoherent \(\OO_X\)-algebra \(\mathcal{D}_{X/S}.\) As \(f\) is smooth and representable, its cotangent complex is just the vector bundle \(\Omega^1_{X/S}.\) In particular, we can define the tangent bundle \(\mathcal{T}_{X/S} := \underline{\Hom}(\Omega_{X/S}^1, \OO_X).\) 

This \(\mathcal{T}_{X/S}\) has the structure of a Lie algebroid; that is, there is a natural action 
\[\mathcal{T}_{X/S} \oplus \OO_X \to \OO_X\]
of \(\mathcal{T}_{X/S}\) on functions (defined by sending \((\partial, f)\) to \(\partial(df),\) viewing \(\partial\) as a map \(\Omega_{X/S}^1 \to \OO_X\)), and a natural Lie bracket
\[\mathcal{T}_{X/S} \oplus \mathcal{T}_{X/S} \to \mathcal{T}_{X/S}\]
defined by \([\partial, \partial'](\omega) = \partial(\partial'(\omega)) - \partial'(\partial(\omega)).\) 

Note that the action \(\mathcal{T}_{X/S} \oplus \OO_X \to \OO_X\) is not \(\OO_X\)-bilinear, and likewise the Lie bracket is not \(\OO_X\)-bilinear; instead both obey suitable Leibniz rules.

We can then define \(\mathcal{D}_{X/S}\) to be the universal enveloping algebra of this Lie algebroid.
\begin{defn}
		We let \(\mathcal{D}_{X/S}\) be the quasicoherent \(\OO_X\)-algebra defined such that, for any morphism \(\phi : \Spec(A) \to X,\) the pullback \(\phi^*(\mathcal{D}_{X/S})\) is the associative \(A\)-algebra generated by the free \(A\)-module \(\phi^*(\mathcal{T}_{X/S}),\) subject to the relations that 
		\[\partial \cdot \partial' - \partial' \cdot \partial = [\partial, \partial'],\]
		\[\partial \cdot a - a \cdot \partial = \partial(a),\]
		where in the first relation we mean to take the bracket of \(\partial, \partial'\) (the Lie bracket on \(\mathcal{T}_{X/S}\) pulls back to a pairing on \(\phi^*(\mathcal{T}_{X/S})\)) and in the second relation by \(\partial(a)\) we mean the element of \(A\) obtained by acting \(\partial\) on \(a.\) 
\end{defn}
\begin{exmp}
		Consider the smooth morphism \(f : \pt \to B\G_m.\) By considering the Cartesian diagram
		\begin{center}
				\begin{tikzcd}
						\G_m \ar[r] \ar[d] & \pt \ar[d] \\
						\pt \ar[r] & B\G_m,
				\end{tikzcd}
		\end{center}
		one can compute that \(\mathcal{T}_{\pt/B\G_m} = k \cdot t\partial_t.\) That is, \(\mathcal{T}_{\pt/B\G_m}\) is free of dimension 1, with a generator we call \(t\partial_t.\) The action
		\[\mathcal{T}_{\pt/B\G_m} \oplus \OO_{\pt} \to \OO_{\pt}\]
		is the zero map, since \(d : \OO_{\pt} \to \Omega^1_{\pt/B\G_m}\) is the zero map. 

		Thus \(\mathcal{D}_{\pt/B\G_m} = k[t\partial_t].\) In particular, the pullback of \(\mathcal{D}_{\pt/B\G_m}\) to \(\G_m\) is just the \emph{commutative} algebra \(k[t^{\pm 1}, t\partial_t],\) which is \emph{not} the same as the non-commutative algebra \(\mathcal{D}_{\G_m/\pt}.\) This failure of base change is because the pullback of 
		\[\mathcal{T}_{\pt/B\G_m} \oplus \OO_{\pt} \to \OO_{\pt}\]
		to \(\G_m\) is the zero map
		\[\mathcal{T}_{\G_m/\pt} \oplus \OO_{\G_m} \to \OO_{\G_m},\]
		and not the natural action of \(\mathcal{T}_{\G_m/\pt}\) on \(\OO_{\G_m}.\) 
\end{exmp}

We can describe modules over \(\mathcal{D}_{X/S}\) concretely. 
\begin{lemma} \label{lem583} 
		Let \(\mathcal{E}\in\mathcal{D}_{\qc}(X).\) Then \(\mathcal{D}_{X/S}\)-module structures on \(\mathcal{E}\) are equivalent to giving maps
		\[\nabla : \mathcal{E} \to \mathcal{E} \otimes \Omega_{X/S}^1\]
		in the category of fppf sheaves on \(X,\) such that
		\[\nabla(f \cdot s) = f\nabla(s) + s \otimes df,\]
		and such that \(\nabla\) is \emph{flat}: the composition
		\[\mathcal{E} \xto{\nabla} \mathcal{E} \otimes \Omega_{X/S}^1 \xto{\nabla \otimes \id_{\Omega_{X/S}^1}} \mathcal{E} \otimes \Omega_{X/S}^1 \otimes \Omega_{X/S}^1 \xto{\id_{\mathcal{E}} \otimes \can} \mathcal{E} \otimes \Omega_{X/S}^2\]
		vanishes.
\end{lemma}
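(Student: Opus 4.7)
The plan is to establish the equivalence via a direct translation between the generators-and-relations presentation of $\mathcal{D}_{X/S}$ and the data of a flat connection. The key point is that an $\OO_X$-linear map $\mathcal{E} \to \mathcal{E} \otimes \Omega_{X/S}^1$ is the same as an $\OO_X$-linear map $\mathcal{T}_{X/S} \otimes \mathcal{E} \to \mathcal{E}$, since $\Omega_{X/S}^1$ is a vector bundle (this uses the smooth, representable assumption on $f$). So a map $\nabla$ as in the statement is equivalent to a morphism of fppf sheaves $\mathcal{T}_{X/S} \otimes \mathcal{E} \to \mathcal{E}$, $(\partial, s) \mapsto \partial \cdot s$, that is $\OO_X$-linear in $\partial$.

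First I would verify the forward direction: given a $\mathcal{D}_{X/S}$-module structure on $\mathcal{E}$, restriction of the action along $\mathcal{T}_{X/S} \hookrightarrow \mathcal{D}_{X/S}$ produces such a pairing. The Leibniz rule $\nabla(fs) = f\nabla(s) + s\otimes df$ is exactly the translation of the second defining relation $\partial \cdot a - a\cdot \partial = \partial(a)$ in $\mathcal{D}_{X/S}$, and flatness of $\nabla$ (vanishing of the curvature) is the translation of the first relation $\partial\cdot\partial' - \partial'\cdot\partial = [\partial,\partial']$, since the curvature $\nabla^2 : \mathcal{E} \to \mathcal{E} \otimes \Omega^2_{X/S}$ sends $s$, when paired against $\partial \wedge \partial'$, precisely to $\partial(\partial' s) - \partial'(\partial s) - [\partial,\partial']\cdot s$.

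For the reverse direction, given $\nabla$ satisfying the Leibniz rule and flatness, I define an action of $\mathcal{T}_{X/S}$ on $\mathcal{E}$ and then extend to $\mathcal{D}_{X/S}$ using the universal property of the universal enveloping algebra of a Lie algebroid. The Leibniz rule ensures the second relation holds, and flatness ensures the first; so the action extends uniquely. These two constructions are mutually inverse essentially by construction, since both $\mathcal{D}_{X/S}$-module structures and flat connections are determined by the associated $\mathcal{T}_{X/S}$-action.

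The main obstacle is mostly bookkeeping rather than mathematical content: one must carefully check that all constructions make sense fppf-locally on $X$, and that the universal property of $\mathcal{D}_{X/S}$ as the enveloping algebra of the Lie algebroid $\mathcal{T}_{X/S}$ is being applied correctly. Since smoothness of $f$ ensures $\Omega^1_{X/S}$ is a vector bundle and $\mathcal{T}_{X/S}$ is its dual, no animated subtleties appear and the argument reduces to the classical one in, for example, Berthelot--Ogus or Katz. I would therefore largely cite the classical statement and only spell out the relative/stacky aspects of the dictionary.
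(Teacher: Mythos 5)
Your proposal is correct and takes essentially the same approach as the paper: the paper's proof (which is three sentences) likewise observes that $\nabla$ encodes maps $\nabla_{\partial}$ for $\partial \in \mathcal{T}_{X/S}$, and that the Leibniz rule and flatness conditions are precisely the relations defining $\mathcal{D}_{X/S}$ on its generators. Your version spells out the dictionary with the pairing $\mathcal{T}_{X/S}\otimes\mathcal{E}\to\mathcal{E}$ and the explicit curvature formula, but this is the same argument with more detail filled in.
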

\begin{proof}
		To give a \(\mathcal{D}_{X/S}\)-action, one just needs to say how the generators act and then check the relations; the map \(\nabla\) encodes maps \(\nabla_{\partial} : \mathcal{E} \to \mathcal{E}\) for each \(\partial \in \mathcal{T}_{X/S},\) and the Leibniz rule and flatness condition are equivalent to the relations imposed on \(\mathcal{D}_{X/S}.\) 
\end{proof}
\begin{remark} \label{rem592}
		It follos that \(\mathcal{D}_{X/S}\)-modules admit a symmetric monoidal structure, defined by the usual formula
		\[\nabla \otimes \nabla' := \id \otimes \nabla' + \nabla \otimes \id.\]
\end{remark} 

\subsubsection{The comparison theorem}

\begin{theorem} \label{classicalcomparison} 
		Let \(f : X\to S\) be a smooth representable morphism of \(\F_p\)-stacks. The functor 
		\[\mathcal{D}_{\qc}((X/S)^{\hat{\dR}}) \to \mathcal{D}_{\qc}(X)\]
		obtained by \(\ast\)-pullback along \(\pi_{X/S, \wedge} : X \to (X/S)^{\hat{\dR}}\) factors through the forgetful functor \(\Mod_{\qc}(\mathcal{D}_{X/S}) \to \mathcal{D}_{\qc}(X)\) (where \(\Mod_{\qc}(\mathcal{D}_{X/S})\) denotes the \(\infty\)-category of quasicoherent \(\mathcal{D}_{X/S}\)-modules), and the induced functor
		\[\mathcal{D}_{\qc}((X/S)^{\hat{\dR}}) \to \Mod_{\qc}(\mathcal{D}_{X/S})\]
		is an equivalence. 

		Moreover, this equivalence is symmetric monoidal for the usual \(\otimes\)-structure on \(\mathcal{D}_{\qc}((X/S)^{\hat{\dR}}),\) and the \(\otimes\)-structure of \autoref{rem592} on \(\Mod_{\qc}(\mathcal{D}_{X/S}).\) 
\end{theorem}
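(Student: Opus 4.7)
My plan is to prove the equivalence by fpqc descent along \(\pi := \pi_{X/S,\wedge}\colon X \to (X/S)^{\hat{\dR}}\). By \autoref{prop424}, \(\pi\) is a faithfully flat cover, so there is an equivalence
\[\mathcal{D}_{\qc}((X/S)^{\hat{\dR}}) \heq \operatorname{Tot}\bigl(\mathcal{D}_{\qc}(X^{\times_{(X/S)^{\hat{\dR}}}\, \bullet + 1})\bigr),\]
sending \(\mathcal{F}\) to \(\pi^*\mathcal{F}\) equipped with descent data along the Cech nerve. The goal is to identify this totalization with \(\Mod_{\qc}(\mathcal{D}_{X/S})\).

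The central computation is of \(R := X \times_{(X/S)^{\hat{\dR}}} X\). By \autoref{gerbe}, \(\nu_{X/S,\wedge}\colon (X/S)^{\hat{\dR}} \to X^{(1)}\) is a gerbe banded by \(G := \mathbb{V}(\mathcal{T}_{X^{(1)}/S})^{\#,\wedge}\), and this gerbe is canonically split over \(F_{X/S}\colon X \to X^{(1)}\) by \(\pi\) (\autoref{cansplit}). Unwinding the universal property, a \(T\)-point of \(R\) is a triple \((x_1, x_2, \alpha)\) with \(x_i \in X(T)\), a scheme-valued equality \(F_{X/S}(x_1) = F_{X/S}(x_2)\) in \(X^{(1)}(T)\), and an isomorphism \(\alpha\colon \pi(x_1)\simeq \pi(x_2)\) in \((X/S)^{\hat{\dR}}(T)\). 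Under the canonical splitting, \(\alpha\) is canonically a section of \(G\) over the common image, giving
\[R \heq (X \times_{X^{(1)}} X) \times_{X^{(1)}} G.\]
By smoothness, \(X \times_{X^{(1)}} X\) is the Frobenius-thickening of the diagonal in \(X \times_S X\) (locally of rank \(p^{\dim(X/S)}\) and supported on the diagonal), while \(G\) is the formal neighborhood of the origin in \(\mathbb{V}(\mathcal{T}_{X^{(1)}/S})\) with nilpotent divided-power structure. The two projections \(R \rightrightarrows X\) together with the composition \(R \times_X R \to R\) (which is addition in \(G\) in the gerbe direction) endow \(R\rightrightarrows X\) with the structure of a formal groupoid over \(X\).

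Next I would identify quasicoherent descent data for \(R \rightrightarrows X\) with \(\mathcal{D}_{X/S}\)-modules by showing that the continuous dual of \((\pi_1)_*\OO_R\), viewed as a coalgebra over \(X\) via one projection, is canonically identified with the \(\OO_X\)-algebra \(\mathcal{D}_{X/S}\). The \(X \times_{X^{(1)}} X\) factor contributes differential operators of order strictly less than \(p\) (generated over \(\OO_X\) by \(\mathcal{T}_{X/S}\) via the standard identification of the Frobenius-thickening of the diagonal with the symmetric algebra on \(\mathcal{T}_{X/S}\) truncated below degree \(p\)), while the divided-power factor \(G\) contributes the central PBW piece \(F_X^*\Sym^\bullet \mathcal{T}_{X^{(1)}/S} \subset \mathcal{D}_{X/S}\) consisting of Frobenius-linear \(p\)-th power operators. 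Comparing the groupoid composition on \(R\) with multiplication in \(\mathcal{D}_{X/S}\) shows that the full non-commutative algebra structures agree.

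Finally, the symmetric monoidal claim is formal: \(\pi^*\) is symmetric monoidal as any quasicoherent pullback, and matches the \(\mathcal{D}\)-module tensor product of \autoref{rem592} since both are determined by the diagonal action on the tensor of the underlying \(\OO_X\)-modules. The main obstacle will be the algebra identification just sketched: verifying that the continuous dual of the Cech nerve reconstructs \(\mathcal{D}_{X/S}\) as a \emph{non-commutative} \(\OO_X\)-algebra, rather than just as an \(\OO_X\)-module, requires careful PBW-style bookkeeping that combines the first-order infinitesimal structure on the Frobenius-thickening \(X \times_{X^{(1)}} X\) with the divided-power structure on \(G\) into a single coherent associative algebra structure on distributions.
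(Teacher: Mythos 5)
The paper's own ``proof'' of \autoref{classicalcomparison} is a one-line citation to the forthcoming \cite{bmvz}; no argument is given in the text, so there is nothing to compare your route against in this paper. What you have outlined is the standard groupoid-descent argument (essentially the one in Bhatt--Lurie and Drinfeld), and the outline is sound: descent along \(\pi\), computation of the Cech nerve \(R = X\times_{(X/S)^{\hat{\dR}}} X\) via the gerbe splitting of \autoref{cansplit}, and identification of the resulting distribution algebra with \(\mathcal{D}_{X/S}\). Your decomposition \(R \heq (X\times_{X^{(1)}} X)\times_{X^{(1)}} G\) is correct, and the rank count on \(\OO_X\)-modules does match (\(\mathcal{D}_{X/S}\) is free of rank \(p^{d}\) over its center \(F_X^*\Sym^\bullet\mathcal{T}_{X^{(1)}/S}\), which is exactly what the two factors supply).

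Two cautions. First, \autoref{prop424} only gives that \(\pi_{X/S,\wedge}\) is \emph{surjective} in the fpqc topology; flatness of \(\pi\) (needed for descent) is not stated there, and really comes from the gerbe theorem (\autoref{gerbe}): \(\pi\) is, fppf-locally on \(X^{(1)}\), a trivial \(G\)-gerbe pulled back along the flat map \(F_{X/S}\), with \(G\) an ind-finite flat affine group scheme, so it is a flat cover. Second, and more seriously, the step you flag as ``PBW-style bookkeeping'' is the actual mathematical content of the theorem: you must show that the convolution algebra of distributions along \(R \rightrightarrows X\) is \(\mathcal{D}_{X/S}\) as a \emph{non-commutative} \(\OO_X\)-algebra, including the relation \(\partial\cdot a - a\cdot\partial = \partial(a)\) and the centrality of the \(p\)-curvature piece coming from \(G\); this requires reconciling the bialgebra structure coming from the groupoid composition on \(R\) with the filtered ring structure of the enveloping algebra, and verifying it is not ``bookkeeping'' so much as the entire proof. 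As written this is a plausible blueprint, not a complete argument; since the paper defers the theorem to \cite{bmvz}, that is perhaps the best one can do here, but the gap you point to should not be minimized.
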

\begin{proof}
		This is done in \cite{bmvz}. 
\end{proof}

For later use, we record the following observation. 
\begin{remark} \label{rem590} 
		When \(X'\to S'\) is a smooth morphism of \emph{schemes}, there is a natural inclusion
		\[0 \to \mathcal{T}_{X'/S'} \inclusion \mathcal{D}_{X'/S'},\]
		which takes the Lie bracket of vector fields to the commutator in the algebra \(\mathcal{D}_{X'/S'}.\) As these inclusions are compatible with base change, we can glue them to define a map
		\[0 \to \mathcal{T}_{X/S} \to \mathcal{D}_{X/S}\]
		for our smooth, representable morphism \(f : X \to S.\) 

		We warn that this morphism is \(\OO_X\)-linear only when we treat \(\mathcal{D}_{X/S}\) as a \emph{right} \(\OO_X\)-module. 
\end{remark}

\subsubsection{Describing \(\nu^*\) classically}

In \autoref{gerbe}, we produced a morphism
\[\nu_{X/S, \wedge} : (X/S)^{\hat{\dR}} \to X^{(1)}.\]
We now describe the resulting functor \[\nu_{X/S,\wedge}^* : \mathcal{D}_{\qc}(X^{(1)}) \to \mathcal{D}_{\qc}((X/S)^{\hat{\dR}}) \heq \Mod_{\qc}(\mathcal{D}_{X/S})\] concretely. 

\begin{prop} \label{prop721} 
		The functor
		\[\nu_{X/S,\wedge}^* : \mathcal{D}_{\qc}(X^{(1)}) \to \Mod_{\qc}(\mathcal{D}_{X/S})\]
		sends \(\mathcal{E}' \in \mathcal{D}_{\qc}(X^{(1)})\) to the pair \((\mathcal{E}, \nabla)\) where \(\mathcal{E} := F_{X/S}^*\mathcal{E}'\) and the connection \(\nabla\) obeys the following: for any scheme \(T\) and map \(T \to S,\) the pullback of \((\mathcal{E}, \nabla)\) to \((X_T/T)^{\hat{\dR}}\) is given by \((\mathcal{E}_T, \nabla_T)\) where \(\mathcal{E}_T = F_{X_T/T}^*(\mathcal{E}'_T)\) (for \(\mathcal{E}'_T\) the pullback of \(\mathcal{E}'\) to \((X_T/T)^{(1)}\)) and 
		\[\nabla_T(f \cdot F_{X_T/T}^{-1}(s)) = df \otimes F_{X_T/T}^{-1}(s).\] 
\end{prop}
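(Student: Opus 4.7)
The plan is to reduce everything to a careful identification of the structure sheaf $\OO_{(X/S)^{\hat{\dR}}}$ under the equivalence of \autoref{classicalcomparison}, after which the formula follows by $\mathcal{D}$-linearity. First, the underlying quasicoherent sheaf of $\nu_{X/S,\wedge}^*\mathcal{E}'$ is identified by applying the forgetful functor $\Mod_{\qc}(\mathcal{D}_{X/S}) \to \mathcal{D}_{\qc}(X),$ which by \autoref{classicalcomparison} corresponds to $\pi_{X/S,\wedge}^*$. Using the identity $\nu_{X/S,\wedge} \circ \pi_{X/S,\wedge} = F_{X/S}$ established in \autoref{nuconst}, we conclude that the underlying sheaf is $F_{X/S}^*\mathcal{E}'.$ The asserted compatibility with base change along $T \to S$ is immediate from \autoref{prop383} combined with the naturality of \autoref{classicalcomparison} in the base, so the only real content is to pin down the connection $\nabla$ on $F_{X/S}^*\mathcal{E}'.$

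By the Leibniz rule, the stated formula is equivalent to asking that every local section of the form $F_{X/S}^{-1}(s),$ for $s$ a local section of $\mathcal{E}',$ be horizontal. The plan is to obtain this horizontality from the fact that such sections are pulled back along $\nu_{X/S,\wedge}$ from a section of $\mathcal{E}'$ on $X^{(1)}.$ Explicitly, I would work over an open $U \subseteq X^{(1)}$ on which we have a local section $s : \OO_U \to \mathcal{E}'|_U,$ pull it back via $\nu_{X/S,\wedge}$ to obtain a morphism in $\mathcal{D}_{\qc}((X/S)^{\hat{\dR}})$ with source $\OO_{(X/S)^{\hat{\dR}}}$ (restricted to the preimage of $U$), and then transport through \autoref{classicalcomparison} to obtain a $\mathcal{D}_{X/S}$-linear morphism whose image of $1$ is $F_{X/S}^{-1}(s).$ Provided one identifies the $\mathcal{D}_{X/S}$-module corresponding to $\OO_{(X/S)^{\hat{\dR}}}$ as $(\OO_X, d)$ with $d$ the exterior derivative, $\mathcal{D}$-linearity forces $\nabla(F_{X/S}^{-1}(s)) = d(1) = 0,$ as desired.

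The main conceptual step is therefore to verify that $\OO_{(X/S)^{\hat{\dR}}}$ corresponds to $(\OO_X, d)$ under \autoref{classicalcomparison}. This follows because $\OO_{(X/S)^{\hat{\dR}}}$ is the monoidal unit of $\mathcal{D}_{\qc}((X/S)^{\hat{\dR}}),$ so by the symmetric monoidal part of \autoref{classicalcomparison} it must correspond to the monoidal unit of $\Mod_{\qc}(\mathcal{D}_{X/S});$ per \autoref{rem592}, this unit is $\OO_X$ with a flat connection acting on functions by the defining derivation of $\mathcal{D}_{X/S},$ which is exactly $d.$ The hard part will be ensuring that the symmetric monoidal enhancement of \autoref{classicalcomparison} is available in the generality we need; in case only the plain equivalence is invoked, one can instead argue concretely that any natural flat connection on $\OO_X$ compatible with its $\OO_X$-algebra structure is automatically the exterior derivative $d,$ so this identification is forced. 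Once this is in hand, the rest of the argument is purely formal.
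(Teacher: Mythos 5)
Your proposal is correct, but it follows a genuinely different route from the paper. The paper reduces (etale locally) to the case $S = \Spec(k)$ and $X = \A^d_k$, observes that $\mathcal{D}_{\qc}(X^{(1)})$ is then generated under colimits by the monoidal unit $\OO_{X^{(1)}}$, and invokes the abstract principle that two cocontinuous symmetric-monoidal functors out of such a category agree; the entire content is then to check that the explicitly-described functor $F$ (given by the formula in the statement) is cocontinuous and symmetric monoidal, which the paper does by inspection. You instead argue pointwise: you identify $\OO_{(X/S)^{\hat{\dR}}}$ with $(\OO_X, d)$ via the monoidal unit correspondence, and then pull back each local section $s$ of $\mathcal{E}'$ to a $\mathcal{D}_{X/S}$-linear morphism $(\OO_X, d) \to (F^*\mathcal{E}', \nabla)$, deducing horizontality of $F^{-1}(s)$ from $d(1)=0$. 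Both arguments lean on the same monoidal input from \autoref{classicalcomparison}; the paper uses it for an abstract uniqueness characterization of the functor, while you use only the identification of units and then compute directly. Your approach is more hands-on and avoids reduction to the affine case and the generation-by-the-unit argument, but it requires you to verify (which you omit) that the horizontal sections $F^{-1}(s)$ locally $\OO_X$-generate $F^*\mathcal{E}'$, so that the Leibniz rule determines $\nabla$ on all of $\mathcal{E}$ — this is easy (it follows from quasicoherence of $\mathcal{E}'$) but worth stating. The paper's abstract route avoids tracking sections at all, which is cleaner in the derived/$\infty$-categorical setting where `sections' can be subtle; your route is more illuminating about where the formula comes from.
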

\begin{proof}
		This claim can be checked etale locally, so we may assume \(S = \Spec(k)\) is an affine scheme and \(X = \A^d_k.\) In that case, the relative Frobenius twist \(X^{(1)}\) is an affine scheme, and in particular \(\mathcal{D}_{\qc}(X^{(1)})\) is generated under colimits by \(\OO_{X^{(1)}}.\) 

		In the statement of the proposition, we defined some functor \(F : \mathcal{D}_{\qc}(X^{(1)}) \to \Mod_{\qc}(\mathcal{D}_{X/S}),\) which sends \(\mathcal{E}'\) to the explicitly described connection \(F(\mathcal{E}') := (\mathcal{E}, \nabla).\) This functor \(F\) is symmetric monoidal (by explicit computation) and cocontinuous (since \(F_{X/S}^*\) is cocontinuous, and because the connection \(\nabla\) is defined via a tensor product and \(\otimes\) is compatible with colimits; alternatively, one can explicitly write down a right adjoint to \(F,\) which we do in \autoref{solfunctor}). 

		Thus \(\nu_{X/S,\wedge}^*\) and \(F\) are two cocontinuous \(\otimes\)-functors, implying they are equivalent since \(\mathcal{D}_{\qc}(X^{(1)})\) is generated under colimits by \(\OO_{X^{(1)}},\) the unit of \(\otimes.\) This generation is because \(X^{(1)}\) is an affine scheme in our situation, as we reduced at the start to considering \(X = \A^d_k.\) 
\end{proof}

\subsubsection{The Azumaya algebra property}

The fact that \((X/S)^{\dR}\) is a gerbe over \(X^{(1)}\) is closely related to an observation of Bezrukavnikov \cite{bmm}, that in the classical situation \(F_{X/S,*}\mathcal{D}_{X/S}\) is always an Azumaya algebra over its center, which can be identified with \(\OO_{\mathbb{V}(\Omega^1_{X^{(1)}/S}}.\) 

Unfortunately, Bezrukavnikov's amazing Azumaya algebra property is not always true for stacks. 
\begin{exmp} \label{noazumaya} 
		Consider the canonical morphism \(\pt \to B\G_m.\) Using the Cartesian diagram
		\begin{center}
				\begin{tikzcd}
						\G_m \ar[r] \ar[d] & \pt \ar[d] \\
						\pt \ar[r] & B\G_m,
				\end{tikzcd}
		\end{center} 
		and that \(\mathcal{D}_{\G_m/\pt} = k[t^{\pm 1}, \partial_t],\) we deduce via descent that \(\mathcal{D}_{\pt/B\G_m} = k[t\partial_t].\) 

		In particular, \(k[t\partial_t]\) is a \emph{commutative} algebra, and so \(F_{\pt/B\G_m, *}(k[t\partial_t])\) will be a certain commutative algebra on \(B\mu_p\), and in particular have center much too large for the claim about the center being the cotangent bundle to hold. 
\end{exmp} 
\begin{remark}
		This phenomenon is possible essentially because \(\mathcal{D}_{X/S}\) is defined via a certain Lie algebroid, and Lie algebroids do not pullback well (because of the anchor map). This prevents us from checking this Azumaya algebra property by a descent argument, and so the fact that the Azumaya algebra holds for \(\G_m \to \pt\) does not imply anything about the case of \(\pt \to B\G_m.\) 
\end{remark}
\begin{remark}
		Another way of understanding this failure is to note that the usual proof of the Azumaya property essentially goes by choosing etale coordinates, and then checking using those etale coordinates. But unfortunately, \(\pt\) is too small and so we can't reduce to an easier to check case. 
\end{remark}

\subsection{The solutions functor} \label{solfunctor}

As alluded to in the proof of \autoref{prop721}, we can use the explicit description of \autoref{prop721} to write down an explicit formula for the adjoint to \(\nu_{X/S,\wedge}^*,\) which we call the \emph{solutions functor}. This solutions functor will play a crucial role in our study of Cartier descent later.

\subsubsection{Technical point: representability of \(F_{X/S}\)}

We start with a useful lemma about the relative Frobenius.
\begin{lemma} \label{lem777} 
		Let \(f : X \to S\) be a smooth, representable morphism of \(\F_p\)-stacks, such that \(S\) admits a flat cover by a scheme.

		Then the relative Frobenius \(F_{X/S} : X \to X^{(1)}\) is representable and qcqs; in fact, it is even finite. Moreover, \(F_{X/S}\) is faithfully flat.
\end{lemma}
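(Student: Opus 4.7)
The plan is to reduce to the case of a smooth morphism between schemes, where the finiteness and faithful flatness of the relative Frobenius is classical, and then descend the conclusion along a flat cover of the base.

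First, I would use the hypothesis that $S$ admits a flat cover by a scheme to choose such a cover $g : S' \to S$ with $S'$ a scheme (even affine, after refining). Formation of the relative Frobenius commutes with base change: setting $X' := X \times_S S'$, there is a canonical identification $(X')^{(1)} \heq X^{(1)} \times_S S'$, and the relative Frobenius $F_{X'/S'}$ is the base change of $F_{X/S}$ along the flat cover $X^{(1)} \times_S S' \to X^{(1)}$. Since $f$ is representable, $X'$ is a scheme, and since $f$ is smooth, so is $X' \to S'$.

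Next, in the purely schematic setting $X' \to S'$ smooth, the classical theory (e.g.\ Kunz's theorem, or Stacks Project on the relative Frobenius of a smooth morphism) tells us that $F_{X'/S'}$ is finite and faithfully flat, and in fact locally free of rank $p^d$, where $d$ is the relative dimension of $f$. In particular it is representable and qcqs. The argument in the classical case proceeds by choosing \'etale coordinates locally: a smooth morphism admits \'etale local factorizations through an affine space $\mathbb{A}^d_{S'}$, and one verifies the claim directly for $\mathbb{A}^d_{S'} \to (\mathbb{A}^d_{S'})^{(1)}$, where the relative Frobenius is given by $x_i \mapsto x_i^p$, and then descends along the \'etale map.

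Finally, the properties \emph{representable by schemes}, \emph{qcqs}, \emph{finite}, and \emph{faithfully flat} are all fpqc-local on the target. Since the pullback of $F_{X/S}$ along the flat cover $X^{(1)} \times_S S' \to X^{(1)}$ is $F_{X'/S'}$, which enjoys all of these properties, so does $F_{X/S}$ itself. The main (mild) obstacle is the base-change compatibility of relative Frobenius in the stacky setting, but this follows immediately from the fact that the absolute Frobenius on any $\mathbb{F}_p$-stack is a natural transformation of the identity functor, so the formation of the twist $X^{(1)} = X \times_{S, F_S} S$ is manifestly stable under base change on $S$; once this compatibility is recorded, the rest is pure descent.
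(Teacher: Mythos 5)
Your reduction to the schematic case via a flat cover $S' \to S$ is exactly the paper's strategy, and the base-change compatibility of relative Frobenius and the invocation of the classical result for smooth morphisms of schemes are both correct. However, there is a genuine gap in the final descent step: you assert that ``representable by schemes'' is fpqc-local on the target, and this is \emph{false} in general. Representability by \emph{algebraic spaces} is fppf-local on the base (Stacks tag 04ZP), but a morphism of algebraic spaces that becomes schematic after a flat cover of the target need not itself be schematic --- there are well-known examples of algebraic spaces (non-quasi-separated quotients, Hironaka-type constructions) that become schemes after an \'etale cover but are not themselves schemes. The correct argument, which is what the paper does, is to first descend \emph{representability by algebraic spaces} together with \emph{finiteness} and \emph{faithful flatness} (all of which genuinely are fppf-local for representable morphisms), and then upgrade to representability by schemes using the fact that a finite morphism of algebraic spaces whose target is a scheme has source a scheme (Stacks tag 03XX). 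So: your proof is almost right, but you need to route through algebraic spaces and use finiteness to buy schematic-ness, rather than claiming it descends directly.
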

\begin{proof}
		First, we prove representability \emph{by algebraic spaces}. As \(S\) has an fppf cover by a scheme, we can base change and assume \(S\) is a scheme -- it suffices to check representability after such a base change because representability by algberaic spaces can be checked fppf locally on the base; see tag 04ZP of the Stacks Project \cite{stacks}. 

		And in the situation where \(S\) is a scheme, representability of \(X\) allows us to deduce that \(X\) is a scheme, and so \(F_{X/S} : X \to X^{(1)}\) is just a morphism of schemes, meaning of course it is representable by algebraic spaces. For smooth morphisms, the relative Frobenius is known to be finite and faithfully flat (see tag 0FW2 of the Stacks Project \cite{stacks}); as these properties can also be checked after fppf base change, we deduce that \(F_{X/S}\) is representable by algebraic spacse, finite, and faithfully flat.

		To conclude, we just need to prove that the relative Frobenius is representable by schemes. Take \(T\) a scheme, and \(T \to X^{(1)}\) a morphism. Then \(X\times_{X^{(1)}} T \to T\) is a finite morphism of algebraic spaces; by tag 03XX in the Stacks Project \cite{stacks}, it follows that \(X\times_{X^{(1)}} T\) is automatically a scheme; in particular, \(F_{X/S}\) is representable by schemes, as desired.
\end{proof}
\begin{remark} \label{rem791}
		The purpose of \autoref{lem777} is that, now that we know \(F_{X/S}\) is representable and qcqs, the (derived) pushforward \(F_{X/S,*}\) of fppf sheaves on \(X\) to fppf sheaves on \(S\) will send quasicoherent sheaves to quasicoherent sheaves. 

		This might seem pointless to check, since if \(F_{X/S,*}\) doesn't send quasicoherents to quasicoherents we could just take the right adjoint to \(F_{X/S}^*.\) However, because we are going to be pushforward things like connections \(\nabla : \mathcal{E} \to \mathcal{E} \otimes \Omega_{X/S}^1\) which are not \(\OO_X\)-linear, it is vital to know that this is just the usual pushforward of sheaves. 
\end{remark}

\subsubsection{Constructing solutions}

\begin{construction}[The solutions functor] 
		We construct a functor
		\[\Sol_{X/S} : \Mod_{\qc}(\mathcal{D}_{X/S}) \to \mathcal{D}_{\qc}(X^{(1)}).\] 
		If \(X/S\) is clear from context, we will sometimes just refer to \(\Sol_{X/S}\) as \(\Sol\) or \(\Sol_X.\) 

		Let \((\mathcal{E}, \nabla) \in \Mod_{\qc}(\mathcal{D}_{X/S}).\) We can pushforward the connection
		\[\nabla : \mathcal{E} \to \mathcal{E} \otimes \Omega_{X/S}^1\]
		along \(F_{X/S}\), to get a morphism 
		\[F_{X/S,*}\nabla : F_{X/S,*}\mathcal{E} \to F_{X/S,*}(\mathcal{E} \otimes \Omega_{X/S}^1).\]
		Here, we take the pushforward just at the level of fppf sheaves on \(X.\) However, despite the fact that \(\nabla\) was not \(\OO_X\)-linear, the Leibniz rule implies \(F_{X/S,*}\nabla\) \emph{is} \(\OO_{X^{(1)}}\)-linear; thus \(F_{X/S,*}(\nabla)\) is a morphism in \(\mathcal{D}_{\qc}(X^{(1)})\) (see \autoref{lem777}). 

		We then define 
		\[\Sol_{X/S}(\mathcal{E}, \nabla) := \fib(F_{X/S,*}(\nabla)).\] 
\end{construction}

\begin{prop} \label{prop813}
		The functor \(\Sol_{X/S}\) is right adjoint to \(\nu_{X/S,\wedge}^*.\) Moreover, the unit
		\[\id \to \Sol_{X/S} \circ \nu_{X/S, \wedge}^*\]
		of the adjunction is an equivalence, and thus \(\nu_{X/S,\wedge}^*\) is fully faithful. 
\end{prop}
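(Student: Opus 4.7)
The plan is to first establish the adjunction via an explicit computation of mapping spaces, and then verify the unit is an equivalence by reducing to a local Cartier-type calculation.

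To construct the adjunction, I would unravel both sides as fibers of maps between $\OO_X$-linear Hom spaces. Since mapping into a fiber computes a fiber, $\Map(\mathcal{E}', \Sol_{X/S}(\mathcal{E},\nabla))$ is the fiber of $\Map_{\OO_{X^{(1)}}}(\mathcal{E}', F_{X/S,*}\mathcal{E}) \to \Map_{\OO_{X^{(1)}}}(\mathcal{E}', F_{X/S,*}(\mathcal{E}\otimes\Omega^1_{X/S}))$. Using the $F_{X/S}^* \dashv F_{X/S,*}$ adjunction on quasicoherent sheaves (valid by \autoref{lem777}), this becomes a fiber over $\Map_{\OO_X}(F_{X/S}^*\mathcal{E}', \mathcal{E})$. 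On the other side, by \autoref{prop721}, $\nu_{X/S,\wedge}^*\mathcal{E}' = (F_{X/S}^*\mathcal{E}', \nabla_{\can})$, and a $\mathcal{D}_{X/S}$-module morphism $(F_{X/S}^*\mathcal{E}', \nabla_{\can}) \to (\mathcal{E}, \nabla)$ is exactly an $\OO_X$-linear $\tilde\phi$ with $\nabla\tilde\phi = (\tilde\phi\otimes\id)\nabla_{\can}$, so $\Map_{\mathcal{D}}(\nu^*\mathcal{E}',(\mathcal{E},\nabla))$ is a fiber of the same pair of Hom-spaces along $\tilde\phi \mapsto \nabla\tilde\phi - (\tilde\phi \otimes \id)\nabla_{\can}$. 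The content of the adjunction is that this differential agrees with the adjoint of postcomposition by $F_{X/S,*}\nabla$; this is a direct Leibniz rule check, with the $df$-terms cancelling: if $\tilde\phi$ is the adjoint of $\phi : \mathcal{E}' \to F_{X/S,*}\mathcal{E}$, then $\tilde\phi(f\cdot F_{X/S}^{-1}(s)) = f\cdot\phi(s)$, and both expressions evaluate to $f\cdot\nabla(\phi(s))$ on such elements.

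For the unit being an equivalence, I would observe that $\eta_{\mathcal{E}'} : \mathcal{E}' \to \Sol_{X/S}(\nu^*\mathcal{E}')$ factors through the usual unit $\mathcal{E}' \to F_{X/S,*}F_{X/S}^*\mathcal{E}'$, landing in the fiber since pulled-back sections $F_{X/S}^{-1}(s)\otimes 1$ are $\nabla_{\can}$-flat. As this is a question about a morphism of quasicoherent sheaves on $X^{(1)}$, I would check it fpqc-locally, and use \autoref{prop385} together with the local structure of smooth morphisms to reduce to the case $S = \Spec k$ and $X = \A^d_k$. The projection formula for the finite flat morphism $F_{X/S}$ then identifies $F_{X/S,*}F_{X/S}^*\mathcal{E}'$ with $\mathcal{E}'\otimes_{\OO_{X^{(1)}}} F_{X/S,*}\OO_X$ (and similarly for the codomain) and $F_{X/S,*}\nabla_{\can}$ with $\id_{\mathcal{E}'}\otimes F_{X/S,*}d$, so the Cartier isomorphism -- i.e. the exactness of $0 \to \OO_{X^{(1)}} \to F_{X/S,*}\OO_X \xto{d} F_{X/S,*}\Omega^1_{X/S}$ -- identifies the fiber with $\mathcal{E}'$. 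Full faithfulness of $\nu^*$ then follows formally from the unit being an equivalence.

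The hard part will be setting up the Leibniz-rule identification carefully enough that it holds naturally in both arguments at the level of mapping spaces, rather than just pointwise on elements; once that is in place, the local Cartier computation is entirely standard.
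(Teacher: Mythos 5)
The student's adjunction argument is essentially the paper's own, just phrased a bit more explicitly in terms of fibers of $\Map$-spaces: both unravel $\Map(\nu^*\mathcal{E}', (\mathcal{E},\nabla))$ via the $F_{X/S}^* \dashv F_{X/S,*}$ adjunction and the explicit description of $\nu^*$ in \autoref{prop721}, and identify horizontality of $\phi$ with $\psi$ factoring through $\Sol$. Where you differ from the paper is that the paper's proof actually stops there: it establishes the adjunction and never explicitly verifies that the unit is an equivalence (that claim is implicitly swept into \autoref{cartierdescent}, whose proof reduces to $\A^d_k$ and cites Katz). Your proposal supplies that missing step directly by the same reduction to $X=\A^d_k$, $S=\Spec k$, which is the right move. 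Two caveats you should make explicit. First, ``the fiber is $\mathcal{E}'$'' requires the Cartier sequence $0 \to \OO_{X^{(1)}} \to F_*\OO_X \xto{d} F_*\Omega^1_{X/S}$ to stay left-exact after $\mathcal{E}'\otimes_{\OO_{X^{(1)}}}(-)$; this is true, because $\OO_{X^{(1)}} \hookrightarrow F_*\OO_X$ is split (so $\operatorname{im}(d)$ is a direct summand of the free module $F_*\Omega^1$, hence flat), but it is not automatic and deserves a sentence. Second, if $\fib$ and $\Sol$ are read as genuinely derived, then $\Sol(\nu^*\mathcal{E}')$ also has a nonvanishing $H^1$ (the Cartier cokernel $\mathcal{E}'\otimes\Omega^1_{X^{(1)}/S}$), so the unit cannot literally be an equivalence of complexes; both the paper and your argument are implicitly computing at the level of the heart, with $\Sol$ meaning the honest kernel of $F_*\nabla$. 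Your framing in terms of $\infty$-categorical mapping anima doesn't resolve this any more than the paper's does, and a reader applying the derived projection formula literally will get the wrong answer. It would be cleaner to state up front that one first reduces to the heart (as the paper does in its proof of \autoref{cartierdescent}) before invoking Cartier.
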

\begin{proof}
		The adjunction can be understood more or less explicitly. Let \((\mathcal{E}, \nabla) \in \Mod(\mathcal{D}_{X/S}),\) and \(\FF \in \mathcal{D}_{\qc}(X^{(1)}).\) Then \(\Map_{(X/S)^{\hat{\dR}}}(\nu_{X/S,\wedge}^*\FF, (\mathcal{E}, \nabla))\) is equivalent to the subanima of \(\Map_X(F_{X/S}^*\FF, \mathcal{E})\) spanned by the maps \(\phi : F_{X/S}^*\FF \to \mathcal{E}\) for which the diagram
		\begin{center}
				\begin{tikzcd}
						F_{X/S}^*\FF \ar[r, "\phi"] \ar[d, "\nabla'"] & \mathcal{E} \ar[d, "\nabla"] \\
						F_{X/S}^*\FF \otimes \Omega_{X/S}^1\ar[r, "\phi \otimes \id_{\Omega_{X/S}^1}"] & \mathcal{E} \otimes \Omega_{X/S}^1 
				\end{tikzcd}
		\end{center}
		commutes. 

		Writing \(F_{X/S}^*\FF = F_{X/S}^{-1}\FF \otimes_{F_{X/S}^{-1}\OO_{X^{(1)}}} \OO_X,\) we have
		\[\nabla'(s \otimes f) = s' \otimes df.\]

		By adjunction, a morphism \(\phi : F_{X/S}^*\FF \to \mathcal{E}\) is equivalent to giving a morphism \(\psi : \FF \to F_{X/S,*}\mathcal{E},\) and \(\phi, \psi\) are related by the formula
		\[\phi(s \otimes f) = f\psi(s).\] 
		Thus asking that \(\phi\) is compatible with the connections is equivalent to demanding that
		\[f\nabla(\psi(s)) + \psi(s) \otimes df = \psi(s) \otimes df\]
		for all \(f \in \OO_X, s \in \FF.\) Of course, this is equivalent to demanding that \(\nabla(\psi(s)) = 0\) for all \(s,\) or equivalently that \(\psi\) factors through \(\Sol(\mathcal{E}, \nabla).\) 
\end{proof}	
\begin{remark} \label{rem808} 
		As \autoref{prop813} tells us
		\[\nu_{X/S,\wedge}^* : \mathcal{D}_{\qc}(X^{(1)}) \to \mathcal{D}_{\qc}((X/S)^{\hat{\dR}})\]
		is fully faithful, it is natural to ask for its essential image. This is answered by Cartier descent, the subject of our \autoref{cartiersec}.
\end{remark}

\section{On \(p\)-curvature of relative de Rham stack} \label{pcurvsec} 

In \autoref{rem808}, we noted that
\[\nu_{X/S,\wedge}^* : \mathcal{D}_{\qc}(X^{(1)}) \to \mathcal{D}_{\qc}((X/S)^{\hat{\dR}})\]
was fully faithful, and asked for its essential image. We now describe this essential image via \emph{\(p\)-curvature}. We start by defining \(p\)-curvature and introducing some basic properties, and then prove Cartier descent, which asserts that the essential image of \(\nu_{X/S,\wedge}^*\) is precisely those \(\mathcal{D}_{X/S}\)-modules with \(p\)-curvature zero. 

\subsection{Definition of \(p\)-curvature} 

Fix \(f : X\to S\) a smooth, representable morphism of \(\F_p\)-stacks. Using \autoref{gerbe}, we can define a certain \emph{linear} invariant of objects of \(\mathcal{D}_{\qc}((X/S)^{\hat{\dR}}).\) 

\subsubsection{The construction of \(p\)-curvature}

Pullback along the relative Frobenius \(F_{X/S}\) gives a \(\otimes\)-functor
\begin{equation} \label{eq164} \mathcal{D}_{\qc}((X/S)^{\hat{\dR}}) \to \mathcal{D}_{\qc}(F_{X/S}^*(X/S)^{\hat{\dR}}) \heq \mathcal{D}_{\qc}(B\mathbb{V}(F_X^*\mathcal{T}_{X/S})^{\#,\wedge}) \heq \mathcal{D}_{\qc}(\mathbb{V}(F_X^*\Omega^1_{X/S})),\end{equation}
where the first equivalence above is from \autoref{cansplit}, and the second equivalence is Cartier duality.
\begin{warn}
		For the resulting composition to be symmetric monoidal, we need to ensure \(\mathcal{D}_{\qc}(\mathbb{V}(F_X^*\Omega^1_{X/S}))\) is given its \emph{convolution} symmetric monoidal structure, \emph{not} its usual symmetric monoidal structure.
\end{warn}

\begin{notn} \label{pcurv}
		We let
		\[\psi_p : \mathcal{D}_{\qc}((X/S)^{\hat{\dR}}) \to \mathcal{D}_{\qc}(\mathbb{V}(F_X^*\Omega^1_{X/S}))\]
		denote the composition \autoref{eq164}.
\end{notn} 

It is often useful to think of a quasicoherent sheaf on \(\mathbb{V}(F_X^*\Omega^1_{X/S})\) as being the data of a quasicoherent sheaf \(\FF\) on \(X\) together with a linear map \(\theta : \FF \to \FF \otimes F_X^*\Omega^1_{X/S}\) obeying \(\theta \wedge \theta = 0.\) From this perspective, it is useful to ask: given a quasicoherent \(\mathcal{D}_{X/S}\)-module \(\mathcal{E},\) what is the quasicoherent sheaf on \(X\) associated to \(\psi_p(\mathcal{E})\)? The following diagram explains it is just \(\mathcal{E}\) itself. 
\begin{prop} \label{itcommutes}
		The triangle
						\begin{center}
								\begin{tikzcd}
										\mathcal{D}_{\qc}((X/S)^{\hat{\dR}}) \ar[rr,"\psi_p"] \ar[dr] && \mathcal{D}_{\qc}(\mathbb{V}(F_X^*\mathbb{L}_{X/S})) \ar[dl] \\
																									  & \mathcal{D}_{\qc}(X) 
								\end{tikzcd}
						\end{center}
				is commutative, where the left morphism is \(\ast\)-pullback along \(X \to (X/S)^{\hat{\dR}},\) and the right morphism is restriction of scalars (that is, pushforward along the affine morphism \(\mathbb{V}(F_X^*\mathbb{L}_{X/S}) \to X\)).
\end{prop}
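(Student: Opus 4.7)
The plan is to unravel the definition of $\psi_p$ as the composition
\[\mathcal{D}_{\qc}((X/S)^{\hat{\dR}}) \to \mathcal{D}_{\qc}(F_{X/S}^*(X/S)^{\hat{\dR}}) \xto{\alpha^*} \mathcal{D}_{\qc}(B\mathbb{V}(F_X^*\mathcal{T}_{X/S})^{\#,\wedge}) \to \mathcal{D}_{\qc}(\mathbb{V}(F_X^*\Omega^1_{X/S}))\]
and to verify stage-by-stage that each intermediate category maps naturally to $\mathcal{D}_{\qc}(X)$ compatibly with these transition maps. Concretely, the forgetful functor to $\mathcal{D}_{\qc}(X)$ will be, respectively: pullback along $\pi_{X/S,\wedge}$; pullback along the canonical splitting $s$ of \autoref{cansplit}; pullback along the basepoint $\ast: X \to B\mathbb{V}(F_X^*\mathcal{T}_{X/S})^{\#,\wedge}$; and restriction of scalars along the affine projection $\mathbb{V}(F_X^*\Omega^1_{X/S}) \to X$. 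Commutativity of the original triangle then reduces to showing these four functors are intertwined by the three arrows in the composition above.

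The first two compatibilities are essentially formal. Since $\nu_{X/S,\wedge} \circ \pi_{X/S,\wedge} = F_{X/S}$, the map $\pi_{X/S,\wedge}$ factors through the canonical splitting $s$ of \autoref{cansplit}, yielding $\pi_{X/S,\wedge}^* \heq s^* \circ (\text{pullback to } F_{X/S}^*(X/S)^{\hat{\dR}})$. Moreover, the isomorphism $\alpha$ of \autoref{cansplit} was defined precisely so as to carry $s$ to the basepoint $\ast$, so $s^* \heq \ast^* \circ \alpha^*$.

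The substantial step is the third compatibility, which is an incarnation of Cartier duality. Both $\mathcal{D}_{\qc}(B\mathbb{V}(F_X^*\mathcal{T}_{X/S})^{\#,\wedge})$ and $\mathcal{D}_{\qc}(\mathbb{V}(F_X^*\Omega^1_{X/S}))$ can be described as \emph{quasicoherent sheaves on $X$ endowed with extra structure}: a coaction of the Hopf algebra $\OO(\mathbb{V}(F_X^*\mathcal{T}_{X/S})^{\#,\wedge})$ on the one side, a module structure over the Cartier-dual algebra $\operatorname{Sym}_{\OO_X}(F_X^*\mathcal{T}_{X/S})$ on the other. Cartier duality is the contraction pairing between these two algebraic structures, and under the resulting equivalence $\ast^*$ becomes ``forget the coaction'' while restriction of scalars becomes ``forget the action''; both recover the underlying $\OO_X$-module, and the triangle commutes tautologically. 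The main obstacle is therefore to pin down a sufficiently concrete model of Cartier duality for the divided-power/nilpotent completion $\mathbb{V}(F_X^*\mathcal{T}_{X/S})^{\#,\wedge}$ in which both forgetful functors manifestly agree on underlying sheaves; once this setup is in place, the compatibility is immediate.
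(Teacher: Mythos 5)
Your proposal is correct and is essentially the paper's own argument, just unpacked slightly more finely: the paper defines the map $\alpha\colon X \to F_{X/S}^*(X/S)^{\hat{\dR}}$ directly from $\pi_{X/S,\wedge}$ and $\id_X$ using $\nu_{X/S,\wedge}\circ\pi_{X/S,\wedge}=F_{X/S}$, and then immediately invokes compatibility of Cartier duality with the basepoint pullback and restriction of scalars, whereas you explicitly insert the canonical splitting $s$ from \autoref{cansplit} as an intermediate step. Both proofs rest on the same two ingredients and yield the same chain of identifications.
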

\begin{proof}
		Recall that \(\nu_{X/S, \wedge} \circ \pi_{X/S, \wedge} = F_{X/S}.\) The pair of maps \(\pi_{X/S, \wedge} : X \to (X/S)^{\hat{\dR}}\) and \(\id : X \to X\) therefore induce a map
		\[\alpha : X \to F_{X/S}^*(X/S)^{\hat{\dR}},\]
		where by \(F_{X/S}^*\) we mean the base change 
		\begin{center}
				\begin{tikzcd}
						F_{X/S}^*(X/S)^{\hat{\dR}} \ar[r] \ar[d] & (X/S)^{\hat{\dR}} \ar[d, "\nu_{X/S, \wedge}"] \\
						X \ar[r, "F_{X/S}"] & X^{(1)}. 
				\end{tikzcd}
		\end{center}

		Thus we have a commutative triangle 
		\begin{center}
								\begin{tikzcd}
										\mathcal{D}_{\qc}((X/S)^{\hat{\dR}}) \ar[rr,"F_{X/S}^*"] \ar[dr, "\pi^*"] && \mathcal{D}_{\qc}(F_{X/S}^*(X/S)^{\hat{\dR}}) \ar[dl, "\alpha^*"] \\
																									  & \mathcal{D}_{\qc}(X) 
								\end{tikzcd}
				\end{center}
		The Cartier duality equivalence
		\[\mathcal{D}_{\qc}(F_{X/S}^*(X/S)^{\hat{\dR}}) \heq \mathcal{D}_{\qc}(\mathbb{V}(F_X^*\mathbb{L}_{X/S}))\]
		is well-known to be compatible with \(\ast\)-pullback to \(X\) on the left with restriction of scalars on the right; thus the above commutative triangle implies the triangle of the proposition statement commutes.
\end{proof} 

\subsubsection{Treating \(p\)-curvature as a morphism} \label{pcurvmorphism}

Let \((\mathcal{E}, \nabla) \in \mathcal{D}_{\qc}((X/S)^{\hat{\dR}}).\) Then the \(p\)-curvature of \((\mathcal{E}, \nabla)\) can be viewed as a morphism
\[\psi_p : \mathcal{E} \to \mathcal{E} \otimes F_{X/S}^*\Omega_{X^{(1)}/S}^1\]
of quasicoherenet sheaves on \(X.\) The functor \(\pi_{X/S,\wedge}^* : \mathcal{D}_{\qc}((X/S)^{\hat{\dR}}) \to \mathcal{D}_{\qc}(X)\) is faithful (though not full); so, we may ask: does \(\psi_p\) come from a morphism 
\[(\mathcal{E}, \nabla) \to (\mathcal{E}, \nabla) \otimes \nu_{X/S,\wedge}^*\Omega_{X^{(1)}/S}^1\]
on \((X/S)^{\hat{\dR}}\)? And the answer is fortunately yes.

\begin{remark}
		The author first learned this from Ogus' \cite{ogus}. We will give a slightly different proof of this fact, communicated to the author by Longke Tang and Qixiang Wang. Tang-Wang's argument is more in line with the techniques of this paper than Ogus', so we include this proof despite the result already being known. The author thanks Tang and Wang for finding this argument and for allowing us to include it in this paper.
\end{remark}

The relative Frobenius \(F_{X/S} : X \to X^{(1)}\) factors through \((X/S)^{\hat{\dR}}.\) The pullback of the gerbe \((X/S)^{\hat{\dR}} \to X^{(1)}\) to \((X/S)^{\hat{\dR}}\) is split (as a gerbe pulled back to its total space is already split); hence we get a commutative diagram 
\begin{center}
		\begin{tikzcd}
				B\mathbb{V}(F^*\mathcal{T}_{X^{(1)}/S})^{\#, \wedge} \ar[r] \ar[d] & B\mathbb{V}(\nu^*\mathcal{T}_{X^{(1)}/S})^{\#, \wedge}) \ar[r] \ar[d] & (X/S)^{\hat{\dR}} \ar[d, "\nu"] \\
				X \ar[r, "\pi"] & (X/S)^{\hat{\dR}} \ar[r, "\nu"] & X^{(1)}
		\end{tikzcd}
\end{center}
in which all squares are Cartesian.

Pulling back \((\mathcal{E}, \nabla)\) to \(B\mathbb{V}(\nu^*\mathcal{T}_{X^{(1)}/S})^{\#, \wedge})\) and applying Cartier duality, we get a map
\[(\mathcal{E}, \nabla) \to (\mathcal{E}, \nabla) \otimes \nu^*\Omega^1_{X^{(1)}/S}.\]
This morphism will pullback along \(\pi^*\) to the usual \(p\)-curvature \(\mathcal{E} \to \mathcal{E} \otimes F^*\Omega^1_{X^{(1)}/S}\) because of compatibility of Cartier duality with base change. 

\begin{notn}
		Given \(\mathcal{E}^{\hat{\dR}} \in \mathcal{D}_{\qc}((X/S)^{\hat{\dR}}),\) we will write
		\[\psi_p(\mathcal{E}^{\hat{\dR}}) : \mathcal{E}^{\hat{\dR}} \to \mathcal{E}^{\hat{\dR}} \otimes \nu_{X/S, \wedge}^*\Omega_{X^{(1)}/S}^1\]
		for the morphism constructed in this section, uniquely characterized by the property that \(\pi_{X/S,\wedge}^*(\psi_p(\mathcal{E}^{\hat{\dR}}))\) is the \(p\)-curvature \(\mathcal{E} \to \mathcal{E} \otimes F_{X/S}^*\Omega^1_{X^{(1)}/S}.\) 
\end{notn}

\subsection{Relative Cartier descent} \label{cartiersec} 

Fix \(f : X \to S\) a smooth, representable morphism of algebraic \(\F_p\)-stacks. Assume \(S\) admits an fppf cover \(\mathcal{U} \to S,\) where \(\mathcal{U}\) is a scheme. 

In this section, we will prove a relative version of Cartier descent, finally allowing us to characterize the essential image of \(\nu_{X/S,\wedge}^*\) in terms of \(p\)-curvature. 
\begin{construction} \label{const845}
		Let \((\mathcal{E}, \nabla) \in \mathcal{D}_{\qc}((X/S)^{\hat{\dR}}).\) Then the \((\nu^*, \Sol)\) adjunction gives us a counit map
		\[\epsilon : \nu^*\Sol(\mathcal{E}, \nabla) \to (\mathcal{E}, \nabla).\]
		Applying \(\pi^*,\) we get a map
		\begin{equation} \label{eq849} F_{X/S}^*\Sol(\mathcal{E}, \nabla) \to \mathcal{E}.\end{equation} 
\end{construction}	

\begin{theorem}[Relative Cartier descent] \label{cartierdescent}
		The functor
		\[\nu_{X/S, \wedge}^* : \mathcal{D}_{\qc}(X^{(1)}) \to \mathcal{D}_{\qc}((X/S)^{\hat{\dR}})\]
		is fully faithful. Moreover, for an object \((\mathcal{E}, \nabla) \in \mathcal{D}_{\qc}((X/S)^{\hat{\dR}}),\) the following are equivalent:
		\begin{enumerate}
				\item \((\mathcal{E}, \nabla)\) lies in the essential image of \(\nu_{X/S, \wedge}^*,\)
				\item the map \(F_{X/S}^*\Sol(\mathcal{E}, \nabla) \to \mathcal{E}\) constructed in \autoref{const845} is an equivalence,
				\item the \(p\)-curvature morphism \(\psi_p(\mathcal{E}, \nabla) : \mathcal{E} \to \mathcal{E} \otimes \nu_{X/S}^*\Omega_{X^{(1)}/S}^1\) is the zero morphism.
		\end{enumerate} 
\end{theorem}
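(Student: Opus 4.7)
The plan is to deduce the three equivalences from a combination of adjunction theory, the conservativity of pullback along $\pi_{X/S,\wedge}$, and the gerbe structure of $\nu_{X/S,\wedge}$ combined with Cartier duality. Full faithfulness of $\nu_{X/S,\wedge}^*$ is already contained in \autoref{prop813}. For (1) $\iff$ (2), I would apply the standard fact that for a fully faithful left adjoint, an object lies in the essential image precisely when the counit at that object is an equivalence. Thus (1) holds iff the counit $\epsilon : \nu_{X/S,\wedge}^*\Sol(\mathcal{E},\nabla) \to (\mathcal{E},\nabla)$ in $\mathcal{D}_{\qc}((X/S)^{\hat{\dR}})$ is an equivalence. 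By \autoref{prop424}, the morphism $\pi_{X/S,\wedge}$ is an fpqc cover, so $\pi_{X/S,\wedge}^*$ is conservative on $\mathcal{D}_{\qc}$; thus $\epsilon$ is an equivalence iff $\pi_{X/S,\wedge}^*\epsilon$ is. Using $\nu_{X/S,\wedge}\circ \pi_{X/S,\wedge} = F_{X/S}$, the map $\pi_{X/S,\wedge}^*\epsilon$ is exactly the map of \autoref{const845}, giving (2).

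For (1) $\Rightarrow$ (3), I would simply compute. By construction of \autoref{pcurv}, $\psi_p$ is obtained by $\ast$-pullback along the relative Frobenius to $F_{X/S}^*(X/S)^{\hat{\dR}}$, then applying the canonical trivialization of \autoref{cansplit} to identify $F_{X/S}^*(X/S)^{\hat{\dR}} \heq X \times B\mathbb{V}(F_X^*\mathcal{T}_{X^{(1)}/S})^{\#,\wedge}$, and finally Cartier duality. For an object of the form $\nu_{X/S,\wedge}^*\mathcal{E}'$, base change along the Cartesian square of \autoref{prop383} says the pullback to $F_{X/S}^*(X/S)^{\hat{\dR}}$ is pulled back from $X$ (namely $F_{X/S}^*\mathcal{E}'$ in the $X$-variable, with trivial gerbe factor). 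Under Cartier duality, the trivial action of $\mathbb{V}(F^*\mathcal{T})^{\#,\wedge}$ corresponds to sheaves on $\mathbb{V}(F^*\Omega^1_{X^{(1)}/S})$ supported at the zero section, which by \autoref{itcommutes} is precisely the condition that $\psi_p(\nu_{X/S,\wedge}^*\mathcal{E}') = 0$ as a morphism $\mathcal{E} \to \mathcal{E}\otimes F_{X/S}^*\Omega^1_{X^{(1)}/S}$.

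The remaining implication (3) $\Rightarrow$ (1) is the substantive content. My preferred approach is to reduce to the classical setting and invoke Katz's Cartier descent (our \autoref{thm71}). By \autoref{prop385}, formation of the sheared de Rham stack commutes with base change on $S$, so I may assume $S$ is an affine scheme. By \autoref{prop443} and the fact that both $\psi_p = 0$ and the map in (2) can be checked etale-locally on $X$, I may further assume $X$ is an affine scheme with etale coordinates over $S$. In this setting \autoref{classicalcomparison} identifies $\mathcal{D}_{\qc}((X/S)^{\hat{\dR}})$ with $\Mod_{\qc}(\mathcal{D}_{X/S})$, and \autoref{prop721} identifies $\nu_{X/S,\wedge}^*$ with the classical functor $F_{X/S}^*$ equipped with its canonical connection. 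Granted the compatibility below, classical Cartier descent directly yields that any $(\mathcal{E},\nabla)$ with vanishing $p$-curvature is isomorphic to $\nu_{X/S,\wedge}^*\Sol(\mathcal{E},\nabla)$.

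The main obstacle is verifying that the de Rham stack $p$-curvature $\psi_p$ of \autoref{pcurv} agrees, under the classical identification of \autoref{classicalcomparison}, with the classical $p$-curvature $\nabla_{\partial}^p - \nabla_{\partial^p}$. This amounts to unwinding the explicit description of the canonical splitting of \autoref{cansplit} in local coordinates together with Cartier duality between $\mathbb{V}(F^*\mathcal{T})^{\#,\wedge}$ and $\mathbb{V}(F^*\Omega^1)$, tracing how the divided power structure on the $\mathcal{T}$-side pairs with the symmetric algebra structure on the $\Omega^1$-side to reproduce $\partial \mapsto \partial^p$. Once this compatibility is in hand, (3) $\Rightarrow$ (1) follows from the classical theorem with no further input; alternatively, one could avoid the classical reduction by descending along the fpqc cover $\pi_{X/S,\wedge}$ and observing that vanishing of $\psi_p$ upgrades the trivialization of $F_{X/S}^*(X/S)^{\hat{\dR}}$ to an equivalence $(\mathcal{E},\nabla) \heq \nu_{X/S,\wedge}^*\Sol(\mathcal{E},\nabla)$ on the nose.
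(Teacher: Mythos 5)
Your proposal follows essentially the same route as the paper's proof: full faithfulness from \autoref{prop813}, (1) $\iff$ (2) via the counit and flat descent along $\pi_{X/S,\wedge}$, (1) $\Rightarrow$ (3) via the Cartesian diagram and Cartier duality, and (3) $\Rightarrow$ (1)/(2) by reducing to $S$ affine and $X$ affine with nice coordinates and invoking Katz's classical Cartier descent. Two minor differences worth noting. First, the paper reduces to objects $(\mathcal{E},\nabla)$ in the heart of the $t$-structure before passing to the local case, so that Katz's theorem (which concerns honest $\mathcal{D}_{X/S}$-modules, not complexes) applies directly; you should insert this reduction explicitly, since otherwise you are implicitly asserting that $\nu^*\Sol$ commutes with passage to cohomology sheaves. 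Second, and to your credit, you flag the real lacuna that both proofs step over: the identification of the stacky $p$-curvature of \autoref{pcurv} (defined via the gerbe trivialization of \autoref{cansplit} and Cartier duality) with the classical $\nabla_{\partial}^p - \nabla_{\partial^p}$ of Katz. The paper's proof cites Katz without carrying out this comparison; your paragraph explaining that the check comes down to tracing the divided-power pairing through Cartier duality is precisely the work that would need to be done. So your proposal is not only correct in broad strokes but is arguably more honest about where the remaining verification lives. One small technical caveat: you propose to localize etale-locally on $X$ using \autoref{prop443}; this works, but one must also observe that $\Sol$ commutes with such etale localization, which follows from \autoref{prop443} plus the base-change compatibility of the pushforward along Frobenius, whereas the paper's flat-descent phrasing of this reduction sidesteps that point more cleanly.
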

\begin{proof}
		\autoref{prop813} already tells us about the fully faithfulness, so we just need to identify its essential image. 

		For an object \((\mathcal{E}, \nabla) \in \mathcal{D}_{\qc}((X/S)^{\hat{\dR}}),\) we have that \((\mathcal{E}, \nabla)\) belongs to the essential image of \(\nu_{X/S,\wedge}^*\) if and only if the co-unit 
		\[\epsilon : \nu_{X/S,\wedge}^*(\Sol_{X/S}(\mathcal{E}, \nabla)) \to (\mathcal{E}, \nabla)\]
		is an equivalence. 

		As \(\pi : X \to (X/S)^{\hat{\dR}}\) is a flat cover, by flat descent we have that \(\epsilon\) is an equivalence if and only if \(\pi^*(\epsilon)\) is an equivalence. But \(\pi^*(\epsilon)\) is exactly the morphism \autoref{eq849}, and so conditions 1 and 2 are equivalent. 

		We now show condition 1 implies condition 3; that is, we check every \(\nu^*\FF\) has \(p\)-curvature zero. Recall that \(p\)-curvature is defined using the Cartesian diagram
		\begin{equation} \label{diagram861}
				\begin{tikzcd}
						B\mathbb{V}(F_X^*\mathcal{T}_{X/S}) \ar[r, "F'"] \ar[d, "\nu'"] & (X/S)^{\hat{\dR}} \ar[d, "\nu"] \\
						X \ar[r, "F"] & X^{(1)}.
				\end{tikzcd}
		\end{equation}
		It follows from the commutativity of \autoref{diagram861} that
		\begin{equation} \label{eq880} F^{',*}(\nu^*\FF) = \nu^{',*}(F^*\FF).\end{equation}
		Using Cartier duality, we can describe objects of \(\mathcal{D}_{\qc}(B\mathbb{V}(F_X^*\mathcal{T}_{X/S})) \heq \mathcal{D}_{\qc}(\mathbb{V}(F_X^*\Omega_{X/S}^1))\) as pairs \((\mathcal{E}', \theta : \mathcal{E}' \to \mathcal{E}' \otimes F_X^*\Omega_{X/S}^1)\) where \(\theta \wedge \theta = 0.\) 

		In this description, we have
		\[F^{',*}(\nu^*\FF) = (F^*\FF, \pi^*(\psi_p(\nu^*\FF)))\]
		(by definition of the \(p\)-curvature morphism) and
		\[\nu^{',*}(F^*\FF) = (F^*\FF, 0).\]
		Thus \autoref{eq880} tells us \(\pi^*(\psi_p(\nu^*\FF)) = 0,\) which since \(\pi\) is a flat cover implies \(\psi_p(\nu^*\FF) = 0.\) In particular, condition 1 implies condition 3. 

		Finally, we prove condition 3 implies condition 2; this is the hardest part of the argument. Assume condition 3 holds; we will show \(\epsilon\) is an equivalence. As being an equivalence can be checked on cohomology sheaves, it suffices to prove this assuming that \((\mathcal{E}, \nabla) \in \mathcal{D}_{\qc}((X/S)^{\hat{\dR}})^{\heart}.\) 

		Moreover, as being an equivalence can be checked after flat descent, we can assume \(S = \Spec k\) is an affine scheme and \(X = \A^d_k.\) In this case, classical Cartier descent (see Theorem 5.1 of \cite{katz}) shows that our \(\psi_p = 0\) assumption implies condition 2 holds, as desired. 
\end{proof}

\section{Hitchin-type morphisms} \label{hitchinsec}
We now turn our attention towards non-abelian Hodge theory. To state our result, we will need some Hitchin morphisms; we now give some general setup in which one can define Hitchin morphisms, since we will use it twice below. 

\subsection{Generalities} \label{hitchingeneralities}

Fix\(k\) be a field of characteristic \(p\) and \(X\) an algebraic \(k\)-stack with \(X \to \Spec(k)\) proper, with \(\omega\) a line bundle on \(X.\) In this situation, we can define a \emph{Hitchin morphism} as follows. 

Let \(L = \SPEC_X \Sym^{\bullet}(\omega^{\vee})\) denote the total space of \(\omega.\) Write \(\pi : L \to X\) for the canonical map. 

\begin{notn}
		We say a twisted linear endomorphism \(\phi : \mathcal{E} \to \mathcal{E} \otimes \omega\) is \emph{integrable} if the map
		\[\phi \wedge \phi : \mathcal{E} \xto{\phi} \mathcal{E} \otimes \omega \xto{\phi \otimes \id} \mathcal{E} \otimes \omega^2 \to \mathcal{E} \otimes \wedge^2(\omega)\]
		vanishes. 
\end{notn}	

\subsubsection{The tautological section}

There is a natural inclusion
\[\omega^{\vee} \to \Sym^{\bullet}(\omega^{\vee}) = \pi_*\OO_L,\]
which by adjunction gives us a map
\[\pi^*(\omega^{\vee}) \to \OO_L\]
of quasicoherent sheaves on \(L.\) The dual of this map is a map \(\OO_L \to \pi^*(\omega),\) or in other words a global section \(\lambda\) of \(\pi^*(\omega).\) 

\begin{defn}
		The \emph{tautological section} of \(\pi^*(\omega)\) is the section \(\lambda\) just described.
\end{defn}

\subsubsection{The characteristic polynomial of an \(\omega\)-twisted endomorphism}

\begin{defn}
		We let \(\mathcal{M}(X, n, \omega)\) denote the \(k\)-stack whose groupoid of \(T\)-points is the groupoid of pairs \((E, \theta),\) where \(E\) is a rank \(n\) vector bundle on \(X\times_k T\) and \(\theta : E \to E \otimes p_X^*(\omega)\) is a linear map, for \(p_X : X\times_k T \to X\) the projection.
\end{defn}
\begin{notn}
		For \(T \to \Spec(k)\) a \(k\)-scheme, we will write \(\omega_T := p_X^*(\omega).\)
\end{notn}

\begin{remark} \label{rem87} 
		In a moment, we will use that there is a natural inclusion
		\begin{equation} \label{eq89} H^0(X, \omega) \inclusion H^0(L, \pi^*(\omega)).\end{equation} 

		Indeed, adjunction gives us a map \(\omega \to \pi_*\pi^*\omega,\) and taking global sections of this morphism gives us exactly \autoref{eq89}. Since \(\pi\) is faithfully flat, the morphism \(\omega \to \pi_*\pi^*\omega\) is always injective, and hence \autoref{eq89} is injective. 
\end{remark}

\begin{construction}[Characteristic polynomials] \label{charpolys}
		Consider a \(k\)-scheme \(T,\) and a \(T\)-point \((E, \theta)\) of \(\mathcal{M}(X, n,\omega).\) We will now define the characteristic polynomial of \(\theta.\) 

		Let \(\pi_T : L_T \to X_T\) be the canonical map. Then \(\pi_T^*(E)\) is a rank \(n\) vector bundle on \(L_T,\) carrying a twisted-linear endomorphism
		\[\pi_T^*(\theta) : \pi_T^*(E) \to \pi_T^*(E) \otimes \pi_T^*(\omega_T).\]
		Set
		\[p(\lambda) := \det(\lambda \cdot \id_{\pi_T^*(E)} - \pi_T^*(\theta)) \in \pi_T^*(\omega_T^{\otimes n}).\] 

		This will be a monic polynomial in \(\lambda,\) of the form
		\[p(\lambda) = \lambda^n + a_1\lambda^{n-1} + \cdots + a_n,\]
		where apriori we have \(a_i \in H^0(L_T, \pi_T^*(\omega_T^{\otimes i})).\) However (see \autoref{rem87}) there is a natural injection
		\[H^0(X_T, \omega_T^{\otimes i}) \inclusion H^0(L_T, \pi_T^*(\omega_T^{\otimes i})),\] 
		and we have a miracle: the \(a_i\) actually belong to this subspace \(H^0(X_T, \omega_T^{\otimes i})\) subspace. This claim can be checked using local coordinates, where it follows immediately after choosing a chart in which \(\omega\) is the trivial line bundle.
\end{construction}

\begin{prop} \label{cayleyhamilton}
		Let \((E, \theta)\) be a \(T\)-point of \(\mathcal{M}(X, n, \omega),\) with characteristic polynomial
		\[p(\lambda) = \lambda^n + a_1\lambda^{n-1} + \cdots + a_n.\]
		Set \(Y_a \subseteq L_T\) the closed subscheme cut out by \(p(\lambda) = 0.\) The integrability condition on \(\theta\) allows us to descend \((E, \theta)\) to a coherent sheaf \(E'\) on \(L_T\); then \(E'\) is the pushforward of some coherent sheaf on \(Y_a.\) 
\end{prop}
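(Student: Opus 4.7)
My plan is to proceed in two logically independent stages: first construct $E'$ on $L_T$ from $(E,\theta)$ via the spectral correspondence, and then invoke Cayley--Hamilton to show $E'$ is scheme-theoretically supported on $Y_a$.

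For the first stage, since $\pi_T : L_T \to X_T$ is affine, pushforward gives an equivalence between quasicoherent $\mathcal{O}_{L_T}$-modules and quasicoherent $\pi_{T,*}\mathcal{O}_{L_T} = \Sym^{\bullet}(\omega_T^{\vee})$-modules. It therefore suffices to upgrade $E$ to a $\Sym^{\bullet}(\omega_T^{\vee})$-module. By adjunction, $\theta$ corresponds to an $\mathcal{O}_{X_T}$-linear map $\omega_T^{\vee} \otimes E \to E$, i.e., an action of the generating piece $\omega_T^{\vee}$ of $\Sym^{\bullet}(\omega_T^{\vee})$. The integrability condition $\theta \wedge \theta = 0$ (in the present line-bundle setting this is automatic because $\wedge^2\omega = 0$, but I would phrase the argument in the symmetric-algebra generality so it ports to higher rank $\omega$) is exactly what is needed for this action to extend uniquely to an $\mathcal{O}_{X_T}$-algebra action of $\Sym^{\bullet}(\omega_T^{\vee})$, yielding the desired $E'$ with $\pi_{T,*}E' = E$.

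For the second stage, I must show that the section $p(\lambda) \in H^0(L_T, \pi_T^{*}\omega_T^{\otimes n})$ annihilates $E'$. Since $\pi_{T,*}$ is fully faithful on $\Sym^{\bullet}(\omega_T^{\vee})$-modules, this vanishing can be verified after applying $\pi_{T,*}$; the projection formula, combined with the construction of $E'$, identifies the pushforward of ``multiplication by $\lambda$'' with $\theta : E \to E \otimes \omega_T$, and iterating (while interpreting each $a_i \in H^{0}(X_T, \omega_T^{\otimes i})$ as a morphism $\mathcal{O}_{X_T} \to \omega_T^{\otimes i}$) identifies the pushforward of ``multiplication by $p(\lambda)$'' with
\[
p(\theta) := \theta^{\circ n} + a_1\,\theta^{\circ(n-1)} + \cdots + a_n\cdot\mathrm{id}_E \,:\, E \longrightarrow E \otimes \omega_T^{\otimes n}.
\]
So everything reduces to the twisted Cayley--Hamilton identity $p(\theta) = 0$. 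Vanishing of a morphism of quasicoherent sheaves is local on $X_T$, so I pass to an open on which $\omega_T$ is trivializable; on such an open, $\theta$ becomes an ordinary $\mathcal{O}$-linear endomorphism of a free module whose characteristic polynomial is exactly $p$, and classical Cayley--Hamilton finishes the job.

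Once $p(\lambda)$ is known to annihilate $E'$, affineness of the closed immersion $\iota : Y_a \hookrightarrow L_T$ gives that $\iota_*$ is fully faithful with essential image the quasicoherent $\mathcal{O}_{L_T}$-modules on which $p(\lambda)$ acts as zero, and $E'$ therefore descends to a sheaf on $Y_a$; coherence is automatic because the descended sheaf has pushforward $E$ to $X_T$ along the finite map $Y_a \to X_T$. The only step with genuine mathematical content is the twisted Cayley--Hamilton step, and the local reduction makes it essentially free, so I do not anticipate any real obstacle.
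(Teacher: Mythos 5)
Your argument is correct and is essentially the paper's own proof, just unpacked: the paper disposes of the claim in one line by noting that scheme-theoretic support is local on $X_T$ and then invoking Cayley--Hamilton, exactly the reduction you perform. Your extra material (reconstructing $E'$ from the $\Sym^{\bullet}(\omega_T^{\vee})$-action, translating ``multiplication by $p(\lambda)$'' through $\pi_{T,*}$ into $p(\theta)$, and deducing coherence afterward from finiteness of $Y_a \to X_T$) is harmless elaboration of steps the paper treats as part of the setup rather than the proof.
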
	
\begin{proof}
		Scheme-theoretic support can be checked locally, where it follows form the Cayley-Hamilton theorem.	
\end{proof}

The closed subschemes \(Y_a\) arising in \autoref{cayleyhamilton} are called \emph{spectral varieties}. 
\begin{defn} \label{artdefn684}
		A \emph{\(T\)-family of spectral spectral varieties} for \((X, \omega)\) is a closed subscheme of \(\mathbb{V}(\omega_T)\) defined by
		\[\lambda^n + a_{n-1}\lambda^{n-1} + \cdots + a_0 = 0,\]
		where \(\lambda\) is the tautological 1-form on \(\mathbb{V}(\omega_T)\), \(a_i \in H^0(X_T, \omega_T^{\otimes i}),\) and we form the above sum in \(\pi_T^*(\omega_T^{\otimes n}),\) thought of as a line bundle on \(\mathbb{V}(\omega_T).\) 
\end{defn}

\subsubsection{Finally, the Hitchin morphism} 

\begin{defn}
		The \emph{Hitchin base} \(\mathcal{H}(X, n, \omega)\) is the \(k\)-space whose set of \(T\)-points is the \(k\)-vector space
		\[\bigoplus_{i=1}^n H^0(X_T, \omega_T^{\otimes i}),\]
		where \(\omega_T = p_X^*(\omega)\) for \(p_X : X_T\to T\) the projection.

		In particular, \(\mathcal{H}(X, n, \omega)\) is represented by the \(k\)-scheme \(\bigoplus_{i=1}^n \Spec \Sym^{\bullet}_k(H^0(X, \omega^{\otimes i})^{\vee}).\) 
\end{defn}

We then define the \emph{Hitchin morphism} to be the map of \(k\)-stacks
\[h : \mathcal{M}(X, n, \omega) \to \mathcal{H}(X, n, \omega)\]
which sends a \(T\)-family of Higgs fields to the coefficients of the corresponding characteristic polynomial (see \autoref{charpolys}). 

\subsubsection{The BNR correspondence}

We now prove a variant of the Beauville--Narasimhan--Ramanan (BNR) correspondence for stacky curves, which we will use in our study of logarithmic non-abelian Hodge theory. Beauville--Narasimhan--Ramanan \cite{bnr} proved the first version of the BNR correspondence for \emph{smooth} spectral curves, and Schaub \cite{schaub} later generalized their work to arbitrary special curves. Our treatment follows Schaub \cite{schaub}. 

So, we now will now specialize to the case where \(X\) is a multi-root stack over a smooth connected curve \(C.\) We let \(\rho : X \to C\) be the map to the course moduli space. 

We now make a definition of torsion-free rank 1 sheaf which applies to spectral curves of such an \(X.\) 
\begin{defn}
		Let \(\tilde{X}\) be a multi-root stack over a curve \(\tilde{C},\) with \(\tilde{\rho} : \tilde{X} \to \tilde{C}\) the corresponding coarse moduli. We say that \(\FF\in\Qcoh(\tilde{X})\) is \emph{torsion-free of rank 1} if \(\GG := \tilde{\rho}_*\FF\) is a coherent sheaf on \(\tilde{C}\) which is torsion-free of rank 1 in the sense of Schaub \cite{schaub}; we recall that to Schaub, \(\GG\) is 
		\begin{enumerate}
				\item \emph{torsion-free} if the map \(\GG \to \GG \otimes Q(\OO_{\tilde{C}})\) (for \(Q(\OO_{\tilde{C}})\) the total ring of fractions of \(\OO_{\tilde{C}}\)) is injective, and
				\item \emph{rank 1} if for every generic point \(\eta_i\) of \(\tilde{C},\) we have \(\ell(\GG_{\eta_i}) = \ell(\OO_{\tilde{C}, \eta_i}),\) where \(\ell(-)\) is is used to indicate the length of an \(\OO_{\tilde{C}, \eta_i}\)-module. 
		\end{enumerate}
\end{defn}
\begin{exmp} \label{exmp1305} 
		If \(\tilde{C} = \Spec k[x, t^2],\) then there are two torsion-free coherent sheaves of rank 1: 
		\begin{enumerate}
				\item the sheaf \(\OO_{\tilde{C}}\) itself; 
				\item the coherent sheaf associated to the \(k[x, t^2]\)-module \(M := k[x] \oplus k[x],\) where \(t\) acts as \(t \cdot (f(x), g(x)) = (0, f(x)).\) 
		\end{enumerate}

		It's clear that \(\OO_{\tilde{C}}\) is torsion-free of rank 1. For the second example, the torsion-freeness is clear, and the rank 1 condition is because \(\tilde{C}\) has a unique generic point \(\eta\) corresponding to the minimal prime \((t),\) and 
		\[\ell(\OO_{\tilde{C}, \eta}) = \ell((k[x, t]/(t^2))_{(t)}) = 2.\]

		As \(M_{(t)}\) is a 2-dimensional \(k(x)\)-vector space, we deduce it has length 2 as well, and hence we get our rank 1 condition. 
\end{exmp}

\begin{prop}[The BNR correspondence] \label{bnr}
		Assume \(X\) is a multi-root stack over a smooth connected curve \(C\). 

		Let \(T\) be a \(k\)-scheme, and fix a \(T\)-point point \(a \in \mathcal{H}(X, n, \omega)(T)\). Let \(Y_a \subseteq \mathbb{V}(\omega_T)\) denote the spectral curve corresponding to \(a.\) 

		Set \(\GG(T)\) the groupoid of quasicoherent sheaves on \(Y_a.\) Define a map
		\begin{equation} \label{eq1033} \mathcal{M}(X, n, \omega)(T) \to \GG(T)\end{equation}
		of groupoids by sending \((\mathcal{E}, \theta)\) to the corresponding coherent sheaf on \(Y_a.\)

		Then the map \autoref{eq1033} is a fully faithful. Moreover, for \(\pi : Y_a \inclusion \mathbb{V}(\omega_T) \to X_T\) the canonical map, the essential image of \autoref{eq1033} consists exactly of those \(\FF \in \GG(T)\) which are torsion free of rank 1. 
\end{prop}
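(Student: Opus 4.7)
My plan is to deduce the statement from the classical Beauville–Narasimhan–Ramanan correspondence (in Schaub's \cite{schaub} singular-curve generality), combined with the standard equivalence between quasicoherent sheaves along an affine morphism and modules over the pushforward of the structure sheaf. First, note that $\pi : Y_a \to X_T$ is finite and affine, since $Y_a$ is cut out of $\mathbb{V}(\omega_T)$ by a monic polynomial of degree $n$. Hence $\pi_*\OO_{Y_a} = \Sym^\bullet_{\OO_{X_T}}(\omega_T^\vee)/(p(\lambda))$ is locally free of rank $n$ over $\OO_{X_T}$, and $\pi_*$ induces an equivalence of $\Qcoh(Y_a)$ with the groupoid of pairs $(\mathcal{E}, \theta)$ where $\mathcal{E} \in \Qcoh(X_T)$ and $\theta : \mathcal{E} \to \mathcal{E} \otimes \omega_T$ is a twisted linear map satisfying $p(\theta) = 0$.

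Under this dictionary, the map \eqref{eq1033} becomes exactly the inclusion of the sub-groupoid in which $\mathcal{E}$ is a rank $n$ vector bundle; the relation $p(\theta) = 0$ is automatic by Cayley–Hamilton (Proposition \ref{cayleyhamilton}). Full faithfulness of \eqref{eq1033} is then tautological, and the essential image consists precisely of those $\mathcal{F} \in \Qcoh(Y_a)$ for which $\pi_*\mathcal{F}$ is a rank $n$ vector bundle on $X_T$. So the proposition reduces to showing that this condition is equivalent to $\mathcal{F}$ being torsion-free of rank $1$ in the paper's sense.

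The forward implication is essentially free from Schaub's BNR: pushing further along the coarse moduli map $\rho : X_T \to C_T$, if $\pi_*\mathcal{F}$ is a rank $n$ vector bundle on the root stack $X_T$, then $(\rho\pi)_*\mathcal{F}$ is a rank $n$ torsion-free sheaf on the smooth classical curve $C_T$ carrying a compatible Higgs field, whence Schaub's theorem implies $\tilde{\rho}_*\mathcal{F}$ is torsion-free of rank $1$ on the coarse moduli $\tilde{C}$ of $Y_a$.

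The converse is the step I expect to be the main obstacle. Given $\mathcal{F}$ torsion-free of rank $1$ on $Y_a$, Schaub's result immediately yields that $(\rho\pi)_*\mathcal{F}$ is a rank $n$ torsion-free sheaf on $C_T$, but I must upgrade this to $\pi_*\mathcal{F}$ being a \emph{vector bundle} on the multi-root stack $X_T$ and not merely a coherent sheaf whose pushforward to $C_T$ is torsion-free. My plan is to argue étale-locally on $C$, where $X$ becomes a standard product of root stacks of the form $[\Spec A[t^{1/r}]/\mu_r]$, and to invoke the Borne-style description of sheaves on such root stacks as parabolic sheaves on $C_T$ (reviewed in the appendix). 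In these charts, a coherent sheaf on $X_T$ is a vector bundle precisely when every step of its parabolic filtration is locally free on $C_T$, and the pieces of the parabolic filtration on $\pi_*\mathcal{F}$ arise by finite flat pushforward from a corresponding filtration on $\mathcal{F}$ coming from the multi-root stack structure of $Y_a$; so the required local freeness can be checked step-by-step via Schaub's classical result applied to the associated spectral covers.
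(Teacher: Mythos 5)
Your reduction contains a genuine gap, and it is precisely the subtlety that the paper's proof takes care to call out. You claim that after identifying $\Qcoh(Y_a)$ with pairs $(\mathcal{E},\theta)$ satisfying $p(\theta)=0$, the essential image of \eqref{eq1033} is exactly those $\FF$ for which $\pi_*\FF$ is a rank $n$ vector bundle on $X_T$. This is false. The domain of \eqref{eq1033} is (implicitly) the fiber $h^{-1}(a)$ of the Hitchin map, i.e.\ pairs whose \emph{characteristic polynomial} equals $a$, and this is a strictly stronger condition than the relation $a(\theta)=0$ furnished by Cayley--Hamilton. Concretely: take $X$ to be a smooth proper curve with $\omega$ trivial, $n=2$, and $a=\lambda^2+\lambda$, so that $Y_a$ is two disjoint copies of $X$. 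The sheaf $\FF$ which is $\OO^{\oplus 2}$ on the component $\{\lambda=0\}$ and zero on $\{\lambda=-1\}$ has $\pi_*\FF \cong \OO_X^{\oplus 2}$, a rank $2$ vector bundle, yet $\FF$ is \emph{not} torsion-free of rank $1$ in the paper's sense (at the generic point of the second component its length is $0\neq 1$), and the associated Higgs field is $\theta=0$, whose characteristic polynomial is $\lambda^2\neq a$. So $\FF$ lies in what you call the essential image but not in the image of \eqref{eq1033}, and the proposition you say the statement ``reduces to'' is false as stated.

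The paper is explicit about exactly this point: ``without this torsion free condition, we could only deduce that $\theta$ obeyed $a$, not necessarily that the characteristic polynomial of $\theta$ was $a$.'' The correct characterization is that the essential image consists of those $\FF$ for which $\pi_*\FF$ is a rank $n$ vector bundle \emph{and} the Higgs field has characteristic polynomial equal to $a$, and the nontrivial content of the proposition includes showing that this pair of conditions matches the torsion-free-rank-$1$ condition. (In fact the rank-$1$ condition --- that the length of $\FF$ at each generic point of $Y_a$ equals the multiplicity in $a$ of the corresponding irreducible factor --- is exactly what pins down the characteristic polynomial once $\pi_*\FF$ is known to be a rank-$n$ vector bundle; this is a genuine generic-fiber argument via the structure theorem for torsion modules over a PID, and cannot be elided.) Your remaining plan --- checking that $\pi_*\FF$ is a vector bundle on the root stack $X_T$ by working étale-locally with Borne's parabolic description --- is a reasonable alternative to the paper's appeal to Schaub's Lemma 2.2 and would handle the vector-bundle part, but on its own it does not complete the proof without addressing the characteristic polynomial discrepancy above.
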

\begin{proof}
		The full faithfulness is immediate from the fact that the category of effective descent data for \(\mathbb{V}(\omega_T) \to X_T\) emebds fully faithfully into the category of quasicoherent sheaves on \(\mathbb{V}(\omega_T),\) and so it suffices to check the claim about the essential image. 

		Now, we characterize the essential image. Suppose \(\FF \in \GG(T)\) is torsion free of rank 1. Then, as \(\pi\) is finite flat of rank \(n,\) we deduce that \(\mathcal{E} := \pi_*\FF\) is a rank \(n\) vector bundle \(\mathcal{E}\) on \(X_T,\) using that \(X_T\) is just a multi-root stack over the corresponding spectral curve for \(C\) and applying Schaub \cite{schaub} Lemma 2.2 (in the same way Schaub \cite{schaub} applies it to deduce his proposition 2.1). 

		Moreover, \(\mathcal{E}\) has a natural Higgs field structure: just let \(\theta : \mathcal{E} \to \mathcal{E} \otimes \omega_T\) be the morphism obtained by remembering that \(\FF\) has a natural action of \(\Sym^{\bullet}_{\OO_{X_T}}(\omega_T^{\vee}),\) which will automatically be integrable. Moreover, because \(\FF\) is a \emph{torsion free} (as it is a line bundle!) on \(Y_a,\) we deduce that \(\theta\) has characteristic polynomial \(a.\) We remark that without this torsion free condition, we could only deduce that \(\theta\) obeyed \(a,\) not necessarily that the characteristic polynomial of \(\theta\) was \(a.\) In particular, \(\FF\) is the image of \((\mathcal{E}, \theta)\) under \autoref{eq1033}. The converse, that every object of the essential image if torsion free of rank 1, is similar, so we conclude.
\end{proof}
\begin{exmp}
		Under the BNR correspondence, the two sheaves of \autoref{exmp1305} correspond to the two possible isomorphism classes of a nilpotent endomorphism of the rank 2 vector bundle on \(\A^1\): the zero map, or the map \(\begin{pmatrix} 0 & 1 \\ 0 & 0 \end{pmatrix}.\)
\end{exmp}

\subsubsection{Functoriality of Hitchin morphisms}

When comparing our logarithmic non-abelian Hodge theorem to de Catalado--Zhang's \cite{dCZ}, it will be useful to have the following result. 

\begin{construction}[Functoriality of Hitchin morphisms] \label{hitchinfunctoriality}
		Let \(f : Y \to X\) be a morphism of \(\F_p\)-schemes, and \(\omega_X\) a line bundle on \(X.\) Then \(f\) induces a natural commutative diagram
		\begin{center}
				\begin{tikzcd}
						\mathcal{M}(X, n, \omega_X) \ar[r, "\mathcal{M}(f)"] \ar[d] & \mathcal{M}(Y, n, f^*\omega_X) \ar[d, "h_Y"] \\
						\mathcal{H}(X, n, \omega_X) \ar[r, "\mathcal{H}(f)"] & \mathcal{H}(Y, n, f^*\omega_X),
				\end{tikzcd}
		\end{center}
		where \(\mathcal{M}(f)\) is the map sending a \(T\)-point \((\mathcal{E}_T, \theta_T) \in \mathcal{M}(X, n, \omega_X)(T)\) to \((f^*\mathcal{E}_T, f^*\theta_T) \in \mathcal{M}(Y, n, f^*\omega_X)(T),\) and \(\mathcal{H}(f)\) is the morphism of Hitchin bases induced by the natural maps 
		\[H^0(X, \omega_X^{\otimes i}) \to H^0(Y, f^*\omega_X^{\otimes i})\]
		coming from taking global sections of the adjunction map
		\[\omega_X^{\otimes i} \to f_*f^*\omega_X^{\otimes i}.\] 
\end{construction}

\subsection{On the characteristic polynomial of \(p\)-curvature} \label{charpolypsip}

We now explain how to treat the characteristic polynomial of \(p\)-curvature. 
\begin{remark}
		Using de Rham stacks give us some advantage here; indeed, in the usual story of characteristic polynomials of \(p\)-curvature (see Laszlo-Pauly \cite{laszlopauly}, and also Zhang \cite{siqingfix}), one views the \(p\)-curvature as a \(F^*\Omega^1_{X/S}\)-twisted linear endomorphism on \(X\); this makes it non-obvious that the Hitchin base for \(p\)-curvature is correct, and indeed one has to prove a theorem constraining the possible characteristic polynomials of \(p\)-curvature to get the Hitchin bases to align. In our de Rham stack approach, this constraint just comes from the fact that the \(p\)-curvature actually lives on \(X^{\dR}.\) 
\end{remark}

Fix \(f : X \to S\) smooth, representable of relative dimension 1. Set \(\omega := \nu_{X/S,\wedge}^*\Omega^1_{X^{(1)}/S}.\) By \autoref{prop813}, the unit 
\[\left(\Omega^1_{X^{(1)}/S}\right)^{\otimes i} \to \Sol(\omega^{\otimes i})\]
of the \((\nu^*, \Sol)\) adjunction is an equivalence for every \(i\geq 0\). Taking global sections, we deduce that
\[H^0(X^{(1)}, \omega_{X^{(1)}/S}^{\otimes i}) \heq H^0(X^{(1)}, \Sol(\omega^{\otimes i})).\]

But by adjunction,
\begin{align*}
		H^0(X^{(1)}, \Sol(\omega^{\otimes i})) &= \Hom(\OO_{X^{(1)}}, \Sol(\omega^{\otimes i})) \\
											   &= \Hom((\OO_X, d), \omega^{\otimes i}) \\
											   &= H^0((X/S)^{\hat{\dR}}, \omega^{\otimes i}).
\end{align*}

Therefore 
\begin{equation} \label{eq557} H^0((X/S)^{\hat{\dR}}, \omega^{\otimes i}) = H^0(X^{(1)}, (\Omega^1_{X^{(1)}/S})^{\otimes i}).\end{equation} 

Thus the Hitchin base for \(\omega\) on \((X/S)^{\hat{\dR}}\) is exactly the same as the Hitchin base for \(\Omega^1_{X^{(1)}/S}\) on \(X^{(1)}.\)

\section{Logarithmic de Rham stacks} \label{logdrsec} 

We now introduce \emph{logarithmic} de Rham stacks; the main reason we took care to develop the theory of de Rham stacks over a general base was so that we could define logarithmic de Rham stacks by working relatively to \((\A^1/\G_m)^n.\) We start by explaining Olsson's \cite{olsson} (very nice!) observation that one can do logarithmic geometry by working relatively to \((\A^1/\G_m)^n,\) and then define logarithmic de Rham stacks.

Throughout this section, we fix a base field \(k\) of characteristic \(p.\) 

\begin{notn}
		Throughout this section, put \(\Theta := \A^1/\G_{m},\) thought of as a \(k\)-stack (so here \(\A^1\) means \(\Spec k[t],\) and \(\G_m\) means \(\Spec k[x, x^{-1}]\)). 
\end{notn}

\subsection{Recollections on Olsson's approach to logarithmic geometry} 

\begin{warn}
		None of the results in this section are original, and all are essentially due to Olsson \cite{olsson}; we recall them here merely for the reader's convenience. In preparing this section, the author also found the works of Borne \cite{borne2006}, \cite{borne2007}, Esnault--Groechenig \cite{eg} and Laaroussi \cite{laaroussi} very helpful. 
\end{warn}

Let \(X\) be a smooth \(k\)-scheme, and \(D\) a strict normal crossings divisor on \(X.\) Olsson \cite{olsson} observed that the logarithmic Kahler differentials \(\Omega^1_{X/k}(\log D)\) could be written as the Kahler differentials of \(X\) relative to an appropriate power of \(\Theta.\) We now explain this perspective; first, we recall how a divisor can be encoded via a map to \(\Theta.\) 
\begin{prop} \label{prop855} 
		Let \(X\) be a \(k\)-scheme. Then morphisms \(X \to \A^1/\G_m\) are in bijection with pairs \((\LL, i : \LL \to \OO_X)\) where \(\LL\) is a line bundle on \(X\) and \(i\) is a morphism of line bundles.
\end{prop}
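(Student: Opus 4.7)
The plan is to unpack the definition of $\A^1/\G_m$ as an fppf quotient stack and directly translate the resulting data into the stated pair; the argument is essentially formal, and the main subtlety is fixing the correct weight convention so the pair comes out in the claimed direction.

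By definition of the quotient stack, a morphism $X \to \A^1/\G_m$ is the data of a $\G_m$-torsor $\pi : P \to X$ together with a $\G_m$-equivariant morphism $f : P \to \A^1$, where $\G_m$ acts on $\A^1 = \Spec k[t]$ by scalar multiplication (so $t$ has weight $1$). I would then invoke the standard equivalence between $\G_m$-torsors on $X$ and line bundles on $X$, assigning to the torsor $P$ the line bundle $\LL := P \times^{\G_m} V$, where $V$ denotes the one-dimensional weight $-1$ representation of $\G_m$. This is the convention that produces a morphism in the direction $\LL \to \OO_X$, rather than a section $\OO_X \to \LL$; the opposite weight convention would yield the dual formulation.

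The key identification is then the following: a $\G_m$-equivariant morphism $f : P \to \A^1$ is the same thing as a weight-$1$ element of $\pi_*\OO_P$. The weight decomposition of the quasicoherent sheaf $\pi_*\OO_P$ on $X$ (which is just the statement that a $\G_m$-torsor becomes canonically trivial after pullback to its own total space) identifies the weight-$1$ piece canonically with $\LL^{\vee}$. Hence such an $f$ corresponds precisely to a section of $\LL^{\vee}$, which in turn is the same datum as a morphism of line bundles $i : \LL \to \OO_X$.

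Finally, I would verify that the assignment $f \mapsto (\LL, i)$ is compatible with pullback along morphisms of $k$-schemes and with isomorphisms of torsors, so that the bijection on objects upgrades to a genuine equivalence of groupoids (and in fact of stacks). The only real obstacle here is cosmetic — keeping track of weights so that the bijection is stated in the form demanded by the proposition — once that is pinned down, the statement follows immediately from the descent of $\G_m$-torsors and the weight decomposition of functions on the torsor.
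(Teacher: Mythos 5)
The paper offers no proof of this proposition; it simply cites Olsson \cite{olsson3}, Example 5.13, so there is no in-house argument to compare against. Your proof is the standard unpacking of the quotient-stack definition — a $\G_m$-torsor $P$ together with an equivariant map to $\A^1$, translated into a line bundle and cosection via the weight grading of $\pi_*\OO_P$ — and this is essentially the argument Olsson gives. Two small points. First, you have correctly identified the only genuinely delicate issue, namely fixing the weight convention so that the output is a cosection $\LL \to \OO_X$ rather than a section $\OO_X \to \LL$. Whether the weight-$(+1)$ or weight-$(-1)$ representation produces $\LL$ in the right direction depends on whether one forms the associated bundle by quotienting $P \times V$ by the diagonal or the antidiagonal $\G_m$-action; it is worth actually tracing this through on the tautological example $\A^1 \to \A^1/\G_m$ (where the answer should come out to $\OO_{\A^1}(-[0]) \inclusion \OO_{\A^1}$, consistent with how the paper encodes divisors via $c_D$) before committing to a convention, but as you say this is bookkeeping rather than a gap. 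Second, as you observe at the end, the word ``bijection'' in the statement should really be an equivalence of groupoids, since morphisms from a scheme to $\A^1/\G_m$ form a groupoid rather than a set; your construction naturally produces exactly that once you also match up isomorphisms of torsors with isomorphisms of line bundles.
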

\begin{proof}
		See \cite{olsson3}, example 5.13.
\end{proof}

We can now make precise our earlier statement about logarithmic Kahler differentials. 
\begin{theorem}[Olsson] \label{olsson875} 
		Let \(X\) be a smooth \(k\)-scheme, and \(D\) a simple normal crossings divisor on \(X,\) with \(n\) components \(D_1, ..., D_n.\) Let \(c_{D_i} : X \to \Theta\) denote the morphism associated to \(\OO_X(-D_i) \inclusion \OO_X.\) Put \(c_D : X \to \Theta^n\) the product of the \(c_{D_i}.\) 

		Then \(c_D\) is a smooth, representable morphism, and the relative Kahler differentials \(\Omega^1_{X/\Theta^n}\) are canonically isomorphic to the logarithmic Kahler differentials \(\Omega^1_{X/k}(\log D).\) Moreover, \(\Omega^1_{D_i/B\G_m} = \phi_i^*\Omega^1_X(\log D),\) for \(\phi_i : D_i \inclusion X\) the inclusion. 
\end{theorem}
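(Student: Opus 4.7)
The plan is to verify each assertion by reducing to a local computation on $X$ in a suitable étale chart. After étale base change on $X$, one may assume the components of $D$ passing through a chosen point are principal, with $D_i = V(x_i)$ for $i \leq m \leq n$ (and $D_j$ empty in this chart for $j > m$), in a regular system of parameters $x_1, \ldots, x_d$. In such a chart, $c_D$ admits a lift $\bar{c}_D : X \to \A^n$ along the fppf cover $\A^n \to \Theta^n$ (a $\G_m^n$-torsor), given explicitly by $x \mapsto (x_1, \ldots, x_m, 1, \ldots, 1)$.

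Representability follows immediately from \autoref{prop855}: a $T$-point of $\Theta^n$ is a tuple $(\LL_i, s_i)$, and $X \times_{\Theta^n} T$ parametrizes isomorphisms $\OO_X(-D_i) \cong p_T^*\LL_i$ intertwining the canonical sections, which is an open subscheme of a $\G_m^n$-torsor. Smoothness of $c_D$ can be checked after the faithfully flat base change $\A^n \to \Theta^n$: the fiber product $X \times_{\Theta^n} \A^n$ is étale-locally $X \times \G_m^n$, with projection to $\A^n$ given by $(x, \lambda) \mapsto (\lambda_1 x_1, \ldots, \lambda_m x_m, \lambda_{m+1}, \ldots, \lambda_n)$, and a direct Jacobian calculation confirms this is smooth of relative dimension $d$.

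For the identification of the relative cotangent sheaf with $\Omega^1_X(\log D)$, the same local computation shows $\Omega^1_{X \times_{\Theta^n} \A^n / \A^n}$ is locally free of rank $d$: quotienting $\Omega^1_{X \times \G_m^n / k}$ by the relations $\lambda_i dx_i + x_i d\lambda_i = 0$ (for $i \leq m$) and $d\lambda_j = 0$ (for $j > m$), one eliminates the $dx_i$ for $i \leq m$ using the invertible $\lambda_i$, leaving the basis $d\lambda_1, \ldots, d\lambda_m, dx_{m+1}, \ldots, dx_d$. Descending along the $\G_m^n$-torsor $X \times_{\Theta^n} \A^n \to X$ amounts to taking $\G_m^n$-invariants: the weight-one element $d\lambda_i$ is replaced by the invariant $d\lambda_i/\lambda_i$, which by the pulled-back relation equals $-dx_i/x_i$, yielding exactly the local basis $\{dx_i/x_i\}_{i\leq m} \cup \{dx_j\}_{j>m}$ of $\Omega^1_X(\log D)$. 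The canonical global isomorphism is obtained via the universal property of logarithmic differentials: the canonical derivation $\OO_X \to \Omega^1_{X/\Theta^n}$ has at worst logarithmic singularities along $D$ (as verified locally), so it factors uniquely through a map $\Omega^1_X(\log D) \to \Omega^1_{X/\Theta^n}$, which the local check confirms is the desired isomorphism.

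The moreover statement follows by an analogous local analysis applied to the composition $D_i \hookrightarrow X \to \Theta^n$, which factors through $B\G_m$ in the $i$-th factor. The principal obstacle, as I foresee it, is the bookkeeping of $\G_m^n$-weights during the flat descent from the $\A^n$-cover down to $\Theta^n$, and upgrading the local identifications to a \emph{canonical} global isomorphism of sheaves rather than merely a local one; the latter is cleanly handled by invoking the universal property of logarithmic differentials, as in Olsson's \cite{olsson} original approach.
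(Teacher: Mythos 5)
Your proof takes essentially the same route as the paper: reduce to a local computation after flat base change to $\A^n \to \Theta^n$, extract the relation $\lambda_i\, dx_i + x_i\, d\lambda_i = 0$, and descend. Two small points. First, in the representability argument, the fiber product $X \times_{\Theta^n} T$ sits inside the $\G_m^n$-torsor of line-bundle isomorphisms as the locus where the sections are intertwined, and that is a \emph{closed} condition (an equality of sections), not an open one; the conclusion that the fiber product is a scheme is unaffected, but ``open subscheme'' should read ``closed subscheme.'' Second, you build the global comparison map in the direction $\Omega^1_X(\log D) \to \Omega^1_{X/\Theta^n}$ via a ``universal property of log differentials,'' whereas the paper builds it the other way, $\Omega^1_{X/\Theta^n} \to \Omega^1_X(\log D)$, by checking that $d : \OO_X \to \Omega^1_X(\log D)$ is a $\Theta^n$-linear derivation and invoking the standard universal property of relative K\"ahler differentials; your direction works but requires making precise what ``the canonical derivation has logarithmic poles'' means and that the element $\xi_i$ with $dx_i = x_i\xi_i$ is unique (which in turn leans on the smoothness/torsion-freeness of $\Omega^1_{X/\Theta^n}$ that you establish first), so the paper's direction is the cleaner choice. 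On the plus side, your explicit Jacobian check of smoothness fills in a step the paper asserts without argument.
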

\begin{proof}
		We include this statement and proof for the reader's convenience, but the ideas are all due to Olsson \cite{olsson3}, \cite{olsson}, and we claim no originality here. 

		To start we explain why \(c_D\) is representable; for this it suffices to prove each \(c_{D_i}\) is representable. 

		The fiber product \(\A^1 \times_{\A^1/\G_m} X\) is just the frame bundle of the line bundle \(\OO_X(-D_i),\) and in particular is a scheme. As representability by \emph{algebraic spaces} can be checked etale locally (see \cite{stacks}, tag 04ZP), we deduce that \(c_{D_i} : X \to \Theta\) is representable by algebraic spaces. Checking an algebraic space is a scheme can be done Zariski locally, so we can now assume \(X\) is affine. Then \(c_D : X \to \A^1/\G_m\) is an affine morphism of stacks (because affineness can be checked after a flat cover, and the pullback \(X \times_{\A^1/\G_m} \A^1\) is just the frame bundle of the line bundle \(\OO_X(-D),\) which will be affine whenever \(X\) is affine). It follows that, when \(X\) is affine, \(c_D\) is affine; hence \(c_D\) is representable by algebraic spaces and affine, implying it is representable by schemes. 

		Thus \(c_D\) is a smooth, representable morphism. Now, we compute its relative Kahler differentials. To start, we recall that there is a natural map
		\[d : \OO_X \to \Omega^1_{X/k} \inclusion \Omega^1_{X/k}(\log D)\]
		of sheaves on \(X.\) 

		We claim that this \(d\) is actually a derivation relative to \(\Theta^n,\) and that the induced map
		\[\Omega_{X/\Theta^n} \to \Omega^1_{X/k}(\log D)\]
		is an isomorphism of quasicoherent sheaves on \(X.\) 

		Both the property of being a derivation, and the property of a morphism being an equivalence, can be checked locally. Thus we can assume \(X = \A^{n+m}\) and \(c_D : X \to \Theta\) is the map classifying the divisor \(x_1\cdots x_n = 0\) in \(\Spec k[x_1, ..., x_n, y_1, ..., y_m].\) In this case, we have a Cartesian diagram
		\begin{center}
				\begin{tikzcd}
						\A^{n+m} \times \Spec k[t_1^{\pm 1}, ..., t_n^{\pm 1}] \ar[r] \ar[d, "c_D'"] & \A^{n+m} \ar[d, "c_D"] \\
						\A^n \ar[r] & \Theta^n,
				\end{tikzcd}
		\end{center}
		and \(c_D'\) has relative differentials \(\Omega^1_{c_{D'}}\) generated by the symbols \(dx_i, dy_i, dt_i^{\pm 1},\) together with the relations
		\[t_i dx_i + x_i dt_i = 0.\]

		In particular, \(t_i^{-1} dt_i\) behaves as \(dx_i/x_i\); hence \(\Omega^1_{X/k}(\log D)\) pulls back to \(\Omega^1_{c_{D'}}.\) As \(\Omega^1_{X/\Theta^n}\) also pulls back to \(\Omega^1_{c_{D'}}\) by compatibility of Kahler differentials with base change, we conclude. The claims about \(\Omega^1_{D/B\G_m}\) now follow easily from compatibility of Kahler differentials with base change along the \(n\) natural maps \(B\G_m \to \Theta^n\) (on factor \(i,\) embed \(B\G_m \inclusion \Theta\) as the closed point; on the other factors, view \(B\G_m = \pt \times \cdots \times \pt \times B\G_m \times \pt \times \cdots \times \pt,\) and map \(\pt \inclusion \Theta\) as the open point). 
\end{proof}
\begin{notn} \label{bgmfactor}
		The morphism \(f_i : B\G_m \to \Theta^n\) constructed in the proof of \autoref{olsson875} as the map
		\[(\Spec k)^{i-1} \times B\G_m \times (\Spec k)^{n-i} \to \Theta^n\]
		will arise often in the sequel.
\end{notn}

\subsubsection{Root stacks and Frobenius twists}

We will need to use Frobenius twists relative to \((\A^1/\G_m)^n\) in our work below; already the relative Frobenius twist appears in our Cartier descent theorem \autoref{cartierdescent}, and it will also appear in our statement of the logarithmic non-abelian Hodge theorem. Thus in this section, we describe such relative Frobenius twists concretely. They will end up being a \emph{multiroot stack} over the usual Frobenius twist.

\begin{defn}[Multiroot stacks]
		Let 
		\[f : X \to (\A^1/\G_m)^n\]
		be a morphism of \(k\)-stacks. The \emph{\((r_1, ..., r_n)\)-multiroot stack of \(f\)} is defined as the fiber product
		\begin{center}
				\begin{tikzcd}
						\mathcal{X}_{r_1, ..., r_n} \ar[r] \ar[d] & X \ar[d, "f"] \\
						(\A^1/\G_m)^n \ar[r, "{([r_1], [r_2], ..., [r_n])}"] & (\A^1/\G_m)^n,
				\end{tikzcd}
		\end{center}
		where \([r] : \A^1/\G_m \to \A^1/\G_m\) is the morphism induced by the map \(t\mapsto t^r.\) 
\end{defn}
\begin{remark}
		Quasicoherent sheaves on multiroot stacks can be described concretely using the language of \emph{parabolic} bundles, due to Borne \cite{borne2006}, \cite{borne2007}. We describe this perspective in more detail in (appendix omitted in this draft), as this language can be very useful for giving concrete examples. Though, while parabolic bundles are often useful for explicit computations, the author finds in practice it is convenient to try working abstractly with the category of quasicoherent sheaves on a root stack, and delay the translation to parabolic bundles as much as possible.
\end{remark}

Frobenius twists relative to \(\Theta^n\) are just multiroot stacks over the usual Frobenius twists. 
\begin{remark}
		The same lemma also appears as Lemma 3.16 of Esnault--Groechenig's \cite{eg}; though we reproduce the short proof for the reader's benefit. 
\end{remark}
\begin{prop} \label{prop1433} 
		Let \(X\) be a smooth \(k\)-scheme and \(D\) a simple normal crossings divisor on \(X\) with \(n\) components; let \(c_D : X \to \Theta^n\) be the associated morphism (see \autoref{olsson875}). Set \(X' := (X/k)^{(1)}, D' = (D/k)^{(1)}\) the Frobenius twists of \(X, D\) relative to \(k.\) 

		Then the relative Frobenius twist \((X/\Theta^n)^{(1)}\) is isomorphic to the \((p, p, ..., p)\)-multiroot stack of \(X'\) along the divisors \(D_1', D_2', ..., D_n'.\)
\end{prop}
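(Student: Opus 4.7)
The plan is to compute $(X/\Theta^n)^{(1)} = X \times_{\Theta^n, c_D, F_{\Theta^n}} \Theta^n$ by factoring the absolute Frobenius of $\Theta^n$ through its relative Frobenius over $\Spec k$, pasting fiber products, and then exploiting the fact that $\Theta = \A^1/\G_m$ is pulled back from $\F_p$.

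First I would record the standard factorization $F_{\Theta^n} = \pi \circ F_{\Theta^n/k}$, where $\pi : (\Theta^n)^{(1)} \to \Theta^n$ is the first projection in the Cartesian square defining $(\Theta^n)^{(1)} = \Theta^n \times_{\Spec k, F_k} \Spec k$. Pasting this factorization into the fiber product defining $(X/\Theta^n)^{(1)}$ gives
\[(X/\Theta^n)^{(1)} = \bigl(X \times_{\Theta^n, c_D, \pi} (\Theta^n)^{(1)}\bigr) \times_{(\Theta^n)^{(1)}, F_{\Theta^n/k}} \Theta^n.\]
Associativity of fiber products simplifies the first factor: since $\pi$ is the first projection of $(\Theta^n)^{(1)} = \Theta^n \times_{\Spec k, F_k} \Spec k$, it collapses to $X \times_{\Theta^n} \Theta^n \times_{\Spec k, F_k} \Spec k = X \times_{\Spec k, F_k} \Spec k = X'$, with residual map $X' \to (\Theta^n)^{(1)}$ being the $F_k$-base change of $c_D$.

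The last step is to invoke the canonical isomorphism $(\Theta^n)^{(1)} \cong \Theta^n$ (arising because $\Theta^n$ is pulled back from $\F_p$, so the base change $\Theta^n \times_{\Spec k, F_k} \Spec k$ reduces via associativity to $\Theta^n_{\F_p} \times_{\Spec \F_p} \Spec k = \Theta^n$). Under this identification, two things happen: (a) $F_{\Theta^n/k}$ becomes $[p]^n$, since both morphisms act on $T$-points via \autoref{prop855} by $(\mathcal{L}_i, \sigma_i)_i \mapsto (\mathcal{L}_i^{\otimes p}, \sigma_i^{\otimes p})_i$; and (b) the $F_k$-base change of $c_D$ becomes $c_{D'}$, since the line bundle $\OO_{X'}(-D'_i)$ is by construction the $F_k$-base change of $\OO_X(-D_i)$. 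Substituting yields $(X/\Theta^n)^{(1)} \cong X' \times_{\Theta^n, c_{D'}, [p]^n} \Theta^n$, which is exactly the $(p, \ldots, p)$-multiroot stack of $X'$ along $D'_1, \ldots, D'_n$.

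The only real care needed is in bookkeeping the $k$-structures: the absolute Frobenius $F_{\Theta^n}$ is not a $k$-morphism while $[p]^n$ is, and the two can only be compared after the canonical base-change identification $(\Theta^n)^{(1)} \cong \Theta^n$ is set up. Once the factorization and identification are in place, the rest is formal associativity of fiber products, so I expect no serious obstacle beyond this bookkeeping.
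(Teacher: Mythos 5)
Your proof is correct and follows essentially the same route as the paper: both factor the absolute Frobenius of \(\Theta^n\) through the relative Frobenius over \(k\), paste fiber products to recognize \(X'\) as the intermediate pullback, and then use the fact that \(\Theta = \A^1/\G_m\) descends to \(\F_p\) to identify \((\Theta^n)^{(1)} \cong \Theta^n\) and \(F_{\Theta^n/k}\) with \([p]^n\). The paper simply makes the identification \(\Theta_k = \Theta_{\F_p} \times_{\F_p} \Spec(k)\) explicit from the outset and writes the factorization as \((\id_{\Theta_{\F_p}} \times \Frob_k) \circ ([p] \times \id_k)^n\), whereas you invoke the relative-Frobenius formalism abstractly before making the same identification at the end — the same argument with slightly different bookkeeping.
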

\begin{proof}
		We will now be a bit more specific about our base fields; recalling that \(\Theta\) was defined to live over \(k,\) we will now write 
		\[\Theta_k = \A^1_k/\G_{m, k}.\]
		Then \(\Theta_k = \A^1_{\F_p}/\G_{m, \F_p} \times_{\Spec(\F_p)} \Spec(k) = \Theta_{\F_p} \times \Spec(k).\) 

		The absolute Frobenius of \(\Theta_{\F_p}\) is just the \(p^{\text{th}}\) power map \(\Theta_{\F_p} \xto{[p]} \Theta_{\F_p}.\) Hence we have a Cartesian diagram
		\begin{center}
				\begin{tikzcd}
						(X/\Theta_k^n)^{(1)} \ar[r] \ar[d] & X' \ar[r] \ar[d, "c_{D'}"] & X \ar[d, "c_D"] \\
						(\Theta_{\F_p} \times \Spec(k))^n \ar[r, "{([p] \times \id_k)^n}"] & (\Theta_{\F_p} \times \Spec(k))^n \ar[r, "(\id_{\Theta_{\F_p}} \times \Frob_k)"] & (\Theta_{\F_p} \times \Spec(k))^n.
				\end{tikzcd}
		\end{center}
		Thus \((X/\Theta_k^n)^{(1)}\) can also be computed as the fiber product of \(X'\) over \(([p], ..., [p]) : \Theta_k^n \to \Theta^k_n,\) which is by definition the \((p, p, ..., p)\)-mutliroot stack of \(X'\) along the \(D_1', ..., D_n'.\)
\end{proof}

\subsection{Logarithmic de Rham stacks} \label{logdrsubsec}

Let \(k\) be a field of characteristic \(p,\) \(X\) a smooth \(k\)-scheme, and \(D\) a strict normal crossings divisor on \(X\) with \(n\) components \(D_1, ..., D_n.\) Set \(X'\) the Frobenius twist of \(X\) relative to \(k,\) and \(X^{(1)}\) the Frobenius twist of \(c_D : X \to \Theta^n.\) We will keep this setup throughout the rest of this section. 

\begin{defn}
		In this situation, we define the \emph{logarithmic de Rham stack} of \((X, D)\) to be the relative de Rham stack
		\[X^{\dR}(\log D) := (X/\Theta^n)^{\dR}.\]

		Similarly, we define the \emph{sheared logarithmic de Rham stack} of \((X, D)\) to be the sheared relative de Rham stack
		\[X^{\hat{\dR}}(\log D) := (X/\Theta^n)^{\hat{\dR}}.\] 
\end{defn}

In the rest of this section, we will make explicit the general results of \autoref{derhamsection} in the special case of logarithmic de Rham stacks, for the benefit of the reader.

We start by relating quasicoherent sheaves on \(X^{\dR}(\log D)\) with connections having logarithmic poles.
\begin{theorem}[Quasicoherent sheaves on logarithmic de Rham stacks] 
		The category \(\mathcal{D}_{\qc}(X^{\hat{\dR}}(\log D))\) is canonically equivalent to the category of pairs \((\mathcal{E}, \nabla)\) with \(\mathcal{E} \in \mathcal{D}_{\qc}(X)\) and
		\[\nabla : \mathcal{E} \to \mathcal{E} \otimes \Omega_X^1(\log D)\]
		a logarithmic connection on \(\mathcal{E}.\) 
\end{theorem}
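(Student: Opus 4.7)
The strategy is to simply combine the machinery already developed: apply the relative de Rham stack comparison (Theorem \ref{classicalcomparison}) to the smooth representable morphism $c_D : X \to \Theta^n$ encoding the divisor, then translate the resulting $\mathcal{D}$-module data into logarithmic connections via Lemma \ref{lem583} and Olsson's identification of the relative cotangent sheaf.

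First I would verify the hypotheses of Theorem \ref{classicalcomparison} for $f = c_D : X \to \Theta^n$. By Olsson's Theorem \ref{olsson875}, $c_D$ is smooth and representable, and the base $\Theta^n = (\A^1/\G_m)^n$ admits the flat cover $(\A^1)^n \to \Theta^n$, so the hypotheses are met. Since by definition $X^{\hat{\dR}}(\log D) = (X/\Theta^n)^{\hat{\dR}}$, Theorem \ref{classicalcomparison} directly yields a symmetric monoidal equivalence
\[\mathcal{D}_{\qc}(X^{\hat{\dR}}(\log D)) \heq \Mod_{\qc}(\mathcal{D}_{X/\Theta^n}).\]

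Next I would unfold the right-hand side using Lemma \ref{lem583}: a quasicoherent $\mathcal{D}_{X/\Theta^n}$-module structure on $\mathcal{E}$ is the same as a flat connection
\[\nabla : \mathcal{E} \to \mathcal{E} \otimes \Omega^1_{X/\Theta^n}\]
(flat in the sense that the induced map $\mathcal{E} \to \mathcal{E} \otimes \Omega^2_{X/\Theta^n}$ vanishes). Finally, Olsson's Theorem \ref{olsson875} furnishes the canonical identification $\Omega^1_{X/\Theta^n} \heq \Omega^1_X(\log D)$, and one checks that the wedge square used in the flatness condition matches the wedge square of $\Omega^1_X(\log D)$ under this identification (this is immediate, since Olsson's isomorphism is induced by the universal derivation $d : \OO_X \to \Omega^1_X(\log D)$ and so respects the full de Rham structure). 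Substituting into the previous equivalence produces the desired identification with pairs $(\mathcal{E}, \nabla : \mathcal{E} \to \mathcal{E} \otimes \Omega^1_X(\log D))$.

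Because every ingredient is already in place, there is no genuine obstacle; the only care required is bookkeeping to ensure the symmetric monoidal and flatness structures transport correctly along the two equivalences. The substantive content --- that $\Theta^n$ is a good enough base to run the relative de Rham stack comparison of \cite{bmvz}, and that relative Kähler differentials over $\Theta^n$ literally compute logarithmic differentials --- has already been isolated in Theorem \ref{classicalcomparison} and Theorem \ref{olsson875}, respectively, which is precisely the reason the theory was set up relative to a general base.
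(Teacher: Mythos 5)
Your proposal is correct and follows exactly the same route as the paper: invoke Theorem \ref{classicalcomparison} for $c_D : X \to \Theta^n$, unfold $\mathcal{D}_{X/\Theta^n}$-modules via Lemma \ref{lem583}, and substitute Olsson's identification $\Omega^1_{X/\Theta^n} \heq \Omega^1_X(\log D)$ from Theorem \ref{olsson875}. The only difference is that you spell out the hypothesis checks (smooth, representable, flat cover of the base), which the paper leaves implicit.
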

\begin{proof}
		\autoref{classicalcomparison} describes \(\mathcal{D}_{\qc}(X^{\hat{\dR}}(\log D))\) in terms of \(\mathcal{D}_{X/\Theta^n}\)-modules; combining the concrete description of \(\mathcal{D}\)-modules given in \autoref{lem583} with Olsson's description of \(\Omega^1_{X/\Theta^n}\) from \autoref{olsson875}, we conclude.
\end{proof}

\begin{defn}
		We define \(\mathcal{D}_X(\log D) := \mathcal{D}_{X/\Theta^n}.\) 
\end{defn} 

\subsection{The log Azumaya property} \label{logazumaya}

We now explain an Azumaya algebra property of logarithmic differential operators. This was first observed by a wonderful paper of Hablicsek \cite{azumayaparabolic}, in the language of parabolic sheaves (which by Borne \cite{borne2007} are equivalent to quasicoherent sheaves on root stacks). 

We take \((X, D)\) as above. 

\begin{theorem}[Azumaya property of logarithmic differential operators] 
		The pushforward \(F_{X/S,*}\mathcal{D}_X(\log D)\) has center \(\OO_{\mathbb{V}(\omega_{X^{(1)}/S})},\) and is in fact Azumaya over its center.
\end{theorem}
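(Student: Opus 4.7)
The plan is to combine the gerbe property \autoref{gerbe} with an étale-local computation. By \autoref{classicalcomparison}, the category of quasicoherent $\mathcal{D}_X(\log D)$-modules is equivalent to $\mathcal{D}_{\qc}(X^{\hat{\dR}}(\log D))$, and under this equivalence $F_{X/\Theta^n,*}\mathcal{D}_X(\log D)$ is identified with the pushforward $\nu_{*}\mathcal{O}_{X^{\hat{\dR}}(\log D)}$ along the gerbe morphism $\nu : X^{\hat{\dR}}(\log D) \to X^{(1)}$; it suffices to study this pushforward. Since both the center condition and the Azumaya property are étale local on $X^{(1)}$, I would pass to an étale cover and reduce to the model case $X = \mathbb{A}^{n+m}_k$ with $D = \{x_1 \cdots x_n = 0\}$, in which case (by \autoref{prop1433}) $X^{(1)}$ is the $(p, \ldots, p)$-multiroot stack over $\mathbb{A}^{n+m}_k$ along $\{x_i' = 0\}$ for $i \leq n$.

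In these coordinates, $\mathcal{D}_X(\log D)$ is generated over $\mathcal{O}_X$ by logarithmic derivations $\delta_i := x_i\partial_{x_i}$ and ordinary derivations $\partial_{y_j}$, with $[\delta_i, x_i] = x_i$, $[\partial_{y_j}, y_j] = 1$, and the remaining generator pairs commuting. The elements $x_i^p$, $y_j^p$, $\partial_{y_j}^p$, and $\delta_i^p - \delta_i$ are central: the only non-trivial check is that $\delta_i^p - \delta_i$ commutes with $x_i$, which follows from the identity $\delta_i^p \cdot x_i = x_i(\delta_i + 1)^p = x_i(\delta_i^p + 1)$ valid in characteristic $p$. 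I would then identify the subalgebra $Z$ they generate with $\mathcal{O}_{\mathbb{V}(\omega_{X^{(1)}/\Theta^n})}$, matching $\delta_i^p - \delta_i$ with the frame $dt_i/t_i$ of the logarithmic cotangent on the multiroot stack (where $t_i^p = x_i'$ is the $p$-th root coordinate, so $dt_i/t_i$ is a genuine section of $\omega_{X^{(1)}/\Theta^n}$) and $\partial_{y_j}^p$ with $dy_j'$. A PBW-type count shows that $F_{X/\Theta^n,*}\mathcal{D}_X(\log D)$ is locally free of rank $p^{2(n+m)}$ over $Z$, so that $Z$ is the entire center.

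For the Azumaya property itself, I would use the canonical splitting \autoref{cansplit}: pulling $\nu$ back along $F_{X/\Theta^n} : X \to X^{(1)}$ yields the split banded gerbe $B\mathbb{V}(F^*\mathcal{T}_{X^{(1)}/\Theta^n})^{\#, \wedge} \to X$, and Cartier duality for the band identifies the pushforward of its structure sheaf with an explicit matrix algebra of rank $p^{2(n+m)}$ over $F^*\mathcal{O}_{\mathbb{V}(\omega_{X^{(1)}/\Theta^n})}$. Since $F_{X/\Theta^n}$ is finite faithfully flat (\autoref{lem777}), faithfully flat descent should promote this matrix algebra to the desired Azumaya algebra on $X^{(1)}$.

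The main obstacle is precisely this final descent step: the warning \autoref{noazumaya} shows that Azumaya descent along $F_{X/S}$ can genuinely fail over a stacky base $S$, so one must exploit a feature specific to the logarithmic setting. The relevant feature is that $\Theta$ has an open dense schematic locus, giving the gerbe $\nu$ more rigidity than in a purely $B\mathbb{G}_m$-type situation; one must then check by direct local computation that the descent datum produced by the canonical splitting is indeed compatible with the matrix algebra structure. This verification is in essence Hablicsek's theorem \cite{azumayaparabolic}, established there in the language of parabolic bundles, and Borne's equivalence \cite{borne2007} between parabolic bundles and quasicoherent sheaves on multiroot stacks provides the dictionary to our setting.
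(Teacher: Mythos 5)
Your proposal takes a substantially longer route than the paper, which simply cites the main theorem of Hablicsek \cite{azumayaparabolic} and invokes Borne's equivalence \cite{borne2007} between parabolic bundles and quasicoherent sheaves on multiroot stacks to translate statements. Your local computation in the middle -- identifying the central elements $x_i^p$, $\partial_{y_j}^p$, $\delta_i^p-\delta_i$, matching them to coordinates on $\mathbb{V}(\omega_{X^{(1)}/\Theta^n})$, and doing the PBW rank count -- is correct and is the content of Hablicsek's argument (the paper's \autoref{exmp1485} sketches exactly this computation for $\A^1$). However, two issues weaken the framing around it. First, the opening identification of $F_{X/\Theta^n,*}\mathcal{D}_X(\log D)$ with $\nu_*\mathcal{O}_{X^{\hat{\dR}}(\log D)}$ is not right: pushing forward the structure sheaf along the gerbe $\nu$ just returns $\mathcal{O}_{X^{(1)}}$, whereas $F_*\mathcal{D}$ is a honestly noncommutative algebra of rank $p^{2\dim}$. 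The relationship between the gerbe and the Azumaya algebra goes through Cartier duality and twisted sheaves on $\mathbb{V}(\Omega^1_{X^{(1)}/S})$, not through $\nu_*$ of the structure sheaf. Second, and more substantively, the strategy of deriving the Azumaya property from the gerbe property \autoref{gerbe} plus the canonical splitting \autoref{cansplit} cannot work on its own: \autoref{noazumaya} exhibits $(\pt/B\G_m)^{\hat{\dR}}$ as a gerbe over $B\mu_p$ for which $\mathcal{D}$ is commutative and very far from Azumaya, so the gerbe property alone is compatible with the Azumaya property failing. You correctly flag this obstacle, but the proposed resolution (``$\Theta$ has an open dense schematic locus, giving the gerbe more rigidity'') is not an argument -- it names the feature that makes the result plausible without showing how to exploit it, and you end up deferring to Hablicsek's direct parabolic-bundle computation anyway, which is precisely what the paper's proof does. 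In short: the route via de Rham stacks and Cartier duality does not actually reduce the work below that of the cited reference, and one should be careful not to suggest that the gerbe property formally implies the Azumaya property, since \autoref{noazumaya} is a counterexample to that implication in the category of smooth representable morphisms.
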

\begin{proof}
		This is the main theorem of Hablicsek's \cite{azumayaparabolic}, though in the language of parabolic bundles; by Borne \cite{borne2007}, though, our statement is equivalent. 
\end{proof}

For the reader's benefit, we include an illustration of the main computation used in Hablicsek's proof.
\begin{exmp}[The Azumaya property for \(\A^1\)] \label{exmp1485} 
		Let \(S = \A^1_k/\G_{m,k}\) for \(k\) an \(\F_p\)-algebra, and let \(X = \Spec k[x].\) Take \(D\) the divisor cut out by \(x = 0.\) The logarithmic Frobenius twist \(X^{(1)}\) is the \(p^{\text{th}}\) root stack of \(X' = \Spec k[x^p]\) along the divisor \(x^p = 0.\) 

		Then \(\mathcal{D}_X(\log D) = k[x, x\partial],\) with the two generators having commutator 
		\[x\partial \cdot x - x \cdot x\partial = x.\]
		Note that \((x\partial)^p - x\partial\) and \(x^p\) are both central elements of \(\mathcal{D}_X(\log D).\) 

		The pushforward \(F_{X/S,*}\mathcal{D}_X(\log D),\) thought of as a quasicoherent sheaf on \(X'\) with parabolic structure (in the sense of Borne \cite{borne2007}), is just the \(k[x^p]\)-algebra \(k[x, x\partial]\) with parabolic structure
		\[\cdots \subseteq F^1 = x \cdot k[x, x\partial] \subseteq F^0 = k[x, x\partial] \subseteq F^{-1} = x^{-1} \cdot k[x, x\partial] \subseteq F^{-2} = x^{-2} \cdot k[x, x\partial] \subseteq \cdots.\] 

		The logarithmic cotangent bundle of \(X^{(1)}\) is just the trivial bundle. Writing \(\mathbb{V}(\omega_{X^{(1)}/S}) = X^{(1)} \times_k \Spec k[\zeta],\) we can then upgrade \(F_{X/S,*}\mathcal{D}_X(\log D)\) to an algebra on \(\mathbb{V}(\omega_{X^{(1)}/S})\) by having \(\zeta\) act on \(F_{X/S,*}\mathcal{D}_X(\log D)\) as multiplication by the central element \((x\partial)^p - x\partial.\) In this way, we view \(F_{X/S,*}\mathcal{D}_X(\log D)\) as a quasicoherent algebra on \(\mathbb{V}(\omega_{X^{(1)}/S}).\)

		As the map \(X \to X'\) is faithfully flat, the induced map \(X[\sqrt[p]{pD}] \to X^{(1)}\) of \(p^{\text{th}}\) root stacks is faithfully flat as well. As being Azumaya can be checked on a flat cover, it suffices to prove that the pullback of \(F_{X/S,*}\mathcal{D}_X(\log D)\) to \(X[\sqrt[p]{pD}] \times \A^1_{\zeta}\) is a matrix algebra. As a sheaf on \(X \times \A^1_{\zeta}\) with parabolic structure, this pullback \(\mathcal{D}'\) can be modelled as the parabolic sheaf
		\[F^i\mathcal{D}' = \sum_j (k[x, x\partial] \cdot x^j) \otimes_{k[x^p, (x\partial)^p - x\partial]} (k[x, (x\partial)^p - x\partial] \cdot x^{i-j}),\]
		where \(\zeta\) acts as \((x\partial)^p - x\partial.\) 

		View \(\mathcal{D}_X(\log D)\) as a sheaf on \(X[\sqrt[p]{pD}] \times \A^1_{\zeta}\) via the parabolic bundle
		\[\cdots \subseteq F^1 = k[x, x\partial] \cdot x \subseteq F^0 = k[x, x\partial] \subseteq F^{-1} = k[x, x\partial] \cdot x^{-1} \subseteq \cdots,\]
		where again we have \(\zeta\) acting as \((x\partial)^p - x\partial.\) The structure sheaf of \(X[\sqrt[p]{pD}] \times \A^1_{\zeta}\) can be identified as the subbundle 
		\[\cdots \subseteq F^1 = k[x, (x\partial)^p - x\partial] \cdot x \subseteq F^0 = k[x, (x\partial)^p - x\partial] \subseteq F^{-1} = k[x, (x\partial)^p - x\partial] \cdot x^{-1} \subseteq \cdots\]
		of \(\mathcal{D}_X(\log D).\) We denote this subbundle by \(\mathcal{A}_X.\) 

		Consider now the sheaf of endomorphisms \(\underline{\End}_{\mathcal{A}_X}(\mathcal{D}_X(\log D)),\) viewing \(\mathcal{D}_X(\log D)\) as a \emph{right} \(\mathcal{A}_X\)-module. There is a natural map of parabolic bundles
		\[\mathcal{D}' \to \underline{\End}_{\mathcal{A}_X}(\mathcal{D}_X(\log D)),\]
		which sends \((f \cdot x^j) \otimes (g \otimes x^{i-j}) \in F^i\mathcal{D}'\) to the endomorphism
		\[D \mapsto (fx^j)D(gx^{i-j}).\] 
		This endomorphism is \(\mathcal{A}_X\)-linear; indeed, it commutes with \(D \mapsto Dx\) since \(g \in k[x, (x\partial)^p - x\partial]\) commutes with \(x,\) and it commutes with right multiplication by \((x\partial)^p - x\partial\) for the same reason. 

		This endomorphism lies in \(F^i\underline{\End}_{\mathcal{A}_X}(\mathcal{D}_X(\log D))\) because if \(D\in F^n\mathcal{D}_X(\log D),\) then \(fx^jDgx^{i-j}\) lies in \(F^{n+i}\mathcal{D}_X(\log D)\) (as \(x^j, x^{i-j}\) change degree in a predictable way, and \(f, g\) both act only by increasing degree). This map is an isomorphism by the same argument used by Bezuknivakov \cite{bmm}.
\end{exmp}

\subsection{Residues}

We now explain how residues of logarithmic connections can be geometrized using \((\pt/B\G_m)^{\hat{\dR}}.\) 

\subsubsection{On \((\pt/B\G_m)^{\hat{\dR}}\)} 

\begin{remark}
		The author is grateful to Sanath Devalapurkar for helpful correspondence on the material of this section. In particular, Devalapurkar explained to the author the relationship between Artin-Schreier and \(p\)-curvature (see \autoref{lem1662}).
\end{remark}

We start by computing the sheared relative de Rham stack \((\pt/B\G_m)^{\hat{\dR}}\) explicitly. Some general lemmata will aid in this task.
\begin{lemma} \label{lem1538} 
		Let \(G\) be a finite type, affine commutative group scheme over \(\Spec k.\) Then 
		\[(BG)^{\dR} \heq B(G^{\dR})\]
		and
		\[(BG)^{\hat{\dR}} \heq B(G^{\hat{\dR}}).\]
\end{lemma}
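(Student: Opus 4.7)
The plan is to prove both equivalences by a common delooping argument, using that transmutation $X \mapsto X^{\dR}$ (and similarly $X \mapsto X^{\hat{\dR}}$) commutes with all finite limits of stacks, as noted earlier in this section. In particular, transmutation sends group objects to group objects and commutes with the loop functor $\Omega Y := \pt \times_Y \pt$. The first step is to apply transmutation to the fiber sequence $G \to \pt \to BG$; since $\pt^{\dR} = \pt$, this gives a fiber sequence $G^{\dR} \to \pt \to (BG)^{\dR}$, which identifies $\Omega (BG)^{\dR} \simeq G^{\dR}$ as commutative group stacks. By construction, $\Omega B(G^{\dR}) \simeq G^{\dR}$ as well.

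The basepoint $\pt \to (BG)^{\dR}$ (the transmutation of $\pt \to BG$), combined with the identification $\Omega (BG)^{\dR} \simeq G^{\dR}$, determines a natural map $\Phi : B(G^{\dR}) \to (BG)^{\dR}$ via the universal property of delooping, and $\Omega \Phi$ is an equivalence by construction. In an $\infty$-topos, a pointed map between pointed $0$-connected objects that induces an equivalence on loops is itself an equivalence; since $B(G^{\dR})$ is $0$-connected by definition, the lemma reduces to showing that $(BG)^{\dR}$ is also $0$-connected as an fpqc sheaf --- that is, that $\pt \to (BG)^{\dR}$ is an fpqc cover. The sheared version is strictly parallel, with $\G_a^{\dR}$ replaced throughout by $\G_a^{\hat{\dR}}$.

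The main obstacle is this last step. A point $\eta \in (BG)^{\dR}(A) = BG(\G_a^{\dR}(A))$ is a $G$-torsor on the $1$-truncated animated ring $\G_a^{\dR}(A)$, and the task is to produce an fpqc cover $A \to A'$ trivializing it. My plan is to reduce to the case where $A$ is semiperfect (these form a basis for the fpqc topology) and use \autoref{cor138} --- respectively \autoref{prop316} in the sheared case --- to write $\G_a^{\dR}(A) \simeq \cofib(\G_a^{\#}(A) \to A)$ as an explicit $1$-truncated animated ring, presented as a square-zero extension of its $\pi_0$ by $\pi_1[1]$. Since $G$ is an affine commutative group scheme, any $G$-torsor on such a ring is fpqc-locally trivial; the technical claim to verify is that a trivializing cover of $\Spec \G_a^{\dR}(A)$ can be arranged to arise by pullback from an fpqc cover of $\Spec A$ itself, which rests on the compatibility of the assignment $A \mapsto \G_a^{\dR}(A)$ with fpqc base change --- a property that ultimately follows from $\G_a^{\dR}$ being an fpqc sheaf of ring stacks.
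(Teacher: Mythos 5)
Your argument tracks the paper's at the level of structure: both reduce the lemma to showing that $\pt \to (BG)^{\dR}$ (resp.\ $\pt \to (BG)^{\hat{\dR}}$) is a cover in the flat topology, and then read off the identification from the Cartesian square $G \to \pt \to BG$. Whether one packages the final step as ``a pointed map of $0$-connected objects inducing an equivalence on loops is an equivalence'' or, as the paper does, observes directly that a flat cover with structure-group fiber $G^{\dR}$ exhibits the target as $B(G^{\dR})$ is cosmetic; either is fine.

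The real gap is in how you propose to show that $\pt \to (BG)^{\dR}$ is an fpqc cover. Given a $G$-torsor over $\Spec \G_a^{\dR}(A)$ (for $A$ semiperfect), you need a faithfully flat $A \to A'$ such that the torsor becomes trivial over $\G_a^{\dR}(A')$, and you gesture at this by asserting that a trivializing cover of $\Spec \G_a^{\dR}(A)$ ``can be arranged to arise by pullback from an fpqc cover of $\Spec A$,'' pointing to the fact that $\G_a^{\dR}$ is an fpqc sheaf. That sheaf property does not give you what you need: it says nothing about realizing an arbitrary faithfully flat $\G_a^{\dR}(A)$-algebra in the image of $\G_a^{\dR}$, nor about lifting flat covers across the nilpotent thickening $A \to \pi_0 \G_a^{\dR}(A)$. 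For \emph{non-smooth} $G$ (e.g.\ $\mu_p$ or $\alpha_p$, which occur as finite type affine commutative group schemes over $k$), a trivializing flat cover of the torsor is not smooth, so you cannot lift it formally along a square-zero extension, and there genuinely can be obstructions; the hand-wave conceals exactly the hard point.

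The ingredient the paper actually uses, and that your proof is missing, is: every finite type affine group scheme over $k$ in characteristic $p$ is \emph{quasisyntomic}, so $\pt \to BG$ is a quasisyntomic cover, and formation of the (sheared) de Rham stack \emph{preserves quasisyntomic covers} (Lemma 6.3 of Bhatt--Lurie \cite{bhattlurie} in the non-sheared case; the analogous statement in the sheared case comes from \cite{bkmvz}). This is the content-bearing input that makes $\pt \to (BG)^{\dR}$ a flat cover. You should cite (or reprove) this preservation statement rather than try to deduce local triviality by hand from the semiperfect formula for $\G_a^{\dR}(A)$; the latter is not a substitute for it.
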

\begin{remark}
		Here, by \(BG,\) we mean the classifying stack of \emph{flat} \(G\)-gerbes. 
\end{remark}
\begin{remark}
		Thanks to this lemma, we will often write \(BG^{\dR}\) instead of \(B(G^{\dR})\) or \((BG)^{\dR},\) since now no confusion can arise.
\end{remark}
\begin{proof}
		The key point is that formation of both de Rham and sheared de Rham stacks preserve quasisyntomic covers; see Lemma 6.3 of Bhatt--Lurie \cite{bhattlurie} for a slightly stronger claim in the non-sheared case. In the sheared case, a similar argument is used to show the analogous result in the upcoming work on sheared prismatization \cite{bkmvz}.

		The diagram
		\begin{center}
				\begin{tikzcd}
						G \ar[r] \ar[d] & \Spec(k) \ar[d] \\
						\Spec(k) \ar[r] & BG
				\end{tikzcd}
		\end{center}
		is Cartesian even in the world of derived stacks; thus applying \(\dR,\) we find that
		\begin{center}
				\begin{tikzcd}
						G^{\dR} \ar[r] \ar[d] & \Spec(k) \ar[d] \\
						\Spec(k) \ar[r] & (BG)^{\dR}
				\end{tikzcd}
		\end{center}
		is Cartesian (since formation of de Rham stacks commutes with derived limits). 

		In characteristic 0, finite type group schemes are smooth; in positive characteristic, finite type group schemes are quasisyntomic. Thus \(G \to \Spec(k)\) is a quasisynotomic surjection; hence \(\Spec(k) \to BG\) is a quasisyntomic surjection. As formation of \(\dR\) preserves quasisynotomic covers, we deduce that \(\Spec(k) \to (BG)^{\dR}\) is still a quasisyntomic cover. Hence we have a surjection \(\pt \to (BG)^{\dR}\) whose fiber is \(G^{\dR}\); this is exactly the data needed to identify \(B(G^{\dR})\heq (BG)^{\dR},\) so we conclude. The same argument works for \(\hat{\dR}.\) 
\end{proof}	
\begin{cor} \label{cor674} 
		We have
		\[\G_m^{\dR} = \cone(\G_m^{\#} \to \G_m),\]
		\[\G_m^{\hat{\dR}} = \cone(\G_m^{\#, \wedge} \to \G_m),\]
		\[B\G_m^{\dR} = \cone(B\G_m^{\#} \to B\G_m),\] 
		\[B\G_m^{\hat{\dR}} = \cone(B\G_m^{\#, \wedge} \to B\G_m).\] 

		Moreover, under these canonical identifications, the morphisms 
		\[B\G_m \to B\G_m^{\dR} \to B\G_m^{\hat{\dR}}\]
		become the canonical maps
		\[B\G_m \to \cone(B\G_m^{\#} \to B\G_m) \to \cone(B\G_m^{\#, \wedge} \to B\G_m).\]
\end{cor}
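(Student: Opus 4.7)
The plan is to prove the two $\G_m$ statements directly from the definitions, and then deduce the two $B\G_m$ statements by applying \autoref{lem1538}. First, recall that by construction $\G_a^{\dR} = \cone(\G_a^{\#} \to \G_a)$ and $\G_a^{\hat{\dR}} = \cone(\G_a^{\#, \wedge} \to \G_a)$ as cones of quasi-ideals, and that $\G_m^{\dR}$, $\G_m^{\hat{\dR}}$ are the respective unit group stacks by transmutation. I would define $\G_m^{\#}$ (respectively $\G_m^{\#, \wedge}$) as the fiber, in fpqc sheaves of commutative groups, of the canonical map $\G_m \to \G_m^{\dR}$ (respectively $\G_m \to \G_m^{\hat{\dR}}$). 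With this definition, the identifications $\G_m^{\dR} = \cone(\G_m^{\#} \to \G_m)$ and $\G_m^{\hat{\dR}} = \cone(\G_m^{\#, \wedge} \to \G_m)$ hold essentially tautologically in the stable $\infty$-category of connective spectra, since any fiber sequence of commutative group stacks rotates to a cofiber identification.

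For the $B\G_m$ statements, apply \autoref{lem1538} with $G = \G_m$ to obtain $(B\G_m)^{\dR} \heq B(\G_m^{\dR})$ and $(B\G_m)^{\hat{\dR}} \heq B(\G_m^{\hat{\dR}})$. Since $B$ on commutative group stacks is the shift functor on connective spectra and in particular preserves cofibers, applying $B$ to $\cone(\G_m^{\#} \to \G_m)$ and $\cone(\G_m^{\#, \wedge} \to \G_m)$ yields $\cone(B\G_m^{\#} \to B\G_m)$ and $\cone(B\G_m^{\#, \wedge} \to B\G_m)$ respectively. For the ``moreover'' part, the natural maps $B\G_m \to B\G_m^{\dR} \to B\G_m^{\hat{\dR}}$ of \autoref{eq345} are induced by the canonical maps of quasi-ideals $0 \to \G_a^{\#, \wedge} \to \G_a^{\#}$, so by functoriality of the cone construction they must correspond to the expected canonical maps between cones; unwinding the fiber definitions of $\G_m^{\#}$ and $\G_m^{\#, \wedge}$ is enough to conclude.

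The main point I expect to have to verify is that the proposed fiber-based definition of $\G_m^{\#}$ matches the intended (multiplicative-divided-power) meaning of the notation. This is a Postnikov-tower computation for the 1-truncated animated ring $\G_a^{\dR}$: the unit group stack of a 1-truncated animated ring $R$ satisfies $\pi_0(R^{\times}) = (\pi_0 R)^{\times}$ and $\pi_1(R^{\times}) = \pi_1(R)$ as sheaves of abelian groups, and divided-power nilpotent elements lie in the Jacobson radical, so every $\pi_0$-unit lifts. Combining this with \autoref{cor198} for $\G_a^{\dR}$ and the analogous \autoref{cor270} for $\G_a^{\hat{\dR}}$ pins down $\G_m^{\#}$ explicitly as the sheaf of units $u$ equipped with a system of divided powers of $u-1$, and analogously for $\G_m^{\#, \wedge}$ in the nilpotent case.
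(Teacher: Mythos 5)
Your overall strategy coincides with the paper's one-line proof: handle the $\G_m$ statements directly and reduce the $B\G_m$ statements to them via \autoref{lem1538}. The reduction via $B$-preservation of cofibers is fine, once one agrees that in this corollary the operator $\cone$ applied to $B\G_m^{\#}\to B\G_m$ means the honest cofiber (which is $2$-truncated, with $\pi_2$ equal to $\ker(\G_m^{\#}\to\G_m)$) rather than the $\tau_{\le 1}$-truncated cone used for quasi-ideals elsewhere in the paper; with the $\tau_{\le 1}$-truncated reading the identity would be false, since $B\G_m^{\dR}$ has nontrivial $\pi_2$. It is worth flagging that convention explicitly.

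The place where your argument is thinner than it looks is the matching step in your final paragraph. Defining $\G_m^{\#}$ as $\operatorname{fib}(\G_m\to\G_m^{\dR})$ makes the first displayed identity tautological, as you note, but then all the content is shifted into showing this fiber is the ``multiplicative divided-power'' group scheme. The Postnikov comparison via \autoref{cor198} only determines the homotopy sheaves: it shows the fiber is a $0$-truncated group sheaf sitting in an extension $0\to\G_a^{\#}\to\G_m^{\#}\to\mu_p\to 0$, with the same graded pieces as the candidate group scheme. That does not by itself ``pin down'' $\G_m^{\#}$ as a group scheme, since the extension class is not determined by the homotopy sheaves alone. The clean way to close this is to compute the fiber directly on semiperfect $\F_p$-algebras $A$, where by \autoref{eq172} one has $\G_a^{\dR}(A)=\cofib(\G_a^{\#}(A)\to A)$ concretely: a point of $\operatorname{fib}(\G_m(A)\to\G_m^{\dR}(A))$ is a unit $u\in A^{\times}$ together with a path from $u$ to $1$ in the groupoid $\G_a^{\dR}(A)$, i.e.\ an element of $\G_a^{\#}(A)$ lifting $u-1$; unwinding, this is exactly a pair $(u,\gamma_{\bullet})$ of a unit with a system of divided powers on $u-1$ (equivalently $1-u$, up to signs), as in the explicit formula used for $\G_m^{\#,\wedge}$ in the proof of \autoref{prop593}. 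Since semiperfect algebras form a basis for the flat topology, this identifies the fiber sheaf. The Jacobson-radical remark about lifting $\pi_0$-units is not the crux; the key input is the explicit description of $\G_a^{\dR}(A)$ on semiperfect algebras, and that should be stated as such.

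The ``moreover'' statement is handled correctly by functoriality of cones, but note that the inclusion of quasi-ideals goes $\G_a^{\#,\wedge}\hookrightarrow\G_a^{\#}$, which on cones produces a map $\G_a^{\hat{\dR}}\to\G_a^{\dR}$; transmutation then gives $X^{\hat{\dR}}\to X^{\dR}$, not the reverse. You have simply copied the arrow directions from \autoref{eq345} and the statement of the corollary, which appear to have the $\dR$/$\hat{\dR}$ arrows reversed relative to this functoriality; it would be worth double-checking the convention rather than propagating it silently.
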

\begin{proof}
		The first two claims follow immediately from the definition of (sheared) de Rham stacks; the other claims then follow from \autoref{lem1538}. 
\end{proof}
\begin{cor} \label{cor1587} 
		We have
		\[(\pt/B\G_m)^{\dR} \heq B\G_m^{\#},\]
		and
		\[(\pt/B\G_m)^{\hat{\dR}} \heq B\G_m^{\#, \wedge}.\]
\end{cor}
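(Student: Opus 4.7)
The plan is to unwind the definition of $(\pt/B\G_m)^{\dR}$ and identify it as the fiber over the basepoint of a delooped fiber sequence, which will turn out to be $B\G_m^{\#}$.

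By the definition of the relative de Rham stack,
\[(\pt/B\G_m)^{\dR} = \pt^{\dR} \times_{(B\G_m)^{\dR}} B\G_m = \pt \times_{(B\G_m)^{\dR}} B\G_m,\]
where the map $B\G_m \to (B\G_m)^{\dR}$ is $\pi_{B\G_m}$, and the map $\pt \to (B\G_m)^{\dR}$ factors as $\pt \to B\G_m \xto{\pi_{B\G_m}} (B\G_m)^{\dR}$ by naturality of $\pi$ applied to the basepoint $\pt \to B\G_m$. Thus $(\pt/B\G_m)^{\dR}$ is the fiber of $\pi_{B\G_m}$ over the canonical basepoint of $(B\G_m)^{\dR}$.

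Next, \autoref{lem1538} supplies the identification $(B\G_m)^{\dR} \heq B\G_m^{\dR}$, and by \autoref{cor674}, under this identification the map $\pi_{B\G_m}$ corresponds to $B$ applied to the canonical map $\G_m \to \G_m^{\dR}$. Combining this with the fiber sequence $\G_m^{\#} \to \G_m \to \G_m^{\dR}$ (the multiplicative analogue of the quotient-stack description in \autoref{lem73}, realizing $\G_m^{\dR}$ as $[\G_m/\G_m^{\#}]$) and delooping, we obtain a fiber sequence
\[B\G_m^{\#} \to B\G_m \to B\G_m^{\dR}.\]
The fiber of $B\G_m \to B\G_m^{\dR}$ over the basepoint is therefore $B\G_m^{\#}$, establishing the first identification. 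The sheared statement is proved by exactly the same argument, replacing $\G_m^{\#}$ with $\G_m^{\#,\wedge}$ and $\G_m^{\dR}$ with $\G_m^{\hat{\dR}}$ throughout.

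The proof is essentially formal once \autoref{lem1538} and \autoref{cor674} are in hand, so the only real obstacle is the multiplicative quotient-stack description $\G_m^{\dR} \heq [\G_m/\G_m^{\#}]$ (and its sheared variant). This goes through as in \autoref{lem73}: the sheafification defining the multiplicative cone of $\G_m^{\#} \to \G_m$ can be identified with the quotient stack by the same argument, since the quasi-ideal structure and the flat-cohomology vanishing results underlying \autoref{lem73} (and, in the sheared case, \autoref{cor1222}) are unaffected by passing to units.
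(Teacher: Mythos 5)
Your proposal is correct and follows essentially the same route as the paper's own proof: both unwind the definition of $(\pt/B\G_m)^{\dR}$ as the fiber product $\pt^{\dR}\times_{(B\G_m)^{\dR}}B\G_m$, identify $(B\G_m)^{\dR}\heq B\G_m^{\dR}$ via \autoref{lem1538} and the bottom map as the canonical map $B\G_m\to\cone(B\G_m^{\#}\to B\G_m)$ via \autoref{cor674}, and read off the fiber as $B\G_m^{\#}$. Your added commentary re-justifying the multiplicative quotient-stack description is redundant given that you already invoke \autoref{cor674}, but it does no harm.
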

\begin{proof}
		By definition, \((\pt/B\G_m)^{\dR}\) is defined as the fiber product
		\begin{center}
				\begin{tikzcd}
						(\pt/B\G_m)^{\dR} \ar[r] \ar[d] & \pt^{\dR} \ar[d] \\
						B\G_m \ar[r] & B\G_m^{\dR}. 
				\end{tikzcd}
		\end{center}
		As \(\pt^{\dR} = \pt\) and \(B\G_m \to B\G_m^{\dR}\) is just the canonical map \(B\G_m \to \cone(B\G_m^{\#} \to B\G_m),\) we conclude. The sheared version is proven identically.
\end{proof}
\begin{remark} 
		In particular, by Cartier duality, quasicoherent sheaves on \((\pt/B\G_m)^{\hat{\dR}}\) are just the same as \(k\)-vector spaces endowed with an endomorphism; this is exactly in line with the computation \(\mathcal{D}_{\pt/B\G_m} \heq k[x\partial_x]\) of \autoref{noazumaya}. 
\end{remark}

\begin{defn}
		Let \(V^{\nabla} \in \mathcal{D}_{\qc}((\pt/B\G_m)^{\hat{\dR}}).\) The \emph{residue} of \(V^{\nabla}\) is endomorphism \(\res : V \to V\) of the \(k\)-vector space \(V\) underlying \(V^{\nabla}\) obtained by looking at the action of \(x\partial_x \in \mathcal{D}_{\pt/B\G_m}\) on \(V.\) 
\end{defn}
\begin{remark}
		It follows from \autoref{cor1587} that a quasicoherent sheaf on \((\pt/B\G_m)^{\hat{\dR}}\) is equivalent to a pair \((V, \res)\) of a vector space and a residue.
\end{remark} 

We also spell out the \(p\)-curvature for \(\pt \to B\G_m.\) The relative Frobenius twist of \(\pt\) over \(B\G_m\) is \(B\mu_p.\) As \(\mathbb{L}_{B\mu_p/B\G_m} \heq \OO_{B\mu_p}\) (computed for instance by pulling back along \(\pt \to B\G_m\) to get \(\G_m/\mu_p \heq \G_m\)), we find that \((\pt/B\G_m)^{\hat{\dR}} \to B\mu_p\) is a \(\G_a^{\#, \wedge}\)-gerbe. This is exactly what we'd expect: \((\pt/B\G_m)^{\hat{\dR}} = B\G_m^{\#, \wedge},\) and the map
\[B\G_m^{\#, \wedge} \to B\mu_p\]
is induced by the group homomorphism \(\G_m^{\#, \wedge} \to \mu_p,\) which is surjective with kernel \(\G_a^{\#, \wedge}\) according to the following lemma.

\begin{lemma} \label{prop593} 
		There is a short exact sequence of commutative affine group schemes
		\begin{equation} \label{eq595} 0 \to \G_a^{\#,\wedge} \to \G_m^{\#,\wedge} \to \mu_p \to 0.\end{equation}

		Moreover, \(\G_m^{\#, \wedge} \to \mu_p\) admits a section \emph{at the level of schemes}. 
\end{lemma}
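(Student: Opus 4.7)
The plan is to construct the two maps explicitly, check exactness, and write down a scheme-theoretic section, all by analogy with Drinfeld's identification $\G_a^{\#,\wedge} \heq \hat{W}[F]$ from \autoref{lem185}. The group scheme $\G_m^{\#,\wedge}$ is best thought of as a multiplicative version of this kernel of Frobenius, and the extra $\mu_p$ quotient reflects the identity $\mu_p = \ker(F : \G_m \to \G_m)$.

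The injection $\G_a^{\#,\wedge} \hookrightarrow \G_m^{\#,\wedge}$ is the exponential $\exp(a) := \sum_{n \geq 0} \gamma_n(a)$; the sum is finite by nilpotence of the divided powers, and the divided-power binomial identity $\gamma_n(a+b) = \sum_{i+j=n}\gamma_i(a)\gamma_j(b)$ makes $\exp$ a group homomorphism. The surjection $\G_m^{\#,\wedge} \to \mu_p$ is obtained by composing the forgetful map $\G_m^{\#,\wedge} \to \G_m$ with the inclusion $\mu_p \subseteq \G_m$: the image does land in $\mu_p$ because the existence of nilpotent divided powers on $u - 1$ forces $(u-1)^p = p!\,\gamma_p(u-1) = 0$ in characteristic $p$, whence $u^p = 1 + (u-1)^p = 1$. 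To produce the scheme-theoretic section $\mu_p \to \G_m^{\#,\wedge}$, send $\zeta \in \mu_p(A)$ to the pair $(\zeta, \gamma_\bullet^\zeta)$ with $\gamma_n^\zeta := (\zeta-1)^n/n!$ for $n < p$ and $\gamma_n^\zeta := 0$ for $n \geq p$. This is a well-defined nilpotent divided-power system on $\zeta - 1$ precisely because $(\zeta - 1)^p = \zeta^p - 1 = 0$ in characteristic $p$ (so the relations $\gamma_i\gamma_j = \binom{i+j}{i}\gamma_{i+j}$ involving indices $\geq p$ reduce to $0 = 0$), and the assignment is manifestly functorial in $A$.

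Exactness is then essentially formal: injectivity of $\exp$ follows because $a = \gamma_1(a)$ is recovered from $\exp(a) - 1$; exactness in the middle follows because an element $(u, \gamma_\bullet)$ of $\G_m^{\#,\wedge}$ lifting $1 \in \mu_p$ has $u = 1 + a$ with $(a, \gamma_\bullet) \in \G_a^{\#,\wedge}$ and hence lies in the image of $\exp$; surjectivity onto $\mu_p$ is witnessed by the section. The main obstacle is simply pinning down the precise moduli description of $\G_m^{\#,\wedge}$ (as a subfunctor of $\G_m$ parameterizing units together with a nilpotent divided-power structure on $u-1$, so that it is automatically a commutative affine group scheme) so that all the maps above are morphisms of schemes rather than just set-level maps for each test ring $A$. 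Once that bookkeeping is done, the argument reduces to the two identities $(\zeta-1)^p = 0$ for $\zeta \in \mu_p$ (needed to land in $\mu_p$ and to construct the section) and $\gamma_n(a+b) = \sum_{i+j=n}\gamma_i(a)\gamma_j(b)$ (needed for $\exp$ to be a group homomorphism).
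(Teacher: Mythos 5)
The central map in your sequence is wrong, and this is fatal: the sequence you write down is not even a complex. You propose the exponential
\[\exp(a) \;=\; \sum_{n\ge 0}\gamma_n(a)\]
as the embedding $\G_a^{\#,\wedge}\hookrightarrow\G_m^{\#,\wedge}$, and the forgetful map $(u,\gamma_\bullet)\mapsto u$ as the surjection onto $\mu_p$. But then the composite $\G_a^{\#,\wedge}\to\mu_p$ sends $(a,\gamma_\bullet)$ to $\sum_n\gamma_n(a)$, which is \emph{not} identically $1$: for instance over $A=\F_p[\epsilon]/(\epsilon^2)$ the element $a=\epsilon$ with $\gamma_1=\epsilon$, $\gamma_{\ge 2}=0$ has $\exp(a)=1+\epsilon\ne 1$. (Your $\exp$ does land in $\G_m^{\#,\wedge}$ — one checks $\gamma_n(a)^p=\tfrac{(np)!}{(n!)^p}\gamma_{np}(a)=0$ for $n\ge 1$ since $p!\mid\tfrac{(np)!}{(n!)^p}$, so $\exp(a)^p=1$ — and it is a group homomorphism by the divided-power binomial formula; it just does not factor through the kernel of the forgetful map.) Your own ``exactness in the middle'' paragraph quietly smuggles this in: you take a lift of $1$, write it as $(1,\gamma_\bullet)$ (so $a=0$), and assert it lies in the image of $\exp$; but $\exp(0,\gamma_\bullet)$ has first coordinate $\sum_n\gamma_n = 1+\gamma_p+\gamma_{2p}+\cdots$, which is again not $1$ for a general nilpotent divided-power system on $0$.

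The kernel of $(u,\gamma_\bullet)\mapsto u$ consists of nilpotent divided-power systems on $0$, and the correct identification with $\G_a^{\#,\wedge}$ is not exponential in nature; it is the ``spacing out by $p$'' map. Concretely, to $(b,\delta_\bullet)\in\G_a^{\#,\wedge}(A)$ one attaches the divided-power system on $0$ defined by $\gamma_{pn}:=\delta_n$ and $\gamma_m:=0$ for $p\nmid m$. The real content of the lemma is the converse direction: that every nilpotent divided-power system $\gamma_\bullet$ on $0$ arises uniquely this way. That requires two nontrivial observations that your sketch does not supply: (i) $\gamma_1=0$ and the quasi-ideal relation $\tfrac{n!}{i!(n-i)!}\gamma_n=\gamma_{n-i}\gamma_i$ with $i=n\bmod p$ force $\gamma_n=0$ whenever $p\nmid n$ (the binomial coefficient is a $p$-adic unit there), and (ii) the remaining data $(\gamma_{pn})_n$ satisfies the divided-power axioms for a structure on $\gamma_p$, which comes down to Lucas' theorem giving $\binom{p(m+n)}{pm}\equiv\binom{m+n}{m}\pmod p$. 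Your section $\zeta\mapsto(\zeta,(\zeta-1)^n/n!)$ and the observation $(\zeta-1)^p=\zeta^p-1=0$ are correct and match the paper; it is only the first map and the middle-exactness argument that need to be replaced.
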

\begin{warn} \label{warn1617} 
		The sequence \autoref{eq595} is not split as a short exact sequence of group schemes! We only construct a scheme-theoretic section; it will not respect the group laws. In fact, no section can be a group homomorphism, because the Cartier dual to \autoref{eq595} is the Artin-Schreier sequence
		\[0 \to \Z/p \to \G_a \to \G_a \to 0,\]
		which famously is not split. 
\end{warn}
\begin{remark}
		Sanath Devalapurkar, in a private communication to the author, explained that the non-sheared version of \autoref{eq595} splits group theoretically, and hence \((\pt/B\G_m)^{\dR}\to B\mu_p\) is the trivial gerbe. We will not need this in this paper, so we do not include it.
\end{remark}
\begin{proof}
		For an \(\F_p\)-algebra \(A,\) we have
		\[\mu_p(A) = \{a \in A \mid a^p = 1\},\]
		\[\G_m^{\#,\wedge}(A) = \{(a, \gamma_{\bullet}) \mid a \in A^{\times}, \gamma_{\bullet}\text{ is a system of nilpotent divided powers for \(1-a\)}\},\]
		\[\G_a^{\#,\wedge}(A) = \{(a, \gamma_{\bullet}) \mid a \in A, \gamma_{\bullet}\text{ is a system of nilpotent divided powers for \(a\)}\}.\]

		For surjectivity, just note that if \(a^p = 1,\) then \(1-a\) admits the system of nilpotent divided powers
		\[\gamma_n(1-a) := \begin{cases} \frac{(1-a)^n}{n!} & n < p, \\ 0 & n \geq p. \end{cases}\]
		In fact, this defines a function
		\[s(A) : \mu_p(A) \to \G_m^{\#,\wedge}(A)\]
		which is clearly functorial in \(A,\) and hence gives us a scheme-theoretic section \(s : \mu_p\to\G_m^{\#,\wedge}\) of \(\G_m^{\#,\wedge}\to\mu_p.\)

		The kernel of \(\G_m^{\#,\wedge}\to\mu_p\) is then given by all the systems of divided powers for \(0 \in A.\) 

		These are equivalent to giving elements \(\gamma_n\in A\) so that
		\[\gamma_0 = 1, \gamma_1 = 0,\] 
		\[\gamma_n \cdot \gamma_m = \frac{(n+m)!}{n!m!}\gamma_{n+m}.\]
		
		It's easy to see these conditions imply \(\gamma_n = 0\) for all \(n < p.\) Hence if \(p \nmid n,\) the second condition let's us write
		\[\frac{n!}{i!(n-i)!}\gamma_n = \gamma_{n-i} \cdot \gamma_i = 0\]
		for \(i\) the remainder of \(n\) when divided by \(p.\) Note that \(n!/(i!(n-i)!)\) is an integer not divisible by \(p\), and hence \(\gamma_n = 0\) as well. 

		However, the \(\gamma_{pn}\) have no reason to be zero. They are just elements of \(A\) obeying
		\[\gamma_{p \cdot 0} = 1,\]
		\[\gamma_{pn} \cdot \gamma_{pm} = \frac{(pn+pm)!}{(pn)!(pm)!}\gamma_{p(n+m)} = \frac{(n+m)!}{n!m!}\gamma_{p(n+m)}.\] 
		The last equality is from Lucas' theorem. 

		Hence giving the \(\gamma_n\) are equivalent to giving some \(\gamma_p \in A\) together with any PD structure on \(\gamma_p.\) This is exactly the data of an element of \(\G_a^{\#}(A),\) and so we conclude. 
\end{proof}

The Artin-Schreier sequence appearing in \autoref{warn1617} allows us to compute \(p\)-curvature in terms of residue.
\begin{lemma} \label{lem1662}
		Let \((V, \res)\) be a quasicoherent sheaf on \((\pt/B\G_m)^{\hat{\dR}}.\) Then the \(p\)-curvature morphism
		\[\psi_p : V \to V \otimes F^*\Omega_{B\mu_p/B\G_m} = V\]
		is just the endomorphism
		\[\psi_p = \res^p - \res.\] 
\end{lemma}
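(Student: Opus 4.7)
The plan is to reduce the assertion to a classical computation via flat descent. By \autoref{prop424}, the map $\pi : \pt \to (\pt/B\G_m)^{\hat{\dR}}$ is fpqc surjective, and it factors through the fppf cover $j : \pt \to B\G_m$. Using base change for relative sheared de Rham stacks (\autoref{prop385}), the pullback of $(\pt/B\G_m)^{\hat{\dR}}$ along $j$ is $\G_m^{\hat{\dR}} \heq (\G_m/\pt)^{\hat{\dR}}$; hence the equality $\psi_p = \res^p - \res$ on $V$ can be checked after passing to the classical $\mathcal{D}_{\G_m/\pt}$-module obtained from $V$.

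Explicitly, the pullback $j^*V$ is $V \otimes_k k[x^{\pm 1}]$ equipped with the connection
\[
\nabla(v \otimes f) = v \otimes df + \res(v) \otimes f \cdot \tfrac{dx}{x},
\]
obtained by extending the $x\partial_x$-action on $V$ via Leibniz along $\mathcal{D}_{\pt/B\G_m} = k[x\partial_x] \hookrightarrow \mathcal{D}_{\G_m/\pt}$. In this classical setting, our de Rham-stack $p$-curvature agrees with the usual formula $\psi_p(\partial) = \nabla_\partial^p - \nabla_{\partial^{[p]}}$. One checks that $(x\partial_x)^{[p]} = x\partial_x$ on $\mathrm{Lie}(\G_m)$ (since $(x\partial_x)^p$ acts on $x^n$ by $n^p = n$), and that $\nabla_{x\partial_x}$ restricted to the weight-$n$ subspace $V \otimes x^n \cong V$ is the operator $n + \res$. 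The Freshman's dream in characteristic $p$ then gives $\nabla_{x\partial_x}^p - \nabla_{x\partial_x} = (n + \res)^p - (n + \res) = \res^p - \res$ uniformly in $n$, and descending along $j$ finishes the proof.

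An alternative, more intrinsic approach uses Cartier duality, as hinted at by the remark preceding the lemma. Under the identifications $(\pt/B\G_m)^{\hat{\dR}} \heq B\G_m^{\#,\wedge}$ (\autoref{cor1587}) and $B\mathbb{V}(F^*\mathcal{T}_{\pt/B\G_m})^{\#,\wedge} \heq B\G_a^{\#,\wedge}$, the $p$-curvature construction becomes pullback of representations along $\G_a^{\#,\wedge} \hookrightarrow \G_m^{\#,\wedge}$. Cartier-dualizing \autoref{prop593} yields the Artin-Schreier sequence (\autoref{warn1617}), so this pullback corresponds on Cartier duals to restriction along $F - 1 : \G_a \to \G_a$; the dual generator $\xi$ on $(\G_m^{\#,\wedge})^\vee$, which acts as $\res$ via the Lie-algebra–cotangent pairing of Cartier duality, pulls back to $\xi^p - \xi$, giving $\res^p - \res$.

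The main obstacle in either approach is pinning down normalizations. In the classical approach, one must fix the trivialization of $F^*\Omega^1_{B\mu_p/B\G_m}$ compatibly with $\tfrac{dx}{x}$ so that the computation produces the claimed scalar (and verify that the de Rham-stack $p$-curvature really restricts to the classical one after fppf pullback). In the Cartier-duality approach, one must verify that the dual generator of $(\G_m^{\#,\wedge})^\vee$ acts as the residue rather than some rescaling of it. Both are essentially standard, but warrant care.
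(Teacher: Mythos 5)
Your second, Cartier-duality route is essentially the paper's proof: the paper likewise identifies \((\pt/B\G_m)^{\hat{\dR}} \heq B\G_m^{\#,\wedge}\), notes via \autoref{prop593} that the gerbe pulled back to \(\pt \to B\mu_p\) is \(B\G_a^{\#,\wedge} \to B\G_m^{\#,\wedge}\), and then applies Cartier duality to \autoref{warn1617} to read off \(\psi_p = \res^p - \res\). Your first, descent-to-\(\G_m\) route is a legitimate alternative, and the Fermat/Freshman's-dream computation is correct; the one genuine extra input it requires, which you correctly flag, is the comparison between the de Rham-stack \(p\)-curvature and the classical Katz formula \(\psi_p(\partial) = \nabla_\partial^p - \nabla_{\partial^{[p]}}\) in the scheme case — the paper never states this comparison explicitly (\autoref{itcommutes} and \autoref{pcurvmorphism} characterize \(\psi_p\) structurally but do not identify it with the classical expression), so if you want that route you should prove or cite the comparison rather than take it as given. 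The Cartier-duality argument sidesteps this entirely, which is part of why the paper chooses it.
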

\begin{proof}
		Recall that \(p\)-curvature is defined in terms of the Cartesian diagram
		\begin{center}
				\begin{tikzcd}
						(\pt/B\G_m)^{\hat{\dR}} \times_{B\mu_p} \pt \ar[r] \ar[d] & (\pt/B\G_m)^{\hat{\dR}} \ar[d] \\
						\pt \ar[r] & B\mu_p.
				\end{tikzcd}
		\end{center}
		As \((\pt/B\G_m)^{\hat{\dR}} = B\G_m^{\#, \wedge},\) we find from \autoref{prop593} that the fiber product is just \(B\G_a^{\#, \wedge},\) with the map \(B\G_a^{\#, \wedge} \to B\G_m^{\#, \wedge}\) in the above Cartesian diagram being the one induced by the inclusion \(\G_a^{\#, \wedge} \to \G_m^{\#, \wedge}.\) 

		As in \autoref{warn1617}, after applying Cartier duality this inclusion becomes the Artin-Schreier cover; as \(\psi_p\) and residue are both defined in terms of Cartier duality, we deduce that \(\psi_p\) is the image of \(\res\) under Artin-Schreier, and hence \(\psi_p = \res^p - \res,\) as desired.
\end{proof}

\subsubsection{Relation to residue of logarithmic connections}

\begin{theorem} \label{residuerelation}
		Let \(X\) be a smooth curve over an algebraically closed field \(k\), and \(D\) a reduced effective divisor in \(X\) with \(n\) components \(D_1, ..., D_n \in X(k).\) Let \(c_D : X \to (\A^1/\G_m)^n\) be the resulting morphism. 

		For each \(i,\) let \(\phi_i : B\G_m \to (\A^1/\G_m)^n\) be the morphism induced as the product of \(n-1\) copies of the open immersion \(\Spec(k) \inclusion \A^1/\G_m\) with one copy of the closed immersion \(B\G_m \inclusion \A^1/\G_m,\) with this one closed immersion being on the \(i^{\text{th}}\) factor. Then the diagram
		\begin{center}
				\begin{tikzcd}
						\pt \ar[r, "D_i"] \ar[d] & X \ar[d, "c_D"] \\
						B\G_m \ar[r, "\phi_i"] & (\A^1/\G_m)^n
				\end{tikzcd}
		\end{center}
		is Cartesian, and the resulting morphism
		\[\phi_i^{\hat{\dR}} : (\pt/B\G_m)^{\hat{\dR}} \to (X/\Theta^n)^{\hat{\dR}}\]
		is such that \(\phi_i^{\hat{\dR}, *} : \mathcal{D}_{\qc}((X/\Theta^n)^{\hat{\dR}}) \to \mathcal{D}_{\qc}((\pt/B\G_m)^{\hat{\dR}})\) sends the logarithmic connection
		\[\nabla : \mathcal{E} \to \mathcal{E} \otimes \Omega_X^1(\log D)\]
		to \((\mathcal{E}|_{D_i}, \res_i),\) where
		\[\res_i : \mathcal{E}|_{D_i} \xto{\nabla} \mathcal{E}|_{D_i} \otimes \Omega_X^1(\log D)|_{D_i} \xto{\id_{\mathcal{E}} \otimes \res} \mathcal{E}|_{D_i},\]
		for \(\res : \Omega_X^1(\log D)|_{D_i} \to \OO_{D_i}\) Poincare's residue map (sending \(dx/x\) to 1, for \(x\) a local coordinate at \(D_i\)). 
\end{theorem}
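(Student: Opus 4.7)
The plan is to handle the Cartesian claim by direct computation, then reduce the functoriality claim to a local calculation using the classical comparison of \autoref{classicalcomparison}.

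For the Cartesian claim, I will use \autoref{prop855}: the morphism $\phi_i$ classifies, on each factor $j\ne i$, the trivial line bundle with its identity map (coming from the open immersion $\Spec(k)\inclusion \A^1/\G_m$), and on the $i$-th factor, some line bundle on $B\G_m$ equipped with the zero map to $\OO_{B\G_m}$. The fiber product $X\times_{\Theta^n,\phi_i} B\G_m$ is then cut out in $X$ by the conditions that each $c_{D_j}$ for $j\ne i$ be an isomorphism (forcing the test map to factor through $X\setminus D_j$) and that $c_{D_i}$ agree with the zero map (forcing the test map to factor through $D_i$). Since $X$ is a curve and the $D_j$ are distinct reduced $k$-points, the divisors $D_j$ for $j\ne i$ are automatically disjoint from $D_i$, so this fiber product is scheme-theoretically $D_i = \Spec(k) = \pt$. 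Applying \autoref{prop385} then gives the Cartesian square relating $(\pt/B\G_m)^{\hat{\dR}}$ to $(X/\Theta^n)^{\hat{\dR}}$, so $\phi_i^{\hat{\dR},*}$ is just the base-change functor on quasicoherent sheaves.

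For the functoriality claim, I first pin down the underlying sheaf. The naturality square relating $\pi_{\pt/B\G_m,\wedge}: \pt\to (\pt/B\G_m)^{\hat{\dR}}$ with $\pi_{X/\Theta^n,\wedge}$ and $\phi_i^{\hat{\dR}}$, combined with the fact that $\pi$ is an fpqc cover (\autoref{prop424}), shows that $\phi_i^{\hat{\dR},*}(\mathcal{E},\nabla)$ has $\mathcal{E}|_{D_i}$ as its underlying vector space. For the residue action, I pass through \autoref{classicalcomparison}: the question becomes what the action of $t\partial_t \in \mathcal{D}_{\pt/B\G_m} = k[t\partial_t]$ on $\mathcal{E}|_{D_i}$ is. Choosing a local coordinate $x$ at $D_i$, the proof of \autoref{olsson875} identifies $x\partial_x \in \mathcal{T}_{X/\Theta^n}$ with the generator $t\partial_t \in \mathcal{T}_{\pt/B\G_m}$ under the Cartesian square. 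A Leibniz computation shows $\nabla_{x\partial_x}(x\mathcal{E})\subseteq x\mathcal{E}$, so $\nabla_{x\partial_x}$ descends to an endomorphism of $\mathcal{E}|_{D_i} = \mathcal{E}/x\mathcal{E}$; this descent is the action of $t\partial_t$. Comparing with the theorem's formula, since $\Omega^1_X(\log D)$ is generated by $dx/x$ near $D_i$ and $\res$ sends $dx/x\mapsto 1$, the composition $(\id_{\mathcal{E}}\otimes\res)\circ\nabla|_{D_i}$ is exactly the descent of $\nabla_{x\partial_x}$.

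The hard part will be rigorously unwinding \autoref{classicalcomparison} to identify the pullback $\phi_i^{\hat{\dR},*}$ with the ``restrict and descend the $x\partial_x$-action'' operation on logarithmic connections. The essential input is the compatibility of the Lie algebroid $\mathcal{T}_{X/\Theta^n}$ with the Cartesian square, which is essentially the content of Olsson's local computation in \autoref{olsson875}. Once this is in hand, both the identification of the underlying sheaf and the explicit residue formula are formal chases through the equivalences.
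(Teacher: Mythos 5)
Your proof is correct and follows the same overall structure as the paper's argument, but it fills in substantially more detail where the paper is laconic. The paper dispatches the whole theorem in two clauses ("The Cartesian diagram is immediate; the claim about restriction is because in general, $\ast$-pullbacks of morphisms between relative de Rham stacks coming from Cartesian diagrams are [computed] via restrictions"). You supply what those clauses actually require: the Cartesian square is verified concretely via Olsson's line-bundle classification of maps to $\A^1/\G_m$ (\autoref{prop855}), and the restriction claim is reduced to the $\mathcal{D}$-module description of \autoref{classicalcomparison} plus the identification of $x\partial_x$ with $t\partial_t$ and a Leibniz check that $\nabla_{x\partial_x}$ preserves $x\mathcal{E}$. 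Your route through the classical comparison is the natural way to make the paper's general assertion precise in this case, and identifying the underlying sheaf via the commuting square of $\pi$'s is the right move; no gaps.

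One small caution for the write-up: the fiber product verification should be phrased for an arbitrary test $T$-point (as you implicitly do), since the data of a map $T\to B\G_m$ plus an isomorphism of pulled-back line bundles is a contractible groupoid once $T\to X$ is constrained to land in $D_i$; this is the reason the fiber product is exactly $\pt$ and not merely has the same underlying topological space. You should also be explicit that the identification $x\partial_x\mapsto t\partial_t$ under the pullback of $\mathcal{T}_{X/\Theta^n}$ to $\pt$ is checked on the level of underlying vector bundles — the Lie algebroid structure (in particular the anchor) does not pull back, but this is harmless because the $\mathcal{D}$-module pullback is defined at the level of sheaves equipped with a non-$\OO$-linear action, not at the level of Lie algebroids.
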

\begin{remark}
		The map \(\res_i : \mathcal{E}|_{D_i} \to \mathcal{E}|_{D_i}\) is automatically \(\OO_{D_i}\)-linear since \(\nabla\) is \(k\)-linear and \(\OO_{D_i} = k\); this automatic linearity of the restriction of \(\nabla\) is unique to dimension 1, and makes residues especially easy to geometrize for curves.
\end{remark}
\begin{proof}
		The Cartesian diagram is immediate; the claim about restriction is because in general, \(\ast\)-pullbacks of morphisms between relative de Rham stacks coming from Cartesian diagrams are commuted via restrictions. 
\end{proof}

\subsubsection{Residue and Cartier descent}

Take some \(\mathcal{V} \in \Qcoh(X^{(1)}).\) It's pullback \(\nu^*\mathcal{V} \in \Qcoh(X^{\dR}(\log D))\) is some vector bundle with logarithmic connection; and in particular, along each component \(D_i\) of \(D,\) we can form the residue of \(\nu^*\mathcal{V}\) along \(D_i.\) 

Is there an easy way to read off this residue directly from \(\mathcal{V}\)? The answer is, fortunately, yes; it will turn out, as we shall see, that the parabolic structure of \(\mathcal{V}\) encodes the residue of \(\nu^*\mathcal{V}.\) 

Each inclusion \(D_i \inclusion X\) gives rise to a commutative diagram 
\begin{center}
		\begin{tikzcd}
				(\pt/B\G_m)^{\dR} \ar[r] \ar[d] & X^{\dR}(\log D) \ar[d] \\
				B\mu_p \ar[r] & X^{(1)}
		\end{tikzcd}
\end{center}
of relative de Rham stacks. As the residue is obtained by pulling back along \((\pt/B\G_m)^{\dR} \to X^{\dR}(\log D),\) using this commutative diagram we see that the residue of \(\nu^*\mathcal{V}\) can be computed by first pulling back \(\mathcal{V}\) to \(B\mu_p.\)

\begin{prop} \label{thm1748}
		Let \(\mathcal{V} \in \Qcoh(X^{(1)}).\) Then \(\nu^*\mathcal{V}\) has residue zero along each \(D_i\) if and only if \(\mathcal{V}\) has trivial parabolic structure (that is, \(\mathcal{V}\) lies in the essential image of \(\pi^* : \Qcoh(X') \to \Qcoh(X^{(1)})\)).
\end{prop}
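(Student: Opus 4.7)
The plan is to reduce the claim to a direct comparison of $\Qcoh(B\mu_p)$ and $\Qcoh((\pt/B\G_m)^{\hat{\dR}})$, using the commutative diagram set up just before the proposition.

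First, I would unpack the commutative diagram
\begin{center}
\begin{tikzcd}
(\pt/B\G_m)^{\hat{\dR}} \ar[r] \ar[d, "\nu"] & X^{\hat{\dR}}(\log D) \ar[d, "\nu"] \\
B\mu_p \ar[r, "\alpha_i"] & X^{(1)}
\end{tikzcd}
\end{center}
attached to each $D_i$. The bottom map $\alpha_i$ factors through the $\mu_p$-gerbe $\mathcal{D}_i := D_i' \times_{X'} X^{(1)} \subseteq X^{(1)}$ sitting over $D_i' \in X'$. The residue of $\nu^*\mathcal{V}$ along $D_i$ is obtained by composing $\nu^*$ with the pullback along the top horizontal map, so by commutativity of the diagram it equals the composition $\alpha_i^*$ followed by $\nu^*$ for $(\pt/B\G_m)^{\hat{\dR}} \to B\mu_p$. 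In particular, the residue along $D_i$ only depends on $\alpha_i^*\mathcal{V} \in \Qcoh(B\mu_p)$.

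Second, I would use Borne's description (reviewed in the appendix) to translate parabolic-structure-theoretic conditions on $\mathcal{V}$ into conditions on the restrictions $\alpha_i^*\mathcal{V}$. Identifying $\Qcoh(B\mu_p)$ with $\Z/p$-graded vector spaces, the statement that $\mathcal{V}$ has trivial parabolic structure at $D_i$ (i.e.\ lies in the essential image of $\pi^*$) becomes the condition that $\alpha_i^*\mathcal{V}$ is concentrated in grading $0 \in \Z/p$. The proposition will then be proved if I verify, for each $i$: a $\mu_p$-representation $W \in \Qcoh(B\mu_p)$ is concentrated in grading $0$ if and only if its pullback along $(\pt/B\G_m)^{\hat{\dR}} \to B\mu_p$ has zero residue.

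Third, I would compute this last pullback functor $\Qcoh(B\mu_p) \to \Qcoh((\pt/B\G_m)^{\hat{\dR}})$ directly. It is induced by the surjection $\G_m^{\#,\wedge} \to \mu_p$ of Lemma \ref{prop593}. Under the identification $\Qcoh((\pt/B\G_m)^{\hat{\dR}}) \simeq \{(V,\res)\}$ from \autoref{cor1587}, a graded vector space $W = \bigoplus_{j \in \Z/p} W_j$ pulls back to $(W, \res)$ with $\res$ acting on $W_j$ as multiplication by $j \in \F_p \hookrightarrow k$. This can be extracted from Cartier duality applied to the sequence $0 \to \G_a^{\#,\wedge} \to \G_m^{\#,\wedge} \to \mu_p \to 0$, whose Cartier dual is the Artin--Schreier sequence $0 \to \Z/p \to \G_a \xrightarrow{F-\id} \G_a \to 0$ of Warning \ref{warn1617}; alternatively, one checks it directly on the one-dimensional characters $\chi_j$ of $\mu_p$. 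Given this description, $\res = 0$ is equivalent to $W_j = 0$ for all $j \neq 0$, as required.

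The main obstacle is the normalization check $\res|_{W_j} = j$ in the third step, which requires care with Cartier-duality conventions. A useful consistency check: since any $\nu^*\mathcal{V}$ automatically has $p$-curvature zero by Cartier descent (\autoref{cartierdescent}), \autoref{lem1662} gives $\res^p - \res = 0$, forcing $\res$ to be semisimple with eigenvalues in $\F_p$, exactly matching the grading interpretation above.
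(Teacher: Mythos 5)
Your proof is correct (for vector bundles, the case that actually matters in the applications), but it takes a genuinely different route than the paper's. The paper's proof stays global: it uses the commutative square relating $X^{\dR}(\log D) \to X^{\dR}$ over $X^{(1)} \to X'$, identifies ``zero residue'' with ``lies in the image of $(\pi')^* : \Qcoh(X^{\dR}) \to \Qcoh(X^{\dR}(\log D))$,'' and then applies non-logarithmic Cartier descent on $X^{\dR}$ to produce the descent $\mathcal{V}'$ on $X'$; fully faithfulness of $\nu^*$ closes the argument. You instead restrict to each $D_i$, reduce everything to the comparison $B\mu_p$ versus $(\pt/B\G_m)^{\hat{\dR}} = B\G_m^{\#,\wedge}$, and compute the pullback functor explicitly via Cartier duality applied to $0 \to \G_a^{\#,\wedge} \to \G_m^{\#,\wedge} \to \mu_p \to 0$ (so $\res|_{W_j} = j$). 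Your approach buys concreteness and makes the Artin--Schreier/$\F_p$-eigenvalue picture explicit, but it silently relies on a local-to-global statement that you only gesture at: that ``$\mathcal{V}$ lies in the essential image of $\pi^*$'' is equivalent to the condition that $\alpha_i^*\mathcal{V} \in \Qcoh(B\mu_p)$ is concentrated in grading $0$ for every $i$. For vector bundles this follows from a Nakayama-type argument over the completion at each $D_i'$, but it fails for general (non-flat) quasicoherent sheaves under underived pullback (e.g.\ locally $k[y]/(y)$ on the root stack $[\Spec k[y]/\mu_p]$ restricts to degree $0$ but is not pulled back from $k[x]$); either restrict to vector bundles explicitly, or use derived restriction and track the higher $\operatorname{Tor}$'s. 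The paper's route via Cartier descent sidesteps this gluing step entirely, which is the main thing you lose by going local.
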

\begin{proof}
		This is proven essentially by staring at the commutative diagram
		\begin{center}
				\begin{tikzcd}
						X^{\dR}(\log D) \ar[r, "\pi'"] \ar[d, "\nu"] & X^{\dR} \ar[d, "\nu'"] \\
						X^{(1)} \ar[r, "\pi"] & X'.
				\end{tikzcd}
		\end{center}

		Indeed, \(\nu^*\mathcal{V}\) has zero residue along each \(D_i\) if and only if it is a usual connection (that is, doesn't have any logarithmic poles) if and only if it lies in the essential image of \((\pi')^* : \Qcoh(X^{\dR}) \to \Qcoh(X^{\dR}(\log D)).\) 

		Thus, the commuativity of our diagram immediately implies that if \(\mathcal{V}\) has trivial parabolic structure, then \(\nu^*\mathcal{V}\) has residue zero along each \(D_i.\) And conversely, if \(\nu^*\mathcal{V}\) has residue zero, then we can write \(\nu^*\mathcal{V} = (\pi')^*(\mathcal{E}, \nabla)\); but \(\nabla\) will have \(p\)-curvature zero since \(\nu^*\mathcal{V}\) does, and so by Cartier descent we deduce \((\mathcal{E}, \nabla) = (\nu')^*(\mathcal{V}')\) for some \(\mathcal{V}'.\) Thus
		\[\nu^*\mathcal{V} \heq \nu^*(\pi^*\mathcal{V}'),\]
		which by fully faithfulness of \(\nu^*\) implies \(\mathcal{V} \heq \pi^*\mathcal{V}',\) as desired.
\end{proof}

\section{Logarithmic non-abelian Hodge theory} \label{lognonsec}

We now prove our logarithmic non-abelian Hodge theorem for curves in positive characteristic. We begin by fixing our setup. 

Fix \(k\) an \emph{algebraically closed} field of positive characteristic \(p,\) and \(X\) a smooth proper curve over \(k.\) Take \(D\) a reduced effective divisor, with components \(D_1, ..., D_n \in X(k)\) (so these \(D_i\) are just distinct points). 

We set \(S := (\A^1_k/\G_{m,k})^n,\) and let \(f : X \to S\) be the morphism classifying \(D\) (see \autoref{olsson875}).

We write \(X^{(1)}\) for the Frobenius twist of \(X\) \emph{relative to \(S\)}, and \(X'\) for the Frobenius twist of \(X\) relative to \(k.\) Recall from \autoref{prop1433} that \(X^{(1)}\) is the \((p, p, ..., p)\)-multiroot stack of \(X',\) along the points \(D_1', ..., D_n'.\) 

We also fix a positive integer \(r\); this will be the rank of the vector bundles we consider.
\subsection{Stating the etale local equivalence}

The non-abelian Hodge theorem concerns two moduli stacks: a moduli stack of vector bundles with connection, and a moduli stack of Higgs fields. We now introduce both. 

\subsubsection{The de Rham side} In our situation, \(\nu_{X/S,\wedge}^*\Omega_{X^{(1)}/S}^1\) is a line bundle on \((X/S)^{\hat{\dR}}.\) Applying \autoref{hitchingeneralities} to the \(k\)-stack \((X/S)^{\hat{\dR}}\) and the line bundle \(\nu_{X/S,\wedge}^*\Omega_{X^{(1)}/S}^1\), we get a Hitchin morphism
\begin{equation} \label{eq609} h : \mathcal{M}((X/S)^{\hat{\dR}}, r, \nu_{X/S,\wedge}^*\Omega_{X^{(1)}/S}^1) \to \mathcal{H}((X/S)^{\hat{\dR}}, r, \nu_{X/S,\wedge}^*\Omega_{X^{(1)}/S}^1).\end{equation}

Thanks to \autoref{eq557}, the Hitchin base of \autoref{eq609} is actually the same as the Hitchin base \(\mathcal{H}(X^{(1)}, r, \Omega_{X^{(1)}/S}^1).\) We will now, following Groechenig, label this Hitchin base as
\[\mathcal{A}^{(1)} := \mathcal{H}(X^{(1)}, r, \Omega_{X^{(1)}/S}^1).\] 

The moduli stack \(\mathcal{M}((X/S)^{\hat{\dR}}, r, \nu_{X/S,\wedge}^*\Omega_{X^{(1)}/S}^1)\) is still a little big, though. We define \(\mathcal{M}_{\dR}(X/S)\) the moduli stack of rank \(r\) vector bundles on \((X/S)^{\hat{\dR}}.\) By \autoref{pcurvmorphism}, taking \(p\)-curvature of a vector bundle with flat connection gives us a natural morphism of stacks
\[\psi_p : \mathcal{M}_{\dR}(X/S) \to \mathcal{M}((X/S)^{\hat{\dR}}, r, \nu_{X/S,\wedge}^*\Omega_{X^{(1)}/S}^1)\]
We define
\[h_{\dR} : \mathcal{M}_{\dR}(X/S) \to \mathcal{A}^{(1)}\]
to be the composition \(h \circ \psi_p.\) 

\begin{remark}
		A \(T\)-point of \(\mathcal{M}_{\dR}(X/S)\) is, by definition, a rank \(r\) vector bundle on 
		\[(X/S)^{\hat{\dR}} \times_k T.\]
		We can rewrite this in a convenient way. As \(k\) is a field, \(T \to \Spec(k)\) is automatically flat; hence \(S_T := S \times_k T\) is a flat \(S\)-scheme, and so the Cartesian diagram
		\begin{center}
				\begin{tikzcd}
				X_T \ar[r] \ar[d] & X \ar[d] \\
				S_T \ar[r] & S
		\end{tikzcd}
		\end{center}
		is Cartesian even in the derived sense; as formation of de Rham stacks commutes with derived limits, we deduce that \((X_T)^{\hat{\dR}} \heq X^{\hat{\dR}} \times_{S^{\hat{\dR}}} (S_T)^{\hat{\dR}}.\) Thus
		\begin{align*}
				(X_T/S_T)^{\hat{\dR}} &:= (X_T)^{\hat{\dR}} \times_{(S_T)^{\hat{\dR}}} S_T \\
									  &\heq (X^{\hat{\dR}} \times_{S^{\hat{\dR}}} (S_T)^{\hat{\dR}}) \times_{(S_T)^{\hat{\dR}}} S_T \\
									  &\heq X^{\hat{\dR}} \times_{S^{\hat{\dR}}} (S \times_k T) \\
									  &\heq (X^{\hat{\dR}} \times_{S^{\hat{\dR}}} S) \times_k T \\
									  &\heq (X/S)^{\hat{\dR}} \times_k T.
		\end{align*}

		Thus \(T\)-points of \(\mathcal{M}_{\dR}(X/S)\) are just the rank \(n\) vector bundles on \((X_T/S_T)^{\hat{\dR}}.\) 
\end{remark}

\subsubsection{The Dolbeault side}

The Dolbeault side is, fortunately, much quicker to define: we put 
\[\mathcal{M}_{\Dol}(X^{(1)}) := \mathcal{M}(X^{(1)}, r, \Omega_{X^{(1)}/S}^1).\]
Applying \autoref{hitchingeneralities} in this situation gives us a Hitchin morphism
\[h_{\Dol} : \mathcal{M}_{\Dol}(X^{(1)}) \to \mathcal{A}^{(1)}.\] 

\subsubsection{The equivalence statement}

We can now state the first form of our non-abelian Hodge theorem.
\begin{theorem}[Logarithmic non-abelian Hodge theory] \label{thm619} 
		There is an etale cover \(\mathcal{U} \to \mathcal{A}^{(1)}\) such that, after base changing to \(\mathcal{U},\) there is an isomorphism of \(\mathcal{U}\)-stacks
		\[\mathcal{M}_{\dR}(X/S) \times_{\mathcal{A}^{(1)}} \mathcal{U} \heq \mathcal{M}_{\Dol}(X^{(1)}) \times_{\mathcal{A}^{(1)}} \mathcal{U}.\] 
\end{theorem}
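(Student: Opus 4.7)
My plan is to adapt Groechenig's \cite{groechenig} argument nearly verbatim, substituting the logarithmic Azumaya property of \autoref{logazumaya} (namely that $F_{X/S,*}\mathcal{D}_X(\log D)$ is Azumaya over its center $\OO_{\mathbb{V}(\omega_{X^{(1)}/S})}$) for Bezrukavnikov's classical Azumaya property. Because $X^{(1)}$ is here the \emph{relative} Frobenius twist (a multiroot stack over $X'$, by \autoref{prop1433}), this substitution automatically encodes the parabolic data that prevented a direct comparison between $\mathcal{M}_{\dR}$ and $\mathcal{M}_{\Dol}(X', D', r)$.

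First I would reinterpret both sides as moduli of sheaves on spectral curves. On the Dolbeault side, \autoref{bnr} applied to $(X^{(1)}, \Omega^1_{X^{(1)}/S})$ identifies the fiber of $h_{\Dol}$ over a $T$-point $a \in \mathcal{A}^{(1)}(T)$ with the groupoid of torsion-free rank $1$ sheaves on the spectral curve $Y_a \subseteq \mathbb{V}(\omega_{X^{(1)}/S, T})$. On the de Rham side, \autoref{classicalcomparison} turns an object of $\mathcal{M}_{\dR}(X/S)$ into a rank $r$ quasicoherent $\mathcal{D}_X(\log D)$-module; pushing forward along $F_{X/S}$ and combining \autoref{pcurvmorphism} with the Cayley--Hamilton argument of \autoref{cayleyhamilton} applied to $\psi_p$ viewed as an $\omega$-twisted linear endomorphism on $(X/S)^{\hat{\dR}}$, one shows that the resulting module over $\mathcal{B} := F_{X/S,*}\mathcal{D}_X(\log D)$ is scheme-theoretically supported on $Y_a$. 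A rank count then identifies the fiber of $h_{\dR}$ over $a$ with the moduli of $\mathcal{B}|_{Y_a}$-modules which are torsion-free of rank $1$ as $\OO_{Y_a}$-modules.

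For the etale cover, let $\mathcal{Y} \to \mathcal{A}^{(1)}$ denote the universal spectral curve. The restriction $\mathcal{B}|_{\mathcal{Y}}$ is still an Azumaya algebra, and its Brauer class is killed by a finite etale cover $\mathcal{U} \to \mathcal{A}^{(1)}$, obtained for instance by passing to the $\operatorname{PGL}$-torsor parameterizing local splittings. Over $\mathcal{U}$ we may pick a locally free sheaf $\mathcal{P}$ on $\mathcal{Y}_{\mathcal{U}} := \mathcal{Y} \times_{\mathcal{A}^{(1)}} \mathcal{U}$ with $\mathcal{B}|_{\mathcal{Y}_{\mathcal{U}}} \heq \underline{\operatorname{End}}(\mathcal{P})$; tensoring with $\mathcal{P}^{\vee}$ furnishes a fiberwise Morita equivalence between torsion-free rank $1$ $\mathcal{B}|_{Y_a}$-modules and torsion-free rank $1$ $\OO_{Y_a}$-modules. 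Combined with the Dolbeault-side BNR identification, this yields the required isomorphism of $\mathcal{U}$-stacks.

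The hard part will be making the scheme-theoretic support claim precise in families: one must verify that $\psi_p$, viewed as a morphism on $(X/S)^{\hat{\dR}}$ rather than on $X$, has characteristic polynomial \emph{exactly} the image of $a$ under the isomorphism $H^0((X/S)^{\hat{\dR}}, \omega^{\otimes i}) \heq H^0(X^{(1)}, (\Omega^1_{X^{(1)}/S})^{\otimes i})$ of \autoref{charpolypsip}, so that the support of the corresponding $\mathcal{B}$-module is neither a thickening nor a proper subscheme of $Y_a$, and that the torsion-free rank $1$ condition under this Azumaya Morita equivalence matches exactly the condition that the original $\mathcal{D}$-module come from a rank $r$ vector bundle on $X$. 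Once these compatibilities are in place, the remainder is a direct transcription of Groechenig's descent argument into the logarithmic setting.
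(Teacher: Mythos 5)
The high-level architecture of your proposal -- interpret both sides via spectral curves, split the Azumaya algebra of logarithmic differential operators on the universal spectral curve after an \'etale cover, then Morita--transfer -- is the correct Groechenig-style strategy and is what the paper does. But there is a genuine gap at the pivotal step: your assertion that the Brauer class of $\mathcal{B}|_{\mathcal{Y}}$ ``is killed by a finite \'etale cover $\mathcal{U} \to \mathcal{A}^{(1)}$, obtained for instance by passing to the $\operatorname{PGL}$-torsor parameterizing local splittings.'' That object (the stack $\mathcal{S}$ whose $T$-points are splittings of $\mathcal{B}|_{\mathcal{Y}_T}$) lives over $\mathcal{A}^{(1)}$, but there is no a priori reason it is even surjective, let alone an \'etale cover. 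Concretely, two non-formal ingredients are needed and neither is supplied: \emph{(a)} a Tsen-type theorem for the \emph{stacky} spectral curves $C^{(1)}_a$ (multiroot stacks over singular schemey spectral curves), showing $\Br(C^{(1)}_a) = 0$ for every $k$-point $a$ so that $\mathcal{S} \to \mathcal{A}^{(1)}$ hits every geometric point --- the paper proves this as \autoref{tsen}, using \autoref{lem808} and \autoref{lem809} (which in turn rests on Achenjang's exact sequence for root stacks); and \emph{(b)} smoothness of $\mathcal{S} \to \mathcal{A}^{(1)}$, so that pointwise surjectivity upgrades to \'etale-local sections --- this is \autoref{smooth1243}, which needs the deformation-theoretic \autoref{lem2056} on lifting Azumaya splittings over square-zero extensions in dimension $1$. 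Without these two items, your cover $\mathcal{U}$ may be empty over parts of $\mathcal{A}^{(1)}$, and the proof does not close. (As an aside, the paper does not claim the cover is \emph{finite} \'etale, and neither should you; all that is needed, and all that the torsor argument gives, is an \'etale cover.)

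Two smaller points. First, the de Rham BNR step is sketched too loosely: pushing forward a rank-$r$ connection with $p$-curvature characteristic polynomial $a$ along $F_{X/S}$ produces a rank-$pr$ sheaf whose central support is $Y_{a^p}$, not $Y_a$; the rank-$1$ description lives on $Y_{a^p}$ while the rank-$p$ description with $\mathcal{D}_a$-action lives on $Y_a$ (and a rank-$p^2$ Azumaya algebra cannot act on a torsion-free rank-$1$ sheaf). The exact bookkeeping is \autoref{lem1507}, and the inverse direction needs \autoref{lem1726}, which shows a torsion-free $F_*\mathcal{D}$-module on the root stack admits a unique compatible parabolic structure --- this is where the multiroot-stack replacement of $X'$ actually earns its keep. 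Second, a bare Morita equivalence over $\mathcal{U}$ gives the \'etale-local statement \autoref{thm619}, but it conceals the dependence on the chosen splitting; the paper records this dependence as a $\mathcal{P}$-torsor structure on $\mathcal{S}$ (\autoref{thm1436}) and proves the stronger global statement $\mathcal{S} \times^{\mathcal{P}} \mathcal{M}_{\Dol} \heq \mathcal{M}_{\dR}$ (\autoref{thm1447}), from which \autoref{thm619} is a formal corollary. If you only want \autoref{thm619} you can skip the torsor formalism, but you still cannot skip Tsen and the smoothness of $\mathcal{S}$.
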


Actually, \autoref{thm619} will follow from a slightly more precise statement; but to make this more precise comparison, we first need to introduce the \emph{stack of splittings} of the Azumaya algebra \(F_{X/S,*}\mathcal{D}_{X/S}.\) In fact, most of the proof of \autoref{thm619} is really just getting in to a position where the objects of the precise statement are defined; once they are, proving the more precise statement is simple. 

\begin{remark}
		Our proof of \autoref{thm619} is basically identical to Groechenig's \cite{groechenig} proof in the non-logarithmic setting. We view this as an advantage to our approach, as it reveals that, when phrased appropriately, Groechenig's proof also covers logarithmic non-abelian Hodge theory; more generally, it seems that most arguments involving de Rham stacks should work in the relative case without much additional input, allowing one to use Olsson's ideas to get logarithmic versions of results.
\end{remark}

\subsection{The stack of splittings} \label{sec1205} 

We will now study the stack of \emph{splittings} of the Azumaya algebra \(F_{X/S,*}\mathcal{D}_{X/S}.\) 

\subsubsection{Splittings on spectral curves}

\begin{notn} \label{notn1209} 
		Let \(T\) be a \(k\)-scheme, and \(a : T \to \mathcal{A}^{(1)}\) a \(T\)-point of the Hitchin base. We will denote by \(C^{(1)}_T\) the spectral curve associated to \(a\) (see \autoref{cayleyhamilton}). 

		Let \(\mathcal{D} := F_{X/S,*}\mathcal{D}_{X/S}\) be the Azumaya algebra of differential operators (see \autoref{logazumaya}), thought of as an algebra on \(\mathbb{V}(\omega_{X^{(1)}/S}).\) We write \(\mathcal{D}_a := (p^*\mathcal{D})|_{C^{(1)}_T}\) where \(p : \mathbb{V}(\omega_{X^{(1)}/S}) \times_k T \to \mathbb{V}(\omega_{X^{(1)}/S})\) is the canonical projection. Note that \(\mathcal{D}_T\) is then an Azumaya algebra on \(C^{(1)}_T.\)
\end{notn}

\begin{defn}
		We define a \(\mathcal{A}^{(1)}\)-stack \(\mathcal{S},\) called the \emph{stack of splittings}, as the functor sending \(T \to \mathcal{A}^{(1)}\) to \emph{relative splittings} of the Azumaya algebra \(F_{X/S,*}\mathcal{D}_{X/S}\) over the \(T\)-family of spectral curves \(C^{(1)}_T\); that is, the groupoid of \(T\)-points of \(\mathcal{S}\) is the groupoid of pairs \((\mathcal{E}, \phi)\) where \(\mathcal{E}\) is a vector bundle on \(C^{(1)}_T\) and \(\phi : \underline{\End}(\mathcal{E}) \to \mathcal{D}_a\) is an isomorphism (see \autoref{notn1209} for the meaning of \(\mathcal{D}_a\)). 
\end{defn}
\begin{remark}
		We will show that \(\mathcal{S}\) is a stack for the fppf topology, and in fact an algebraic stack, in \autoref{algcor1911}.
\end{remark}

\begin{defn}
		We let \(\mathcal{P}\) denote the \(\mathcal{A}^{(1)}\)-stack whose functor of points sends \(T \to \mathcal{A}^{(1)}\) to the groupoid of line bundles on \(C^{(1)}_T.\) 
\end{defn} 
\begin{remark} \label{rem1159} 
		This \(\mathcal{P}\) is of course the relative Picard stack of the universal spectral curve over \(\mathcal{A}^{(1)}.\) 
\end{remark}

\begin{lemma}
		The functor \(\mathcal{P}\) is an algebraic stack (and in particular is a stack; that is, its functor of points obeys fppf descent). 
\end{lemma}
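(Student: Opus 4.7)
The plan is to realize $\mathcal{P}$ as the relative Picard stack of the universal spectral curve over $\mathcal{A}^{(1)}$ and then apply the standard algebraicity theorem for Picard stacks of proper, flat families. First I would build the universal spectral curve $\pi : \mathcal{C} \to \mathcal{A}^{(1)}$ by specializing \autoref{notn1209} to $T = \mathcal{A}^{(1)}$ with $a$ the identity; concretely, $\mathcal{C}$ is the closed substack of $\mathbb{V}(\omega_{X^{(1)}/S}) \times_k \mathcal{A}^{(1)}$ cut out by the tautological characteristic polynomial $\lambda^r + a_1 \lambda^{r-1} + \cdots + a_r$ with the $a_i$ the universal sections arising from the identification $\mathcal{A}^{(1)} = \bigoplus_i \Spec \Sym^\bullet_k H^0(X^{(1)}, (\Omega^1_{X^{(1)}/S})^{\otimes i})^\vee$. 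By base change of the defining equation, for any $T \to \mathcal{A}^{(1)}$ one has $C^{(1)}_T = \mathcal{C} \times_{\mathcal{A}^{(1)}} T$, so $\mathcal{P}$ is just the relative Picard stack $\underline{\Pic}(\mathcal{C}/\mathcal{A}^{(1)})$.

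Next I would verify that $\pi$ is proper, flat, and of finite presentation. Working locally on $X^{(1)}$ where $\omega_{X^{(1)}/S}$ is trivial, the finite map $\mathcal{C} \to X^{(1)} \times_k \mathcal{A}^{(1)}$ exhibits $\pi_* \mathcal{O}_{\mathcal{C}}$ as a free rank-$r$ sheaf with basis $1, \lambda, \ldots, \lambda^{r-1}$. Since $X^{(1)} \to \Spec(k)$ is proper (as $X^{(1)}$ is the multi-root stack over the proper curve $X'$) and $\mathcal{A}^{(1)}$ is affine of finite type over $k$, the composite $\mathcal{C} \to X^{(1)} \times_k \mathcal{A}^{(1)} \to \mathcal{A}^{(1)}$ inherits the properness, flatness, and finite presentation.

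Third, I would invoke the algebraicity of relative Picard stacks for proper, flat, finitely-presented morphisms of algebraic stacks: for any such $f : \mathcal{X} \to \mathcal{Y}$, the prestack $\underline{\Pic}(\mathcal{X}/\mathcal{Y})$ is an algebraic stack locally of finite presentation over $\mathcal{Y}$. This is a mild extension of Artin's classical theorem for schemes; it can be deduced from the Stacks Project treatment of Picard stacks, or verified directly by Artin's representability criterion, using that line bundles on a proper flat family have a controllable cotangent complex ($R f_* \mathcal{O}_{\mathcal{X}}$ in degrees $\leq 1$) and automatically descend in the fppf topology. Applying this to our $\pi$ gives the algebraicity of $\mathcal{P}$, and in particular the claimed fppf descent of its functor of points (which also follows immediately and directly from fppf descent for line bundles).

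The main obstacle is that the source $X^{(1)}$, and hence $\mathcal{C}$, is not a scheme but an algebraic stack — and in fact a non-tame one, since in characteristic $p$ the $p$-th multi-root stack has infinitesimal inertia groups $\mu_p$. I would handle this by observing that no step in the proof of algebraicity of the Picard stack requires tameness: it uses only properness, flatness, and finite presentation, together with fppf-local existence of line bundles. If one wishes to reduce strictly to results for algebraic spaces, one can instead pass to a smooth atlas of $\mathcal{C}$ and compute $\mathcal{P}$ as an appropriate quotient, but this is avoidable and the stack-level statement is the one actually needed downstream.
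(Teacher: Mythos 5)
Your argument is correct and takes the same route as the paper: identify $\mathcal{P}$ with the relative Picard stack of the universal spectral curve over $\mathcal{A}^{(1)}$ and invoke algebraicity of relative Picard stacks for proper, flat, finitely-presented morphisms of algebraic stacks. The paper simply cites Lieblich's Lemma 2.3.2 \cite{lieblich} for this last step, whereas you unpack the hypotheses and sketch the representability argument in more detail, including a correct observation that tameness of the multi-root stack plays no role.
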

\begin{proof}
		This \(\mathcal{P}\) is the relative Picard stack of the universal spectral curve on the scheme \(\mathcal{A}^{(1)}\) (see \autoref{rem1159}); this universal spectral curve is an algebraic stack (as it is a root stack over a scheme), and so Lieblich's \cite{lieblich} Lemma 2.3.2 implies \(\mathcal{P}\) is an algebraic stack. 
\end{proof}

There is a natural action of \(\mathcal{P}\) on \(\mathcal{S}\): if \(\LL\) is a line bundle on \(C^{(1)}_T\) and \((\mathcal{E}, \phi) \in \mathcal{S}(T),\) then there is a canonical equivalence \(c : \underline{\End}(\mathcal{E} \otimes \LL) \heq \underline{\End}(\mathcal{E}),\) so we can define \(\LL \cdot (\mathcal{E}, \phi) := (\mathcal{E} \otimes \LL, \phi \circ c) \in \mathcal{S}(T).\) We now prove that this action makes \(\mathcal{S}\) a \(\mathcal{P}\)-torsor. 

First we check simple transitivity. 
\begin{lemma} \label{quasitorsor} 
		The action of \(\mathcal{P}(T)\) on \(\mathcal{S}(T)\) is simply transitive for any \(T \to \mathcal{A}^{(1)}.\) 
\end{lemma}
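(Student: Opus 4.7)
The plan is to fix a basepoint $s = (\mathcal{E}, \phi) \in \mathcal{S}(T)$ and construct an explicit inverse to the action functor $(-\cdot s) : \mathcal{P}(T) \to \mathcal{S}(T)$, $\mathcal{L} \mapsto \mathcal{L} \cdot s$. Given another splitting $s' = (\mathcal{E}', \phi') \in \mathcal{S}(T)$, I would set
\[
\Lambda(s') := \underline{\Hom}_{\mathcal{D}_a}(\mathcal{E}, \mathcal{E}'),
\]
where each of $\mathcal{E}$ and $\mathcal{E}'$ is regarded as a left $\mathcal{D}_a$-module by transporting the tautological action of its endomorphism sheaf along $\phi$, $\phi'$. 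This construction is manifestly functorial in $s'$, so it defines a candidate inverse $\Lambda : \mathcal{S}(T) \to \mathcal{P}(T)$.

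The first thing to check is that $\Lambda(s')$ really is a line bundle on $C^{(1)}_T$. Being a line bundle is etale-local, and since $\mathcal{D}_a$ is Azumaya of some degree $d$, I would pass to an etale cover on which $\mathcal{D}_a \heq M_d(\OO)$. On such a cover, any splitting of the trivialized matrix algebra is Zariski-locally isomorphic to the standard module $\OO^{\oplus d}$ (a rank $d$ vector bundle trivializes Zariski-locally, and Skolem--Noether lets one further conjugate the splitting isomorphism to the identity), so both $\mathcal{E}$ and $\mathcal{E}'$ become identified with $\OO^{\oplus d}$. The direct computation $\underline{\Hom}_{M_d(\OO)}(\OO^{\oplus d}, \OO^{\oplus d}) \heq \OO$ (scalar matrices are the commutant of the standard module) then gives the claim.

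Next I would exhibit the unit and counit of the adjunction $(-\cdot s) \dashv \Lambda$. For the counit, the evaluation pairing $\mathcal{E} \otimes_{\OO} \Lambda(s') \to \mathcal{E}'$ is $\mathcal{D}_a$-linear, and under the local identifications above it reduces to the tautological isomorphism $\OO^{\oplus d} \otimes \OO \heq \OO^{\oplus d}$, hence is globally an isomorphism. The induced map on endomorphism algebras is the canonical isomorphism $c$ appearing in the definition of the action, so this upgrades to an isomorphism $\Lambda(s') \cdot s \heq s'$ in $\mathcal{S}(T)$. For the unit, writing $p \cdot s = (\mathcal{E} \otimes p, \phi \circ c)$ I would compute
\[
\Lambda(p \cdot s) = \underline{\Hom}_{\mathcal{D}_a}(\mathcal{E}, \mathcal{E} \otimes p) \heq \underline{\End}_{\mathcal{D}_a}(\mathcal{E}) \otimes_{\OO} p \heq p,
\]
using again that the commutant of the $\mathcal{D}_a$-action on the splitting $\mathcal{E}$ is $\OO$.

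The main obstacle is less conceptual than clerical: a splitting includes the specific isomorphism $\phi$, not merely a Morita equivalence class, so every natural identification above must be checked to be compatible with the chosen $\phi$'s rather than just with the underlying vector bundles. Once the local matrix model is fixed, however, all compatibilities reduce to standard Morita theory for $M_d(\OO)$, and the simple transitivity of the action follows.
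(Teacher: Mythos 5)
Your proposal is essentially the paper's proof: both approaches hinge on the sheaf $\underline{\Hom}_{\mathcal{D}_a}(\mathcal{E},\mathcal{E}')$ being a line bundle and the evaluation map giving the equivariance. Where you differ is in establishing faithfulness: the paper deduces trivial stabilizers by taking determinants of $\mathcal{E}\otimes\LL\heq\mathcal{E}$, whereas you directly compute the unit $\underline{\End}_{\mathcal{D}_a}(\mathcal{E})\heq\OO$, which is arguably sharper — the determinant step as written in the paper would only yield $\LL^{\otimes\operatorname{rk}\mathcal{E}}\heq\OO$ (note $\det(\mathcal{E}\otimes\LL)=\det(\mathcal{E})\otimes\LL^{\otimes\operatorname{rk}\mathcal{E}}$), and in characteristic $p$ with $\operatorname{rk}\mathcal{E}=p$ this does not on its own force $\LL\heq\OO$, so your route via the commutant is cleaner.
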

\begin{proof}
		Take \((\mathcal{E}, \phi)\) and \((\mathcal{E}', \phi')\) two \(T\)-points of \(\mathcal{S}.\) Put \(\LL := \underline{\Hom}_{\mathcal{D}_a}(\mathcal{E}, \mathcal{E}')\), using \(\phi, \phi'\) to turn \(\mathcal{E}, \mathcal{E}'\) into \(\mathcal{D}_a\)-modules. As \(\mathcal{E}, \mathcal{E}'\) are two simple algebras over an Azumaya algebra, by the general theory of Azumaya algebras we deduce that \(\LL\) is a line bundle and that
		\[\mathcal{E} \otimes \LL \heq \mathcal{E}'\]
		as \(\mathcal{D}_a\)-modules. This proves the transitivity. 

		If \(\LL \in \Pic(C^{(1)}_T)\) has \((\mathcal{E}, \phi) \cdot \LL \heq (\mathcal{E}, \phi),\) then in particular \(\mathcal{E} \otimes \LL \heq \mathcal{E},\) so taking determinants we deduce that \(\det(\mathcal{E}) \otimes \LL \heq \det(\mathcal{E}),\) and thus \(\LL \heq \OO.\) This gives us that the action has trivial stabilizers.
\end{proof}

The non-emptiness of \(\mathcal{S}\) is more subtle. We start by proving a non-emptiness claim when testing on \(k\)-points of \(\mathcal{A}^{(1)},\) and then extend.

\subsubsection{Tsen's theorem for root stacks}

\begin{lemma} 
		The morphism \(\mathbb{V}(\Omega^1_{X^{(1)}/S}) \to \mathbb{V}(\Omega^1_{X'/S})\) induced by the map \(X^{(1)} \to X'\) witnesses \(\mathbb{V}(\mathcal{T}_{X^{(1)}/S})\) as the \((p, p, ..., p)\)-multiroot stack of \(\mathbb{V}(\Omega^1_{X'/S})\) along the divisors \((\pi^{-1}(D_1'), ..., \pi^{-1}(D_n'))\), for \(\pi : \mathbb{V}(\Omega^1_{X'/S}) \to X'\) the canonical map and \(D_i'\) the Frobenius twists of the \(D_i\) (that is, \(D_i'\) is the image of \(D_i\) under the relative Frobenius). 
\end{lemma}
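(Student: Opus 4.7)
The plan is to derive this as a direct base-change consequence of \autoref{prop1433}. First I would note that the morphism $[p] : \Theta \to \Theta$ is faithfully flat (the underlying map $\A^1 \to \A^1$, $t \mapsto t^p$ is finite flat in characteristic $p$), so its base change $q : X^{(1)} \to X'$ is faithfully flat as well; here I am using \autoref{prop1433}, which identifies $X^{(1)}$ with the fiber product $X' \times_{\Theta^n,[p]^n} \Theta^n$.

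Next, I would invoke flat base change for K\"ahler differentials to obtain $\Omega^1_{X^{(1)}/S} \cong q^*\Omega^1_{X'/S}$, and then pass to total spaces to get
\[
\mathbb{V}(\Omega^1_{X^{(1)}/S}) \;\cong\; X^{(1)} \times_{X'} \mathbb{V}(\Omega^1_{X'/S}) \;\cong\; \mathbb{V}(\Omega^1_{X'/S}) \times_{\Theta^n,\,[p]^n} \Theta^n,
\]
where the composite map $\mathbb{V}(\Omega^1_{X'/S}) \to \Theta^n$ factors through $\pi : \mathbb{V}(\Omega^1_{X'/S}) \to X'$ followed by the classifying map $c_{D'} : X' \to \Theta^n$.

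Finally, to identify this fiber product as the claimed multiroot stack, I would apply \autoref{prop855}: since $c_{D'}$ corresponds to the divisor $D' = (D'_1, \ldots, D'_n)$ on $X'$, its composition with $\pi$ corresponds (under the dictionary of \autoref{prop855}) to the pullback divisor $(\pi^{-1}(D'_1), \ldots, \pi^{-1}(D'_n))$ on $\mathbb{V}(\Omega^1_{X'/S})$. By definition, the fiber product of this classifying morphism along $[p]^n : \Theta^n \to \Theta^n$ is precisely the $(p, p, \ldots, p)$-multiroot stack of $\mathbb{V}(\Omega^1_{X'/S})$ along these divisors, which is the claim.

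The argument is essentially formal, and the main bookkeeping issue is verifying that the classifying map associated to the pullback divisor agrees with the composition $c_{D'} \circ \pi$; this follows at once from the naturality of the bijection in \autoref{prop855}. I will also note a minor typographical point: the statement mixes $\mathcal{T}_{X^{(1)}/S}$ and $\Omega^1_{X^{(1)}/S}$, but since $X$ is a smooth curve the tangent and cotangent bundles are line bundles in duality, and the morphism $\mathbb{V}(\Omega^1_{X^{(1)}/S}) \to \mathbb{V}(\Omega^1_{X'/S})$ explicitly named at the outset makes clear that the cotangent version is intended, which is what the plan above targets.
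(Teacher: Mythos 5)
Your proposal follows essentially the same route as the paper's proof: identify $X^{(1)}$ with the fiber product $X' \times_{\Theta^n,[p]^n} \Theta^n$ (\autoref{prop1433}), use base-change compatibility of K\"ahler differentials to write $\mathbb{V}(\Omega^1_{X^{(1)}/S}) \cong \mathbb{V}(\Omega^1_{X'/S}) \times_{\Theta^n,[p]^n} \Theta^n$, and read off the multiroot stack description from the definition. One small remark: the observation that $[p]$ is faithfully flat is a red herring -- K\"ahler differentials commute with \emph{arbitrary} base change, so no flatness hypothesis is needed there; the paper's phrase ``compatibility of relative K\"ahler differentials with pullback'' is the precise statement being invoked. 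Your additional bookkeeping via \autoref{prop855} (verifying that the composite $c_{D'} \circ \pi$ classifies the pullback divisor) spells out a step the paper leaves implicit when it says ``the claim follows immediately from the definition of root stacks,'' and your note about the $\mathcal{T}$ versus $\Omega^1$ typo in the statement is correct.
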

\begin{proof}
		Let \(c_{D'} : X' \to S = (\A^1/\G_m)^n\) be the morphism classifying the divisor \(D' = D_1' + \cdots + D_n'.\) 

		Then \(\mathbb{V}(\Omega^1_{X^{(1)}/S}) = \mathbb{V}(\Omega^1_{X'/S}) \times_{X'} X^{(1)},\) by compatibility of relative Kahler differentials with pullback. As \(X^{(1)} = (X', c_{D'}) \times_S (S, ([p], ..., [p])),\) we deduce that 
		\[\mathbb{V}(\Omega^1_{X^{(1)}/S}) = \mathbb{V}(\Omega^1_{X'/S}) \times_S (S, ([p], ..., [p]),\]
		and so our claim follows immediately from the definition of root stacks.
\end{proof}
\begin{cor} \label{lem808}
		Every spectral curve (see \autoref{artdefn684}) for \((X^{(1)}, \Omega^1_{X^{(1)}/S})\) is a \(([p], ..., [p])-\)multiroot stack of a spectral curve for \((X', \Omega^1_{X'/k}(\log D')).\) 
\end{cor}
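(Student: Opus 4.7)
The plan is to deduce this from the previous lemma together with an identification of global sections and compatibility of multiroot stacks with base change.

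First I would unpack notation: by Olsson's theorem (\autoref{olsson875}), $\Omega^1_{X'/S} = \Omega^1_{X'/k}(\log D')$, and the previous lemma identifies $\mathbb{V}(\Omega^1_{X^{(1)}/S})$ as the $(p,\dots,p)$-multiroot stack of $\mathbb{V}(\Omega^1_{X'/S})$ along the preimages of the $D_i'$. Concretely, this means there is a Cartesian square
\begin{center}
\begin{tikzcd}
\mathbb{V}(\Omega^1_{X^{(1)}/S}) \ar[r] \ar[d] & \mathbb{V}(\Omega^1_{X'/S}) \ar[d] \\
(\A^1/\G_m)^n \ar[r, "{([p],\dots,[p])}"] & (\A^1/\G_m)^n,
\end{tikzcd}
\end{center}
where the right-hand vertical arrow is $c_{D'}$ composed with the structure map to $X'$.

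Second, I would match up the Hitchin data on the two sides. Since $\pi : X^{(1)} \to X'$ is a multiroot stack we have $\pi_*\OO_{X^{(1)}} = \OO_{X'}$, so by the projection formula
\[H^0\bigl(X_T^{(1)}, (\Omega^1_{X^{(1)}/S})^{\otimes i}|_{X_T^{(1)}}\bigr) = H^0\bigl(X'_T, (\Omega^1_{X'/k}(\log D'))^{\otimes i}|_{X'_T}\bigr).\]
Hence a $T$-family of spectral data $a = (a_1,\dots,a_r)$ for $(X^{(1)}, \Omega^1_{X^{(1)}/S})$ is canonically the same as a $T$-family of spectral data for $(X', \Omega^1_{X'/k}(\log D'))$, and I will denote the resulting spectral curves by $Y_a^{(1)} \subseteq \mathbb{V}(\Omega^1_{X^{(1)}/S})_T$ and $Y_a' \subseteq \mathbb{V}(\Omega^1_{X'/k}(\log D'))_T$. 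They are cut out by the same equation $\lambda^r + a_1\lambda^{r-1} + \cdots + a_r = 0$, which makes sense on either ambient space once we use Olsson's identification.

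Third, I would base change the Cartesian square of the first step along the closed immersion $Y_a' \hookrightarrow \mathbb{V}(\Omega^1_{X'/S})_T$. The top-right vertex becomes $Y_a'$, and the top-left vertex becomes precisely $Y_a^{(1)}$ (since the equation $\lambda^r + a_1\lambda^{r-1} + \cdots + a_r = 0$ is pulled back tautologically under the multiroot stack morphism). The resulting Cartesian square exhibits $Y_a^{(1)}$ as the $(p,\dots,p)$-multiroot stack of $Y_a'$ along the pullback of the divisors $D_i'$, as required. The only mildly subtle point is to check that the cutting-out equation is genuinely pulled back from $Y_a'$ under the multiroot map, but this is automatic from the identification of global sections in the second step and from the fact that the map $\mathbb{V}(\Omega^1_{X^{(1)}/S}) \to \mathbb{V}(\Omega^1_{X'/S})$ commutes with the tautological sections of the pullbacks of $\omega$. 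There is no genuine obstacle; the whole argument is essentially a diagram chase once Olsson's theorem and the previous lemma are in hand.
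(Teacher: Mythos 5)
Your proof is correct and takes essentially the same route as the paper. You form the same Cartesian square (the outer rectangle of the paper's stacked diagram), match the Hitchin data via the projection formula $\pi_*\OO_{X^{(1)}} = \OO_{X'}$, and then base change along the spectral curve locus — this is the same argument the paper makes, just phrased with the projection formula where the paper instead invokes its map $\alpha_i$.
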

\begin{proof}
		Set \(\omega' = \Omega^1_{X'/k}(\log D'), \omega^{(1)} = \Omega^1_{X^{(1)}/S}.\) Let our spectral curve \(C^{(1)}\) for \((X^{(1)}, \Omega^1_{X^{(1)}/S})\) be cut out by
		\[(\lambda^{(1)})^n + a_{n-1}^{(1)}(\lambda^{(1)})^{n-1} + \cdots + a_0^{(1)} = 0,\]
		where \(a_i^{(1)} \in H^0(X^{(1)}, (\omega^{(1)})^{\otimes i}).\) Recall the morphism
		\[\alpha_i : H^0(X^{(1)}, (\omega^{(1)})^{\otimes i}) \to H^0(X', (\omega')^{\otimes i})\]
		of \autoref{eq89}. Set \(a_i' := \alpha_i(a_i^{(1)}).\) 

		As the diagram
		\begin{center}
				\begin{tikzcd}
						\mathbb{V}(\omega^{(1)}) \ar[r] \ar[d] & \mathbb{V}(\omega') \ar[d] \\
						X^{(1)} \ar[r] \ar[d] & X' \ar[d] \\
						S \ar[r, "{[p]}"] & S
				\end{tikzcd}
		\end{center}
		has all squares Cartesian, our spectral curve \(C^{(1)}\) is the base change along \(\mathbb{V}(\omega^{(1)}) \to \mathbb{V}(\omega')\) of the spectral curve \(C'\) cut out by
		\[(\lambda')^n + a_{n-1}'(\lambda')^{n-1} + \cdots + a_0' = 0.\]
		In particular, the diagram
		\begin{center}
				\begin{tikzcd}
						C^{(1)} \ar[r] \ar[d] & C' \ar[d] \\
						S \ar[r, "{([p], \cdots, [p])}"] & S
				\end{tikzcd}
		\end{center}
		is Cartesian. Our claim now follows from the definition of multiroot stacks.
\end{proof}

We now recall a result of Achenjang \cite{achenjang}.
\begin{lemma} \label{lem809}
		Let \(\mathcal{Y}\) be a tame algebraic stack over \(k\), with coarse moduli space \(c : \mathcal{Y} \to Y.\) Assume all stabilizers of \(k\)-points of \(\mathcal{Y}\) are groups of the form \(\mu_n\) (with \(n\) allowed to depend on the geometric point) and that \(c\) is an isomorphism over some nonempty open subscheme \(U \subseteq Y.\) 

		Let \(D'\) be an effective Cartier divisor in \(Y.\) Let \(\mathcal{Y}_m\) be the \(m^{\text{th}}\) root stack of \(\mathcal{Y}\) along \(c^*(D').\) 

		Assume also that \(\dim Y = 1.\) If \(H^2(\mathcal{Y}, \G_m) = 0,\) then \(H^2(\mathcal{Y}_m, \G_m) = 0.\) 
\end{lemma}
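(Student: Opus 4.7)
I propose to argue via the Leray spectral sequence for $\pi : \mathcal{Y}_m \to \mathcal{Y}$ in the fppf site. The first task is to compute $R^q\pi_*\G_m$. Since $\pi$ is an isomorphism over $\mathcal{Y} \setminus c^*(D')$, the higher pushforwards $R^q\pi_*\G_m$ for $q>0$ are supported on $E := c^*(D')$. Over a geometric point of $E$, the fiber of $\pi$ is (the reduced structure of) the classifying stack $B\mu_m$, reflecting the tautological $\mu_m$-torsor defining the root stack. Because $\mu_m$ is cyclic and tame, and $k^\times$ is divisible over the algebraically closed field $k$, the standard cyclic group cohomology computation yields $H^0(B\mu_m,\G_m) = k^\times$, $H^1(B\mu_m,\G_m) \cong \Z/m$, and crucially $H^2(B\mu_m,\G_m) = 0$. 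Combining this with the geometric fiber description, I get $\pi_*\G_m = \G_m$, $R^1\pi_*\G_m = i_*\underline{\Z/m}$ for the closed immersion $i : E \hookrightarrow \mathcal{Y}$, and $R^2\pi_*\G_m = 0$.

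Feeding these into the Leray spectral sequence $E_2^{p,q} = H^p(\mathcal{Y},R^q\pi_*\G_m) \Rightarrow H^{p+q}(\mathcal{Y}_m,\G_m)$, the three $E_2$-terms contributing to $H^2$ are $E_2^{2,0} = H^2(\mathcal{Y},\G_m) = 0$ by hypothesis, $E_2^{0,2} = 0$ by the computation above, and $E_2^{1,1} = H^1(E,\underline{\Z/m})$ (using $R^p i_* = 0$ for $p>0$ since $i$ is a closed immersion). Thus $H^2(\mathcal{Y}_m,\G_m)$ is a subquotient of $H^1(E,\underline{\Z/m})$. I would then invoke $\dim Y = 1$ to deduce that $D'$ is a $0$-dimensional effective Cartier divisor in $Y$, and therefore $E = c^*(D')$ is a $0$-dimensional closed substack of $\mathcal{Y}$, namely a finite disjoint union of stacky points $\bigsqcup_i B\mu_{n_i}$.

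The hard part is controlling $H^1(E,\underline{\Z/m})$. For each component $B\mu_{n_i} \subset E$ with $\gcd(n_i,m) = 1$ (in particular all components when $\mathcal{Y}$ is a scheme, where $n_i = 1$) the summand $H^1(B\mu_{n_i},\underline{\Z/m}) = \Hom(\mu_{n_i},\Z/m)$ vanishes, and the argument concludes at once. When $\gcd(n_i,m) > 1$ the summand is $\Z/\gcd(n_i,m)$ and one must show it dies in the spectral sequence. The cleanest route I see is to identify the differential $d_2 : E_2^{1,1} \to E_2^{3,0} = H^3(\mathcal{Y},\G_m)$ explicitly as the connecting homomorphism coming from the Kummer-like short exact sequence implicit in the root construction, and to verify its injectivity on these potentially surviving classes using the particular structure of $\mathcal{Y}$. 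Alternatively, one can bypass the Leray analysis entirely by proving a direct Tsen-type vanishing theorem for $H^2(-,\G_m)$ of proper tame stacky curves over an algebraically closed field: in that framework the implication becomes essentially tautological, since both $\mathcal{Y}$ and $\mathcal{Y}_m$ fit this description.
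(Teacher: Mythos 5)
Your Leray approach is plausible in spirit, but it has a genuine gap exactly where you flag it, and that gap is not cosmetic. The paper instead quotes Lemma 5.3 of Achenjang \cite{achenjang}, which yields the exact sequence
\[
H^2(\mathcal{Y}, \G_m) \to H^2(\mathcal{Y}_m, \G_m) \to H^1(D', \underline{\Z/m\Z}),
\]
where the third term is computed on the \emph{scheme} $D' \subseteq Y$ (the divisor in the coarse moduli space), \emph{not} on the stacky pullback $E = c^*(D') \subseteq \mathcal{Y}$. This is the decisive difference. When $\dim Y = 1$ and $k$ is algebraically closed, $D'$ is a finite disjoint union of copies of $\Spec(k)$, so $H^1(D', \underline{\Z/m\Z}) = 0$ for trivial reasons, and the lemma follows immediately.

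Your $E_2^{1,1}$ term, by contrast, is $H^1(E, \underline{\Z/m\Z})$ over the substack $E$, which decomposes as $\bigsqcup_i B\mu_{n_i}$ and carries nontrivial contributions $\Hom(\mu_{n_i}, \Z/m) = \Z/\gcd(n_i, m)$ whenever $\gcd(n_i, m) > 1$. You correctly observe that these classes must die in the spectral sequence, but you do not show it. In fact the proposed fix — proving $d_2 : E_2^{1,1} \to E_2^{3,0} = H^3(\mathcal{Y}, \G_m)$ is injective on these classes — looks unlikely to work as stated: for a proper tame stacky curve one would generally expect $H^3(\mathcal{Y}, \G_m)$ to vanish as well, which would make $d_2$ zero rather than injective. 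So the surviving contribution would have to cancel in some other way (perhaps the $R^1\pi_*\G_m$ computation is not what you wrote, or the Leray sequence interacts with the non-reducedness of the root-stack fiber over $E$ in a subtler way). Either way, a new idea is missing.

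The more robust route, and the one the paper takes, is to invoke (or reprove) Achenjang's sequence, which is engineered so that the obstruction term lives on the coarse space rather than on the stack. This is exactly what makes the argument collapse to a one-line vanishing when $\dim Y = 1$. Your fallback suggestion of proving a direct Tsen-type vanishing for all proper tame stacky curves is essentially the content of \cite{achenjang}, but you would need to actually execute it; as written, it is not a proof but a deferral to a harder theorem.
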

\begin{proof}
		By Achenjang's \cite{achenjang} Lemma 5.3, we have an exact sequence \begin{equation} \label{eq848} H^2(\mathcal{Y}, \G_m) \to H^2(\mathcal{Y}_m, \G_m) \to H^1(D', \underline{\Z/m\Z}).\end{equation}
		As \(\dim Y = 1,\) the divisor \(D'\) is a disjoint union of geometric points; hence \(H^1(D', \underline{\Z/m\Z}) = 0.\) By assumption, \(H^2(\mathcal{Y}, \G_m) = 0.\) Thus \autoref{eq848} becomes an exact sequence
		\[0 \to H^2(\mathcal{Y}_m, \G_m) \to 0,\]
		and we conclude our desired vanishing. 
\end{proof}
\begin{theorem}[Tsen's theorem for stacky spectral curves, apres Achenjang] \label{tsen}
		Every spectral curve \(C^{(1)}\) of \((X^{(1)}, \Omega^1_{X^{(1)}/S})\) has \(H^2(C^{(1)}, \G_m) = 0.\) In particular, \(\Br(C^{(1)}) = 0,\) so every Azumaya algebra on \(C^{(1)}\) is split. 
\end{theorem}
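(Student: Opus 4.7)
The plan is to reduce the cohomology vanishing on $C^{(1)}$ to classical Tsen's theorem on an ordinary spectral curve, and to transfer the vanishing along the root stack construction described in \autoref{lem808}.

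First I would apply \autoref{lem808} to write $C^{(1)}$ as the $(p, \ldots, p)$-multiroot stack of a classical spectral curve $C'$ for $(X', \Omega^1_{X'/k}(\log D'))$ along the disjoint divisors $\pi^{-1}(D'_i) \subset C'$, where $\pi \colon C' \to X'$ is the finite projection. Since $X'$ is a smooth proper curve and $C'$ is cut out by a monic polynomial in the total space of $\Omega^1_{X'/k}(\log D')$, the curve $C'$ is a projective scheme over $k$ of dimension one. As $k$ is algebraically closed, classical Tsen's theorem then yields $H^2(C', \G_m) = 0$.

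Next I would propagate this vanishing up to $C^{(1)}$ using \autoref{lem809}. The numerical input is $\dim C' = 1$, so each divisor $\pi^{-1}(D'_i)$ is a finite disjoint union of geometric points and hence satisfies $H^1(\pi^{-1}(D'_i), \underline{\Z/p\Z}) = 0$. Achenjang's exact sequence (the content of \autoref{lem809}) therefore identifies $H^2(C^{(1)}, \G_m)$ with $H^2(C', \G_m) = 0$. The Brauer vanishing $\Br(C^{(1)}) = 0$ then follows by the standard embedding of the Brauer group into $H^2(-, \G_m)$, which in turn forces every Azumaya algebra on $C^{(1)}$ to split.

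The main subtlety I anticipate is that the multiroot construction in characteristic $p$ produces stabilizers $\mu_p$ which are not tame, so iterating \autoref{lem809} one divisor at a time would lose the tameness hypothesis after the very first step. I plan to circumvent this by applying Achenjang's exact sequence directly to the full multiroot stack $C' \to C^{(1)}$ in a single step, which is legitimate because the $\pi^{-1}(D'_i)$ are pairwise disjoint (this follows from disjointness of the $D'_i$ in $X'$ and finiteness of $\pi$). The main technical point of the argument is therefore the single-step multi-divisor generalization of \autoref{lem809} — essentially extending the relevant Kummer-type exact sequence from one divisor to finitely many disjoint divisors — which I expect to follow from Achenjang's methods without new input.
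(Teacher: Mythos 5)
Your overall strategy matches the paper's: reduce the vanishing of $H^2(-,\G_m)$ on $C^{(1)}$ to the vanishing on the ordinary (possibly singular) spectral curve $C'$, and propagate through the root stack construction via Achenjang's exact sequence. But the obstacle you anticipate does not exist, and your proposed workaround introduces an unverified claim.

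The misconception is about tameness. In the Abramovich--Olsson--Vistoli sense used by \autoref{lem809}, an algebraic stack with finite inertia is tame precisely when all its geometric stabilizers are linearly reductive. The group scheme $\mu_p$ is of multiplicative type and hence linearly reductive in \emph{every} characteristic, including characteristic $p$ --- it is $\Z/p$, the Cartier dual, that fails to be linearly reductive in characteristic $p$, not $\mu_p$. This is exactly why the hypothesis in \autoref{lem809} reads ``stabilizers of the form $\mu_n$'': taking a $p$-th root stack in characteristic $p$ produces $\mu_p$-stabilizers and therefore preserves tameness. Consequently, the paper's iterated argument is perfectly valid: after taking the $p$-th root of $C'$ along $\pi^{-1}(D'_1)$, the result still has $\mu_p$-stabilizers, is generically a scheme, and has one-dimensional coarse space, so \autoref{lem809} applies again along $\pi^{-1}(D'_2)$, and so on. Your single-step multi-divisor variant of Achenjang's sequence is therefore unnecessary, and since you do not actually establish it --- you only assert it ``should follow without new input'' --- your version as written has a gap precisely where you believed you were repairing one. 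If the tameness concern were real, this would be a genuine hole; since it is not, you should simply iterate \autoref{lem809} as the paper does.

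A smaller point: you invoke ``classical Tsen's theorem'' for $H^2(C',\G_m)=0$, but $C'$ is a possibly singular spectral curve, so the statement needed is not the usual Tsen for smooth curves (or for function fields). The paper cites Groechenig's Theorem 3.21 (together with the caveat in \autoref{warning} about the generic-schemeyness hypothesis) precisely because this extension to singular, generically reduced one-dimensional schemes requires some care.
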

\begin{warn} \label{warning}
		In our proof of this theorem, we cite Theorem 3.21 of Groechenig \cite{groechenig}. Achenjang pointed out to the author that Theorem 3.21 of Groechenig \cite{groechenig} has a minor error, and Groechenig must include a \emph{generic schemeyness} assumption for that theorem to be true. However, we will only apply that Theorem 3.21 to situations where we have this assumption, so this error will not impact our application. 
\end{warn}
\begin{proof}
		By Theorem 3.21 of Groechenig \cite{groechenig} (though see \autoref{warning}), this theorem is true for the spectral curves \(C'\) of \((X', \Omega^1_{X'/k}(\log D')),\) because those spectral curves are (possibly singular!) schemes.

		By \autoref{lem808}, our spectral curve \(C^{(1)}\) is a multiroot stack over some spectral curve \(C'.\) By repeated application of \autoref{lem809}, we deduce that the vanishing of \(H^2(-, \G_m)\) for \(C'\) implies the same for \(C^{(1)},\) and hence conclude. 
\end{proof}

\autoref{tsen} allows us to split the Azumaya algebra \(\mathcal{D}_a\) for \(a : \Spec(k) \to \mathcal{A}^{(1)}\) any \(k\)-point of our Hitchin base. To upgrade this non-emptiness of \(\mathcal{S}\) on \(k\)-points to a proof that \(\mathcal{S}\) is a \(\mathcal{P}\)-torsor, we will show that \(\mathcal{S}\to\mathcal{A}^{(1)}\) is smooth. 

he proof of this smoothness will occupy the rest of this section. To begin, we check \(\mathcal{S}\) is an algebraic stack; this is actually easiest to do by giving an alternative description of \(\mathcal{S}.\) This alternative description will also be useful in our proof of the non-abelian Hodge theorem.

\subsubsection{Intermezzo: a de Rham BNR correspondence} 

We now give an analgoue of the BNR correspondence (which allows us to describe \(\mathcal{M}_{\Dol}\) using spectral curves) on the de Rham side. Our result is a logarithmic version of Groechenig's \cite{groechenig} Proposition 3.15.

We start with a linear algebra lemma.
\begin{lemma} \label{lem1507} 
		Let \(Y\) be a scheme over a base \(T,\) smooth of relative dimension \(d,\) and let \(F_{Y/T} : Y \to Y'\) be the relative Frobenius. Fix a line bundle \(\LL\) on \(Y',\) and let
		\[\theta : \mathcal{E} \to \mathcal{E} \otimes F^*\LL\]
		be a linear morphism on \(Y,\) for \(\mathcal{E}\) a vector bundle on \(Y.\) If \(\theta\) has characteristic polynomial \(a,\) then \(F_*(\theta) : F_*\mathcal{E}\to F_*\mathcal{E} \otimes \LL\) has characteristic polynomial \(a^{p^d}.\) 
\end{lemma}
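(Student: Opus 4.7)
The plan is to reduce the lemma to a local norm computation along the relative Frobenius, which is finite locally free of rank $p^d$. The claim is local on $Y'$ and independent of the chosen trivialization of $\LL$, so I may assume $T = \Spec A$, $Y = \Spec B$ with $B = A[x_1, \ldots, x_d]$, $Y' = \Spec B'$ with $B' = A[y_1, \ldots, y_d]$, and $F_{Y/T}^* : B' \to B$ given by $y_i \mapsto x_i^p$. Trivializing $\LL$ turns $\theta$ into a $B$-linear endomorphism of a free $B$-module $M$ of rank $r$ with characteristic polynomial $p_\theta(\lambda) \in B[\lambda]$, and the goal becomes to show that the characteristic polynomial of $\theta$ viewed as a $B'$-linear endomorphism of $M$ equals $p_\theta(\lambda)^{p^d}$.

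I apply the standard compatibility of determinants with restriction of scalars along a finite free ring extension: for $B' \subseteq B$ finite free and $\phi$ a $B$-linear endomorphism of a free $B$-module, $\det_{B'}(\phi) = \mathrm{Nm}_{B/B'}(\det_B(\phi))$. Applied to $\phi = \lambda \cdot \id - \theta$ on $M[\lambda]$ over $B'[\lambda] \subseteq B[\lambda]$, this identifies the characteristic polynomial of $\theta$ as a $B'$-linear endomorphism with $\mathrm{Nm}_{B[\lambda]/B'[\lambda]}(p_\theta(\lambda))$. Since $B[\lambda] \supseteq B'[\lambda]$ has the same Frobenius-extension shape as $B \supseteq B'$ (just with $\lambda$ freely adjoined to the base ring), the lemma reduces to the norm identity
\[
\mathrm{Nm}_{B/B'}(\beta) = \beta^{p^d} \in B' \quad \text{for every } \beta \in B,
\]
noting that $\beta^{p^d}$ does lie in $B'$ because $B^p \subseteq B'$ by direct inspection of the inclusion.

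To establish this norm identity, I base change along the flat ring map $B' \to B$: the $B'$-linear operator multiplication-by-$\beta$ becomes a $B$-linear endomorphism $L_\beta$ of $B \otimes_{B'} B$, and flat base change of determinants yields $\det_B(L_\beta) = \mathrm{Nm}_{B/B'}(\beta)$ inside $B$. Under the canonical isomorphism $B \otimes_{B'} B \cong B[\eta_1, \ldots, \eta_d]/(\eta_1^p, \ldots, \eta_d^p)$ given by $\eta_i = x_i \otimes 1 - 1 \otimes x_i$ (which is $p$-step nilpotent since $x_i^p = y_i \in B'$ moves freely across the tensor), the operator $L_\beta$ becomes multiplication by $\beta(\vec x + \vec \eta)$. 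Filtering $B[\vec \eta]/(\vec \eta^p)$ by the powers of its nilpotent ideal produces an associated graded of total $B$-rank $p^d$ on which multiplication by $\beta(\vec x + \vec \eta)$ reduces to multiplication by the constant $\beta$; hence $\det_B(L_\beta) = \beta^{p^d}$. This filtration computation is the one substantive step in the proof. Combining everything then gives $\mathrm{Nm}_{B[\lambda]/B'[\lambda]}(p_\theta(\lambda)) = p_\theta(\lambda)^{p^d}$, which is the characteristic polynomial of $F_{Y/T, *}\theta$; the passage back from the locally trivialized picture to the statement for arbitrary $\LL$ is automatic, since the claimed identity is polynomial in sections of tensor powers of $\LL$ and can be verified after local trivialization.
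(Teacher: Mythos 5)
Your proof is correct (up to an immaterial sign: with $\eta_i = x_i\otimes 1 - 1\otimes x_i$ one has $1\otimes x_i = x_i - \eta_i$, so $L_\beta$ is multiplication by $\beta(\vec x - \vec\eta)$ rather than $\beta(\vec x + \vec\eta)$; the filtration argument is unaffected), but it follows a genuinely different route from the paper. The paper works directly in local coordinates and tries to read off the characteristic polynomial of $F_*\theta$ from its block-matrix description in the monomial basis $y^F e_i$ of $F_*\mathcal{O}_Y^r$; the proof there is very terse, and its concluding claims (``the $r\times r$ block with index $((f_1,\dots),(f_1',\dots))$ is just $\theta^{f_1,\dots,f_d}$,'' ``each block is $\theta$'') are at best loosely stated -- taken literally they would describe a matrix $J\otimes\theta$ with $J$ the all-ones matrix, whose characteristic polynomial is not $a^{p^d}$, so the intended bookkeeping (each block should really be a shifted $\theta^G$ with a carry factor) is left implicit. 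Your argument sidesteps all of this: you reduce to the identity $\mathrm{Nm}_{B/B'}(\beta)=\beta^{p^d}$ via the transitivity formula $\det_{B'}(\phi)=\mathrm{Nm}_{B/B'}(\det_B(\phi))$ applied to $\phi=\lambda\cdot\mathrm{id}-\theta$ over $B'[\lambda]\subseteq B[\lambda]$, and then prove the norm identity by flat base change plus the filtration of $B\otimes_{B'}B\cong B[\vec\eta]/(\vec\eta^{\,p})$ by powers of the nilpotent ideal, on whose graded pieces multiplication by $\beta(\vec x\pm\vec\eta)$ becomes multiplication by the scalar $\beta$. This is cleaner and more robust than the paper's basis chase: the norm formula does the combinatorial work once and for all, and the only computation you need is the tautological one that $\mathrm{gr}$ of a unipotent multiplication operator is scalar. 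It is a legitimate and arguably preferable alternative proof.
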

\begin{proof}
		The characteristic polynomial is local on \(Y,\) so we may assume \(T = \Spec(A), Y = \A^d_T, \LL = \OO_{Y'}, \mathcal{E} = \OO_Y^r.\) Write \(Y = \Spec A[y_1, ..., y_d].\) Let \(\theta_{ij}\) be the entries of the \(r\times r\) matrix representing \(\theta\) in the \(e_1, ..., e_r\) basis of \(\OO_Y^r.\) 

		Write
		\[\theta_{ij} = \sum_{(f_1, ..., f_d), 0 \leq f_i < p} y_1^{f_1}\cdots y_d^{f_d}\theta_{ij}^{f_1, ..., f_d},\]
		where \(\theta_{ij}^{(f_1, ..., f_d)} \in A[y_1^p, ..., y_d^p].\) Write \(\theta^{f_1, ..., f_d}\) the \(r \times r\) matrix with coefficients \(\theta_{ij}^{f_1, ..., f_d}.\) 

		The pushforward \(F_*\OO_Y^r\) is the trivial bundle of rank \(p^dr\) on \(Y'.\) We can represent \(F_*(\theta)\) as a \(p^d \times p^d\) block matrix, where each block is size \(r \times r,\) and the blocks are indexed by pairs \(((f_1, ..., f_d), (f_1', ..., f_d'))\) of multi-indices \(0 \leq f_i \leq p-1.\) This matrix is written in the basis \(y_1^{f_1}\cdots y_d^{f_d}e_i\) of \(F_*\OO_Y^r.\) Thus the \(r\times r\) block with index \(((f_1, ..., f_d), (f_1', ..., f_d'))\) is just \(\theta^{f_1, ..., f_d}.\) 

		Hence \(F_*(\theta)\) is just a \(p^d \times p^d\) block matrix, each block is \(\theta\); the claim now follows immediately from the definition of characteristic polynomials. 
\end{proof}

\begin{lemma} \label{lem1726} 
		Let \(\mathcal{E}\) be a torsionfree coherent sheaf on \(X^{(1)}\) together with a \(F_{X/S,*}\mathcal{D}_{X/S}\)-module structure. Then there is a unique quasicoherent \(\mathcal{D}_{X/S}\)-module \(\FF\) on \(X\) such that \(F_{X/S,*}\FF \heq \mathcal{E}.\) 
\end{lemma}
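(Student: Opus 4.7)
The plan is to reduce the lemma to the standard affineness equivalence for quasicoherent sheaves, applied to the relative Frobenius $F := F_{X/S} : X \to X^{(1)}.$ By \autoref{lem777}, $F$ is affine (indeed finite and faithfully flat), so pushforward along $F$ yields an equivalence of categories between $\Qcoh(X)$ and quasicoherent $F_*\OO_X$-modules on $X^{(1)}.$ The first step will be to upgrade this to a $\mathcal{D}$-module version: pushforward along $F$ should induce an equivalence
\[
F_* : \Mod_{\qc}(\mathcal{D}_{X/S}) \xto{\sim} \Mod_{\qc}(F_*\mathcal{D}_{X/S}),
\]
where we view $\mathcal{D}_{X/S}$ via its left $\OO_X$-structure, so that $F_*\mathcal{D}_{X/S}$ is a quasicoherent sheaf of $\OO_{X^{(1)}}$-algebras whose structure map factors as $\OO_{X^{(1)}} \to F_*\OO_X \hookrightarrow F_*\mathcal{D}_{X/S}.$ This upgrade follows formally from the basic affineness equivalence once one verifies the expected associativity-type compatibilities.

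Given the data of the lemma, I would then construct $\FF$ as follows. Restricting the $F_*\mathcal{D}_{X/S}$-action along $F_*\OO_X \hookrightarrow F_*\mathcal{D}_{X/S}$ endows $\mathcal{E}$ with an $F_*\OO_X$-module structure, which by the affine equivalence produces a unique $\FF \in \Qcoh(X)$ with $F_*\FF \heq \mathcal{E}$ as $F_*\OO_X$-modules. The remaining part of the $F_*\mathcal{D}_{X/S}$-action on $\mathcal{E}$ is automatically $F_*\OO_X$-linear on the left (by associativity of the algebra action), so it descends to a genuine $\mathcal{D}_{X/S}$-action on $\FF$; uniqueness of this lift is built into the equivalence.

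I expect no serious conceptual obstacle, but the main technical work will be bookkeeping about left-versus-right $\OO_X$-structures on $\mathcal{D}_{X/S}$, and checking that the compatibility of the $\OO_{X^{(1)}}$-structure on $\mathcal{E}$ with the given $F_*\mathcal{D}_{X/S}$-module structure factors through $\OO_{X^{(1)}} \to F_*\OO_X$ as the pushed-down algebra structure map. The ``torsionfree coherent'' hypothesis plays no role in the descent itself; rather, these properties are inherited by $\FF$: coherence holds because $F$ is finite, and torsionfreeness because $F$ is faithfully flat, so that $\FF$ and $F_*\FF = \mathcal{E}$ share the same underlying abelian sheaf on $X.$
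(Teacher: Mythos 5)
Your proof is correct, and it takes a genuinely different route from the paper's. The paper's proof first \emph{forgets} the parabolic structure, passing from $X^{(1)}$ to the coarse moduli $X'$, then descends along $F_{X/k} : X \to X'$ using that it is a universal homeomorphism (so $F_{X/k}^{-1}$ and $F_{X/k,*}$ are inverse equivalences on sheaves), and finally invokes the torsionfree coherent hypothesis to argue that the parabolic structure is uniquely forced by the $F_{X/S,*}\mathcal{D}_{X/S}$-action -- essentially showing the parabolic filtration must coincide with the $x$-adic one, which is where torsionfreeness is used. You instead apply the standard affine equivalence directly along $F_{X/S} : X \to X^{(1)}$ (affine by \autoref{lem777}), descending the $F_{X/S,*}\mathcal{D}_{X/S}$-action in one step; this bypasses the ``reconstruct the parabolic structure'' step entirely. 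A notable consequence of your approach is that the torsionfree coherent hypothesis is in fact superfluous for the conclusion: your descent works for any quasicoherent $F_{X/S,*}\mathcal{D}_{X/S}$-module on $X^{(1)}$. (In the application in \autoref{drbnr} the hypothesis is automatic anyway, since the relevant sheaf is a vector bundle.) One small slip: your final sentence claims $\FF$ and $F_{X/S,*}\FF \heq \mathcal{E}$ ``share the same underlying abelian sheaf on $X$,'' which is misstated since they live on different stacks; but this is extraneous, as the lemma only asserts $\FF$ is quasicoherent, not that it is coherent or torsionfree.
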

\begin{proof}
		Let \(F_{X/k} : X \to X'\) denote the usual relative-to-\(k\) Frobenius on \(X.\) Forgetting the parabolic structures on \(\mathcal{E}\) and \(F_{X/S,*}\mathcal{D}_{X/S},\) we find that \(\mathcal{E}\) can be thought of as a quasicoherent sheaf on \(X'\) with a \(F_{X/k,*}\mathcal{D}_X(\log D)\)-module structure. As \(F_{X/k}\) is a universal homeomorphism, the functors \(F_{X/k}^{-1}\) and \(F_{X/k,*}\) furnish inverse equivalences between \(\Sh(X)\) and \(\Sh(X').\) Setting \(\FF = F_{X/k}^{-1}\mathcal{E},\) we find that \(\FF\) has a \(F_{X/k}^{-1}F_{X/k,*}\mathcal{D}_X(\log D) = \mathcal{D}_X(\log D)\)-module structure. In particular, \(\FF\) is a sheaf of \(\OO_X\)-modules; moreover, by compatibility of relative Frobenius with open immersions, we deduce that \(\FF\) is quasicoherent on \(X.\) 
	
		Thus we have \(F_{X/k,*}\FF \heq \mathcal{E}\) as \(F_{X/k,*}\mathcal{D}_X(\log D)\)-modules on \(X'.\) To upgrade this to a statement on \(X^{(1)},\) we will argue that for any torsionfree coherent \(F_{X/k,*}\mathcal{D}_X(\log D)\)-module \(\mathcal{E}\) on \(X',\) there is a unique way of giving \(\mathcal{E}\) parabolic structures compatible with the \(\mathcal{D}\)-algebra action (in the sense that there is a unique \(F_{X/S,*}\mathcal{D}_{X/S}\)-module on \(X^{(1)}\) which gives \(\mathcal{E}\) when you forget parabolic structures). 

		And indeed, consider a point \(P \in D.\) Choose a uniformizer \(x\) of \(P\) in \(X.\) A parabolic structure for \(\mathcal{E}\) at \(P\) is a \(p\)-step filtration
		\[x^p\mathcal{E} = F^p \subseteq F^{p-1} \subseteq \cdots \subseteq F^0 = \mathcal{E}.\]
		The filtration is trivial away from \(P,\) so it suffices to define it only in a small enough neighborhood of \(P\) where the local coordinate \(x\) makes sense. 

		If this filtration is compatible with the \(\mathcal{D}_{X/S}\)-module structure, then \(x\) must act as degree 1. Thus
		\[F^i \subseteq x^iF^0 = x^i\mathcal{E}.\]
		
		Moreover, take some \(s \in F^i.\) Then \(x^{p-i}s \in F^p = x^p\mathcal{E},\) so we can write \(x^{p-i}s = x^pt\) for some \(t\in\mathcal{E}.\) As \(\mathcal{E}\) is torsion free, this implies \(s = x^it\), so \(x^i\mathcal{E} \subseteq F^i.\) We deduce that \(x^i\mathcal{E} = F^i,\) so our filtration is forced to be the filtration by order of divisibility by \(x.\) In particular, there is a unique way to put a parabolic structure on \(\mathcal{E}\) compatible with the \(\mathcal{D}\)-module structure, as desired.
\end{proof}

\begin{prop}[The de Rham BNR correspondence, apres Groechenig] \label{drbnr}
		Fix a \(k\)-scheme \(T,\) and \(a : T \to \mathcal{A}^{(1)}\) a \(T\)-point of the Hitchin base. Thinking of \(a\) as a degree \(r\) characteristic polynomial, we write \(a^p\) for the corresponding polynomial of degree \(pr,\) and we write \(C^{(1)}_{a^p}\) for the degree \(pr\) spectral curve associated to \(a^p.\) 

		The following three groupoids are canonically equivalent:
		\begin{enumerate}
				\item the groupoid \(h_{\dR}^{-1}(a),\)
				\item the groupoid \(\GG_1\) of torsion free, rank 1 coherent sheaves on \(C^{(1)}_{a^p}\) with a \(\mathcal{D}_{a^p}\)-module structure,
				\item the groupoid \(\GG_2\) of torsion free, rank \(p\) coherent sheaves on \(C^{(1)}_a\) with a \(\mathcal{D}_a\)-module structure.
		\end{enumerate}
\end{prop}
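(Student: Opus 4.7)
The plan is to reduce both claimed equivalences to the BNR correspondence (Proposition \ref{bnr}) applied on the Frobenius twist $X^{(1)}_T$, and then translate back to $X_T$ using Lemma \ref{lem1726}. The key observation is that, although a point $(\mathcal{F},\nabla)\in h_{\dR}^{-1}(a)$ lives on $X$ of rank $r$, its $F_{X/S}$-pushforward gives a rank $pr$ Higgs bundle on $X^{(1)}_T$ whose spectral support can be read off in two different ways, yielding (2) and (3) respectively.

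First I would construct the functor $(1)\to(3)$. Given $(\mathcal{F},\nabla)\in h_{\dR}^{-1}(a)$, the $p$-curvature $\psi_p\colon \mathcal{F}\to \mathcal{F}\otimes F_{X/S}^*\Omega^1_{X^{(1)}/S}$ is $\mathcal{O}_{X^{(1)}}$-linear after pushforward, so $(F_{X/S,*}\mathcal{F},F_{X/S,*}\psi_p)$ is a rank $pr$ Higgs bundle on $X^{(1)}_T$ valued in $\Omega^1_{X^{(1)}/S}$. The Cayley--Hamilton relation $a(\psi_p)=0$ on $X_T$ pushes forward to $a(F_{X/S,*}\psi_p)=0$ on $X^{(1)}_T$, so this Higgs bundle is scheme-theoretically supported on $C^{(1)}_a$, giving a torsion-free sheaf of rank $pr/r=p$ on $C^{(1)}_a$. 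The $\mathcal{D}_{X/S}$-action on $\mathcal{F}$ pushes to a $\mathcal{D}=F_{X/S,*}\mathcal{D}_{X/S}$-action on $F_{X/S,*}\mathcal{F}$, which restricts to the desired $\mathcal{D}_a$-action. For the inverse, starting from a rank $p$ torsion-free $\mathcal{D}_a$-module on $C^{(1)}_a$, push it forward to $X^{(1)}_T$ (a torsion-free coherent $\mathcal{D}$-module of rank $pr$) and apply Lemma \ref{lem1726} to obtain a unique $\mathcal{D}_{X/S}$-module $\mathcal{F}$ on $X_T$ with $F_{X/S,*}\mathcal{F}$ equal to this pushforward; a local rank count confirms $\mathcal{F}$ is a vector bundle of rank $r$, and the characteristic polynomial of its $p$-curvature is $a$ by construction.

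For $(1)\iff(2)$ I would run an essentially parallel argument, but now applying BNR (Proposition \ref{bnr}) to $(F_{X/S,*}\mathcal{F},F_{X/S,*}\psi_p)$ \emph{without} first reducing support to $C^{(1)}_a$: by Lemma \ref{lem1507} (with $d=1$) its characteristic polynomial is $a^p$, so BNR gives a torsion-free rank $1$ sheaf on $C^{(1)}_{a^p}$, and the $\mathcal{D}$-action restricts to a $\mathcal{D}_{a^p}$-structure. The inverse direction again uses Lemma \ref{lem1726}. The two functors $(1)\to(2)$ and $(1)\to(3)$ are related by pushforward along the closed immersion $C^{(1)}_a\hookrightarrow C^{(1)}_{a^p}$ and its left inverse on $\mathcal{D}$-modules forced by the relation $a(\theta)=0$ coming from Cayley--Hamilton on $X_T$.

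The main technical point to verify is that Proposition \ref{bnr} actually applies in our setting: $X^{(1)}_T$ is a multi-root stack over $X'_T$ (by Proposition \ref{prop1433}), and one needs to check that the spectral curves $C^{(1)}_a$ and $C^{(1)}_{a^p}$ are themselves multi-root stacks over honest spectral curves of $(X'_T,\Omega^1_{X'/k}(\log D'))$; this is exactly the content of Corollary \ref{lem808}, so Proposition \ref{bnr} applies verbatim. The only other subtle step is showing that the $\mathcal{D}$-module structure descends compatibly through the BNR equivalence, which follows formally because BNR identifies Higgs bundles with quasicoherent sheaves on the spectral curve via the $\mathrm{Sym}^{\bullet}(\omega^\vee)$-module structure, and the $\mathcal{D}$-action commutes with this.
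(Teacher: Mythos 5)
Your proof is essentially correct and draws on the same ingredients the paper uses (Frobenius pushforward, Lemma \ref{lem1507}, Proposition \ref{bnr}, Lemma \ref{lem1726}, Corollary \ref{lem808}), but the decomposition of the argument is a bit different. The paper first establishes $\GG_1 \heq \GG_2$ directly by pushing a rank-$p$ sheaf on $C^{(1)}_a$ forward to a rank-$pr$ Higgs bundle on $X^{(1)}_T$ with characteristic polynomial $a^p$ and descending to $C^{(1)}_{a^p}$, then shows $h_{\dR}^{-1}(a) \heq \GG_1$; it never constructs a direct $(1)\leftrightarrow(3)$ functor. You instead construct $(1)\to(3)$ directly, using the Cayley--Hamilton identity $a(\psi_p)=0$ (which you correctly observe pushes forward to $a(F_{X/S,*}\psi_p)=0$) to land the descended sheaf scheme-theoretically on the smaller spectral curve $C^{(1)}_a$, and then do $(1)\to(2)$ in parallel following the char-poly-$a^p$ route. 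Both routes are valid; yours buys a cleaner symmetry between the two target groupoids at the cost of needing to separately check that the two functors you produce are compatible with the $\GG_1\leftrightarrow\GG_2$ equivalence --- which you do correctly note is pushforward along $C^{(1)}_a\hookrightarrow C^{(1)}_{a^p}$, with the inverse well-defined exactly because the $\mathcal{D}_{a^p}$-module structure together with Lemma \ref{lem1726} forces $a(\theta)=0$ on any $\FF\in\GG_1$. One small caveat: the $(1)\to(3)$ descent is not literally an instance of Proposition \ref{bnr} as stated (which concerns rank-one sheaves on degree-$r$ spectral curves), but rather the same Schaub-style argument applied to rank-$p$ sheaves; the paper acknowledges the same point when it invokes ``analogously to \autoref{bnr}'' for $\GG_1\heq\GG_2$, so this is a shared feature rather than a gap specific to your argument.
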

\begin{remark}[Comparison to de Cataldo-Zhang \cite{dCZ}] 
		de Cataldo--Zhang (\cite{dCZ} Lemma 4.1) prove a very similar theorem, but without using these stacky spectral curves. The relationship between our two theorems is explained by the following remark: if \(\FF\) is a torsion free coherent sheaf on \(C'_a\) (the non-stacky spectral curve) with a \(\mathcal{D}_a\)-module structure, then there is a unique way to endow \(\FF\) with a parabolic structure compatible with the \(\mathcal{D}_a\)-module structure. Indeed, this is more or less the argument given to prove \autoref{lem1726}, which will be a crucial ingredient in our proof. 
\end{remark}
\begin{proof}
		Put \(\pi : C^{(1)}_a \to X^{(1)}_T\) the projection map, which is finite flat of degree \(r.\) 

		Given \(\FF \in \GG_2,\) we can form \(\pi_*\FF,\) which will be a vector bundle on \(X^{(1)}_T\) of rank \(pr\) carrying a Higgs field of characteristic polynomial \(a^p\) and an action of \(F_*\mathcal{D}_{X_T/S_T}.\) Descending to the cotangent bundle, analogously to \autoref{bnr} we can produce a torsion free, rank 1 coherent sheaf on \(C^{(1)}_{a^p}\) with a \(\mathcal{D}_{a^p}\)-module structure. In this way, essentially the same argument as \autoref{bnr} shows \(\GG_1 \heq \GG_2.\) 

		To conclude, we give an equivalence \(h_{\dR}^{-1}(a) \heq \GG_2.\) Given \((\mathcal{E}, \nabla) \in h_{\dR}^{-1}(a),\) the Frobenius pushforward \(F_*\mathcal{E}\) will be a vector bundle on \(X^{(1)}_T\) of rank \(pr\) carrying an action of \(F_*\mathcal{D}_{X_T/S_T}.\) 

		Moreover, the \(p\)-curvature \(\psi_p : \mathcal{E} \to \mathcal{E} \otimes F^*\omega_{X^{(1)}_T/S_T}\) gives us a morphism
		\[F_*(\psi_p) : F_*\mathcal{E} \to F_*\mathcal{E} \otimes \omega_{X^{(1)}_T/S_T}.\]
		Equivalently, this \(F_*\psi_p\) encodes the action of \(Z(F_*\mathcal{D}_{X_T/S_T}\) on \(F_*\mathcal{E}.\) By \autoref{lem1507}, the characteristic polynomial of \(F_*(\psi_p),\) so by \autoref{bnr} we can descend \(\mathcal{E}\) to a torsion free rank 1 coherent sheaf on \(C^{(1)}_{a^p}\) with a \(\mathcal{D}_{a^p}\)-module structure. Thus we have produced a fully faithful functor
		\[h_{\dR}^{-1}(a) \to \GG_1.\]

		For essential surjectivity, take \(\FF \in \GG_1.\) Let \(\pi' : C^{(1)}_{a^p} \to X^{(1)}_T\) be the projection map, which is finite flat of degree \(pr.\) Thus \(\pi'_*\FF\) is a rank \(pr\) vector bundle on \(X^{(1)}_T,\) carrying the structure of a \(F_*\mathcal{D}_{X_T/S_T}\)-module with \(p\)-curvature having characteristic polynomial \(a^p.\) By \autoref{lem1726}, we can choose \(\mathcal{E}\) a rank \(r\) vector bundle on \(X_T\) such that \(F_*\mathcal{E} \heq \pi'_*\FF,\) and moreover such that \(\mathcal{E}\) has a \(\mathcal{D}_{X/S}\)-module structure making \(F_*\mathcal{E} \heq \pi'_*\FF\) an equivalence of \(F_{X_T/S_T,*}\mathcal{D}_{X_T/S_T}\)-modules. This \(\mathcal{E}\) will then have \(p\)-curvature with characteristic polynomial \(a\) (as whatever its characteristic polynomial \(b\) is, we will have \(b^p = a^p,\) which implies \(b = a\) as \(b, a\) are polynomials with coefficients in an \(\F_p\)-vector space). This shows the essential surjectivity, so we conclude.
\end{proof}

We can now redescribe \(\mathcal{S}.\)
\begin{defn}
		We write \(\mathcal{M}_{\dR}^{\reg}\) for the stack of connections having \emph{regular \(p\)-curvature}: that is, those connections whose corresponding torsion free rank \(p\) coherent sheaf under \autoref{drbnr} is a rank \(p\) vector bundle. 
\end{defn}
\begin{lemma}[Splittings and regular connections] \label{regconnections}
		There is an equivalence \(\mathcal{S} \heq \mathcal{M}_{\dR}^{\reg}\) over \(\mathcal{A}^{(1)}.\)
\end{lemma}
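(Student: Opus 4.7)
The plan is to directly use the de Rham BNR correspondence of \autoref{drbnr}, which identifies $h_{\dR}^{-1}(a)$ with the groupoid $\GG_2$ of torsion free rank $p$ coherent sheaves on $C^{(1)}_a$ equipped with a $\mathcal{D}_a$-module structure. Under this identification, $\mathcal{M}_{\dR}^{\reg}$ corresponds to the full subgroupoid where the underlying coherent sheaf is actually a \emph{vector bundle} of rank $p$. The proof then amounts to matching this subgroupoid with $\mathcal{S}$.

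First I would construct a functor $\Phi : \mathcal{S} \to \mathcal{M}_{\dR}^{\reg}$. Given $(\mathcal{E}, \phi) \in \mathcal{S}(T)$, the canonical $\underline{\End}(\mathcal{E})$-module structure on $\mathcal{E}$ together with $\phi$ gives $\mathcal{E}$ the structure of a $\mathcal{D}_a$-module. As $\mathcal{E}$ is a rank $p$ vector bundle on $C^{(1)}_T$ (since $\phi$ being an isomorphism with the rank $p^2$ Azumaya algebra $\mathcal{D}_a$ forces $\mathcal{E}$ to be rank $p$ locally free), this pair lies in $\GG_2 \cap \{\text{vector bundles}\}$, hence corresponds via \autoref{drbnr} to a regular connection.

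For the inverse $\Psi : \mathcal{M}_{\dR}^{\reg} \to \mathcal{S}$, I would take a regular connection, transport it via \autoref{drbnr} to a rank $p$ vector bundle $\FF$ on $C^{(1)}_a$ with a $\mathcal{D}_a$-module structure, and then read off the induced algebra map $\phi : \mathcal{D}_a \to \underline{\End}(\FF)$. The essential step is to verify $\phi$ is an isomorphism. Both source and target are Azumaya of rank $p^2$: $\mathcal{D}_a$ by \autoref{logazumaya}, and $\underline{\End}(\FF)$ because $\FF$ is rank $p$ locally free. Since $\mathcal{D}_a$ is Azumaya and hence has no nontrivial two-sided ideals, $\phi$ is injective. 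To conclude it is an isomorphism, I would argue étale-locally on $C^{(1)}_a$: after an étale cover splitting $\mathcal{D}_a \cong M_p(\OO)$, Morita theory identifies finitely generated $M_p(\OO)$-modules with $\OO$-modules, so any locally free $M_p(\OO)$-module of $\OO$-rank $p$ is (locally) the standard module $\OO^p$, forcing $\phi$ to be an isomorphism locally and therefore globally.

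Finally I would check that $\Phi$ and $\Psi$ are quasi-inverse, which is purely formal given the equivalence of \autoref{drbnr}: both functors intertwine the ``forget to a $\mathcal{D}_a$-module'' construction, and the only additional data on either side is an isomorphism $\underline{\End}(\FF) \cong \mathcal{D}_a$, which is uniquely determined by the $\mathcal{D}_a$-action on $\FF$ (via the canonical map just discussed). I would also note that the equivalence is clearly compatible with the projection to $\mathcal{A}^{(1)}$, since the $a$-component of $(\mathcal{E}, \phi) \in \mathcal{S}$ records the spectral curve on which $\mathcal{E}$ lives, which matches the characteristic polynomial of $p$-curvature by construction of \autoref{drbnr}. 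The main (small) obstacle is the Morita-theoretic step verifying that an Azumaya algebra acting on a locally free sheaf of the minimal possible rank must do so via the endomorphism algebra; once that is established, the rest is bookkeeping via \autoref{drbnr}.
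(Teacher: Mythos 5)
Your proposal matches the paper's approach exactly: identify $\mathcal{M}_{\dR}^{\reg}$ with rank-$p$ vector bundles on $C^{(1)}_a$ carrying a $\mathcal{D}_a$-module structure via \autoref{drbnr}, then observe that such a structure is equivalent to an isomorphism $\underline{\End}(\mathcal{V}) \heq \mathcal{D}_a$, i.e.\ a splitting. The paper simply asserts this last equivalence, while you (correctly) supply the étale-local Morita argument verifying that the induced map $\mathcal{D}_a \to \underline{\End}(\mathcal{V})$ is forced to be an isomorphism; this is the right justification and not a different route.
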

\begin{proof}
		By definition, a splitting of \(\mathcal{D}_a\) is a pair of a rank \(p\) vector bundle \(\mathcal{V}\) on \(C^{(1)}_{a^p}\) together with an isomorphism \(\underline{\End}(\mathcal{V}) \heq \mathcal{D}_a.\) But such an identification is identical to a \(\mathcal{D}_a\)-module structure on \(\mathcal{V}\) and so we conclude. 
\end{proof}
\begin{cor} \label{algcor1911}
		\(\mathcal{S}\) is an algebraic stack over \(\mathcal{A}^{(1)}.\)
\end{cor}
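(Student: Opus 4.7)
The plan is to invoke Lemma \ref{regconnections}, which identifies $\mathcal{S}$ with $\mathcal{M}_{\dR}^{\reg}$ as $\mathcal{A}^{(1)}$-stacks, and then exhibit $\mathcal{M}_{\dR}^{\reg}$ as an open substack of the (algebraic) de Rham moduli stack $\mathcal{M}_{\dR}(X/S)$. Algebraicity of $\mathcal{S}$ then follows, since an open substack of an algebraic stack is algebraic.

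First I would show that $\mathcal{M}_{\dR}(X/S)$ is itself algebraic. By Theorem \ref{classicalcomparison}, a $T$-point of $\mathcal{M}_{\dR}(X/S)$ is a pair $(\mathcal{E},\nabla)$ with $\mathcal{E}$ a rank $r$ vector bundle on $X_T$ and $\nabla$ a flat logarithmic connection $\nabla : \mathcal{E} \to \mathcal{E}\otimes\Omega_{X_T}^1(\log D_T)$. The stack of rank $r$ vector bundles on the proper smooth curve $X$ is a classical algebraic stack; the space of logarithmic connections on a fixed $\mathcal{E}$ is a torsor (if nonempty) under the relatively affine scheme classifying $\underline{\Hom}(\mathcal{E}, \mathcal{E}\otimes\Omega^1_X(\log D))$; and the flatness condition on $\nabla$ cuts out a closed substack (it is the vanishing of a map between quasicoherent sheaves). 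Combining these three layers of structure gives algebraicity of $\mathcal{M}_{\dR}(X/S)$.

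Second I would show $\mathcal{M}_{\dR}^{\reg} \subseteq \mathcal{M}_{\dR}(X/S)$ is open. For a $T$-point $(\mathcal{E},\nabla)$ with image $a \in \mathcal{A}^{(1)}(T)$, the de Rham BNR correspondence of Proposition \ref{drbnr} produces a torsion-free rank $p$ coherent sheaf $\mathcal{F}$ on the $T$-family of spectral curves $C^{(1)}_a$; the regularity condition is exactly that $\mathcal{F}$ be locally free. Since $\mathcal{F}$ arises from $F_{X/S,*}\mathcal{E}$ via descent along the finite flat projection $\pi : C^{(1)}_a \to X^{(1)}_T$, with $F_{X/S}$ itself finite flat by Lemma \ref{lem777}, the sheaf $\mathcal{F}$ is finitely presented and flat over $T$; local freeness of a given rank for a finitely presented sheaf on a flat family is an open condition on the base, which gives the desired open immersion.

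The main obstacle is essentially bookkeeping: I would need to verify carefully that the sheaf $\mathcal{F}$ produced by Proposition \ref{drbnr} really is $T$-flat and that its formation commutes with arbitrary base change in $T$, so that fiberwise openness of local freeness upgrades to openness on the base. These verifications are routine once one unwinds the Frobenius pushforward together with the BNR descent of Proposition \ref{bnr}, but they are the only substantive checks required.
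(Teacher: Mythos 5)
Your proof is correct and takes essentially the same route as the paper: both arguments go through Lemma \ref{regconnections} to identify $\mathcal{S}$ with $\mathcal{M}_{\dR}^{\reg}$ and then appeal to algebraicity of the latter, the only difference being that the paper treats the algebraicity of $\mathcal{M}_{\dR}$ and $\mathcal{M}_{\dR}^{\reg}$ as known while you spell out the standard layered construction of $\mathcal{M}_{\dR}(X/S)$ and the openness-of-local-freeness argument for the regular locus.
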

\begin{proof}
		As \(\mathcal{M}_{\dR}, \mathcal{M}_{\dR}^{\reg}\) are both known to be algebraic stacks, this follows from \autoref{regconnections}. 
\end{proof}

\subsubsection{Smoothness of \(\mathcal{S}\to\mathcal{A}^{(1)}\)}

Now that we know \(\mathcal{S}\) is an algebraic stack, we can prove smoothness; this will make it much easier to show \(\mathcal{S}\) is a \(\mathcal{P}\)-torsor.

We need one preliminary lemma. 
\begin{lemma} \label{lem2056} 
		Let \(i : \mathcal{Y} \inclusion \mathcal{Y}'\) be a square zero extension of algebraic stacks of \emph{dimension 1}, and \(\mathcal{A}\) an Azumaya algebra on \(\mathcal{Y}'.\) Any splitting of \(i^*\mathcal{A}\) can be extended to a splitting of \(\mathcal{A}\) on \(\mathcal{Y}'.\) 
\end{lemma}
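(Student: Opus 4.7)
\emph{Proof plan.} My plan is to reduce the problem to a standard obstruction calculation in deformation theory, then exploit the dimension hypothesis to kill the obstruction. Let $\mathcal{I} := \ker(\OO_{\mathcal{Y}'} \to i_*\OO_{\mathcal{Y}})$; this is a quasicoherent $\OO_{\mathcal{Y}}$-module because $\mathcal{I}^2 = 0.$ A splitting of $i^*\mathcal{A}$ amounts to a vector bundle $\mathcal{V}_0$ on $\mathcal{Y}$ together with an algebra isomorphism $\phi_0 : \underline{\End}(\mathcal{V}_0) \heq i^*\mathcal{A},$ and a lift is the analogous data $(\mathcal{V}, \phi)$ on $\mathcal{Y}'$ restricting to $(\mathcal{V}_0, \phi_0).$

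First I would produce the lift locally. Smooth-locally on $\mathcal{Y}'$ the Azumaya algebra $\mathcal{A}$ is split and $\mathcal{V}_0$ is a trivial bundle, so a local lift $(\mathcal{V}_\alpha, \phi_\alpha)$ is immediate. The usual deformation-theoretic calculation then shows that two local lifts differ by an element of $\underline{\End}(\mathcal{V}_0) \otimes \mathcal{I},$ and that the obstruction to gluing the $(\mathcal{V}_\alpha, \phi_\alpha)$ into a global lift is a class in $H^2(\mathcal{Y}, \underline{\End}(\mathcal{V}_0) \otimes \mathcal{I}).$ Equivalently, one may argue via the $\G_m$-gerbe of splittings $\mathcal{S}(\mathcal{A}) \to \mathcal{Y}'$: the pullback map $\mathcal{S}(\mathcal{A}) \to \mathcal{S}(i^*\mathcal{A})$ is governed by a short exact sequence of sheaves of groups on $\mathcal{Y}$ whose kernel is the quasicoherent sheaf above, and the long exact sequence places the obstruction in the same degree-2 group.

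The final step is cohomological vanishing. Since $\mathcal{Y}$ is one-dimensional, quasicoherent cohomology on $\mathcal{Y}$ vanishes in degrees $\geq 2$; for the tame stacks appearing in our application (multiroot stacks over smooth curves), this follows by pushing forward to the coarse moduli space, which is a 1-dimensional scheme whose structure sheaf has cohomological dimension $\leq 1,$ while pushforward along the coarse moduli map is exact on quasicoherents in the tame case. Hence $H^2(\mathcal{Y}, \underline{\End}(\mathcal{V}_0) \otimes \mathcal{I}) = 0,$ the obstruction vanishes, and the given splitting of $i^*\mathcal{A}$ extends.

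The main obstacle I expect is pinning down the obstruction class and coefficient sheaf precisely, so that the stacky cohomological-dimension-1 statement applies to the correct quasicoherent sheaf. Both ingredients are classical in the Azumaya-algebra literature once tameness is available, so no essential novelty should be required beyond careful bookkeeping of the local-to-global gluing.
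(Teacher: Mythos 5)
Your proposal is correct and takes a genuinely different route from the paper's. The paper argues through the Brauer group: it invokes the inclusion $\Br(\mathcal{Y}') \hookrightarrow H^2(\mathcal{Y}', \G_m)$ and proves the restriction map $H^2(\mathcal{Y}', \G_m) \to H^2(\mathcal{Y}, \G_m)$ is injective, using the short exact sequence $0 \to \II \xrightarrow{\exp} \OO_{\mathcal{Y}'}^\times \to i_*\OO_{\mathcal{Y}}^\times \to 0$ (for $\II$ the square-zero ideal) together with the vanishing $H^2(\mathcal{Y}', \II) = 0$; since the Brauer class of $\mathcal{A}$ dies on $\mathcal{Y}$, injectivity forces it to die on $\mathcal{Y}'$, so $\mathcal{A}$ splits. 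You instead run the obstruction theory directly. Both arguments rest on exactly the same cohomological input -- $H^2$ of a quasicoherent sheaf on the $1$-dimensional stack vanishes (and in fact $H^2(\mathcal{Y}', \II) = H^2(\mathcal{Y}, \II)$ since $\II$ is an $\OO_{\mathcal{Y}}$-module) -- but they package it differently. Your route is slightly more faithful to the statement: the paper's argument as literally written shows only that $\mathcal{A}$ admits \emph{some} splitting on $\mathcal{Y}'$, leaving implicit the small torsor-twisting step (transitivity of line bundles on splittings plus surjectivity of $\Pic(\mathcal{Y}') \to \Pic(\mathcal{Y})$, itself from the same $\exp$ sequence) needed to conclude that a \emph{given} splitting of $i^*\mathcal{A}$ extends. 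Your obstruction argument targets that precise claim directly.

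One minor imprecision worth fixing: you place the gluing obstruction in $H^2(\mathcal{Y}, \underline{\End}(\mathcal{V}_0) \otimes \II)$, which is the right group for deforming $\mathcal{V}_0$ as a bare bundle. But once the algebra isomorphism $\phi_0$ is part of the data, Skolem--Noether cuts the relevant automorphism sheaf down to $1 + \II$, and the obstruction lies in $H^2(\mathcal{Y}, \II)$ -- exactly what your $\G_m$-gerbe reformulation produces. The two coefficient sheaves are not interchangeable, though both are quasicoherent and both have vanishing $H^2$ here, so the conclusion is unharmed; you should just pick one description rather than asserting they land in ``the same'' degree-$2$ group.
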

\begin{proof}
		There is a natural injection \(\Br(\mathcal{Y}') \inclusion H^2(\mathcal{Y}', \G_m)\) for any algebraic stack (see Remark 2.19 of \cite{achenjang}). 

		Thus it suffices to prove that the map \(i^* : H^2(\mathcal{Y}', \G_m) \to H^2(\mathcal{Y}, \G_m)\) is injective. Fortunately for us, this basically follows from an argument of Groechenig (specifically, the bottom of page 17 of \cite{groechenig}). Let \(\mathcal{Y}\) be cut out by the quasicoherent sheaf of ideals \(\II\) with \(\II^2 = 0\). Then there is a short exact sequence
		\[0 \to \II \xto{\exp} \OO_{\mathcal{Y}'}^{\times} \to i_*\OO_{\mathcal{Y}}^{\times} \to 0,\]
		where \(\exp : \II \to \OO_{\mathcal{Y}'}^{\times}\) is defined by \(\exp(f) = 1+f\); this is a homomorphism since \(\II^2 = 0.\) 

		Thus the injectivity of \(H^2(\mathcal{Y}', \G_m) \to H^2(\mathcal{Y}, \G_m)\) is immediate from the fact that \(H^2(\mathcal{Y}', \II) = 0,\) which is true by our dimension assumption as \(\II\) is quasicoherent.
\end{proof}

\begin{prop} \label{smooth1243} 
		The morphisms \(\mathcal{P} \to \mathcal{A}^{(1)}\) and \(\mathcal{S} \to \mathcal{A}^{(1)}\) are both smooth (but not representable!).
\end{prop}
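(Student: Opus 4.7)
\emph{Proof plan.} The plan is to prove smoothness by combining local finite presentation with the infinitesimal lifting criterion, exploiting crucially that the universal spectral curves are relatively 1-dimensional over \(\mathcal{A}^{(1)}.\) Local finite presentation is immediate: \(\mathcal{P}\) is the relative Picard stack of a proper flat family of 1-dimensional algebraic stacks over the finite-type scheme \(\mathcal{A}^{(1)}\) (via the Lieblich citation above), and \(\mathcal{S} \heq \mathcal{M}_{\dR}^{\reg}\) by \autoref{regconnections} is an open substack of the algebraic stack \(\mathcal{M}_{\dR},\) hence also locally of finite presentation.

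For formal smoothness, I would fix a square-zero closed immersion \(i : T \inclusion T'\) of affine \(\mathcal{A}^{(1)}\)-schemes with ideal \(\mathcal{I},\) inducing a square-zero extension \(j : C^{(1)}_T \inclusion C^{(1)}_{T'}\) of 1-dimensional algebraic stacks (since the universal spectral curve is finite over the relative-dimension-one stack \(X^{(1)}_{\mathcal{A}^{(1)}}\)). For \(\mathcal{P},\) given a line bundle \(\LL\) on \(C^{(1)}_T,\) I would invoke the exponential sequence
\[0 \to j_*(\mathcal{I}|_{C^{(1)}_T}) \xto{\exp} \OO_{C^{(1)}_{T'}}^\times \to j_*\OO_{C^{(1)}_T}^\times \to 0\]
(well-defined since the ideal squares to zero), whose associated long exact sequence locates the obstruction to lifting \(\LL\) in \(H^2(C^{(1)}_T, \mathcal{I}|_{C^{(1)}_T}).\) This quasicoherent cohomology vanishes since \(C^{(1)}_T\) is a Noetherian algebraic stack of dimension one.

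For \(\mathcal{S},\) using the identification \(\mathcal{S} \heq \mathcal{M}_{\dR}^{\reg}\) a \(T\)-point lying over some \(a \in \mathcal{A}^{(1)}(T)\) is exactly a splitting of the Azumaya algebra \(\mathcal{D}_a\) on \(C^{(1)}_T,\) and the desired lift to \(T'\) is produced directly by \autoref{lem2056} applied to the square-zero extension \(j\) and to \(\mathcal{D}_a\) on \(C^{(1)}_{T'}.\) I expect the main point of delicacy to be ensuring the dimension-one vanishing of quasicoherent \(H^2\) on the stacky spectral curves \(C^{(1)}_T\) (which are multi-root stacks, not schemes); this reduces via their finite coarse moduli morphisms to the classical Grothendieck vanishing on 1-dimensional Noetherian schemes, and is of the same flavor as (but simpler than) the \(\G_m\)-cohomology computation already carried out in \autoref{tsen}.
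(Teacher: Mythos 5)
Your proposal is correct and follows essentially the same route the paper takes: the paper cites the Stacks Project tag 0DPK for the Picard-stack side (whose proof is precisely the exponential-sequence and \(H^2\)-vanishing argument you spell out) and then invokes \autoref{lem2056} to make the analogous lifting argument go through for \(\mathcal{S}\); you have simply unpacked what tag 0DPK does internally and stated the reduction to coarse moduli explicitly. The one small presentational caution is your phrase that \(C^{(1)}_T\) ``is a Noetherian algebraic stack of dimension one'' --- when \(T\) is a nontrivial affine scheme \(C^{(1)}_T\) has higher total dimension, and the vanishing you actually need is that the fibers of \(C^{(1)}_T \to T\) are one-dimensional and \(T\) is affine, so that \(R^q\) pushes forward trivially for \(q \geq 2\) and affineness of \(T\) kills the rest; this is of course exactly what the Stacks Project argument uses, and what the proof of \autoref{lem2056} in the paper relies on as well.
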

\begin{proof}
		This is proven for \(\mathcal{P}\) in tag 0DPK of the Stacks project \cite{stacks} under a few more assumptions; however, the argument given in the Stacks project still works for us in light of our \autoref{lem2056} and the fact that \(\mathcal{S}, \mathcal{P}\) are still algebraic stacks in our situation. 
\end{proof}

Finally, we may conclude.

\begin{theorem} \label{thm1436} 
		The action of \(\mathcal{P}\) on \(\mathcal{S}\) makes \(\mathcal{S}\) into an etale \(\mathcal{P}\)-torsor.
\end{theorem}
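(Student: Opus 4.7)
The plan is to combine three ingredients already established in the excerpt: simple transitivity of the $\mathcal{P}$-action on $\mathcal{S}$ from \autoref{quasitorsor}, smoothness of $\mathcal{S}\to\mathcal{A}^{(1)}$ from \autoref{smooth1243}, and Tsen's theorem producing splittings of $\mathcal{D}_a$ over algebraically closed field points from \autoref{tsen}. Since simple transitivity is already done, everything reduces to proving non-emptiness of $\mathcal{S}$ etale-locally on $\mathcal{A}^{(1)}$.

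First, I would fix an affine test scheme $T$ with a map $a : T \to \mathcal{A}^{(1)}$ and aim to produce an etale cover $T' \to T$ on which $\mathcal{S}$ acquires a section. For any geometric point $\bar t \to T$, the fiber $C^{(1)}_{\bar t}$ is a spectral curve over an algebraically closed field, so $\Br(C^{(1)}_{\bar t}) = 0$ by \autoref{tsen}; this produces a splitting of $\mathcal{D}_{a(\bar t)}$, i.e.\ a point of $\mathcal{S}(\bar t)$ lying over $a(\bar t)$.

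Second, I would invoke \autoref{smooth1243} to spread these pointwise splittings to etale-local ones. The map $\mathcal{S} \to \mathcal{A}^{(1)}$ is a smooth morphism of algebraic stacks whose fibers are non-empty at every geometric point; the standard fact that such morphisms admit sections etale-locally on the base (reduce via a smooth cover of $\mathcal{A}^{(1)}$ by a scheme to the classical statement that a smooth surjection of schemes admits etale-local sections, which in turn follows from the infinitesimal lifting criterion together with Hensel's lemma) then supplies the required cover $T' \to T$. Combined with simple transitivity from \autoref{quasitorsor}, this exhibits $\mathcal{S}$ as an etale $\mathcal{P}$-torsor.

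The main obstacle is essentially bookkeeping, since all substantive inputs are already in place; the only genuine nuance is checking that the ``smooth plus fiberwise non-empty over geometric points implies etale-locally non-empty'' principle applies in the algebraic-stack setting, which it does after reducing via smooth covers to the scheme-theoretic statement. It is worth noting that this is the place in the proof where the smoothness assertion of \autoref{smooth1243} (which itself leaned on the algebraicity established through the de Rham BNR correspondence \autoref{drbnr} and \autoref{regconnections}) is really being used; without it, Tsen alone would only give pointwise non-emptiness and not a torsor structure.
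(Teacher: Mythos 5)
Your proposal is correct and follows the paper's own argument essentially verbatim: reduce to etale-local non-emptiness via \autoref{quasitorsor}, obtain surjectivity of $\mathcal{S}\to\mathcal{A}^{(1)}$ from Tsen's theorem (\autoref{tsen}), and combine with smoothness (\autoref{smooth1243}) to extract etale-local sections. The only difference is that you spell out the ``smooth plus surjective implies etale-local sections'' step in more detail, which the paper leaves implicit.
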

\begin{proof}
		By \autoref{quasitorsor}, we only need show non-emptiness etale locally on \(\mathcal{A}^{(1)}.\) But \autoref{smooth1243} shows \(\mathcal{S} \to \mathcal{A}^{(1)}\) is smooth, and \autoref{tsen} implies it is surjective; so etale locally on \(\mathcal{A}^{(1)},\) the map \(\mathcal{S} \to \mathcal{A}^{(1)}\) admits a section. 
\end{proof}
\begin{remark}
		In de Cataldo-Zhang \cite{dCZ}, the exact same stack \(\mathcal{S}\) appears (though they call it \(\mathcal{H}\)). However, they are not able to prove a torsor property for \(\mathcal{S},\) essentially because they don't use stacky spectral curves. As explained in Lopez \cite{lopez}, a stacky curve in general has a larger Picard group than its coarse moduli space. de Cataldo-Zhang \cite{dCZ} only work with the coarse moduli spaces of our spectral curves, and hence their Picard groups are smaller, and so the action of their Picard groups on \(\mathcal{S}\) is not transitive. We will revisit this point in more detail in \autoref{sec2085} and \autoref{sec2086}.
\end{remark}

\subsection{The more precise comparison}

Now that we understand \(\mathcal{S},\) we can state a more precise version of \autoref{thm619}. 

\begin{construction}[Action of \(\mathcal{P}\) on \(\mathcal{M}_{\Dol}(X^{(1)})\)] 
		Fix \(T\) a \(k\)-scheme and \(a : T \to \mathcal{A}^{(1)}\) a morphism. A \(T\)-point of \(\mathcal{P}\) is, by definition, a line bundle \(\LL\) on \(C^{(1)}_a\); and a \(T\)-point of \(\mathcal{M}_{\Dol}(X^{(1)})\) is, by the BNR correspondence \autoref{bnr}, equivalent to a torsion free rank 1 coherent sheaf \(\FF\) on \(C^{(1)}_a.\) The tensor product \(\LL \otimes \FF\) is still a torsion free rank 1 coherent sheaf on \(C^{(1)}_a,\) and hence by \autoref{bnr} is a \(T\)-point of \(\mathcal{M}_{\Dol}(X^{(1)}).\) In this way, we get an action of \(\mathcal{P}\) on \(\mathcal{M}_{\Dol}(X^{(1)}).\) 
\end{construction}

We will now seek to prove the following more precise variant of \autoref{thm619}. 
\begin{theorem} \label{thm1447} 
		There is an equivalence of \(\mathcal{A}^{(1)}\)-stacks
		\[\mathcal{S} \times^{\mathcal{P}} \mathcal{M}_{\Dol}(X^{(1)}) \heq \mathcal{M}_{\dR}(X).\] 
\end{theorem}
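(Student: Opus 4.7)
The plan is to construct a natural morphism
\[\Psi : \mathcal{S} \times_{\mathcal{A}^{(1)}} \mathcal{M}_{\Dol}(X^{(1)}) \to \mathcal{M}_{\dR}(X)\]
of $\mathcal{A}^{(1)}$-stacks by \emph{tensoring a splitting with a spectral sheaf}, show that $\Psi$ is $\mathcal{P}$-equivariant so that it descends to a morphism
\[\bar{\Psi} : \mathcal{S} \times^{\mathcal{P}} \mathcal{M}_{\Dol}(X^{(1)}) \to \mathcal{M}_{\dR}(X),\]
and finally prove $\bar\Psi$ is an equivalence by descent along the etale $\mathcal{P}$-torsor $\mathcal{S} \to \mathcal{A}^{(1)}$ supplied by \autoref{thm1436}.

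To define $\Psi$, fix a $k$-scheme $T$ and $a : T \to \mathcal{A}^{(1)}$. A $T$-point of the source is a splitting $(\mathcal{V}, \phi : \underline{\End}(\mathcal{V}) \heq \mathcal{D}_a)$ of the Azumaya algebra on $C^{(1)}_a$ together with a $T$-point of $h_{\Dol}^{-1}(a)$, which by \autoref{bnr} amounts to a torsion-free rank~$1$ coherent sheaf $\FF$ on $C^{(1)}_a$. I would send this pair to $\mathcal{V} \otimes_{\OO_{C^{(1)}_a}} \FF$, a torsion-free rank~$p$ coherent sheaf on $C^{(1)}_a$, carrying the $\mathcal{D}_a$-module structure coming from $\phi$ acting on the first factor. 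By part (3) of the de Rham BNR correspondence \autoref{drbnr}, such data corresponds to a $T$-point of $\mathcal{M}_{\dR}(X)$ over $a$, and this defines $\Psi$. The $\mathcal{P}$-equivariance is then immediate: for $\LL \in \mathcal{P}(T)$,
\[\Psi(\LL \cdot (\mathcal{V}, \phi), \FF) = (\mathcal{V} \otimes \LL) \otimes \FF \heq \mathcal{V} \otimes (\LL \otimes \FF) = \Psi((\mathcal{V}, \phi), \LL \cdot \FF),\]
so $\Psi$ descends to $\bar\Psi$.

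To show $\bar\Psi$ is an equivalence, I would pull back along the etale cover $\mathcal{S} \to \mathcal{A}^{(1)}$, which trivializes the torsor and identifies $\bar\Psi \times_{\mathcal{A}^{(1)}} \mathcal{S}$ with the map $\mathcal{S} \times_{\mathcal{A}^{(1)}} \mathcal{M}_{\Dol}(X^{(1)}) \to \mathcal{S} \times_{\mathcal{A}^{(1)}} \mathcal{M}_{\dR}(X)$ sending $((\mathcal{V}, \phi), \FF)$ to $((\mathcal{V}, \phi), \mathcal{V} \otimes \FF)$. By etale descent, it suffices to verify that this base-changed map is an equivalence. Over any $T$-point of $\mathcal{S}$, the functor $\FF \mapsto \mathcal{V} \otimes_{\OO_{C^{(1)}_a}} \FF$ is the Morita equivalence between $\OO_{C^{(1)}_a}$-modules and $\mathcal{D}_a$-modules afforded by the splitting $\phi$, and it exchanges torsion-free rank~$1$ sheaves with torsion-free rank~$p$ $\mathcal{D}_a$-modules, matching $\mathcal{M}_{\Dol}$-points with $\mathcal{M}_{\dR}$-points via \autoref{bnr} and \autoref{drbnr}.

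The main obstacle is bookkeeping rather than a single deep step: I have to verify that the BNR identifications of \autoref{bnr} and \autoref{drbnr} are functorial in $T$ and compatible with the tensor-product construction defining $\Psi$, and that Morita equivalence by $\mathcal{V}$ preserves torsion-freeness with the expected rank change. Once these compatibilities are recorded, the $\mathcal{P}$-equivariance and the descent argument are essentially formal.
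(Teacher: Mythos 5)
Your proof is correct and takes essentially the same route as the paper: the paper's Lemma~\ref{lem1683} constructs a $\mathcal{P}$-equivariant equivalence $\mathcal{S}\times_{\mathcal{A}^{(1)}}\mathcal{M}_{\Dol}\heq\mathcal{S}\times_{\mathcal{A}^{(1)}}\mathcal{M}_{\dR}$ via exactly the Morita argument $\FF\mapsto\mathcal{V}\otimes\FF$ and then quotients by $\mathcal{P}$ using $\mathcal{S}/\mathcal{P}\heq\mathcal{A}^{(1)}$, which is the same construction you give in transposed order (descending $\Psi$ first, then checking equivalence after pullback). One small note: \autoref{thm1436} makes $\mathcal{S}\to\mathcal{A}^{(1)}$ a smooth surjection admitting sections etale locally, not literally an etale morphism, so you should either base change along a genuine etale scheme cover $U\to\mathcal{A}^{(1)}$ trivializing the torsor or invoke smooth descent for checking equivalences of stacks; the argument goes through either way.
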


Before proving \autoref{thm1447}, we explain why it implies \autoref{thm619}. 
\begin{proof}[Proof of \autoref{thm619}] 
		By \autoref{thm1436}, \(\mathcal{S}\) is an etale \(\mathcal{P}\)-torsor. Hence after some etale cover \(U\to \mathcal{A}^{(1)}\) we have \(\mathcal{S}_U \heq \mathcal{P}_U,\) and so after pulling back to \(U,\) \autoref{thm1447} becomes
		\[\mathcal{M}_{\Dol}(X^{(1)}) \times_{\mathcal{A}^{(1)}} U \heq \mathcal{S}_U \times^{\mathcal{P}_U} (\mathcal{M}_{\Dol}(X^{(1)}) \times_{\mathcal{A}^{(1)}} U) \heq \mathcal{M}_{\dR}(X) \times_{\mathcal{A}^{(1)}} U,\]
		as desired. 
\end{proof}

To prove \autoref{thm619}, we will follow the strategy of Groechenig \cite{groechenig} in the non-logarithmic case. Thus we first prove a \(\mathcal{P}\)-equivariant analogue of \autoref{thm619}, and then take a quotient.
\begin{lemma}[Logarithmic version of Groechenig \cite{groechenig} Theorem 3.29] \label{lem1683}
		There is an isomorphism 
		\begin{equation} \label{eq1685} \phi : \mathcal{S} \times_{\mathcal{A}^{(1)}} \mathcal{M}_{\Dol} \heq \mathcal{S} \times_{\mathcal{A}^{(1)}} \mathcal{M}_{\dR}\end{equation}
		of \(\mathcal{A}^{(1)}\)-stacks. 
\end{lemma}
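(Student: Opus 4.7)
The plan is to combine the BNR correspondences of \autoref{bnr} (on the Dolbeault side) and \autoref{drbnr} (on the de Rham side) with the tautological splitting supplied over $\mathcal{S}$, in order to set up a Morita equivalence that fiberwise intertwines the two moduli problems.

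Fix a $k$-scheme $T$ and a $T$-point $(s, a)$ of $\mathcal{S}$, so $a : T \to \mathcal{A}^{(1)}$ and $s$ provides a rank $p$ vector bundle $\mathcal{V}$ on the stacky spectral curve $C^{(1)}_a$ together with an isomorphism $\underline{\End}(\mathcal{V}) \heq \mathcal{D}_a$. By \autoref{bnr}, the fiber of $\mathcal{M}_{\Dol}$ over $a$ is equivalent to the groupoid of torsion-free rank $1$ coherent sheaves on $C^{(1)}_a$, and by \autoref{drbnr} the fiber of $\mathcal{M}_{\dR}$ over $a$ is equivalent to the groupoid of torsion-free rank $p$ coherent $\mathcal{D}_a$-modules on $C^{(1)}_a$.

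I would then define $\phi$ on $T$-points by the Morita functor
\[
(s, \FF) \longmapsto \bigl(s,\ \mathcal{V} \otimes_{\OO_{C^{(1)}_a}} \FF\bigr),
\]
whose output is a torsion-free rank $p$ sheaf on $C^{(1)}_a$ carrying a canonical $\underline{\End}(\mathcal{V}) \heq \mathcal{D}_a$-module structure, hence a $T$-point of $\mathcal{S} \times_{\mathcal{A}^{(1)}} \mathcal{M}_{\dR}$ over $a$. The inverse sends $(s, \GG)$ to $\bigl(s,\ \underline{\Hom}_{\mathcal{D}_a}(\mathcal{V}, \GG)\bigr)$, and the fact that these functors are mutually inverse is the standard Morita equivalence between $\mathcal{D}_a$-modules and $\OO_{C^{(1)}_a}$-modules supplied by the splitting. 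Base change compatibility in $T$ is automatic since both $\otimes$ and $\underline{\Hom}$ commute with flat pullback of the spectral curve, and the universal splitting pulls back to a splitting.

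The main obstacle will be checking that the Morita functors preserve torsion-freeness and the correct ranks on the \emph{stacky} spectral curves (recall from \autoref{lem808} that $C^{(1)}_a$ is a multi-root stack over an ordinary spectral curve). This is standard in the scheme setting but requires a small amount of care in the stacky situation; it can be handled either by pushing forward to the coarse moduli space of $C^{(1)}_a$ and invoking the corresponding fact for schemes (noting that a splitting of an Azumaya algebra pushes forward to a splitting of its pushforward), or by unpacking the parabolic bundle description of Borne \cite{borne2007} and running the Morita equivalence filtration-by-filtration.
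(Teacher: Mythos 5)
Your proposal is correct and takes essentially the same approach as the paper: the forward map is $\LL \mapsto \LL \otimes \mathcal{V}$ with the $\mathcal{D}_a$-module structure induced by the splitting $\phi$, and the inverse is $\GG \mapsto \underline{\Hom}_{\mathcal{D}_a}(\mathcal{V}, \GG)$, exactly the Morita equivalence you describe. You are somewhat more careful than the paper about checking ranks and torsion-freeness on the stacky spectral curve (the paper simply asserts these), but the underlying argument is the same.
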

\begin{proof}
		Let \(T\) be a test scheme, equipped with a map \(a : T \to \mathcal{A}^{(1)}\). The \(T\)-points of \(\mathcal{S} \times_{\mathcal{A}^{(1)}} \mathcal{M}_{\Dol}\) form the groupoid of triples \((\mathcal{V}, \phi : \End(\mathcal{V}) \heq \mathcal{D}_a, \LL),\) where \((\mathcal{V}, \phi)\) is a splitting of the Azumaya algebra \(\mathcal{D}_a\) on the spectral curve \(C^{(1)}_a,\) and \(\LL\) is a rank 1 torsion free coherent sheaf on \(C^{(1)}_a.\) 

		By \autoref{drbnr}, the groupoid of \(T\)-points of \(\mathcal{S} \times_{\mathcal{A}^{(1)}} \mathcal{M}_{\dR}\) is the groupoid of triples \((\mathcal{V}, \phi : \End(\mathcal{V}) \heq \mathcal{D}_a, \mathcal{E}),\) where \((\mathcal{V}, \phi)\) is a splitting of \(\mathcal{D}_a\) and \(\mathcal{E}\) is a torsion free, rank \(p\) coherent sheaf on \(C^{(1)}_a\) together with a \(\mathcal{D}_a\)-module structure. 

		Define 
		\[\phi(T)(\mathcal{V}, \phi, \LL) = (\mathcal{V}, \phi, \LL \otimes \mathcal{V}).\]
		Note that \(\LL \otimes \mathcal{V}\) is torsionfree of rank \(p,\) and has a natural action of \(\End(\mathcal{V}),\) which thanks to \(\phi\) furnishes a \(\mathcal{D}_a\)-module structure on \(\mathcal{E} \otimes \mathcal{V}.\) This is how we view \((\mathcal{V}, \phi, \LL\otimes\mathcal{V})\) as a \(T\)-point of \(\mathcal{S}\times_{\mathcal{A}^{(1)}} \mathcal{M}_{\dR}.\) 

		To see essential surjectivity of \(\phi(T),\) take some a \(T\)-point \((\mathcal{V}, \phi, \mathcal{E})\) on the de Rham side. Then \(\LL := \underline{\Hom}_{\mathcal{D}_a}(\mathcal{V}, \mathcal{E})\) will be torsion free of rank 1, and the evaluation map \(\LL \otimes \mathcal{V} \to \mathcal{E}\) is an euqivalence. The full faithfulness of \(\phi(T)\) is similar, and so we conclude. 
\end{proof}

\subsubsection{The proof of \autoref{thm1447}}

To conclude our proof of \autoref{thm1447}, we will essentially quotient the equivalence \autoref{eq1685} by \(\mathcal{P}.\) This strategy is slightly different than Groechenig's \cite{groechenig} original approach, and is essentially identical to the non-logarithmic argument made by de Cataldo--Groechenig--Zhang \cite{dCGZ} in their Section 4.2. 

First, observe that \(\mathcal{P}\) has natural actions on both sides of \autoref{eq1685}: multiplication by \(\mathcal{T} \in \mathcal{P}(T)\) can be defined to 
\begin{enumerate}
		\item send \((\mathcal{V}, \phi, \LL)\) to \((\mathcal{V} \otimes \mathcal{T}, \phi, \LL \otimes \mathcal{T}^{-1}),\)
		\item send \((\mathcal{V}, \phi, \mathcal{E})\) to \((\mathcal{E} \otimes \mathcal{T}, \phi, \mathcal{E}).\) 
\end{enumerate}
With these actions, the isomorphism \(\phi\) of \autoref{lem1683} is \(\mathcal{P}\)-equivariant. Hence, quotienting by \(\mathcal{P},\) it induces an isomorphism of stacks
\[\mathcal{S}\times^{\mathcal{P}} \mathcal{M}_{\Dol} \heq (\mathcal{S}/\mathcal{P}) \times_{\mathcal{A}^{(1)}} \mathcal{M}_{\dR}.\]

As \(\mathcal{S}\) is a \(\mathcal{P}\)-torsor, the quotient \(\mathcal{S}/\mathcal{P} \heq \mathcal{A}^{(1)},\) and hence we deduce \autoref{thm1447}. 

\subsection{de Cataldo--Zhang's Hitchin base} \label{sec2085}

In the rest of this section, we will explain how to use our logarithmic non-abelian Hodge theorem to prove de Cataldo--Zhang's \cite{dCZ} logarithmic non-abelian Hodge theorem. 

To start, we of course must \emph{state} their theorem. This requires us to introduce a \emph{new} Hitchin base. In the logarithmic setting, besides \(p\)-curvature, the de Rham side has another linear invariant: the \emph{residue} of a connection. 

By \autoref{residuerelation}, each \(P_i \in D\) gives us a map
\[(\pt/B\G_m)^{\hat{\dR}} \xto{P_i^{\hat{\dR}}} X^{\hat{\dR}}(\log D),\]
pullback along which realizes residue. Applying \autoref{hitchinfunctoriality} to these morphisms, we produce a map
\[\mathcal{A}^{(1)} \to \prod_{P_i \in D} \A^r,\]
where we view \(\A^r\) as the Hitchin base on the point. There are similarly natural maps
\[h_i : \mathcal{M}_{\dR}(X) \to \A^r\]
for each \(P_i\in D,\) sending a connection to the characteristic polynomial of its residue at \(P_i.\) By \autoref{lem1662}, the characteristic polynomial of the residue at \(P_i,\) and the characteristic polynomial of \(\psi_p\) at \(P_i,\) are tightly related. This inspired de Cataldo-Zhang \cite{dCZ} to introduce the following modification of the Hitchin base. 

\begin{defn}[de Cataldo--Zhang's \cite{dCZ} Hitchin base]
		Following de Cataldo--Zhang \cite{dCZ}, we let \(\mathcal{A}^{\DCZ}\) denote the fiber product
		\begin{center}
				\begin{tikzcd}
						\mathcal{A}^{\DCZ} \ar[r] \ar[d] & \mathcal{A}^{(1)} \ar[d] \\
						\prod_{P_i \in D} \A^r \ar[r, "\AS"] & \prod_{P_i \in D} \A^r,
				\end{tikzcd}
		\end{center}
		where the map \(\AS\) is obtained as follows: viewing an element of \(\A^r\) as the coefficients of a degree \(r\) monic polynomial \(p\), let \(\AS(p)\) be the coefficients of the polynomial whose roots are these Artin-Scrheier images of the roots of \(p.\)
\end{defn}
\begin{remark}
		As mentioned above, \autoref{lem1662} implies that the maps \(h_{\dR}\) and \(h_i\) give rise to a map
		\[h_{\DCZ} : \mathcal{M}_{\dR}(X, r) \to \mathcal{A}^{\DCZ},\]
		which we call \emph{de Cataldo--Zhang's Hitchin morphism}.
\end{remark}

The stack of splittings \(\mathcal{S}\) still appears in de Cataldo--Zhang's logarithmic non-abelian Hodge theorem. However, they use a smaller Picard group.
\begin{defn}
		We let \(\mathcal{P}^{\DCZ}\) denote the \(\mathcal{A}^{(1)}\)-stack whose functor of points sends \(a : T \to \mathcal{A}^{(1)}\) to the groupoid of line bundles on \(C'_a,\) which recall is the \emph{non-stacky} spectral curve obtained as the coarse moduli space of our stacky spectral curve \(C^{(1)}_a.\) 
\end{defn}
\begin{remark}
		There is a natural inclusion \(\mathcal{P}^{\DCZ}\inclusion\mathcal{P},\) but in general the stacky spectral curve \(C^{(1)}_a\) can have many more line bundles than \(C'_a\); see for instance Theorem 1.1 of Lopez's \cite{lopez}. 
\end{remark}

de Cataldo--Zhang also use a smaller Dolbeault side than we do.
\begin{defn}
		We let \(\mathcal{M}_{\Dol}^{\DCZ} \inclusion \mathcal{M}_{\Dol}\) denote the \(\mathcal{A}^{(1)}\)-stack whose groupoid of \(T\)-points is the full subgroupoid of \(\mathcal{M}_{\Dol}(T)\) spanned by those torsion-free rank 1 coherent sheaves on \(C^{(1)}_T\) which have trivial parabolic structure (that is, are pulled back from \(C'_T\)). 
\end{defn}

Finally, we can state de Cataldo--Zhang's theorem.
\begin{remark}
		Identifying \(\mathcal{S} \heq \mathcal{M}_{\dR}^{\reg}\) as in \autoref{regconnections} and using de Cataldo--Zhang's Hitchin morphism \(\mathcal{M}_{\dR} \xto{h_{\DCZ}} \mathcal{A}^{\DCZ},\) we can view \(\mathcal{S}\) as a \(\mathcal{A}^{\DCZ}\)-stack.
\end{remark}
\begin{theorem}[de Cataldo--Zhang \cite{dCZ}] \label{dczmainthm}
		Let \(\widetilde{\mathcal{M}_{\Dol}^{\DCZ}}\) denote the base change of \(\mathcal{M}_{\Dol}^{\DCZ}\) from an \(\mathcal{A}^{(1)}\)-stack to a \(\mathcal{A}^{\DCZ}\)-stack. Then there is a natural morphism of \(\mathcal{A}^{\DCZ}\)-stacks
		\[\mathcal{S}\times^{\widetilde{\mathcal{P}^{\DCZ}}} \widetilde{\mathcal{M}_{\Dol}^{\DCZ}} \to \mathcal{M}_{\dR},\]
		which is an isomorphism over the open subscheme of \(\mathcal{A}^{\DCZ}\) where \(\mathcal{S} \to \mathcal{A}^{\DCZ}\) is nonempty.
\end{theorem}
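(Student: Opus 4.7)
The plan is to deduce \autoref{dczmainthm} from our main comparison \autoref{thm1447} by base-changing the latter along $\mathcal{A}^{\DCZ}\to\mathcal{A}^{(1)}$ and then carving out the desired substacks using \autoref{thm1748}, which translates ``trivial parabolic structure'' into ``residue zero''. Pulling back \autoref{thm1447} along $\mathcal{A}^{\DCZ}\to\mathcal{A}^{(1)}$ yields an equivalence of $\mathcal{A}^{\DCZ}$-stacks
\[
\mathcal{S}_{\mathcal{A}^{\DCZ}} \times^{\widetilde{\mathcal{P}}} \widetilde{\mathcal{M}_{\Dol}} \heq \widetilde{\mathcal{M}_{\dR}},
\]
where $\mathcal{S}_{\mathcal{A}^{\DCZ}} := \mathcal{S}\times_{\mathcal{A}^{(1)}}\mathcal{A}^{\DCZ}$ and the tildes denote analogous base changes. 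The graphs of the de Cataldo--Zhang Hitchin morphisms $\mathcal{S}\to \mathcal{A}^{\DCZ}$ and $h_{\DCZ}\colon\mathcal{M}_{\dR}\to\mathcal{A}^{\DCZ}$ give natural closed immersions $\mathcal{S}\hookrightarrow \mathcal{S}_{\mathcal{A}^{\DCZ}}$ and $\mathcal{M}_{\dR}\hookrightarrow \widetilde{\mathcal{M}_{\dR}}$, while the inclusions $\mathcal{P}^{\DCZ}\subseteq \mathcal{P}$ and $\mathcal{M}_{\Dol}^{\DCZ}\subseteq \mathcal{M}_{\Dol}$ base-change to tilded inclusions.

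Next I would define the natural map $\mathcal{S}\times^{\widetilde{\mathcal{P}^{\DCZ}}} \widetilde{\mathcal{M}_{\Dol}^{\DCZ}}\to\mathcal{M}_{\dR}$ by composing these inclusions with the base-changed equivalence, then verify it factors through $\mathcal{M}_{\dR}\hookrightarrow \widetilde{\mathcal{M}_{\dR}}$. Concretely, for a $T$-point $(V,L)$ of $\mathcal{S}\times_{\mathcal{A}^{\DCZ}} \widetilde{\mathcal{M}_{\Dol}^{\DCZ}}$ lying over $a_{\DCZ}\in\mathcal{A}^{\DCZ}$, the splitting $V$ has residue characteristic polynomial equal to the residue coordinate of $a_{\DCZ}$ by the very definition of $\mathcal{S}\to\mathcal{A}^{\DCZ}$, and the Dolbeault object $L$ has trivial parabolic structure on the stacky spectral curve $C^{(1)}_a$, which by \autoref{thm1748} implies its contribution to the residue of any tensor vanishes. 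Consequently the residue of the de Rham object $V\otimes L$ produced by the equivalence $\phi$ of \autoref{lem1683} equals the residue of $V$, matching $a_{\DCZ}$.

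The bulk of the work is to verify that this map is an isomorphism over the open locus $\mathcal{U}\subseteq \mathcal{A}^{\DCZ}$ where $\mathcal{S}\to\mathcal{A}^{\DCZ}$ is nonempty, which I would check fiberwise. Fix $a_{\DCZ}\in\mathcal{U}$ with image $a\in\mathcal{A}^{(1)}$. For essential surjectivity, take $E \in \mathcal{M}_{\dR}$ lying over $a_{\DCZ}$; by the big equivalence write $E \heq V \otimes L$ for some $(V,L)\in \mathcal{S}_a\times\mathcal{M}_{\Dol,a}$, and using the nonemptiness hypothesis adjust by the $\mathcal{P}_a$-action to arrange that the residue of $V$ equals $a_{\DCZ}$. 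Then the residue contributed by $L$ must vanish, so by \autoref{thm1748} $L$ has trivial parabolic structure, placing $(V,L)$ in the restricted product. For uniqueness up to the $\widetilde{\mathcal{P}^{\DCZ}}$-action: if $(V,L)\sim (V\otimes T, L\otimes T^{-1})$ and both pairs lie in the restricted product, then the $\mathcal{P}_a$-element $T$ has residue zero, so by \autoref{thm1748} $T$ has trivial parabolic structure and hence lies in $\mathcal{P}^{\DCZ}_a$.

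The main obstacle will be carefully interpreting ``residue'' for the various objects living on the stacky spectral curve $C^{(1)}_a$, and verifying the additivity of residue under tensor product that the argument above uses tacitly. Since \autoref{thm1748} is formulated for sheaves on $X^{(1)}$, I would translate via the de Rham BNR correspondence \autoref{drbnr}: push $V, L, T$ forward from $C^{(1)}_a$ to $X^{(1)}$ along the finite flat map $\pi$, verify that trivial parabolic structure on the spectral curve yields trivial parabolic structure on $X^{(1)}$, and then invoke \autoref{thm1748} together with the BNR characterization of residue furnished by \autoref{residuerelation} to conclude.
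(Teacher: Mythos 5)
Your proposal is correct and takes essentially the same route as the paper's own proof. The paper merely factors out the key technical input --- that a line bundle on $C^{(1)}_a$ has trivial parabolic structure if and only if twisting by it preserves residues, proven via \autoref{thm1748}, pushforward along $C^{(1)}_a\to X^{(1)}$, and tensor-additivity of residues --- into a standalone lemma (\autoref{trivialpar}), and then applies it after fixing a base point $\mathcal{E}\in\mathcal{S}_a$ and using simple transitivity of $\mathcal{P}$, which is the same normalization you perform implicitly when you ``adjust by the $\mathcal{P}_a$-action.''
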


To deduce \autoref{dczmainthm} from our results boils down to understanding how parabolic structures and residues interrelate. We now explore this.

\subsection{Residues and parabolic structures} \label{sec2086}

Our strategy for showing \autoref{dczmainthm} will be to prove that \(\mathcal{P}^{\DCZ} \inclusion \mathcal{P}\) can alternatively be characterized as the subgroupoid of line bundles which fix the residue of a connection. Once we have this characterization, deducing \autoref{dczmainthm} from our \autoref{thm1447} will be formal.

Fix a \(T\)-point \(a : T \to \mathcal{A}^{(1)}\) of the Hitchin base, for \(T\) a \(k\)-scheme. Let \(\nabla : \mathcal{E} \to \mathcal{E} \otimes \Omega^1_{X_T/S_T}\) be a logarithmic connection on a rank \(r\) vector bundle \(\mathcal{E}\) on \(X_T,\) whose \(p\)-curvature \(\psi_p\) has characteristic polynomial \(a.\) The de Rham BNR correspondence associates to this \(\nabla\) a torsion free rank \(p\) coherent sheaf \(\FF\) on \(C^{(1)}_a,\) together with a \(\mathcal{D}_a\)-module structure.

\begin{prop} \label{trivialpar}
		Let \(\LL \in \mathcal{P}(a)\) be a line bundle on \(C^{(1)}_a.\) The following are equivalent: 
		\begin{enumerate}
				\item \(\LL\) has trivial parabolic structure;
				\item for \emph{any} such connection \((\mathcal{E}, \nabla),\) \(\FF\) and \(\LL\otimes\FF\) correspond to \(\mathcal{D}_{X_T/S_T}\)-modules with the same residue at every point; 
				\item for \emph{one} connection \((\mathcal{E}, \nabla)\) as above, \(\FF\) and \(\LL \otimes \FF\) correspond to \(\mathcal{D}_{X_T/S_T}\)-modules with the same residue at every point.
		\end{enumerate}
\end{prop}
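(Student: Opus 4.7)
The plan is to prove the cycle of implications in the form $(1) \Rightarrow (2) \Rightarrow (3) \Rightarrow (1)$, where $(2) \Rightarrow (3)$ is a tautology and the substance lies in a single local claim: \emph{tensoring by $\LL$ shifts the residue spectrum at each $P_i$ by an $\F_p$-valued quantity depending only on the parabolic structure of $\LL$ at the fiber of $C^{(1)}_a$ above $P_i^{(1)}$, independently of $\FF$}. Granting this shift formula, all three implications become formal: $(1) \Rightarrow (2)$ because the shift vanishes when $\LL$ has trivial parabolic structure; $(3) \Rightarrow (1)$ by reading the $\FF$-independent shift as a pure invariant of $\LL$ which must vanish if it vanishes for even one $\FF$.

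To prove the shift formula I would first reduce to a local computation at each $P_i$, which is legitimate because by \autoref{residuerelation} the residue at $P_i$ is obtained by $\ast$-pullback along a morphism $(\pt/B\G_m)^{\hat{\dR}} \to X^{\hat{\dR}}(\log D)$ induced by $P_i$, and this morphism only sees an etale neighborhood of $P_i$. In such a neighborhood $X^{(1)}$ is $[\A^1/\mu_p]$, and by \autoref{lem808} the spectral curve $C^{(1)}_a$ is a $(p,\ldots,p)$-multiroot stack $\rho : C^{(1)}_a \to C'_a$ over the (non-stacky) spectral curve $C'_a$ associated to $(X', \Omega^1_{X'/k}(\log D'))$. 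The $\rho$-fiber over $P_i^{(1)}$ decomposes, over each point of $C'_a$ lying above $P_i'$, as a copy of $B\mu_p$; a line bundle $\LL$ on $C^{(1)}_a$ restricts to a $\mu_p$-character on each such $B\mu_p$, i.e.\ to an element $\alpha_{i,j}(\LL) \in \Z/p\Z \hookrightarrow \F_p$. By the analog of \autoref{thm1748} for the multiroot stack $\rho$, $\LL$ has trivial parabolic structure if and only if all these $\alpha_{i,j}(\LL)$ vanish. The key local claim, which I would then prove, is that if $\nabla$ has residue eigenvalues $\beta_{i,j}$ at $P_i$ (indexed by $p$-curvature eigenvalues and hence by the components of the $\rho$-fiber over $P_i^{(1)}$), then $\nabla'$ has residue eigenvalues $\beta_{i,j} + \alpha_{i,j}(\LL)$. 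This is obtained by combining \autoref{lem1662} (residue and $p$-curvature related by Artin--Schreier), which matches residue eigenvalues with $p$-curvature eigenvalues and hence with the components of the fiber, with the identification of parabolic weight on $C^{(1)}_a$ as residue shift under the de Rham BNR correspondence, which in turn is essentially a refined, component-wise version of \autoref{thm1748}.

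Given this shift formula the three implications are immediate as explained above, so the only real obstacle is the local identification of parabolic weight with residue shift. The subtle point is that tensoring happens on $C^{(1)}_a$ while the residue is read on $X_T$, so one must track the $\mu_p$-equivariance through the BNR/Frobenius descent dictionary of \autoref{drbnr}: the pushforward $\pi_*$ from $C^{(1)}_a$ to $X^{(1)}_T$ is $\mu_p$-equivariant in a formal neighborhood of $P_i$, so the component-wise decomposition by $p$-curvature eigenvalue is compatible with the parabolic filtration, and the action of the residue operator on each eigenvalue component is shifted by exactly the corresponding $\alpha_{i,j}(\LL)$ when we tensor.
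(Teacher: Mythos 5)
Your proposal is correct and captures the right idea, but it takes a noticeably more local and explicit route than the paper's proof, and the difference is worth flagging. The paper's argument is global: it pushes $\LL$ forward along the finite flat degree-$r$ map $\pi : C^{(1)}_a \to X^{(1)}_T$ to a rank-$r$ bundle $\mathcal{V}$, claims that $\LL \otimes \FF$ is the de Rham BNR partner of $\mathcal{E} \otimes F^*_{X/S}\mathcal{V}$ with the tensor connection, and then invokes \autoref{thm1748} together with the tensor-product rule $\res(\nabla \otimes \nabla') = \res \otimes \id + \id \otimes \res'$. Your argument instead stays on the spectral curve, decomposing the $\rho$-fiber over each $P_i^{(1)}$ by $p$-curvature eigenvalue, extracting the $\mu_p$-weight $\alpha_{i,j}(\LL) \in \F_p$ of $\LL$ at each $B\mu_p$, and identifying that weight as the additive shift of the residue eigenvalue in the corresponding component, with Artin--Schreier (\autoref{lem1662}) supplying the matching between residue and $p$-curvature eigenvalues. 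Your local version is arguably cleaner: the paper's assertion that $\LL \otimes \FF$ is the BNR partner of $\mathcal{E} \otimes F^*_{X/S}\mathcal{V}$ does not parse literally (that tensor has rank $r^2$, not $r$, so it cannot be the partner of a rank-$p$ sheaf on the degree-$r$ spectral curve $C^{(1)}_a$); what the paper is doing is really a componentwise computation dressed up as a global tensoring, and your proposal makes that componentwise content explicit. One place where you are slightly hand-wavy, and which deserves to be filled in if you were to write this out in full, is the ``refined componentwise version of \autoref{thm1748}'': you should spell out that the identification of $\LL$'s $\mu_p$-weight with the residue shift goes through the $\mu_p$-equivariance of $\pi_*$ on the eigenspace decomposition of $p$-curvature, which is precisely the compatibility of the de Rham BNR correspondence (\autoref{drbnr}) with restriction to the $B\mu_p$ at $P_i^{(1)}$ furnished by \autoref{residuerelation}. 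Finally, a shared caveat applicable to both your argument and the paper's: if ``same residue'' is read as ``same multiset of residue eigenvalues,'' then in the degenerate situation where a single $p$-curvature eigenvalue at some $P_i$ has multiplicity $\geq p$ and the residues fill out a full $\F_p$-coset, a nonzero shift $\alpha_{i,j}$ can permute the eigenvalues and preserve the multiset; your direction $(3) \Rightarrow (1)$ implicitly assumes this cannot happen, as does the paper's. This is harmless for the applications here (the intended reading of ``same residue'' is componentwise, which your indexing supports), but it is worth stating explicitly since it is exactly the kind of thing that a referee would ask about.
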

\begin{proof}
		It's easier to prove this if we think about how \(\LL\) acts on \((\mathcal{E}, \nabla)\) directly, instead of juggling these intermediaries \(\FF.\) For this, just note that a line bundle \(\LL\) on \(C^{(1)}_a\) will pushforward to a rank \(r\) vector bundle \(\mathcal{V}\) on \(X^{(1)}\) under the finite flat degree \(r\) map \(C^{(1)}_a \to X^{(1)}.\) Then \(\LL \otimes \FF\) is the de Rham BNR partner of \(\mathcal{E} \otimes F_{X/S}^*\mathcal{V},\) with its natural tensor product connection. 

		By \autoref{thm1748}, the residue of the natural connection on \(F_{X/S}^*\mathcal{V}\) is zero if and only if \(\mathcal{V}\) has trivial parabolic structure. In a tensor product of connections, the residue is \(\res \otimes \id + \id \otimes \res',\) and thus we deduce immediately that our three conditions are equivalent. 
\end{proof}

\subsubsection{Deducing de Cataldo--Zhang's theorem}

\begin{proof}[Proof of \autoref{dczmainthm}]
		Fix some \(\mathcal{E} \in \mathcal{S}_a.\) 

		For any \((\mathcal{E}', \LL')\in (\mathcal{S}\times^{\mathcal{P}} \mathcal{M}_{\Dol})_a,\) by transitivity of \(\mathcal{P}\) on \(\mathcal{S},\) we can find some \(\LL\) so that \((\mathcal{E}', \LL') \heq (\mathcal{E}, \LL)\) in the groupoid \((\mathcal{S}^{\times{\mathcal{P}}} \mathcal{M}_{\Dol})(a).\) By \autoref{trivialpar}, because \(\mathcal{E}\) and \(\mathcal{E} \otimes \LL\) have the same spectral data, we deduce \(\LL\) has trivial parabolic structure. 

		Thus the map
		\begin{equation} \label{eq2017} (\mathcal{S} \times^{\tilde{\mathcal{P}^{\DCZ}}} \tilde{\mathcal{M}_{\Dol}^{\DCZ}})(a) \to (\mathcal{S} \times^{\mathcal{P}} \mathcal{M}_{\Dol})(a)\end{equation}
		is essentially surjective, since we can represent every object of the target groupoid via a pair \((\mathcal{E}, \LL)\) where \(\LL\) has trivial parabolic structure.

		It is also fully faithful, because if \((\mathcal{E}, \LL), (\mathcal{E}', \LL')\) are two objects of the source category (so, \(\LL, \LL'\) both have trivial parabolic structure), then morphisms 
		\[(\mathcal{E}, \LL) \to (\mathcal{E}', \LL')\]
		are equivalent to giving a triple consisitng of  
		\begin{enumerate}
				\item a line bundle \(\mathcal{V}\) on \(C^{(1)}_a,\)
				\item an isomorphism \(\mathcal{E} \otimes \mathcal{V} \heq \mathcal{E}'\) of \(\mathcal{D}_a\)-modules, and
				\item an isomorphism \(\mathcal{V}^{\vee} \otimes \LL\heq \LL'.\) 
		\end{enumerate}

		In the source category, morphisms are the same but the line bundle \(\mathcal{V}\) is required to have trivial parabolic structure. But again by \autoref{trivialpar}, an ismomorphism \(\mathcal{E} \otimes \mathcal{V} \heq \mathcal{E}'\) existing implies \(\mathcal{V}\) has trivial parabolic structure. 
\end{proof}

\bibliographystyle{plain}
\bibliography{refs}

\end{document}